\documentclass[abstractpage,publicabstractpage,ackpage,copyrightpage,phd]{uithesis}
\usepackage{amsfonts}
\usepackage{amsmath}
\usepackage{amssymb}
\usepackage{amsthm} % included in ntheorem with amsthm option
\usepackage{array}
\usepackage{float}
\usepackage[toc,page,titletoc]{appendix}
\usepackage{longtable}
\usepackage{graphicx}
\usepackage[font=singlespacing]{caption}
\usepackage[normalem]{ulem}
\usepackage{nicefrac}
\usepackage{units}
\usepackage{rotating}
\usepackage{afterpage}
\usepackage{tikz}
\usetikzlibrary{arrows,positioning}

% compress and sort citations as numbers
% alter uithesis.cls line 355 from plain2 to unsrt to use this
%\usepackage[numbers,sort&compress]{natbib}

% does not allow citations to be split between pages; requirement
\usepackage{etoolbox}
\apptocmd{\thebibliography}{\interlinepenalty 10000\relax}{}{}

% Vertical spacing (singlespace) in itemset and enumerate
\usepackage{enumitem}
\setlist{noitemsep,topsep=1pt, partopsep=4pt, parsep=2pt}
\setenumerate{noitemsep,topsep=1pt, partopsep=4pt, parsep=2pt}

% single spacing for rows in a table (this does not single space
% within a row; for that, use \\[-5mm]
 % 1/2 of the default spacing (double)

\allowdisplaybreaks[2]

\newtheorem{thm}{Theorem}[chapter]
\newtheorem{lem}[thm]{Lemma}
\newtheorem{cor}[thm]{Corollary}
\newtheorem{prop}[thm]{Proposition}

\theoremstyle{definition}
\newtheorem{definition}{Definition}[chapter]

\theoremstyle{remark}

\newtheorem*{remark}{Remark}

%%%%%%%%%%%%%%%%%%%%%%%%%%%%%% LyX specific LaTeX commands.
\newcommand{\lyxmathsym}[1]{\ifmmode\begingroup\def\b@ld{bold}
  \text{\ifx\math@version\b@ld\bfseries\fi#1}\endgroup\else#1\fi}
\providecommand{\LyX}{L\kern-.1667em\lower.25em\hbox{Y}\kern-.125emX\@}

%
% the following command can be used to put a remark
% in the margin of the thesis/paper
% to include a remark, use \edz{remark}
% size of the remark font can be changed, not it
% is set to "scriptsize", can also be "tiny" or "small"
% or others
%
\def\sideremark#1
{
\ifvmode\leavevmode\fi\vadjust
  {\vbox to0pt
    {\vss\hbox to0pt
      {\hskip\hsize\hskip1em\vbox
        {\hsize2cm
%          \small
          \scriptsize
          \raggedright\pretolerance10000\noindent#1\hfill
        }
        \hss
      }
      \vbox to8pt{\vfil}\vss
    }
  }
}

%\include{prelude}       

% This is the prelude to my thesis and contains much of what is needed to
% create my thesis

%% Front Matter %%%%%%%%%%%%%%%%%%%%%%%%%%%%%%%%%%%%%%%%%%%%%%%%%%%%%%%%
%\abtitlepgfalse       % print out the microfiche title page
%\abstractpgfalse      % print out the microfiche abstract
% \abtitlepgtrue       % print out the microfiche title page
% \abstractpgtrue      % print out the microfiche abstract

%\titlepgtrue          % print out the title page
%\copyrightfalse       % claim my true inheritance
%\signaturepagetrue    % page for thesis committee members
%\dedicationtrue       % page for the dedication
%\acktrue              % Print out the acknowledgements
%\abswithesisfalse     % abstract for the thesis
%\tablecontentstrue    % page for table of contents
%\figurespagefalse     % page of figures
%%%%%%%%%%%%%%%%%%%%%%%%%%%%%%%%%%%%%%%%%%%%%%%%%%%%%%%%%%%%%%%%%%%%%%%%%

%%%%% TITLE %%%%%%%%%%%%
\title{Classification of tensor decompositions for II$_1$ factors}

%%%%%%%%% AUTHOR %%%%%%%%%%%%
\author{Wanchalerm Sucpikarnon}
%%%%%%%%% Other Stuff %%%%%%%%%%%%
\dept{Mathematics}
% for a single advisor
\setboolean{multipleSupervisors}{false}
\advisor{Ionut Chifan, Associate Professor} %  \hspace{56mm} Second Advisor Title and Name}
% for multiple advisors; change <value> to line up the names
%\setboolean{multipleSupervisors}{true}
%\advisor{Advisor 1\\\hspace{<value>mm}Advisor 2...}
%
% edit the names below to have your committee members names appear
% on the signature page.  memberOne should be your advisor.
%
\memberOne{Ionut Chifan}
\memberTwo{Raul Curto}
\memberThree{Palle Jorgensen}
\memberFour{Surjit Khurana}
\memberFive{Victor Camillo}
\submitdate{December 2019}
\copyrightyear{2019}{}

% Abstracts for the various purposes
%\newcommand{\abstextwithesis}{\myabstract}
%\newcommand{\abstracttext}{\myabstract}

% Dedication for the thesis.
%\epigraph{epigraph}
%\dedication{dedication}

% Acknowledgement file

\ackfile{thesisAck}
% Abstract file - specifies the text for the abstract.  This is used
% in the PhD Abstract, as well as in the abstract bounded with
% the dissertation
\abstractfile{thesisAbstract}
\publicabstractfile{thesisAbstractpublic}

% Front matter, title page, etc.

\begin{document}

\frontmatter

%\include{chapter1}
% Chapter One - Chapter One Title
\chapter{INTRODUCTION} \label{chapter:chapterlabelone}

An important step towards understanding the structure of II$_1$ factors is the study of their tensor product decompositions.  A factor is called \emph{prime} if it cannot be decomposed as a tensor product of diffuse factors. Using this notion of $*$-orthogonal von Neumann algebras, S. Popa was able to show in \cite{Po83} that the (non-separable) II$_1$ factor $L(\mathbb F_S)$ arising from the free group $\mathbb F_S$ with uncountably many generators $S$ is prime. More than a decade later, using Voiculescu's influential free probability theory, Ge managed to prove the same result about the free group factors $L(\mathbb F_n)$ with countably many generators, $n\geq 2$ \cite{Ge98}.  Using a completely different perspective based on $C^*$-techniques, Ozawa obtained a far-reaching generalization of this by showing that for every icc hyperbolic group $\Gamma$ the corresponding factor $L(\Gamma)$ is in fact \emph{solid} (for every diffuse $A\subset L(\Gamma)$ von Neumann subalgebra, its relative commutant $A'\cap L(\Gamma)$ is amenable) \cite{Oz03}.  Developing a new approach rooted in the study of closable derivations, Peterson showed primeness of $L(\Gamma)$, whenever $\Gamma$ is any nonamenable icc group with positive first Betti number \cite{Pe06}. Within the powerful framework of his deformation/rigidity theory Popa discovered a new proof of solidity of the free group factors \cite{Po06}. These methods laid out the foundations of a rich subsequent activity regarding the study of primeness and other structural aspects of II$_1$ factors \cite{Oz04, CH08, CI08, Si10, Fi10,CS11,CSU11, SW11, HV12,Bo12, BHR12, DI12, CKP14, Is14, HI15, Ho15, DHI16, Is16}.      

\section{Statements of main results} The techniques introduced in the deformation/rigidity framework also opened up a whole array of new possibilities towards understanding novel aspects in the classification of tensor product decompositions of factors. For example, motivated in part by the results in \cite{CdSS15}, Drimbe, Hoff and Ioana have discovered in \cite{DHI16} a new classification result regarding the study of tensor product decompositions of II$_1$ factors. Precisely, whenever $\Gamma$ is an icc group that is measure equivalent to a direct product of non-elementary hyperbolic groups then \emph{all} possible tensor product decompositions of the corresponding II$_1$ factor $L(\Gamma)$ can arise \emph{only} from the canonical direct product decompositions of the underlying group $\Gamma$. Pant and de Santiago showed the same result holds when $\Gamma$ is a poly-hyperbolic group with non-amenable factors in its composition series \cite{dSP17}. In this dissertation we make new progress in this direction by introducing several new and fairly large classes of groups for which this tensor product rigidity phenomenon still holds. This include many new families of groups that were not previously investigated in this framework such as amalgamated free products and McDuff groups. Our results also improve significantly upon a series of previous results on primeness and unique prime factorisations including \cite{CH08,SW11}. Below we briefly describe these results also placing them in a context and explaining their importance and the methods involved. 

Basic properties in Bass-Serre theory of groups show that the only way an amalgam $\Gamma_1\ast_{\Sigma}\Gamma_2$ could decompose as a direct product is through its core $\Sigma$. Precisely, if $\Gamma_1\ast_{\Sigma} \Gamma_2 =\Lambda_1\times \Lambda_2$ then there is a permutation $s$ of $\{1,2\}$ so that $\Lambda_{s(1)}<\Sigma$. This further gives $\Sigma= \Lambda_{s(1)}\times \Sigma_0$, $\Gamma_1= \Lambda_{s(1)} \times \Gamma^0_1$, $\Gamma_2= \Lambda_{s(1)} \times \Gamma^0_2$ for some groups $\Sigma_0<\Gamma^0_1,\Gamma^0_2$  and hence $\Lambda_{s(2)} =\Gamma^0_1\ast_{\Sigma_0} \Gamma^0_2$. An interesting question is to investigate situations when this basic group theoretic aspect could be upgraded to the von Neumann algebraic setting. It is known this fails in general since there are examples of product indecomposable icc amalgams whose corresponding factors are McDuff and hence decomposable as tensor products. However, under certain indecomposability assumptions on the core algebra, we are able to provide a positive answer to our question.

\begin{thm}
\label{tensordecompampf} Let $\Gamma=\Gamma_1\ast_{\Sigma}\Gamma_2$ be an icc group such that $[\Gamma_1:\Sigma]\geq 2$ and $[\Gamma_2:\Sigma]\geq 3$. Assume that $\Sigma$ is finite-by-icc and any corner of $L(\Sigma)$ is virtually prime. Suppose that $L(\Gamma)=M_1\bar\otimes M_2$, for diffuse $M_i$'s. Then there exist decompositions $\Sigma=\Omega \times \Sigma_0$ with $\Sigma_0$ finite, $\Gamma_1= \Omega \times \Gamma^0_1$, $\Gamma_2= \Omega \times \Gamma^0_2$, for some groups $\Sigma_0< \Gamma_1^0,\Gamma^0_2$, and hence $\Gamma =\Omega \times (\Gamma^0_1\ast_{\Sigma_0} \Gamma^0_2)$. Moreover, there is a unitary $u\in L(\Gamma)$, $t>0$, and a permutation $s$ of $\{1,2\}$ such that 
\begin{equation*}
M_{s(1)}= u L(\Omega)^t u^*\quad{ and }\quad  M_{s(2)} = u L(\Gamma_1^0\ast_{\Sigma_0} \Gamma_2^0)^{1/t} u^*.
\end{equation*}
\end{thm}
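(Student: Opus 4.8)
\emph{Proof sketch.} The plan is to exploit the amalgamated free product decomposition $L(\Gamma)=L(\Gamma_1)\ast_{L(\Sigma)}L(\Gamma_2)$ and the deformation/rigidity analysis of such algebras, in three stages: first locate one of the tensor factors inside the core $L(\Sigma)$; then use the virtual primeness of corners of $L(\Sigma)$ to upgrade this to a genuine tensor splitting of a corner of $L(\Gamma)$ along a corner of $L(\Sigma)$; and finally ``de-quantize'' this analytic splitting to recover the group-level direct product decompositions and to conjugate the tensor factors into place.

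First I would record the basic structure. Since $\Gamma$ is icc, $L(\Gamma)$ is a II$_1$ factor, so $M_1,M_2$ are commuting II$_1$ subfactors with $\mathcal N_{L(\Gamma)}(M_i)''=L(\Gamma)$; and the conditions $[\Gamma_1:\Sigma]\ge 2$, $[\Gamma_2:\Sigma]\ge 3$ ensure that $\Gamma$ contains a nonabelian free subgroup, that each vertex group has infinite index in $\Gamma$ (so that $L(\Gamma)\not\prec_{L(\Gamma)}L(\Gamma_i)$), and that $L(\Gamma)$ is not amenable relative to $L(\Sigma)$. Feeding $M_1$ and $M_2$ into the structural/dichotomy theorems for subalgebras of amalgamated free product von Neumann algebras (in the spirit of Ioana--Peterson--Popa, Chifan--Ioana, and Vaes), together with the intertwining calculus and the relative malnormality of the vertex algebras $L(\Gamma_i)\subseteq L(\Gamma)$, the degenerate outcomes get excluded: ``$M_i$ amenable relative to $L(\Sigma)$'' is incompatible with $M_i$ being a full tensor factor whose normalizer $L(\Gamma)$ is not amenable relative to $L(\Sigma)$; ``$M_1\prec_{L(\Gamma)}L(\Gamma_j)$'' forces, by relative malnormality of $L(\Gamma_j)$, also $M_2=M_1'\cap L(\Gamma)\prec_{L(\Gamma)}L(\Gamma_j)$ and hence $L(\Gamma)=M_1\vee M_2\prec_{L(\Gamma)}L(\Gamma_j)$, contradicting $[\Gamma:\Gamma_j]=\infty$, unless in fact $M_1\prec_{L(\Gamma)}L(\Sigma)$; and the mixed case $M_1\prec L(\Gamma_1)$, $M_2\prec L(\Gamma_2)$ leads, via malnormality together with the intertwining calculus and $L(\Gamma_1)\cap L(\Gamma_2)=L(\Sigma)$, to one of $M_1,M_2$ intertwining into $L(\Sigma)$. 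Thus, after interchanging $M_1$ and $M_2$ if necessary (this is the source of the permutation $s$), I may assume $M_1\prec_{L(\Gamma)}L(\Sigma)$.

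From $M_1\prec_{L(\Gamma)}L(\Sigma)$ I would pass, through the usual partial-isometry picture and an amplification, to a unital $\ast$-isomorphism of $M_1$ onto a subfactor $N$ of a corner $Q$ of an amplification of $L(\Sigma)$, arranged so that the corresponding corner of $L(\Gamma)$ splits as $N\,\bar\otimes\,P$ with $P\cong M_2$ and $N\subseteq Q\subseteq N\,\bar\otimes\,P$. Since $N$ is a factor, the Ge--Kadison tensor-splitting theorem gives $Q=N\,\bar\otimes\,P_0$ for some $P_0\subseteq P$; but $Q$ is a corner of $L(\Sigma)$, hence virtually prime, while $N$ is diffuse, so $P_0$ must be finite dimensional. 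Cutting by a minimal central projection and absorbing the resulting matrix factor, I conclude that a corner of $L(\Sigma)$ equals, up to amplification and finite index, $M_1$ itself, and that a corner of $L(\Gamma)$ splits as the tensor product of that corner of $L(\Sigma)$ with a II$_1$ factor which, up to a finite-dimensional piece, is $M_2$.

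The final step, which I expect to be the main difficulty, is the de-quantization. Applying Ge--Kadison once more along the chains $L(\Sigma)\subseteq L(\Gamma_i)\subseteq L(\Gamma)$ splits the relevant corners of $L(\Gamma_1),L(\Gamma_2)$ as $T\,\bar\otimes\,R_1$ and $T\,\bar\otimes\,R_2$, where $T$ is the corner of $L(\Sigma)$ above (made a factor), and the identity $(T\,\bar\otimes\,R_1)\ast_{T}(T\,\bar\otimes\,R_2)\cong T\,\bar\otimes\,(R_1\ast R_2)$ identifies the complementary factor with $R_1\ast R_2$. One must then show that this analytic tensor splitting descends to honest group direct product decompositions $\Sigma=\Omega\times\Sigma_0$ with $\Sigma_0$ finite, $\Gamma_i=\Omega\times\Gamma^0_i$ for some $\Sigma_0<\Gamma^0_i$, and hence $\Gamma=\Omega\times(\Gamma^0_1\ast_{\Sigma_0}\Gamma^0_2)$; here one uses crucially that $\Sigma$ is finite-by-icc and occupies the canonical position of the amalgam core, so that its diffuse part is the prime piece $L(\Omega)$ and the finite-dimensional discrepancy is precisely $L(\Sigma_0)$, while the icc-ness of $\Gamma$ absorbs the amplification and kills the residual cocycle. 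Once the group decomposition is in hand, $L(\Omega)$ is a genuine tensor factor of $L(\Gamma)$ with complement $L(\Gamma^0_1\ast_{\Sigma_0}\Gamma^0_2)$, and we have $M_1\prec_{L(\Gamma)}L(\Omega)$, $L(\Omega)\prec_{L(\Gamma)}M_1$, and $\mathcal N_{L(\Gamma)}(M_1)''=L(\Gamma)$; so Popa's intertwining-upgrading techniques supply a unitary $u\in L(\Gamma)$ and $t>0$ with $M_1=uL(\Omega)^t u^*$, and then $M_2=M_1'\cap L(\Gamma)=u\,(\,(L(\Omega)^t)'\cap L(\Gamma)\,)\,u^*=uL(\Gamma^0_1\ast_{\Sigma_0}\Gamma^0_2)^{1/t}u^*$, which (with $s$ the permutation fixed above) is the asserted conclusion.
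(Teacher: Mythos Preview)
Your Stage 1 is essentially the paper's first move (Corollary \ref{intertwiningcoregroupsafphnn}, itself a specialization of Theorem \ref{intertwiningincore1} via Vaes's Theorem~A): one of the $M_i$ intertwines into $L(\Sigma)$. Your exclusion of the degenerate alternatives is phrased slightly differently from the paper but reaches the same conclusion; the cleaner route is the paper's: if $M_1\prec L(\Gamma_j)$ but $M_1\not\prec L(\Sigma)$, then by \cite{IPP05} the whole $M_1\vee M_2=L(\Gamma)$ intertwines into $L(\Gamma_j)$, which is impossible.

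The genuine gap is Stage 3, the ``de-quantization''. Your plan is to split corners of $L(\Gamma_i)$ via Ge--Kadison as $T\bar\otimes R_i$ and then assert that this analytic splitting descends to $\Gamma_i=\Omega\times\Gamma_i^0$ at the group level. But a tensor factorization of a corner of a group factor does \emph{not} in general come from a direct product of the group; producing the honest subgroup $\Omega$ is precisely the hard content, and the sentence ``uses crucially that $\Sigma$ is finite-by-icc and occupies the canonical position of the amalgam core'' is not an argument. The paper does not attempt to read off the subgroup from an algebraic splitting. Instead it introduces the notion of \emph{spatial commensurability} $M_1\cong^{com}_{L(\Gamma)}L(\Sigma)$ (Definition in Section~\ref{commalg}) and proceeds as follows: virtual primeness of corners of $L(\Sigma)$ is used, via the dichotomy Lemma~\ref{intertwiningdichotomy}, to force $M_1\cong^{com}_{L(\Gamma)}L(\Sigma)$ (rather than merely $M_1\prec L(\Sigma)$); then Theorem~\ref{virtualprod}, which combines the downward basic construction with the group-theoretic input of Theorem~\ref{fromrelcomtocomgroups} (from \cite{CdSS15}), manufactures an actual subgroup $\Omega\leq C_\Gamma(\Sigma)$ with $[\Gamma:\Sigma\Omega]<\infty$ and $M_2\cong^{com}_{L(\Gamma)}L(\Omega)$; finally Theorem~\ref{fromvirtualtoproduct} runs a conjugation-orbit argument (iteratively passing to $\{\gamma:|\mathcal O_{\Sigma_1}(\gamma)|<\infty\}$ and its centralizer) to upgrade ``finite-index virtual product'' to an honest decomposition $\Gamma=\Gamma_1'\times\Gamma_2'$ together with the unitary $u$ and the scalar $t$. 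Only after the direct product of $\Gamma$ is in hand does the purely group-theoretic Lemma~\ref{amalgamprodecomp} recover the specific shape $\Sigma=\Omega\times\Sigma_0$, $\Gamma_i=\Omega\times\Gamma_i^0$. Your sketch bypasses this entire mechanism, and without it there is no way to produce $\Omega$ as a subgroup rather than merely as a subfactor.
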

In particular this result provides many new examples of prime group factors and factors that satisfies Ozawa-Popa's unique prime decomposition property. This includes factors associated with simple groups such as Burger-Mozes groups which is a premiere in the subject.

In \cite{Po07} Popa was able to establish primeness for all factors $L(\Gamma)$ associated with non-canonical wreath product groups $\Gamma=A\wr G$ where $A$ is amenable and $\Gamma$ is non-amenable. Using the deformation techniques from \cite{CPS12} Sizemore and Winchester were able to extend this result by establishing various unique tensor decomposition properties from von Neumann algebras arising from direct products of such groups. In this dissertation we extend this even further by showing that all tensor decompositions of such factors are in fact parametrized by the canonical direct product decompositions of the underlying group. Specifically, for product of groups in the class $\mathcal{WR}$ (see section 6.3.2 for the definition) we have the following result

\begin{thm}
Let $\Gamma_1,\Gamma_2, \ldots, \Gamma_n\in \mathcal{WR}$ and let $\Gamma=\Gamma_1\times\Gamma_2\times \cdots \times \Gamma_n$. Consider the corresponding von Neumann algebra $M=L(\Gamma)$ and let $P_1, P_2$ be non-amenable II$_1$ factors such that $M=P_1\bar\otimes P_2$. Then there exist a scalar $t>0$ and a partition $I_1\sqcup I_2=\{1,2,\ldots,n\}$ such that
\begin{equation*}
L(\Gamma_{I_1})\cong P_1^t \quad \text{and} \quad L(\Gamma_{I_2})\cong P_2^{1/t}.
\end{equation*}
\end{thm}

In the celebrated work \cite{Mc69} McDuff introduced an (uncountable) family of groups that give rise to non-isomorphic II$_1$ factors, thus solving a long standing open problem at the time. Her construction of these groups was quite involved being essentially based on the iteration of the so-called $T_0$ and $T_1$ group functors. These functors are in part inspired by the earlier work of Dixmier and Lance \cite{DL69} which in turn go back to the pioneering work of Murray and von Neumann \cite{MvN43}.

Let $\Gamma$  be a group. For $i\geq 1,$ let $\Gamma_i$ be isomorphic copies of $\Gamma$
 and $\Lambda_i$ be isomorphic to $\mathbb{Z}$. Define $\tilde\Gamma=\bigoplus_{i\geq 1} \Gamma_i$ and let $\mathfrak S_{\infty}$ be the group of finite permutations of the positive integers $\mathbb N$. Consider the semidirect product $\tilde\Gamma\rtimes \mathfrak S_{\infty}$ associated to the natural action of $\mathfrak S_{\infty}$ on $\tilde\Gamma$ which permutes the copies of $\Gamma$. Following \cite{Mc69} we define
\begin{itemize}
\item $T_0(\Gamma)$ = the group generated by $\tilde\Gamma$ and $\Lambda_i, i\geq 1$ with the only relation that $\Gamma_i$ and $\Lambda_j$ commutes for $i\geq j\geq 1$.
\item  $T_1(\Gamma)$ = the group generated by $\tilde\Gamma\rtimes \mathfrak S_{\infty}$ and $\Lambda_i, i\geq 1$ with the only relation that $\Gamma_i$ and $\Lambda_j$ commute for $i\geq j\geq 1$.
\end{itemize}

From definitions it is evident that $T_i(\Gamma)$ give rise to II$_1$ factors $L(T_i(\Gamma))$ that have an abundence of assymptoticaly central sequences and hence by \cite{Mc69} they admit many tensor product decompositions by the hyperfinite factor, i.e. $L(T_i(\Gamma))\cong L(T_i(\Gamma))\bar \otimes \mathcal R$. However, besides this classic result, virtually noting is known towards describing the other possible tensor decompositions of these factors. In this thesis we completely answer this question by showing that in fact these are \emph{all} the possible tensor decompositions of these factors.

\begin{thm}
Fix $\Gamma$  a non-amenable group and let $\alpha\in\{0,1\}$. If $L(T_{\alpha}(\Gamma))=P_1\bar\otimes P_2$ then either $P_1$  or $P_2$ is isomorphic to the hyperfinite II$_1$ factor.

\end{thm}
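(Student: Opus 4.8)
\emph{Proof plan.} Since a II$_1$ factor is hyperfinite precisely when it is amenable, it suffices to derive a contradiction from the hypothesis $M:=L(T_\alpha(\Gamma))=P_1\bar\otimes P_2$ with both $P_1,P_2$ non-amenable. First I would record a structural description of $T_\alpha(\Gamma)$ as an increasing union of HNN extensions. For $n\ge 1$ set $G_n:=\langle\,\tilde\Gamma\ (\text{and }\mathfrak S_\infty\text{ when }\alpha=1),\ \Lambda_1,\dots,\Lambda_n\,\rangle$ and $A_{n+1}:=\bigoplus_{i\ge n+1}\Gamma_i\le G_n$. The defining relations say exactly that $\Lambda_{n+1}$ commutes with $A_{n+1}$ and satisfies no other relation, so $G_{n+1}=\mathrm{HNN}(G_n,A_{n+1},\mathrm{id})=G_n\ast_{A_{n+1}}\bigl(A_{n+1}\times\langle\Lambda_{n+1}\rangle\bigr)$, and $T_\alpha(\Gamma)=\bigcup_n G_n$. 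Correspondingly $M=\overline{\bigcup_n M_n}^{\,w}$ with $M_n:=L(G_n)$, where $M_{n+1}$ is an HNN extension of $M_n$ with edge algebra $L(A_{n+1})=\overline{\bigotimes}_{i\ge n+1}L(\Gamma_i)$; note that these edge algebras are non-amenable, precisely because $\Gamma$ is.

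The heart of the argument is to run Popa's deformation/rigidity machinery along this tower. Each $M_{n+1}=\mathrm{HNN}(M_n,L(A_{n+1}))$ carries the canonical s-malleable deformation attached to its HNN (equivalently, amalgamated free product) structure; it fixes the vertex algebra $M_n$ pointwise and moves only $u_{\Lambda_{n+1}}$, hence it extends to a deformation $\alpha^{(n)}_t$ of all of $M$ that fixes $M_n$ together with every $u_{\Lambda_j}$, $j\ne n+1$. Applying Popa's transversality together with the spectral-gap / relative-amenability dichotomy (in the form suited to HNN extensions), and using crucially that $P_1$ is regular in $M$ and that both $P_1$ and $P_1'\cap M=P_2$ are non-amenable, one gets for each $n$ the dichotomy: either $\alpha^{(n)}_t\to\mathrm{id}$ uniformly on $(P_1)_1$ — whence, by intertwining-by-bimodules inside the HNN extension, $P_1\prec_M L(G'_n)$, where $G'_n$ is $T_\alpha(\Gamma)$ with the generator $\Lambda_{n+1}$ deleted — or $P_1$ is amenable relative to $L(A_{n+1})$ inside $M$. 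Since $\bigcap_n A_{n+1}=\{e\}$ and relative amenability over $L(A_{n+1})$ strengthens as $n$ grows, the second alternative cannot persist for all $n$ without forcing $P_1$ to be amenable; hence $P_1\prec_M L(G'_n)$ for all $n\ge n_0$, and since the $L(G'_n)$ form commuting squares with $\bigcap_{n\ge n_0}L(G'_n)=M_{n_0}$ and Popa's intertwining is stable under such intersections, we conclude $P_1\prec_M M_{n_0}$ for some $n_0\ge 2$.

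The last step turns this into a contradiction. Because $P_1$ is regular in $M$ — its normalizer contains $\mathcal U(P_1)\cup\mathcal U(P_2)$, which generate $M$ — the standard lemmas linking intertwining with (quasi-)normalizers yield $M=\mathcal{QN}_M(P_1)''\prec_M\mathcal{QN}_M(M_{n_0})''$ and likewise $P_2=P_1'\cap M\prec_M\mathcal{QN}_M(M_{n_0})''$. A Bass--Serre normal-form computation shows that the group quasinormalizer of $G_{n_0}$ in $T_\alpha(\Gamma)$ is $G_{n_0}$ itself (for $j>n_0$ the letter $\Lambda_j$ conjugates $\Gamma_1\le G_{n_0}$ outside $G_{n_0}$, so it does not quasinormalize $G_{n_0}$), so $\mathcal{QN}_M(M_{n_0})''=M_{n_0}$. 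Thus both $P_1$ and $P_2$, hence $M=P_1\vee P_2$, intertwine into the proper subalgebra $M_{n_0}\subsetneq M$; since $[T_\alpha(\Gamma):G_{n_0}]=\infty$, this is impossible. The $\alpha=0$ and $\alpha=1$ cases are handled uniformly, the locally finite — hence amenable — group $\mathfrak S_\infty$ being disjoint from the tail algebras and entering only in the normal-form computations.

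Finally, the anticipated obstacle. The group-theoretic input (the HNN tower, the quasinormalizer and index computations) and the concluding step are routine. The substantive difficulty is the second paragraph: unlike the amalgam results behind Theorem~\ref{tensordecompampf}, here the edge algebras $L(A_{n+1})=\overline{\bigotimes}_{i\ge n+1}L(\Gamma_i)$ are non-amenable, so the Ioana--Peterson--Popa amenable-core dichotomy is not available off the shelf; one must carry out the deformation and spectral-gap analysis \emph{relative to} these non-amenable cores, exploiting the non-amenability of $P_1'\cap M$ to eliminate the relatively-amenable outcome, and one must control the family $(\alpha^{(n)}_t)_n$ and the resulting chain of intertwinings coherently over the whole infinite tower. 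Making the descent actually bottom out at a finite level $M_{n_0}$ — i.e. trapping a tensor factor genuinely below some generator $\Lambda_{n_0+1}$ rather than merely inside an algebra missing a single generator — is the crux.
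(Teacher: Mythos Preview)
Your overall shape is right—exploit the tree-like structure of $T_\alpha(\Gamma)$ to trap one factor below a decreasing family of ``core'' subalgebras—but the execution diverges from the paper's in a way that creates real gaps.

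\textbf{The paper's approach.} Rather than viewing $T_\alpha(\Gamma)$ as a union of HNN extensions of the finite-stage groups $G_n$, the paper observes that for \emph{each} $n$ the \emph{full} group splits as an amalgamated free product
\[
T_\alpha(\Gamma)=\Sigma_n\ast_{\tilde\Gamma_n}\Delta_n,
\]
where $\tilde\Gamma_n=\bigoplus_{i\ge n}\Gamma_i$, $\Sigma_n=\langle\tilde\Gamma\ (\text{resp.\ }\tilde\Gamma\rtimes\mathfrak S_\infty),\Lambda_1,\dots,\Lambda_n\rangle$ and $\Delta_n=\langle\tilde\Gamma_n,\Lambda_{n+1},\Lambda_{n+2},\dots\rangle$. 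Since both indices over the core are infinite, Theorem~\ref{intertwiningincore1} (i.e.\ Vaes's dichotomy \cite{Va13}) applies directly to $M=L(T_\alpha(\Gamma))$ and gives, for each $n$, some $i$ with $P_i\prec_M^s L(\tilde\Gamma_n)$. No extension of deformations from a finite stage to all of $M$ is needed. Then, setting $Q_n=L(\tilde\Gamma_n)$ and $M_n=L(\Sigma_n')$ with $\Sigma_n'\nearrow T_\alpha(\Gamma)$, the paper carries out an explicit double-coset bimodule computation to verify the hypotheses of \cite[Lemma~2.6]{IS19}, which is precisely the tool that lets one pass from ``$P_i$ amenable relative to $Q_n$ for all $n$'' to ``$P_i$ amenable relative to $\bigcap_n Q_n=\mathbb C$''. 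So the whole argument is: intertwine into the core, hence relatively amenable to the core, hence amenable.

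\textbf{Gaps in your proposal.} First, you assert that ``Popa's intertwining is stable under such intersections'' to pass from $P_1\prec_M L(G_n')$ for many $n$ to $P_1\prec_M M_{n_0}$. This is false in general: a diffuse algebra can intertwine into every member of a decreasing chain with trivial intersection. Likewise, ``amenable relative to $L(A_{n+1})$ for all $n$ implies amenable'' is not automatic; it is exactly what the Ioana--Spaas lemma (together with the bimodule verification) is there to supply, and you do not invoke or prove an analogue. Second, your endgame does not follow from ``standard lemmas'': from $P_1\prec_M M_{n_0}$ with $P_1$ regular and $\mathcal{QN}_M(M_{n_0})''=M_{n_0}$ there is no general principle giving $M\prec_M M_{n_0}$, nor does $P_1\prec_M M_{n_0}$ and $P_2\prec_M M_{n_0}$ yield $P_1\vee P_2\prec_M M_{n_0}$. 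The normalizer/quasinormalizer control results you have in mind (e.g.\ \cite{IPP05,Va08}) go in a different direction and require mixing-type hypotheses on the target, not merely self-quasinormalizing. Finally, a smaller point: your dichotomy at level $n$ would, at best, give intertwining into an algebra missing a \emph{single} generator $\Lambda_{n+1}$, not into a finite-stage $M_{n_0}$; bridging that gap is essentially the same intersection problem again.

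In short, the structural insight you are missing is that the full group itself is an amalgam over $\tilde\Gamma_n$ for every $n$, which converts the problem directly into the relative-amenability descent handled by \cite{IS19}; your Part~B route through a fixed finite level is not salvageable as stated.
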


All the aforementioned results are obtained through the developments of several new technical innovations in the deformation/rigidity technology. These new methods are highlighted in the chapter 6 of this thesis which also contains the bulk of the results. Particularly important in most of the proofs is the notion of spatial commensurability for von Neumann subalgebras introduced in the section 6.2 as well as the assymptotic analysis on bimodules and clustering von Neumann sugalgebras presented in the proof of theorem 6.14. These new methods shed new light in the study of tensor decomposition aspects present excellent potential to tackle more difficult groups that will be investigated in the future. In this dissertation, Theorem 1.1 is from the previous work with R. de Santiago while Theorem 1.2 and Theorem 1.3 are the results of a collaboration with I. Chifan.

%\include{chapter2}

% Chapter Two - Chapter Two Title
\chapter{Von Neumann Algebras} \label{chapter:chapterlabelone}

\section{Introduction}

Let $H$ be a Hilbert space and $\mathcal{B}(H)$ be the space of bounded linear operators on $H$.
Recall that $\mathcal{B}(H)$ is a Banach space with the operator norm $\|\cdot\|_{\infty}$.

We define the convergences on $\mathcal{H}$ as the following:
\begin{itemize}
\item The \textit{uniform topology} is a topology defined by the operator norm, i.e,
\begin{equation*}
\text{$x_n\rightarrow x$ \,uniformly \quad if and only if \quad $\|x_n-x\|_{\infty}\rightarrow 0$.}
\end{equation*}
\item The \textit{strong operator topology} (SOT) is a topology generated by the family of semi-norm $\|x\xi\|$ for all \,$x\in \mathcal{B}(H)$ and $\xi\in H$, i.e. 
\begin{equation*}
\text{$x_n\rightarrow x$ \,SOT \quad if and only if \quad $\|(x_n-x)\xi\|\rightarrow 0$ \,\,for all\,\,$\xi\in H.$}
\end{equation*}
\item The \textit{weak operator topology} (WOT) is a topology generated by the family of semi-norm $|\langle x \xi, \zeta\rangle|$ for all $x\in \mathcal{B}(H)$ and $\xi,\zeta\in H$, i.e. 
\begin{equation*}
\text{$x_n\rightarrow x$ \,WOT \quad if and only if\quad $|\langle (x_n-x) \xi, \zeta\rangle|\rightarrow 0$ \,\,for all\,\, $\xi, \zeta\in H.$}
\end{equation*}

\end{itemize}
Note that the topologies on $\mathcal{B}(H)$ can be compared as the following relation:
\begin{center}
WOT$\quad\prec\quad$ SOT$\quad\prec\quad$  uniform.
\end{center}

\begin{definition}
A \textbf{von Neumann algebra} is a $*$-subalgebra of $\mathcal{B}(H)$ containing the unit $1$ and being closed in the weak topology.
\end{definition}

\begin{definition}
Let $B\subset \mathcal{B}(H)$, the commutant of $B$ is defined by
\begin{equation*}
B'=\{x\in \mathcal{B}(H)\,|\, xy=yx \,\,\,\text{for all}\,\, y\in B\}.
\end{equation*} 
\end{definition}

\begin{thm}[Double commutant Theorem]
Let $A$ be a $*$-subalgebra of $\mathcal{B}(H)$ containing the unit $1$. 
\begin{equation*}
A''=\overline{A}^{WOT}=\overline{A}^{SOT}.
\end{equation*}
In particular, $A$ is a von Neumann algebra if and only if $A=A''$.
\end{thm}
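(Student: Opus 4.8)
The plan is to establish the cycle of inclusions $A \subseteq \overline{A}^{SOT} \subseteq \overline{A}^{WOT} \subseteq A'' \subseteq \overline{A}^{SOT}$, which forces the three intermediate sets and $A''$ all to coincide. Three of these inclusions are soft. The inclusion $A \subseteq \overline{A}^{SOT}$ is trivial. For $\overline{A}^{SOT} \subseteq \overline{A}^{WOT}$, note that every SOT-convergent net is WOT-convergent, so the SOT-closure of $A$ is contained in its WOT-closure. For $\overline{A}^{WOT} \subseteq A''$, first observe that $A \subseteq A''$ since each element of $A$ commutes with every element of $A'$ by definition; then observe that $A''$ is WOT-closed, because for each fixed $y \in \mathcal{B}(H)$ the map $x \mapsto xy - yx$ is WOT-continuous, so $\{x : xy = yx\}$ is WOT-closed, and $A'' = \bigcap_{y \in A'}\{x : xy=yx\}$ is an intersection of such sets; hence the WOT-closure of $A$ lies in $A''$.

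The substantive step is $A'' \subseteq \overline{A}^{SOT}$. Fix $T \in A''$; I must show that every basic SOT-neighborhood of $T$ meets $A$, i.e. that for all $\xi_1,\dots,\xi_n \in H$ and $\varepsilon > 0$ there is $a \in A$ with $\|(T-a)\xi_i\| < \varepsilon$ for every $i$. I first treat $n = 1$. Given $\xi \in H$, set $K = \overline{A\xi}$ and let $p \in \mathcal{B}(H)$ be the orthogonal projection onto $K$. Since $A$ is an algebra, $K$ is invariant under $A$, and since $A = A^*$ the orthogonal complement of $K$ is invariant as well; therefore $p \in A'$. As $T \in A''$, we get $Tp = pT$, so $T$ leaves $K$ invariant. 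Because $1 \in A$ we have $\xi \in A\xi \subseteq K$, whence $T\xi \in K = \overline{A\xi}$, so there is $a \in A$ with $\|T\xi - a\xi\| < \varepsilon$.

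The general $n$ reduces to this by amplification. Work on $H^{(n)} = H \oplus \cdots \oplus H$ and let $A^{(n)} \subset M_n(\mathcal{B}(H)) = \mathcal{B}(H^{(n)})$ be the diagonal copy $\{\mathrm{diag}(a,\dots,a) : a \in A\}$. A computation with matrix units shows $(A^{(n)})' = M_n(A')$ and consequently $(A^{(n)})'' = \{\mathrm{diag}(y,\dots,y) : y \in A''\}$; in particular $\mathrm{diag}(T,\dots,T) \in (A^{(n)})''$. Applying the $n=1$ case to this operator and the single vector $\xi_1 \oplus \cdots \oplus \xi_n \in H^{(n)}$ yields $a \in A$ with $\sum_{i=1}^n \|(T-a)\xi_i\|^2 < \varepsilon^2$, which gives the desired estimate. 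This proves $A'' \subseteq \overline{A}^{SOT}$ and hence the three displayed equalities. The final assertion is then immediate: if $A$ is a von Neumann algebra it is WOT-closed, so $A = \overline{A}^{WOT} = A''$; conversely, if $A = A''$ then $A$ equals its own WOT-closure and, being a unital $*$-subalgebra, is a von Neumann algebra.

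I expect the main obstacle to be the amplification bookkeeping, namely verifying $(A^{(n)})' = M_n(A')$ and $(A^{(n)})'' = \mathrm{diag}(A'')$, together with the point $p \in A'$ in the $n=1$ case — this last being precisely where the hypothesis that $A$ is closed under the $*$-operation is used.
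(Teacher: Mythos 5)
Your proof is correct and complete: the cycle of inclusions is the standard argument, the point $p\in A'$ in the $n=1$ case is justified exactly where the $*$-closedness of $A$ is needed, and the amplification to $H^{(n)}$ with $(A^{(n)})'=M_n(A')$ handles general basic SOT-neighborhoods. The paper states this theorem as background without proof, so there is nothing to compare against; your argument is the canonical one and I see no gaps.
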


In general, by Double commutant Theorem, for any subset $S\subset \mathcal{B}(H)$ we call $(S\cup S^*)''$ a von Neumann algebra generated by $S$. Moreover for subsets $S_1, S_2\subset \mathcal{B}(H)$, we write
$S_1\vee S_2$
as the von Neumann algebra generated by $S_1$ and $S_2$.

If $S\subset M$ are von Neumann algebras, then
\[
P'\cap M=\{x\in M\,|\, xp=px\,\,\text{for}\,\,p\in P\}
\]
is called the \textbf{relative commutant} of $P$ in $M$.

\begin{thm}[Kaplansky Density Theorem]
Let $N\subset \mathcal{B}(H)$ be a von Neumann algebra and $A$ be a strongly $*$-subalgebra, not assumed to be unital.
\begin{enumerate}
\item[(i)] If $x\in N$, then there exists a net $(x_{\alpha})$ from $A$ converging $*$-strongly to $x$ and satisfying $\|x_{\alpha}\|\leq \|x\|$ for all $\alpha.$  
\item[(ii)] If $x\in N$ is a self-adjoint then the net in $(i)$ may be chosen with the additional property that each $x_{\alpha}$ is self-adjoint.
\item[(iii)] If $u\in N$ is a unitary and $A$ is a unital C$^*$-algebra, then there is a net $(u_{\alpha})$ of $unitaries$ from $A$ converging $*$-strongly to $u$.
\end{enumerate}
\end{thm}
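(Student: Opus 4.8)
The plan is to establish (ii) first --- this is where the real work lies --- and then deduce the general case of (i) by a $2\times2$ amplification and (iii) by exponentiation. Throughout one may assume $\|x\|\le 1$ after rescaling. The one genuinely analytic ingredient, which I would isolate as a preliminary lemma, is a \emph{strong continuity} statement for the bounded continuous function $f(t)=\tfrac{2t}{1+t^{2}}$: if $(a_i)$ is a net of self-adjoint operators on $H$ converging strongly to a self-adjoint $a$, then $f(a_i)\to f(a)$ strongly, \emph{with no a priori bound on $\|a_i\|$}. This follows from the resolvent-type identity
\[
f(a_i)-f(a)=2\bigl[(1+a_i^{2})^{-1}(a_i-a)(1+a^{2})^{-1}-(1+a_i^{2})^{-1}a_i(a_i-a)\,a(1+a^{2})^{-1}\bigr]
\]
together with the bounds $\|(1+b^{2})^{-1}\|\le 1$ and $\|b(1+b^{2})^{-1}\|\le\tfrac12$, valid for every self-adjoint $b$: applied to a vector $\xi$ and the fixed vectors $\eta=(1+a^{2})^{-1}\xi$, $\zeta=a(1+a^{2})^{-1}\xi$, it gives $\|(f(a_i)-f(a))\xi\|\le 2\|(a_i-a)\eta\|+\|(a_i-a)\zeta\|\to 0$. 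The relevant features of $f$ are that $|f|\le 1$ everywhere on $\mathbb R$, $f(0)=0$, and $f$ restricts to an increasing homeomorphism of $[-1,1]$ onto $[-1,1]$.

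For (ii): first, the self-adjoint part $A_{sa}$ is strongly dense in $N_{sa}$. Indeed, given $x=x^{*}\in N$ take $a_i\in A$ with $a_i\to x$ strongly; since the adjoint is weakly continuous, $\tfrac12(a_i+a_i^{*})\to x$ weakly, and a convex subset of $\mathcal B(H)$ has the same weak and strong closures, so $x$ lies in the strong closure of the convex set $A_{sa}$. Now let $x=x^{*}\in N$ with $\|x\|\le 1$. Using continuous functional calculus inside $N$, write $x=f(y)$ where $y=y^{*}\in N$ has spectrum in $[-1,1]$, choose $b_i\in A_{sa}$ with $b_i\to y$ strongly, and apply the lemma to get $f(b_i)\to x$ strongly with $\|f(b_i)\|\le 1$. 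The only remaining issue is that $f(b_i)$ need not lie in $A$; but for each fixed $i$ one can approximate $f$ uniformly on the compact set $\sigma(b_i)$ by real polynomials with vanishing constant term, rescaled so their supremum over that set is at most $1$, and evaluating these at $b_i$ produces self-adjoint elements of $A$ of norm $\le 1$ converging in norm --- hence strongly --- to $f(b_i)$. Since the unit ball of $\mathcal B(H)$ is strongly closed, $x$ lies in the strong closure of $\{a\in A_{sa}:\|a\|\le 1\}$; a net from this set does the job, and being self-adjoint it automatically converges $*$-strongly.

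For (i) with arbitrary $x$, $\|x\|\le 1$: pass to $M_2(N)\subset\mathcal B(H\oplus H)$, a von Neumann algebra in which $M_2(A)$ is a strongly dense $*$-subalgebra (approximate entrywise). The matrix $\tilde x=\begin{pmatrix}0&x\\ x^{*}&0\end{pmatrix}$ is self-adjoint with $\|\tilde x\|=\|x\|\le 1$, so by (ii) there are self-adjoint $\tilde a_i\in M_2(A)$ with $\|\tilde a_i\|\le 1$ and $\tilde a_i\to\tilde x$ strongly; reading off its $(1,2)$ entry $a_i\in A$ gives $\|a_i\|\le 1$, $a_i\to x$ strongly and (from the $(2,1)$ entry, which is $a_i^{*}$ by self-adjointness) $a_i^{*}\to x^{*}$ strongly, i.e. $a_i\to x$ $*$-strongly. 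For (iii): by Borel functional calculus write the unitary $u$ as $u=\exp(ih)$ with $h=h^{*}\in N$, $\|h\|\le\pi$; apply (ii) to obtain $h_i\in A_{sa}$ with $\|h_i\|\le\pi$ and $h_i\to h$ strongly, and set $u_i=\exp(ih_i)$. Since $A$ is a unital C$^{*}$-algebra, the exponential series converges in norm inside $A$, so $u_i\in A$, and $u_i$ is unitary. Finally, because the $h_i$ are uniformly bounded, continuous functional calculus is strongly continuous on them --- this holds for polynomials since products of uniformly bounded strongly convergent nets converge strongly, and extends to all of $C([-\pi,\pi])$ by Stone--Weierstrass --- so $u_i\to u$ and $u_i^{*}=\exp(-ih_i)\to u^{*}$ strongly.

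The step I expect to require the most care is the norm control in (ii): strong convergence alone gives no bound on $\|a_i\|$, so truncation is unavailable, while the closure of $A$ is only taken in the strong topology, so one cannot apply a general continuous functional calculus and remain inside $A$. The two devices above --- the particular function $f$, which is $[-1,1]$-valued and strongly continuous with no boundedness hypothesis, and the per-index polynomial approximation that stays both inside the algebra and inside the unit ball --- are exactly what reconcile ``lies in $A$'' with ``has norm $\le\|x\|$''. Everything else (the $2\times2$ trick, the exponentiation, and the various $\varepsilon/3$ estimates) is routine once (ii) is in hand.
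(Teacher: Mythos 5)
The paper states the Kaplansky Density Theorem as background (in its survey chapter) and gives no proof of its own, so there is nothing to compare against; judged on its merits, your argument is the standard, complete proof. The one genuinely delicate point --- that $A$ is only a strongly dense $*$-subalgebra rather than a C$^*$-algebra, so $f(b_i)$ need not lie in $A$ --- is correctly handled by your per-index approximation of $f$ by rescaled polynomials with vanishing constant term, and the resolvent identity for $f(t)=2t/(1+t^{2})$, the $2\times 2$ matrix trick for (i), and the exponentiation with the uniform bound $\|h_i\|\le\pi$ for (iii) are all carried out correctly.
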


\begin{definition} 
Let $A$ be a subset of $\mathcal{B}(H)$. Then the \textbf{center} of $A$ is defined by
\begin{equation*}
\mathcal{Z}(A)=A \cap A'.
\end{equation*}
\end{definition}

\begin{definition}
A von Neumann algebra $M$ is called \textbf{factor} if it has the trivial center, i.e. $\mathcal{Z}(M)=\mathbb{C}1$.
\end{definition}

\begin{definition}
A von Neumann algebra $M$ is \textbf{finite} if it has a faithful normal tracial state $\tau:M\rightarrow \mathbb{C}$ satisfying:
\begin{itemize}
\item $\tau$ is a positive linear functional with $\tau(1)=1$; 
\item $\tau$ is faithful, i.e. if $\tau(x^*x)=0$ then $x=0$;
\item $\tau$ is normal, i.e. weakly continuous on $(M)_1$, the unit ball of $M$ with respect to the uniform norm $\|\cdot\|_{\infty}$;
\item $\tau$ is trace, i.e. $\tau(xy)=\tau(yx)$ for all $x,y\in M$.
\end{itemize}
If $M$ is an infinite dimensional finite von Neumann algebra, then $M$ is called an von Neumann algebra of \textbf{type II$_1$}.
\end{definition}

Let $M\in \mathcal{B}(H)$ be a von Neumamm algebra on a Hilbert space $H$ and $p$ a projection in $H$. Then
\begin{equation*}
pMp=\{pxp\,|\, x\in M\}
\end{equation*}
is a von Neumann algebra in $\mathcal{B}(pH)$. One says that $pMp$ is  a corner of $M$.

\begin{definition}
A von Neumann algebra M is \textbf{diffuse} if there are no nonzero minimal projecton or an atom in M. 
Recall that a nonzero projection $p\in M$ is said to be \textbf{minimal} if the corner $pMp=\mathbb{C}p.$ 
\end{definition}

\begin{definition}
We say that two von Neumann algebra $M_1$  and $M_2$ are \textbf{isomorphic} if there exists a bijection $*$-homomorphism (called an isomorphism) between $M_1$ and $M_2$, and denoted by $M_1\cong M_2$.
\end{definition}

\begin{remark}
If a finite factor $(M,\tau)$ has a minimal projection, then $M\cong M_n(\mathbb{C})$ for some $n$.
A finite factor M is diffuse if and only if it is infinite dimensional. M is then a type II$_1$ factor.
\end{remark}

\section{Group von Neumann algebra}
Let $\Gamma$ be a discrete group and $\ell^2(\Gamma)$ be the space of square summable sequences over $\Gamma$ which has a natural orthonormal basis 
$\{\delta_h\,|\, h\in \Gamma\},$
where $\delta_h$ is a Kronecker delta. Consider the left regular representation of $\Gamma$ on $\ell^2(\Gamma)$ defined by 
$\lambda_{g}(\delta_h)=\delta_{gh} \quad \text{for}\,\,g,h\in \Gamma. $
The von Neumann algebra $L(\Gamma)$  is a von Neumann algebra generated by the set $\{\lambda_g\,|\, g\in \Gamma\}$. By the bicommutant theorem 
\begin{equation}
L(\Gamma)={\operatorname{span}\{\lambda_g\,|\, g\in \Gamma\}}''\cong\overline{\mathbb{C}[\Gamma]}^{\text{SOT}}
\end{equation}

In addition, recall that for $f, f_0\in \ell^2(\Gamma)$, the convolution product
\[
L_f(f_0)=f*f_1
\]
defined by 
\[
(f*f_0)(t)=\sum_{s\in \Gamma}f(s)f_0(s^{-1}t).
\]
By Cauchy-Schwarz inequality, we have $\|f*f_0\|_{\infty}\leq\|f\|_2\|f_0\|_2$ and it follows that $f*f_0\in \ell^{\infty}(\Gamma)$.
We say that $f$ is a \textit{left} convolver for $\Gamma$ if $f*f_0\in \ell^2(\Gamma)$ for every $f_0\in \ell^2(\Gamma)$. We denote $LC(\Gamma)$ the space of all left convolvers for $\Gamma$. Since $LC(\Gamma)$ commutes with $\rho(\Gamma)$, so we can view $LC(\Gamma)$ as a subspace of $\rho(\Gamma)'$ in $\mathcal{B}(\ell^2(\Gamma))$. Plus, it is easy to check that $\lambda(\Gamma)$ is contained in $LC(\Gamma)$. Therefore, $L(\Gamma)\subset LC(\Gamma)$. Similarly, we can introduce $RC(\Gamma)$ as the space of all right convolvers for $\Gamma$. As a consequence, one can show that
\begin{center}
 $LC(\Gamma)=L(\Gamma)=RC(\Gamma)'$ \,\,\,and \,\,\,$RC(\Gamma)=R(\Gamma)=LC(\Gamma)'$
\end{center}

The von Neumann algebra $L(\Gamma)$ is the (left) \textbf{group von Neumann algebra} of $\Gamma$ and $R(\Gamma)$ is the right group von Neumann algebra of $\Gamma$. Note that since the left and right-regular representations are equivalent it follows that $L(\Gamma)\cong R(\Gamma)$.

Moreover for $x\in L(\Gamma)$, 
\begin{equation*}
\tau(x)=\langle x\delta_e, \delta_e\rangle
\end{equation*}
 defines a normal faithful trace on $L(\Gamma)$. In particular, $L(\Gamma)$ is a finite von Neumann algebra.

If $x=\sum_{g\in\Gamma}\alpha_g\delta_g\in \ell^2(\Gamma)$ is a left-convolver, then we will often also write $x$ or $\sum_{g\in\Gamma}\alpha_gu_g$ to denote the operator $L_x\in LC(\Gamma).$ (Instead of $\delta_g$ we use $u_g$ to emphasize that $u_g$ is a unitary operator.) And we call the set $\{\alpha_g\}_{g\in \Gamma}$ the \textbf{Fourier coefficients} of $x$. Thus writing $x=\sum_{g\in\Gamma}\alpha_gu_g$ should be considered as an abbreviation for writing $L_{x}=L_{\sum_{g\in\Gamma}\alpha_g\delta_g}$.

\begin{thm}\cite{MvN43} 
Let $\Gamma$ be a discrete group  
A group von Neumann algebra $L(\Gamma)$ is a factor if and only if $\Gamma$ is an infinite conjugacy class (icc) group, i.e. each conjugacy class of non-trivial elements in $\Gamma$ is an infinite set.
\end{thm}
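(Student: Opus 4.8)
The plan is to prove both implications by a Fourier-coefficient analysis. Recall from the previous section that $L(\Gamma)\subseteq LC(\Gamma)\subseteq \ell^2(\Gamma)$, so every $x\in L(\Gamma)$ has a Fourier expansion $x=\sum_{g\in\Gamma}\alpha_g u_g$ with $\sum_g|\alpha_g|^2=\|x\|_2^2<\infty$, where the coefficients are recovered by $\alpha_g=\tau(xu_g^*)=\langle x\delta_e,\delta_g\rangle$. This expansion is the only real tool needed.

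First I would show that if $\Gamma$ is icc then $L(\Gamma)$ is a factor. Take $x\in\mathcal Z(L(\Gamma))$. Since $L(\Gamma)$ is generated by the unitaries $u_h$, centrality gives $u_hxu_h^*=x$ for all $h\in\Gamma$. Comparing Fourier coefficients via $\tau(u_hxu_h^*u_g^*)=\tau(xu_{h^{-1}gh}^*)$ yields $\alpha_g=\alpha_{h^{-1}gh}$ for all $g,h$, i.e. the coefficient function is constant on each conjugacy class. For $g\neq e$ the conjugacy class of $g$ is infinite by hypothesis, so square-summability of $(\alpha_g)$ forces $\alpha_g=0$. Hence $x=\alpha_e u_e\in\mathbb C 1$, so $\mathcal Z(L(\Gamma))=\mathbb C 1$.

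Conversely, suppose $\Gamma$ is not icc, so there is $g_0\neq e$ whose conjugacy class $C=\{hg_0h^{-1}:h\in\Gamma\}$ is finite. I would set $x=\sum_{g\in C}u_g$, which is a finite sum and hence genuinely lies in $L(\Gamma)$. Conjugation by any $h$ permutes $C$, so $u_hxu_h^*=x$, and since the $u_h$ generate $L(\Gamma)$, $x$ is central. The $u_g$, $g\in C$, are orthonormal in $\ell^2(\Gamma)$, so $\|x\|_2^2=|C|\geq 1$, while $\tau(x)=\langle x\delta_e,\delta_e\rangle=0$ because no element of $C$ equals $e$. Thus $x$ is a nonzero central element with $\tau(x)=0$, so $x\notin\mathbb C 1$ and $L(\Gamma)$ is not a factor; taking the contrapositive finishes the proof.

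The argument is essentially bookkeeping, and the only step that needs genuine care is the justification of the Fourier-coefficient identity $\alpha_g(u_hxu_h^*)=\alpha_{h^{-1}gh}(x)$ directly at the level of $\ell^2$, which follows from $\alpha_g(y)=\langle y\delta_e,\delta_g\rangle$ together with $u_h^*\delta_g=\delta_{h^{-1}g}$ and $u_h^*\delta_e=\delta_{h^{-1}}$. Beyond the fact that $L(\Gamma)$ is generated by $\{u_g\}$ and the $\ell^2$-summability of Fourier coefficients, no deeper input is required.
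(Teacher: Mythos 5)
Your proof is correct and follows essentially the same route as the paper: the non-icc direction builds the central element $\sum_{g\in C}u_g$ over a finite conjugacy class, and the icc direction shows centrality forces the Fourier coefficients to be constant on (infinite) conjugacy classes, hence zero off the identity. If anything, your write-up is slightly more careful than the paper's, which invokes $\sum_g|\alpha_g|<\infty$ where the correct and needed fact is the $\ell^2$-summability $\sum_g|\alpha_g|^2<\infty$ that you use, and which does not spell out why the constructed central element is non-scalar as you do via $\tau(x)=0$ and $\|x\|_2^2=|C|$.
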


\begin{proof}
Suppose $h\in \Gamma\setminus \{e\}$ and the conjugacy class $h^{\Gamma}=\{ghg \,|\, g\in \Gamma\}$ is finite. Then $x=\sum_{k\in h^{\Gamma}}u_k\not\in \mathbb{C}$ and $x\in\{u_g\}'_{g\in \Gamma}\cap L(\Gamma)=\mathcal{Z}(L(\Gamma))$.

Conversely, suppose $\Gamma$ is icc and $x=\sum_{g\in\Gamma}\alpha_{g}u_g\in \mathcal{Z}(L(\Gamma))\setminus \mathbb{C},$ then for all $h\in \Gamma$ we have 
\begin{equation*}
x=u_hxu_h^*=\sum_{g\in \Gamma}\alpha_{h}u_{hgh^{-1}}=\sum_{g\in \Gamma} \alpha_{h^{-1}gh}u_g.
\end{equation*}
Thus the Fourier coefficients for $x$ are constant on conjugacy classes and since $\sum_{g\in \Gamma} |\alpha_{g}|<\infty$. Then we have $\alpha_g=0$ for all $g\neq e$ and hence $x=\alpha_{e}\in \mathbb{C}$.
 \end{proof}

The followings are examples of countable icc groups
\begin{itemize}
\item $\mathbb{F}_n, n\geq 2$ the free group on $n$ generators.
\item $\mathfrak{S}_{\infty}=\cup_{n=1}^{\infty}\mathfrak{S}_{n}$ the group of finite permutations on $\mathbb{N}$
\item wreath products $H\wr_I \Gamma:=(\oplus_I H)\rtimes \Gamma$ where $H, \Gamma$ are countably infinite and $\Gamma\curvearrowright I$ with infinite orbits.
\item icc property is closed under products.
\item amalgamated free products $\Gamma=\Gamma_1*_{\Sigma}\Gamma_2$ where $[\Gamma_1:\Sigma]\geq 2, [\Gamma_2:\Sigma]\geq 3$ and $|\Sigma\cap g\Sigma g^{-1}|<\infty$.
\end{itemize}

\section{Group measure space}
Let $\Gamma\curvearrowright (X,\mu)$ be a probability measure preserving (p.m.p.) action of $\Gamma$ on a probability space $(X,\mu)$. Recall that $L^{\infty}(X,\mu)$ acts naturally by multiplication on $L^2(X,\mu)$. Let $\sigma:\Gamma\curvearrowright L^2(X,\mu)$ be an action of $\Gamma$ on $L^2(X,\mu)$ defined by
\begin{equation*}
\sigma_g(f)(x)=f(g^{-1}x)\quad \text{for all}\quad g\in\Gamma, x\in (X,\mu).
\end{equation*}
Define the space
\begin{equation*}
A[\Gamma]:=\bigg\{   \sum_{g\in\Gamma} a_g g \,\big|\, a_g\in L^{\infty}(X,\mu)\,\,\text{and}\,\, a_g=0\,\,\text{for all $g\in\Gamma$ but finite}\bigg\}.
\end{equation*}
The product is defined by
$(a_1g)(a_2h)=a_1\sigma_g(a_2) gh$
and the involution by
$(ag)^*=\sigma_{g^{-1}}(a^*)g^{-1}$ where $a^*=\bar a.$ To avoid any confusion, we write $u_g$ instead of $1_{L^{\infty}(X,\mu)} g$ According to the first step, it follows that $A[\Gamma]$ is a $*$-algebra of operators acting on the Hilbert space $\mathcal{H}=L^2(X,\mu)\otimes\ell^2(\Gamma)$ by sending 
\begin{equation*}
a\mapsto L(a):=a\otimes 1\,\,,\,\, u_g\mapsto L(u_g):=\sigma_g\otimes \lambda_g \quad\text{and} \quad
\end{equation*}
\begin{equation*}
\text{and} \quad L(u_g)L(a)L(u_g)^*=L(\sigma_g(a)).
\end{equation*}
The \textbf{group measure space von Neumann algebra $L^{\infty}(X,\mu)\rtimes \Gamma$ associated with} $\Gamma\curvearrowright(X,\mu)$ or \textbf{crossed product}  is the von Neumann algebra generated by 
\[
L(L^{\infty})\cup \{L(u_g)\,|\,g\in\Gamma\}.
\]
In particular, the elements in $L^{\infty}(X,\mu)\rtimes \Gamma$ may be identified to elements of $L^2(X,\mu)\otimes \ell^2(\Gamma)$ by $x\mapsto xU_e$ and hence are written as
\[
x= \sum_{g\in\Gamma} x_gu_g,
\]
with $\sum_{g\in\Gamma} \|x_g\|^2_{L^2(X,\mu)}<\infty$.  The coefficient $x_g\in L^{\infty}(X,\mu)$ are called \textbf{Fourier coefficients} of $x$ and the $u_g$ are called the \textbf{canonical unitaries of the crossed product}.
With the trace defined by 
\[
\tau(x)=\langle xu_e, u_e\rangle= \int_X x_e d\mu \quad \text{where} \quad x=\sum_{g\in\Gamma}x_gu_g.
\]
In particular, the group von Neumann algebra is a specific case when $X$ is just a singleton.

\section{Tensor product}
\begin{definition}
Let $M_1\in\mathcal{B}(H_1)$ and $M_2\in\mathcal{B}(H_2)$ be von Neumann algebras. The algebraic tensor product $M_1\odot M_2$ of $M_1$ and $M_2$ is defined by
\begin{equation*}
(x_1\otimes x_2)(\xi_1\otimes \xi_2)=(x_1\xi_1\otimes x_2\xi_2)
\end{equation*}
for any $x_i\in M_i$, $\xi_i\in H_i$ and $i=1,2$.
Obviously, $M_1\odot M_2$ is a $*$-algebra and its SOT-closure gives a von Neumann algebra acting on $H_1\odot H_2$. We call it \textbf{von Neumann tensor product} denoted by
\[
M_1\bar{\otimes} M_2.
\] 
\end{definition}

There is a celebrating theorem established by Tomita in 1960.

\begin{thm}[Tomita's Commutant theorem] Let $H_1, H_2$ be Hilbert spaces.
Let $M_1\subset \mathcal{B}(H_1)$ and $M_2\subset \mathcal{B}(H_2)$ be von Neumann algebras. Then
\begin{equation*}
(M_1\bar{\otimes}M_2)'=M_1'\bar{\otimes}M_2'
\end{equation*}
\end{thm}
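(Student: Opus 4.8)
The plan is to prove the two inclusions separately. The inclusion $M_1'\bar\otimes M_2'\subseteq (M_1\bar\otimes M_2)'$ is the \emph{easy} one: for $x_i\in M_i$ and $y_i\in M_i'$ one checks on elementary tensors that $(y_1\otimes y_2)(x_1\otimes x_2)=y_1x_1\otimes y_2x_2=x_1y_1\otimes x_2y_2=(x_1\otimes x_2)(y_1\otimes y_2)$, hence by linearity and continuity everywhere, so the $*$-algebra $M_1'\odot M_2'$ lies inside $(M_1\odot M_2)'$. Since taking commutants is insensitive to SOT-closure, $(M_1\odot M_2)'=(M_1\bar\otimes M_2)'$, and as the latter is SOT-closed it contains $M_1'\bar\otimes M_2'=\overline{M_1'\odot M_2'}^{\mathrm{SOT}}$.

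For the reverse inclusion I would first treat the special case in which one tensor factor is all of $\mathcal{B}(H)$: namely $(M_1\otimes 1_{H_2})'=M_1'\bar\otimes\mathcal{B}(H_2)$, and symmetrically $(1_{H_1}\otimes M_2)'=\mathcal{B}(H_1)\bar\otimes M_2'$. This is proved by a direct matrix computation: fixing an orthonormal basis $(e_i)_{i\in I}$ of $H_2$ and writing an operator $T$ on $H_1\otimes H_2$ as a matrix $(T_{ij})_{i,j\in I}$ with entries in $\mathcal{B}(H_1)$, the relation $T(x\otimes 1)=(x\otimes 1)T$ for all $x\in M_1$ is equivalent to $T_{ij}\in M_1'$ for all $i,j$; and an operator with all matrix entries in $M_1'$ lies in $M_1'\bar\otimes\mathcal{B}(H_2)$ because its finite truncations $(1\otimes p_n)T(1\otimes p_n)$ (with $p_n$ the projection onto $\mathrm{span}\{e_1,\dots,e_n\}$) belong to $M_1'\odot\mathcal{B}(H_2)$, are norm-bounded by $\|T\|$, and converge to $T$ in SOT. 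The reverse containment $M_1'\bar\otimes\mathcal{B}(H_2)\subseteq(M_1\otimes 1)'$ is already covered by the easy direction. Granting this, if $T\in(M_1\bar\otimes M_2)'$ then $T$ commutes in particular with the subalgebras $M_1\otimes 1$ and $1\otimes M_2$ of $M_1\bar\otimes M_2$, so
\begin{equation*}
T\in\bigl(M_1'\bar\otimes\mathcal{B}(H_2)\bigr)\cap\bigl(\mathcal{B}(H_1)\bar\otimes M_2'\bigr).
\end{equation*}

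The whole content of the theorem is now concentrated in showing that this intersection equals $M_1'\bar\otimes M_2'$ (the inclusion $\supseteq$ being obvious), and this is the step I expect to be the \emph{main obstacle}. One natural route is via slice maps: for normal functionals $\psi\in\mathcal{B}(H_2)_*$ and $\phi\in\mathcal{B}(H_1)_*$ the maps $\mathrm{id}\otimes\psi$ and $\phi\otimes\mathrm{id}$ are normal and an operator is determined by all its slices; membership of $T$ in $M_1'\bar\otimes\mathcal{B}(H_2)$ forces every right slice $(\mathrm{id}\otimes\psi)(T)$ into $M_1'$ (by the SOT-approximation just used, since $M_1'$ is weakly closed), and membership in $\mathcal{B}(H_1)\bar\otimes M_2'$ forces every left slice into $M_2'$, so the task reduces to the Fubini-type statement that an operator with all right slices in $M_1'$ and all left slices in $M_2'$ must already lie in $M_1'\bar\otimes M_2'$. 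The cleanest complete argument, however, is probably to reduce instead to the situation where each $M_i$ acts standardly — this reduction only tensors both sides with an ambient $\mathcal{B}(\ell^2)$, an operation covered by the $\mathcal{B}(H)$-case above, together with the elementary identity $(pMp)'=M'p$ on $pH$ for $p\in M'$ — so that each $M_i$ has a cyclic and separating vector $\xi_i$. Then $\xi_1\otimes\xi_2$ is cyclic and separating for $M_1\bar\otimes M_2$, the relevant closed operator $S_{\xi_1\otimes\xi_2}$ is the closure of $S_{\xi_1}\otimes S_{\xi_2}$, whence the modular data factorize as $\Delta=\Delta_1\otimes\Delta_2$ and $J=J_1\otimes J_2$, and Tomita's theorem applied on each side gives $(M_1\bar\otimes M_2)'=J(M_1\bar\otimes M_2)J=(J_1M_1J_1)\bar\otimes(J_2M_2J_2)=M_1'\bar\otimes M_2'$. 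The care needed in the standard-form reduction for non-$\sigma$-finite $M_i$, and the invocation of the modular machinery itself, are where the real work sits.
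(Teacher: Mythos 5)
The paper does not prove this theorem at all: it is quoted as a classical result of Tomita, so there is no in-text argument to compare yours against. Judged on its own, your outline follows the standard modern route and correctly locates the difficulty. The easy inclusion and the matrix-unit computation giving $(M_1\otimes 1)'=M_1'\bar\otimes\mathcal{B}(H_2)$ are both fine, and you are right that the entire content is the identity $\bigl(M_1'\bar\otimes\mathcal{B}(H_2)\bigr)\cap\bigl(\mathcal{B}(H_1)\bar\otimes M_2'\bigr)=M_1'\bar\otimes M_2'$, which does not follow from any soft density argument.

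That said, as written the proposal is a sketch rather than a proof, because the two steps you defer are exactly the ones carrying the weight. First, the factorization of the modular data: the assertion that $S_{\xi_1\otimes\xi_2}$ is the closure of $S_{\xi_1}\otimes S_{\xi_2}$ requires showing that $(M_1\odot M_2)(\xi_1\otimes\xi_2)$ is a core for $S_{\xi_1\otimes\xi_2}$, whose domain is a priori the larger space $\overline{(M_1\bar\otimes M_2)(\xi_1\otimes\xi_2)}$; this is a genuine lemma (one checks it by comparing graphs, using that $S^*=F$ is built from the commutant, where your easy inclusion feeds back in), not a formal identity. Second, the reduction to standard form is clean only for $\sigma$-finite algebras; in general there is no cyclic and separating vector and one must either decompose $H_i$ over a family of cyclic projections or invoke the standard form associated with a weight, and your reduction identity should read $(M_ip)'=pM_i'p$ on $pH_i$ for a projection $p\in M_i'$ (the set $M_i'p$ need not even act on $pH_i$). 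Neither issue is a wrong turn --- both are handled in the textbook proofs --- but they are the ``real work'' you acknowledge, and until they are supplied the argument reduces the commutation theorem to the full Tomita--Takesaki machinery rather than proving it. The alternative slice-map route you mention has the same character: the Fubini-type statement is equivalent to the commutation theorem itself, so it cannot be taken for granted.
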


According to Tomita's Theorem, we have that $M_1\bar\otimes M_2$ is a factor if each component $M_i$ needs to be a factor for $i=1, 2$. We, furthermore, have the following basic proposition.
\begin{prop} Given any von Neumann algebras $M_1$ and $M_2$. Then
\begin{enumerate}
\item If $M_1$ and $M_2$ are tracial factors, then so is $M_1\bar\otimes M_2$;
\item If $M_1$ and $M_2$ are $II_1$ factors, then so is $M_1\bar\otimes M_2$.
\end{enumerate}
\end{prop}

\begin{definition}
Let $M$ be a II$_1$ factor. We says $M$ is \textbf{prime} provided that if $M$ is isomorphic to a tensor product $M_1\bar\otimes M_2$ of von Neumann algebras $M_1, M_2$ then either $M_1$ or $M_2$ is finite dimensional.
\end{definition}

In the same spirit with Choda's Galois correspondence theorem \cite{Ch78}. Ge obtained a splitting theorem for tensors that we recall below. This is instrumental in deriving some of main results in this thesis.

\begin{thm}[Theorem A in \cite{Ge96}]\label{Ge96theoremA}
If $M$ is a finite factor, $N$ is a finite von Neumann algebra, and $B$ is a von Neumann sub algebra of $M\bar\otimes N$, there exists a von Neumann sub algebra $N_0$ of $N$ such that 
\begin{equation*}
B=M\bar\otimes N_0
\end{equation*}
\end{thm}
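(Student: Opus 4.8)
The plan is to prove this by exhibiting $N_0$ concretely as a relative commutant and then verifying the claimed equality via a double-commutant argument combined with Tomita's commutant theorem. First I would work on a fixed Hilbert space: represent $M$ on $L^2(M,\tau_M)$ and $N$ on $L^2(N,\tau_N)$, so that $M\bar\otimes N$ acts on $H_1\otimes H_2$ with $(M\bar\otimes N)' = M'\bar\otimes N'$ by Tomita's commutant theorem. The key object will be the subalgebra $\mathcal A := M\otimes \mathbb C 1 \subset M\bar\otimes N$; note $\mathcal A' \cap (M\bar\otimes N)$ contains $1\otimes N$, and in fact, using that $M$ is a factor, one expects $\mathcal A' \cap (M\bar\otimes N) = 1\otimes N$ (this is where the factoriality of $M$ is essential: $(M\otimes 1)' = M'\bar\otimes \mathcal B(H_2)$, intersected with $M\bar\otimes N$ gives $\mathcal Z(M)\bar\otimes N = 1\otimes N$).

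Next I would introduce the candidate: set $N_0$ to be defined via $1\otimes N_0 := (\mathcal A \vee B)' \cap (M\bar\otimes N)$... actually more directly, define $N_0 \subset N$ by $1\otimes N_0 = (M\otimes 1)' \cap (M\otimes 1 \vee B)$, or cleaner, observe that $B \supset \mathcal Z(B)$ and $B$ commutes with nothing special a priori, so instead I would use the following: let $P = (M\otimes 1) \vee B$, a von Neumann subalgebra of $M\bar\otimes N$ containing $M\otimes 1$. The heart of the matter is to show $P = M\bar\otimes N_0$ where $N_0$ is the von Neumann subalgebra of $N$ such that $1\otimes N_0 = P \cap (M\otimes 1)' = P \cap (M'\bar\otimes \mathcal B(H_2))$. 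One inclusion, $M\bar\otimes N_0 \subset P$, is immediate since $M\otimes 1 \subset P$ and $1\otimes N_0 \subset P$. For the reverse inclusion it suffices to show $B \subset M\bar\otimes N_0$: take $b \in B$, expand it using the fact that $M\bar\otimes N$ has $M\otimes 1$ with relative commutant $1\otimes N$ and a normal conditional-expectation-type slice-map structure (the maps $\mathrm{id}\otimes \psi$ for $\psi\in N_*$ and $\phi\otimes \mathrm{id}$ for $\phi\in M_*$), and show the $N$-valued "Fourier coefficients" of $b$ all lie in $N_0$.

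The main obstacle — and the step I would spend the most care on — is proving $B \subset M\bar\otimes N_0$, i.e., that every element of $B$ already has all its slices in $N_0$; equivalently, that $B$ together with $M\otimes 1$ generates exactly $M\bar\otimes N_0$ and not something larger. The clean way is: for $\phi \in M_*$ a normal functional, the slice map $\phi\bar\otimes\mathrm{id}: M\bar\otimes N \to N$ restricted to $P$ has range inside $N_0$ (since $P = (M\otimes 1)\vee B$ and one can check the slices of products $x\otimes 1 \cdot b \cdot y\otimes 1$ land in $N_0$ by a density/normality argument using Kaplansky), hence $\phi\bar\otimes\mathrm{id}(b)\in N_0$ for all such $\phi$ and all $b\in B$; since the slice maps separate points of $M\bar\otimes N$ and $M\bar\otimes N_0 = \{z \in M\bar\otimes N : (\phi\bar\otimes\mathrm{id})(z)\in N_0 \text{ for all }\phi\in M_*\}$, we conclude $b \in M\bar\otimes N_0$. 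Finiteness of $M$ enters to guarantee the existence of the trace-preserving conditional expectation $M\bar\otimes N \to 1\otimes N$ needed to make these slice-map manipulations and the normal-density steps go through cleanly, and finiteness of $N$ ensures $M\bar\otimes N$ is itself finite so that all the subalgebras in sight are complemented by normal expectations. I would then remark that $N_0$ is automatically determined as $N_0 = (\phi_0\bar\otimes\mathrm{id})(B)''$ for any faithful normal state $\phi_0$ on $M$, which also shows $N_0$ is canonical.
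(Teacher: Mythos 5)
First, a remark on the statement itself: as quoted in the thesis (and hence as you received it) the theorem is false --- take $B=\mathbb C1$ --- because Ge's Theorem A carries the additional hypothesis $M\bar\otimes\mathbb C1\subseteq B$, which has been dropped here; moreover the thesis only cites the result and gives no proof, so I am comparing your argument against Ge's standard one. You implicitly restore the missing hypothesis by working with $P=(M\otimes 1)\vee B$, but your argument then only yields $P=M\bar\otimes N_0$; to conclude $B=M\bar\otimes N_0$ you must at some point invoke $M\otimes 1\subseteq B$, so this should be said explicitly. Your opening moves are fine: Tomita's commutation theorem gives $(M\otimes 1)'\cap(M\bar\otimes N)=\mathcal Z(M)\bar\otimes N=1\otimes N$ by factoriality of $M$, the definition $1\otimes N_0:=P\cap(M\otimes1)'$ is the right one, and the inclusion $M\bar\otimes N_0\subseteq P$ is immediate.

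The genuine gap is in the step you yourself identify as the heart of the matter: showing that every slice $(\phi\bar\otimes\mathrm{id})(b)$, $b\in B$, lies in $N_0$. The justification offered --- check the slices of products $(x\otimes1)b(y\otimes1)$ and use density/Kaplansky --- is circular: $(\phi\bar\otimes\mathrm{id})\big((x\otimes1)b(y\otimes1)\big)=(\phi(x\,\cdot\,y)\bar\otimes\mathrm{id})(b)$ is again a slice of $b$, so nothing is gained, and one cannot verify the claim on a generating set of $P$ because $B$ itself is among the generators and the claim for $B$ is exactly what is to be proved. The missing ingredient is a reason why $E_{1\otimes N}=\tau_M\bar\otimes\mathrm{id}$ maps $P$ back into $P$: this holds because $E_{1\otimes N}(z)$ is the unique $\|\cdot\|_2$-minimal element of the $\|\cdot\|_2$-closed convex hull of $\{(u\otimes1)z(u^*\otimes1):u\in\mathcal U(M)\}$, a set contained in the convex, weakly closed set $P$ since $M\otimes1\subseteq P$; this Dixmier-type averaging onto the relative commutant of $M\otimes 1$ is where finiteness of $M$ genuinely enters, not merely through the existence of the expectation. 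Once that is in place, a general normal $\phi=\tau_M(a\,\cdot)$ reduces to $E_{1\otimes N}((a\otimes1)z)$ with $(a\otimes1)z\in P$, and your slice-map characterization of $M\bar\otimes N_0$ (correct, but itself a nontrivial consequence of the commutation theorem) finishes the argument. Note that Ge's own route is shorter and avoids slice maps entirely: the trace-preserving expectation $E_B$ satisfies $E_B(1\otimes N)\subseteq(M\otimes1)'\cap B=1\otimes N_0$ by $M\otimes1$-bimodularity, hence $E_B|_{1\otimes N}=E_{1\otimes N_0}$ by uniqueness of trace-preserving expectations, hence $E_B=\mathrm{id}\otimes E_{N_0}=E_{M\bar\otimes N_0}$ on elementary tensors and, by normality and density, everywhere, so $B=E_B(M\bar\otimes N)=M\bar\otimes N_0$.
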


\section{Conditional expectation}

\begin{thm} [GNS-Construction]
Let $A^*$ be a C$^*$-algebra and $\varphi$ a positive linear functional on $A$. Then there exists a Hilbert space $L^2(A, \varphi)$ and a unique (up to equivalence) representation 
\begin{equation*}
\pi:A\rightarrow \mathcal{B}(L^2(A,\varphi))
\end{equation*}
with a unit cyclic vector $1_{\varphi}\in L^2(A,\varphi)$ such that
\begin{equation*}
\varphi(x)=\langle \pi(x)1_{\varphi}, 1_{\varphi}\rangle \quad\text{for all} \quad x\in A.
\end{equation*}
\end{thm}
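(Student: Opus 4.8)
The plan is to run the classical GNS construction: turn $\varphi$ into an inner product on a quotient of $A$, complete, let $A$ act by left multiplication, and exhibit the cyclic vector. First I would define the sesquilinear form $\langle x,y\rangle_\varphi := \varphi(y^*x)$ on $A$. Positivity of $\varphi$ makes this positive semi-definite, so it obeys the Cauchy--Schwarz inequality $|\varphi(y^*x)|^2 \le \varphi(x^*x)\varphi(y^*y)$. Setting $N_\varphi := \{x\in A : \varphi(x^*x)=0\}$, Cauchy--Schwarz identifies $N_\varphi$ with $\{x\in A : \varphi(y^*x)=0 \text{ for all } y\in A\}$; in particular $N_\varphi$ is a closed left ideal of $A$. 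Hence $\langle\cdot,\cdot\rangle_\varphi$ descends to a genuine inner product on $A/N_\varphi$, and I define $L^2(A,\varphi)$ to be its Hilbert space completion, writing $\widehat x$ for the image of $x\in A$.

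Next I would define the representation by left multiplication, $\pi(a)\widehat x := \widehat{ax}$, which is well defined precisely because $N_\varphi$ is a left ideal. The key point is boundedness: for $a$ in $A$ (or its unitization) one has $\|a\|^2 1 - a^*a \ge 0$, so writing $\|a\|^2 1 - a^*a = c^*c$ and using positivity of $\varphi$ on $x^*(c^*c)x = (cx)^*(cx)$ gives $\|\pi(a)\widehat x\|_\varphi^2 = \varphi(x^*a^*ax) \le \|a\|^2\varphi(x^*x) = \|a\|^2\|\widehat x\|_\varphi^2$. Thus $\pi(a)$ extends to a bounded operator on $L^2(A,\varphi)$ with $\|\pi(a)\|\le\|a\|$, and a routine verification shows $\pi$ is linear, multiplicative and adjoint-preserving, i.e. a $*$-representation.

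For the cyclic vector, in the unital case I would take $1_\varphi := \widehat 1$: then $\pi(A)1_\varphi = \{\widehat x : x\in A\}$ is dense by construction, $\langle 1_\varphi,1_\varphi\rangle_\varphi = \varphi(1)$ (normalized to $1$ when $\varphi$ is a state, whence the phrase ``unit vector''), and $\langle\pi(x)1_\varphi,1_\varphi\rangle_\varphi = \varphi(1^*x1) = \varphi(x)$. When $A$ is non-unital the same conclusions follow by running the argument along a bounded approximate identity (or by passing to the unitization and restricting), which I expect to be the only genuinely fiddly bookkeeping in the proof; the substantive content is really the boundedness estimate above, which rests on the $C^*$-identity via the functional-calculus fact $a^*a \le \|a\|^2 1$.

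Finally, for uniqueness up to unitary equivalence, suppose $(\pi',H',\xi')$ is another triple with $\xi'$ cyclic and $\varphi(x)=\langle\pi'(x)\xi',\xi'\rangle$. I would define $U$ on the dense subspace $\pi(A)1_\varphi \subset L^2(A,\varphi)$ by $U\big(\pi(a)1_\varphi\big) := \pi'(a)\xi'$. Since $\|\pi(a)1_\varphi\|_\varphi^2 = \varphi(a^*a) = \|\pi'(a)\xi'\|^2$, the map $U$ is well defined and isometric, and it has dense range by cyclicity of $\xi'$; hence $U$ extends to a unitary $L^2(A,\varphi)\to H'$. A direct check on the dense subspace gives $U\pi(a) = \pi'(a)U$ for all $a$ and $U1_\varphi = \xi'$, establishing the asserted equivalence.
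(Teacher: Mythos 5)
Your argument is the standard GNS construction and is correct in all essentials: the semi-inner product $\varphi(y^*x)$, the left ideal $N_\varphi$, the completion, the boundedness estimate via $a^*a\le\|a\|^2 1$, the cyclic vector, and the uniqueness via the densely defined isometry. The paper states this theorem purely as recalled background and gives no proof, so there is nothing to compare against; your write-up supplies exactly what a complete proof requires, and you are right to flag that the cyclic vector is a \emph{unit} vector only after normalizing $\varphi$ to a state (the paper's hypothesis of a mere positive linear functional is slightly too weak for that clause as literally stated).
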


Throughout the section $N$ denote a finite von Neumann algebra withe a fixed faithful normal trace $\tau$ and $B$ denote a von Neumann subalgebra of $N$. Using GNS construction, we can define the Hilbert space $L^2(N)$ which is defined over the dense linear subspace $N$ by 
\begin{equation*}
\langle x,y\rangle=\tau(xy^*) \quad\text{for all} \quad x,y\in N.
\end{equation*}
This $L^2(B)$ is a Hilbert subspace of $L^2(N)$ with the restricted inner product on $L^2(N)$.
Denote by $e_B:L^2(N)\rightarrow L^2(B)$ be the canonical orthogonal projection. We define
\[
E_B=e_B|_{N}.
\]
For the further use, we recall the following basic properties of this projection

\begin{thm}\label{conditonalexpectation}\
Let $B\subset N$ be von Neumann subalgebras.
\begin{enumerate}
\item $E_B=e_B|_N$ is a norm reducing map from $N$ onto $B$ with $E_B(1)=1$;
\item $E_B(bxc)=bE_B(x)c$ for all $x\in N$ and $b,c\in B$;
\item $\tau(xE_B(y))=\tau(E_B(x)E_B(y))=\tau(E_B(x)y)$ for all $x\in N$;
\item $\{e_B\}'\cap N=B$ and $B'=(N'\cup \{e_B\})''$;
\item $E_B$ is normal complete positive map;
\item $e_BJ=Je_B$ and $E_BJ=JE_B$
\item For the uniqueness, if $\phi:N\rightarrow B$ with 
\begin{equation*}
\phi(b_1xb_2)=b_1\phi(x)b_2 \quad \text{and}\quad \tau(\phi(x))=\tau(x)
\end{equation*}
 for all $x\in N$ and $b_1, b_2\in B$, then $\phi=E_B$.
\end{enumerate}
\end{thm}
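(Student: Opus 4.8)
The plan is to extract all seven properties from the geometry of the Jones projection $e_B\colon L^2(N)\to L^2(B)$, the point being that $L^2(B)$ is a closed subspace invariant under enough operators. Writing $\hat x$ for the image of $x\in N$ in $L^2(N)$ and using the tracial identity $J\hat x=\widehat{x^{*}}$, I would first record that $L^2(B)$ is invariant under $J$, under left multiplication $\lambda(b)$ for each $b\in B$ (since $\lambda(b)\widehat{b'}=\widehat{bb'}$), and under right multiplication $Jb^{*}J$ (since $Jb^{*}J\,\widehat{b'}=\widehat{b'b}$). As $B$ is the linear span of its unitaries and $e_B$ is the orthogonal projection onto $L^2(B)$, this forces $e_B$ to commute with $\lambda(B)$, with $JBJ$, and with $J$. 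The commutation with $J$ is exactly claim (6) (and $E_BJ=JE_B$ follows on restricting to $N$), and the commutation with $\lambda(B)$ already gives one inclusion of claim (4): $B\subseteq\{e_B\}'\cap N$.

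\textbf{Claims (1), (2), (3).} The one genuinely substantial step is to show that $E_B:=e_B|_N$ actually takes values in $B$, i.e. that $e_B\hat x\in\widehat B$ for every $x\in N$. I would study the densely defined operator $T_x$ on $L^2(B)$ determined by $T_x\hat b:=Jb^{*}J\,e_B\hat x=e_B\widehat{xb}$ (the two descriptions agree because $e_B$ commutes with $JBJ$). The estimate $\|T_x\hat b\|_2=\|e_B\widehat{xb}\|_2\le\|\widehat{xb}\|_2\le\|x\|_{\infty}\|\hat b\|_2$ shows $T_x$ is bounded with $\|T_x\|\le\|x\|_{\infty}$, and $T_x$ manifestly commutes with the right $B$-action, so $T_x\in(JBJ)'=B$ by the commutant theorem for a finite von Neumann algebra in standard form. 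Hence $T_x$ is left multiplication by a unique $b_0\in B$ with $\|b_0\|_{\infty}\le\|x\|_{\infty}$, and evaluating at $\hat 1$ gives $e_B\hat x=\widehat{b_0}$. Setting $E_B(x):=b_0$ then yields claim (1): $E_B$ is norm-reducing, maps onto $B$ (it fixes $B$ since $e_B\hat b=\hat b$), and $E_B(1)=1$. Claim (2) is then read off the chain $\widehat{E_B(bxc)}=e_B\bigl(\lambda(b)Jc^{*}J\,\hat x\bigr)=\lambda(b)Jc^{*}J\,e_B\hat x=\widehat{b\,E_B(x)\,c}$, and claim (3) from self-adjointness of $e_B$ together with $e_BJ=Je_B$: one computes $\tau(xE_B(y))=\langle\hat x,\widehat{E_B(y)^{*}}\rangle=\langle\hat x,e_B\widehat{y^{*}}\rangle=\langle e_B\hat x,\widehat{y^{*}}\rangle=\tau(E_B(x)y)$, and inserting a second $e_B$ on the right produces the middle term $\tau(E_B(x)E_B(y))$, where one uses $E_B(x^{*})=E_B(x)^{*}$, itself immediate from $e_BJ=Je_B$.

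\textbf{Claims (4), (5), (7).} For the remaining half of claim (4): if $x\in N$ commutes with $e_B$ then $x\hat 1=xe_B\hat 1=e_Bx\hat 1=e_B\hat x=\widehat{E_B(x)}$, so $x=E_B(x)\in B$; together with the preliminary step this gives $\{e_B\}'\cap N=B$, whence $(N'\cup\{e_B\})'=N''\cap\{e_B\}'=N\cap\{e_B\}'=B$ and, taking commutants, $B'=(N'\cup\{e_B\})''$. For claim (5): from the $y\in B$ case of claim (3) one has $\tau(E_B(a)c)=\tau(ac)$ for all $a\in N$, $c\in B$, so $\tau\bigl(E_B(x^{*}x)\,b^{*}b\bigr)=\tau(x^{*}x\,b^{*}b)=\|xb^{*}\|_2^{2}\ge0$ for every $b\in B$; since $E_B(x^{*}x)$ is self-adjoint and $\tau$ is faithful, this forces $E_B(x^{*}x)\ge0$. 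Complete positivity then follows by running the same argument inside $M_n(N)\supset M_n(B)$, where the Jones projection onto $M_n(B)$ is $e_B\otimes\mathrm{id}_{M_n}$, so that $E_{M_n(B)}=E_B\otimes\mathrm{id}_{M_n}$. Normality follows because for a bounded net $x_\alpha\to x$ weakly one has $\langle E_B(x_\alpha)\hat c,\hat d\rangle=\tau(x_\alpha cd^{*})\to\tau(xcd^{*})=\langle E_B(x)\hat c,\hat d\rangle$ for all $c,d\in B$, and $\{\hat c:c\in B\}$ is total in $L^2(B)$, so $E_B$ is weakly continuous on bounded sets. Finally, for claim (7): given $\phi\colon N\to B$ with the stated properties, the bimodule property applied with $1\in B$ gives $\phi(xc)=\phi(x)c$, hence $\tau\bigl((\phi(x)-E_B(x))c\bigr)=\tau(xc)-\tau(xc)=0$ for every $c\in B$; taking $c=(\phi(x)-E_B(x))^{*}$ and invoking faithfulness of $\tau$ yields $\phi=E_B$.

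\textbf{Expected main obstacle.} Everything except the boxed step in the second paragraph is routine bookkeeping with $e_B$, $J$, and faithfulness of $\tau$; the real content — and the only place a non-elementary input enters — is the passage from the Hilbert-space projection to a genuinely $B$-valued, norm-reducing map, i.e. the proof that $e_B\hat x\in\widehat B$, which rests on the norm estimate above together with the identification $(JBJ)'=B$ (the tracial form of Tomita--Takesaki).
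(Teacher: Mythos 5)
Your proof is correct; the one genuinely nontrivial step --- showing that $e_B\hat x$ lies in $\widehat B$ rather than merely in $L^2(B)$, via the bounded right-$B$-modular operator $T_x$ and the commutation theorem $(JBJ)'=B$ in standard form --- is handled properly, and the remaining items all follow as you describe. The paper itself states this theorem as background and gives no proof, so there is nothing to compare against; your argument is the standard one found in the references the thesis draws on.
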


\begin{definition}
Let $B\subset N$ be finite von Neumann algebras. From Theorem \ref{conditonalexpectation}, the \textbf{conditional expectation} $E_B:N\rightarrow B$ is defined by $E_B=e_B|_N$.
\end{definition}

Below we record some conditional expectation that will be useful subsequently.

(1) Let $\Lambda<\Gamma$ be groups. Consider $L(\Lambda)\subset L(\Gamma)$.We have
$E_{L(\Lambda)}(x)=\sum_{g\in\Lambda}x_gu_g$ where $x=\sum_{g\in\Gamma}x_gu_g\in L(\Gamma)$.

(2) Let $L^{\infty}(X)\rtimes \Gamma$ be a crossed product.
The conditional expectation $E_{L(\Gamma)}:L^{\infty}(X)\rtimes \Gamma\rightarrow L(\Gamma)$ is defined by
$E_{L(\Gamma)}(\sum x_gu_g)=\sum_{g\in \Gamma} \tau(x_g)u_g$ where $x=\sum_{g\in\Gamma}x_gu_g\in L^{\infty}(X)\rtimes \Gamma$, $E_{L^{\infty}(X)}(\sum x_gu_g)=x_e$.

(3) Let $N\subset M$ be finite von Neumann algebras and $p\in N$ be a projection. Define $E_{pNp}:pMp\rightarrow pNp$ by $E_{pNp}(x)=pE_{N}(x)E_{N}(p)^{-1}p$ for all $x\in pMp.$ 

Note that $\|x\|^2_{2,p}=\tau(p)^{-1}\|x\|_2^2$
 where $\|\cdot\|_2, \|\cdot\|_{2,p}$ are the norms on $L^2(M)$ and $L^2(pMp)$ respectively.

(4) Let $N\subset M$ be finite von Neumann algebras and $p$ be a projection in $N'\cap M$. Define $E_{Np}:pMp\rightarrow Np$ by
 $E_{Np}(x)=E_{N}(x)E_{N}(p)^{-1}p.$ for all $x\in pMp.$
 
 In the cases (3) and (4) if $N$ is a factor, then $E_{N}(p)=\tau(p)1.$

To study structural property  of inclusions of von Neumann algebras, an important tool is the associated basic construction. This algebra was introduced by  E. Christensen in order to study perturbations of algebras and later was used to great extended theory of finite index subfactors by V.F.R. Jones. The basic construction plays a key roles in Popa's deformation/rigidity  theory especially in the intertwining technique that we will see use in this dissertation.

\begin{definition}
If $B$ is a von Neumann subalgebra of a finite von Neumann algebra $N$ with faithful normal trace $\tau,$ the \textbf{basic construction} from the inclusion $B\subset N$ is defined to be the von Neumann algebra $\langle N,e_B\rangle:=(N\cup \{e_B\})''$.
\end{definition}

\begin{thm}
Let $B$ be a von Neumann subalgebra of finite von Neumann algebra $N$ with a fixed faithful normal trace $\tau.$ Then $\langle N,e_B\rangle$ is a semifinite von Neumann algebra with a faithful semifinite normal trace $Tr$ satisfying the following properties:
\begin{enumerate}
\item $\langle N,e_B\rangle=JB'J$, $\langle N,e_B\rangle'=JBJ$, and the $*$-subalgebra $Ne_BN=\text{span}\{xe_By\,|\, x,y\in N\}$ is weakly dense in $\langle N,e_B\rangle$;
\item the central support of $e_B$ in $\langle N,e_B\rangle$ is $1$;
\item $e_B\langle N,e_B\rangle e_B=Be_B$;
\item $e_BN$ and $Ne_B$ are weakly and strongly dense in respectively $e_b\langle N,e_B\rangle$ and $\langle N,e_B\rangle e_B$;
\item the map $x\mapsto xe_B:N\rightarrow Ne_B\subset \langle N,e_B\rangle e_B$ is injective;
\item $Tr(xe_By)=\tau(xy)$ for all $x,y\in N$;
\item $Ne_BN$ is dense in $L^2(\langle N,e_B\rangle, Tr)$ in $\|\cdot\|_{2,Tr}$-norm.
\end{enumerate}
\end{thm}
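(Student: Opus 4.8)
The plan is to work concretely with $N$, $B$, and $e_B$ acting on the standard Hilbert space $L^2(N,\tau)$: here $N$ acts by left multiplication, $e_B$ is the orthogonal projection onto $L^2(B)$, and $J\hat x=\widehat{x^{*}}$ is the canonical conjugation. I will freely use the standard fact $N'=JNJ$ on $L^2(N)$ together with the facts already recorded in Theorem~\ref{conditonalexpectation}: $\{e_B\}'\cap N=B$, $B'=(N'\cup\{e_B\})''$, $Je_B=e_BJ$, and the module identity $e_B n e_B=E_B(n)e_B$ for $n\in N$. First I would establish the commutant identities in (1). Since $\langle N,e_B\rangle'=(N\cup\{e_B\})'=N'\cap\{e_B\}'=JNJ\cap\{e_B\}'$, conjugating by $J$ and using $Je_BJ=e_B$ and $N\cap\{e_B\}'=B$ gives $\langle N,e_B\rangle'=J\bigl(N\cap\{e_B\}'\bigr)J=JBJ$; taking commutants once more and using $(JBJ)'=JB'J$ yields $\langle N,e_B\rangle=JB'J$.

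For the weak density of $Ne_BN$ in $\langle N,e_B\rangle$, I would note first that $Ne_BN$ is a $*$-subalgebra, since $(ne_Bm)(n'e_Bm')=nE_B(mn')e_Bm'$ by the module identity, and that it acts nondegenerately because $Ne_B\,L^2(N)=N\,\widehat B\supseteq N\hat 1$ is dense; hence its weak closure is $(Ne_BN)''$. Then $(Ne_BN)'=JNJ\cap\{e_B\}'=JBJ=\langle N,e_B\rangle'$: indeed any $T$ commuting with $e_B$ and with every $ne_B$ satisfies $T\hat n=n\,(T\hat 1)$ on the dense set $\{\hat n\}$, forcing $T\in JNJ=N'$, and the reverse inclusion is clear; so $\overline{Ne_BN}^{w}=\langle N,e_B\rangle$. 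Item (2) is then quick: the range of the central support of $e_B$ in $\langle N,e_B\rangle$ is the projection onto $\overline{\langle N,e_B\rangle e_B L^2(N)}=\overline{\langle N,e_B\rangle\,\widehat B}$, which contains $N\hat 1$, hence equals $L^2(N)$, so the central support is $1$.

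Items (3), (4), (5) are then short. For (3), $e_B(ne_Bm)e_B=E_B(n)E_B(m)e_B\in Be_B$ on the generators of $Ne_BN$, and passing to weak limits via (1) gives $e_B\langle N,e_B\rangle e_B\subseteq Be_B$; the reverse inclusion is obvious. For (5), $n\mapsto ne_B$ is injective because $ne_B\hat 1=\hat n$, so $ne_B=0$ forces $\tau(n^{*}n)=0$, hence $n=0$ by faithfulness of $\tau$. For (4), $(ne_Bm)e_B=nE_B(m)e_B\in Ne_B$, so multiplying the weakly dense set $Ne_BN$ on the right by $e_B$ shows $Ne_B$ is weakly dense in $\langle N,e_B\rangle e_B$; Kaplansky's density theorem upgrades this to strong density, and the statement for $e_BN$ is symmetric.

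Finally, for the semifinite trace (items (6)--(7)): by (2) and (3), $e_B$ is a projection in $\langle N,e_B\rangle$ of central support $1$ with $e_B\langle N,e_B\rangle e_B=Be_B\cong B$ finite, so $\langle N,e_B\rangle$ is semifinite. To obtain $Tr$ with the prescribed normalization I would transport the canonical faithful normal semifinite trace of the commutant $B'$ through the linear $*$-anti-isomorphism $x\mapsto Jx^{*}J$ of $\langle N,e_B\rangle$ onto $B'$ (a trace being insensitive to the order of a product), where the trace on $B'$ is the intrinsic one obtained by realizing $L^2(N)$ as a left $B$-submodule of $L^2(B)\bar\otimes\ell^2(\mathbb N)$ and restricting $\tau^{\mathrm{op}}\otimes\mathrm{Tr}$; testing $Tr$ against a Pimsner--Popa type partition $\sum_j m_je_Bm_j^{*}=1$ then yields the normalization $Tr(xe_By)=\tau(xy)$, which is (6). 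Property (7) follows because $Tr$ is semifinite, $Ne_BN$ lies in the square-integrable domain, and $e_B$ has central support $1$, so the $\|\cdot\|_{2,Tr}$-closure of $Ne_BN$ is $\langle N,e_B\rangle$-invariant and contains $e_B$, hence is all of $L^2(\langle N,e_B\rangle,Tr)$. The step I expect to be the main obstacle is precisely this construction of $Tr$: showing the assignment $xe_By\mapsto\tau(xy)$ is unambiguous on $Ne_BN$ and extends to a normal semifinite trace is the one genuinely nontrivial point, and it is cleanest to route it through the left-$B$-module picture above rather than argue directly.
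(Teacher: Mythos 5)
The paper records this theorem as standard background from the basic-construction literature and gives no proof of its own, so there is nothing to compare against line by line; judged on its own terms, your argument is the standard one and is essentially correct. Items (1)--(5) are fine: the commutant computation $\langle N,e_B\rangle'=JNJ\cap\{e_B\}'=JBJ$, the double-commutant identification of the weak closure of $Ne_BN$ (your observation that $T(ne_B)=(ne_B)T$ forces $T\hat n=n(T\hat 1)$ and hence $T\in N'$ is the right way to get $(Ne_BN)'\subseteq JBJ$), the central-support and corner computations, and injectivity of $x\mapsto xe_B$ all go through. One small remark on (4): Kaplansky is not really the right tool since $Ne_B$ is not a $*$-algebra; but $Ne_B$ is convex, so its weak and strong closures coincide and the upgrade is automatic. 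The only substantive issue is the one you flag yourself: items (6)--(7) are a plan rather than a proof. Transporting the canonical trace of $B'$ through $x\mapsto Jx^*J$ is a legitimate and standard route, but the normalization $Tr(xe_By)=\tau(xy)$ does require the computation with an orthonormal (Pimsner--Popa) basis $\{m_j\}$ of $L^2(N)$ as a right $B$-module --- note this basis is in general an infinite family, not a finite partition --- namely $Tr(xe_By)=\sum_j\tau\bigl(E_B(m_j^*x)E_B(ym_j)\bigr)=\tau\bigl(x\sum_j m_jE_B(m_j^*y)\bigr)=\tau(xy)$, and the well-definedness of $\sum_i x_ie_By_i\mapsto\sum_i\tau(x_iy_i)$ does not follow from evaluating against $\hat 1$ alone (that only gives $\sum_i\tau(x_iE_B(y_i))$), so this step genuinely needs to be written out. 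Since you identified it as the crux and proposed the correct mechanism, I would count this as an incomplete but correctly structured proof rather than a gap in the idea.
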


\section{Amplification}
Let $M\subset \mathcal{B}(H)$ be a von Neumann algebra 
For every $n\geq 1$, let $M_n(M)$ be a space of $n\times n$ matrices with entries in $M$. Clearly, $M_n(M)\subset \mathcal{B}(H^{\oplus n})$. Moreover, it is a straightforward proof to show that $M_n(M)$ is also a von Neumann algebra.
If $M$ is a type II$_1$ factor then $M_n(M)$ is also a type II$_1$ factor.

Denote $\operatorname{Tr}_n\otimes \tau$ its trace defined by
\begin{equation*}
(\operatorname{Tr}_n\otimes \tau)([x_{ij}])=\displaystyle\sum_{i}\tau(x_{ii}).
\end{equation*}
Moreover, we embed $M_n(M)$ into $M_{n+1}(M)$ by putting the zero entries in the last row and the last column, we obtain the increasing algebras

\begin{equation*}
\mathcal{M}(M) = \displaystyle\bigcup_{n\geq 1}M_{n}(M)
\end{equation*}

For any two projections $p,q\in\mathcal{M}(M),$ there is an $n\geq 0$ such that both $p$ and $q$ must belong to  $M_n(M).$ Since $M_{n}(M)$ is a factor and the trichotomy property for projections on factors, we have
\begin{center}
$p$  and $q$ are equivalent \quad if and only if \quad $(\operatorname{Tr}_n\otimes \tau)(p)=(\operatorname{Tr}_n\otimes \tau)(q)$
\end{center} 
This follows that
\begin{equation*}
p\,\mathcal{M}(M)\, p= p\,M_n(M)\, p\simeq q\,M_n(M)\,q.
\end{equation*}
Define
\begin{equation*}
M^t=p\,M_n(M)\,p, \quad\text{where}\quad t=(\operatorname{Tr}\otimes \tau)(p).
\end{equation*}
It is not too hard to check that $M^t$ is well-defined for every $t>0$ and unique up to isomorphism. We call $M^t$ an \textbf{amplification} of $M$ by $t$.
\begin{thm}
Let $M, M_0$ be II$_1$ factors and $s,t>0$. Then the following hold:
\begin{enumerate}
\item[(a)] $(M\bar\otimes M_0)^t=M\bar\otimes M_0^t=M^t\bar\otimes M_0$.
\item[(b)] $(M^s)^t=M^{st}$.
\item[(c)] $M\bar\otimes M_0=M^t\otimes M_0^{1/t}$.
\end{enumerate} 
\end{thm}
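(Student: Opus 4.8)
The plan is to reduce all three items to the defining description $M^t=p\,M_n(M)\,p$ with $(\operatorname{Tr}_n\otimes\tau)(p)=t$, using two elementary facts: (i) the canonical isomorphism $M_n(N)\cong N\bar\otimes M_n(\mathbb C)$, under which the weight $\operatorname{Tr}_n\otimes\tau_N$ becomes $\tau_N\otimes\operatorname{Tr}_n$, and more generally $M_m(M_n(N))\cong M_{mn}(N)$ intertwines $\operatorname{Tr}_m\otimes\operatorname{Tr}_n\otimes\tau_N$ with $\operatorname{Tr}_{mn}\otimes\tau_N$; and (ii) for von Neumann algebras $A,B$ and a projection $q\in B$ one has $(1_A\otimes q)(A\bar\otimes B)(1_A\otimes q)=A\bar\otimes qBq$, which is immediate from the definition of the von Neumann tensor product (Tomita's commutant theorem handling the commutants). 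Since, as recorded above, the amplification does not depend up to isomorphism on the choice of projection realizing a given trace value, in each item it suffices to produce one convenient projection and recognize the resulting corner.

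For (a), I would write $(M\bar\otimes M_0)^t=p\,M_n(M\bar\otimes M_0)\,p$ with $(\operatorname{Tr}_n\otimes\tau_M\otimes\tau_{M_0})(p)=t$ and $n>t$, identify $M_n(M\bar\otimes M_0)$ with $M\bar\otimes M_n(M_0)$ via (i), and choose $p=1_M\otimes p_0$, where $p_0\in M_n(M_0)$ is a projection with $(\operatorname{Tr}_n\otimes\tau_{M_0})(p_0)=t$ (such $p_0$ exists since the factor $M_n(M_0)$ has unit of trace $n>t$). Then $(\operatorname{Tr}_n\otimes\tau_M\otimes\tau_{M_0})(p)=t$, and by (ii)
\[
(M\bar\otimes M_0)^t=(1_M\otimes p_0)\big(M\bar\otimes M_n(M_0)\big)(1_M\otimes p_0)=M\bar\otimes\big(p_0M_n(M_0)p_0\big)=M\bar\otimes M_0^t.
\]
Running the same computation with the roles of $M$ and $M_0$ exchanged (via the flip $M\bar\otimes M_0\cong M_0\bar\otimes M$) gives $(M\bar\otimes M_0)^t=M^t\bar\otimes M_0$, which finishes (a). Taking $n=1$ and $p=1$ in the definition also records that $M^1=M$, which I will need below.

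For (b), fix integers $m>t$ and $n>s$ and realize $M^s=p\,M_n(M)\,p$ with $(\operatorname{Tr}_n\otimes\tau)(p)=s$; then the tracial state on the factor $M^s$ is $x\mapsto s^{-1}(\operatorname{Tr}_n\otimes\tau)(x)$. By definition $(M^s)^t=q\,M_m(M^s)\,q$ with $(\operatorname{Tr}_m\otimes\tau_{M^s})(q)=t$. Identifying $M_m(M^s)=M_m\big(pM_n(M)p\big)=(1_m\otimes p)\,M_{mn}(M)\,(1_m\otimes p)$ through (i), the weight $\operatorname{Tr}_m\otimes\tau_{M^s}$ corresponds to $s^{-1}(\operatorname{Tr}_{mn}\otimes\tau)$, so the constraint on $q$ becomes $(\operatorname{Tr}_{mn}\otimes\tau)(q)=st$ (note $st\le ms$, so $q$ fits). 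Since $q\le 1_m\otimes p$ we have $q\,M_m(M^s)\,q=q\,M_{mn}(M)\,q$, and hence $(M^s)^t=M^{st}$ by the defining description of the amplification.

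For (c), I combine the previous parts: by (a), $(M\bar\otimes M_0)^{1/t}=M\bar\otimes M_0^{1/t}$, and applying (a) once more (now amplifying the first tensor factor) $\big(M\bar\otimes M_0^{1/t}\big)^t=M^t\bar\otimes M_0^{1/t}$; on the other hand, by (b) and $M^1=M$, $\big((M\bar\otimes M_0)^{1/t}\big)^t=(M\bar\otimes M_0)^{(1/t)t}=(M\bar\otimes M_0)^1=M\bar\otimes M_0$. Comparing the two expressions yields $M^t\bar\otimes M_0^{1/t}=M\bar\otimes M_0$. The only genuinely delicate point in all of this is the bookkeeping in (b): one must keep straight which traces are the normalized tracial states on the II$_1$ factors $M$ and $M^s$ and which are the unnormalized matrix traces $\operatorname{Tr}_m,\operatorname{Tr}_n,\operatorname{Tr}_{mn}$, and verify carefully that the identification $M_m(M_n(M))\cong M_{mn}(M)$ respects them; once that is pinned down, the rest is formal.
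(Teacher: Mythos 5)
Your argument is correct. Note that the paper records this theorem in its preliminaries without any proof, treating it as a standard fact about amplifications, so there is nothing to compare against; your verification via the defining description $M^t=p\,M_n(M)\,p$, the identifications $M_n(A\bar\otimes B)\cong A\bar\otimes M_n(B)$ and $M_m(M_n(M))\cong M_{mn}(M)$, and the well-definedness of $M^t$ up to the trace of the projection is exactly the standard route, and you correctly handle the one delicate point (the renormalization $\tau_{M^s}=s^{-1}(\operatorname{Tr}_n\otimes\tau)$ in part (b), which converts the constraint on $q$ into $(\operatorname{Tr}_{mn}\otimes\tau)(q)=st$).
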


\begin{cor}
Given two groups $\Gamma_1, \Gamma_2$ and $t>0$, we have the relation
\begin{equation*}
L(\Gamma_1\times\Gamma_2)=L(\Gamma_1)\bar\otimes L(\Gamma_2)=L(\Gamma_1)^t\bar\otimes L(\Gamma_2)^{1/t}.
\end{equation*}
\end{cor}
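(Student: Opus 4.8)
The plan is to deduce the corollary from the amplification theorem stated just above, once we first identify $L(\Gamma_1\times\Gamma_2)$ with the von Neumann tensor product $L(\Gamma_1)\bar\otimes L(\Gamma_2)$ by hand. To do this I would introduce the natural unitary $U\colon \ell^2(\Gamma_1\times\Gamma_2)\to\ell^2(\Gamma_1)\bar\otimes\ell^2(\Gamma_2)$ determined on canonical orthonormal bases by $\delta_{(g_1,g_2)}\mapsto\delta_{g_1}\otimes\delta_{g_2}$. A direct computation on basis vectors shows that $U$ intertwines the left regular representations, $U\lambda_{(g_1,g_2)}U^*=\lambda_{g_1}\otimes\lambda_{g_2}$ for all $(g_1,g_2)$, so that after conjugating by $U$ the algebra $L(\Gamma_1\times\Gamma_2)=\{\lambda_{(g_1,g_2)}\}''$ becomes the von Neumann algebra generated by $\{\lambda_{g_1}\otimes\lambda_{g_2}\colon g_i\in\Gamma_i\}$. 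Since $\lambda_{g_1}\otimes 1=\lambda_{g_1}\otimes\lambda_e$ and $1\otimes\lambda_{g_2}=\lambda_e\otimes\lambda_{g_2}$ belong to this generating set while conversely $\lambda_{g_1}\otimes\lambda_{g_2}=(\lambda_{g_1}\otimes 1)(1\otimes\lambda_{g_2})$, the algebra generated is exactly $(\{\lambda_{g_1}\otimes 1\}\cup\{1\otimes\lambda_{g_2}\})''=L(\Gamma_1)\bar\otimes L(\Gamma_2)$ by the definition of the von Neumann tensor product. Finally one notes the identification is trace-preserving, since $\tau$ on each side is the vector state at the identity and $U\delta_e=\delta_e\otimes\delta_e$; this establishes the first equality $L(\Gamma_1\times\Gamma_2)=L(\Gamma_1)\bar\otimes L(\Gamma_2)$.

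For the second equality I would simply invoke part (c) of the amplification theorem above with $M=L(\Gamma_1)$ and $M_0=L(\Gamma_2)$, which gives $L(\Gamma_1)\bar\otimes L(\Gamma_2)=L(\Gamma_1)^t\bar\otimes L(\Gamma_2)^{1/t}$ for every $t>0$; alternatively one can combine parts (a) and (b), writing $M\bar\otimes M_0=((M\bar\otimes M_0)^t)^{1/t}=(M^t\bar\otimes M_0)^{1/t}=M^t\bar\otimes M_0^{1/t}$. Concatenating the two equalities finishes the proof.

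There is no deep obstacle here; the only point I would be careful about is that the amplification $M^t$ and the amplification theorem, as set up above, presuppose that $M$ is a II$_1$ factor (this is what makes $M_n(M)$ a factor and lets the trichotomy of projections give well-definedness of $M^t$). Thus the corollary is being applied in the situation where $\Gamma_1$ and $\Gamma_2$ are icc, so that $L(\Gamma_1)$ and $L(\Gamma_2)$ are indeed II$_1$ factors by the Murray--von Neumann criterion (the theorem above characterising when $L(\Gamma)$ is a factor); everything else is a routine unpacking of definitions.
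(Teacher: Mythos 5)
Your proof is correct and is exactly the argument the paper intends: the corollary is stated without proof as an immediate consequence of the standard unitary identification $L(\Gamma_1\times\Gamma_2)\cong L(\Gamma_1)\bar\otimes L(\Gamma_2)$ together with part (c) of the amplification theorem. Your closing caveat is also well taken — as stated for arbitrary groups the corollary is imprecise, since $M^t$ is only defined here for II$_1$ factors, so one must indeed read it under the standing assumption that the $\Gamma_i$ are infinite icc groups.
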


\section{Ultrapower von Neumann algebras and property gamma}
In this section we introduce the ultrapower von Neumann algebra $N^{\omega}$ associated to a given von Neumann algebra $N$.
This is an important tool that provides algebraic framework to understand various asymptotic properties such as central sequence.
 We fix a \textit{free ultrafilter} $\omega$ on $\mathbb{N}$. Recall that $\omega$ is an element of $\beta\mathbb{N}\setminus \mathbb{N}$ where $\beta\mathbb{N}$ is the Stone-Cech compatification of $\mathbb{N}.$ For any bounded sequence $(c_n)$ of complex numbers, $\lim_{\omega} c_n$ is defined as the value at $\omega$ of this sequence, viewed as a continuous function on $\beta\mathbb{N}.$

Let $(M_n,\tau_n)$ is a sequence of tracial von Neumann algebras. The product algebra $\Pi_{n\geq 1} M_n$ is the C$^*$-algebra of bounded sequences $x=(x_n)_n$ with $x_n\in M_n$ for every $n$, endowed with the norm $\|x\|=\sup_n\|x_n\|.$ The (tracial) \textbf{ultraproduct} $\Pi_{\omega} M_n$ is the quotient of $\Pi_{n\geq} M_n$ by the ideal $I_{\omega}$ of all sequences $(x_n)_n$ such that $\lim_{\omega}\tau_n(x_n^*x_n)=0$. It is easily seen that $I_{\omega}$ is a normed closed two-sided ideal, so that $\Pi_{\omega}M_n$ is a C$^*$-algebra. If $x_{\omega}$ denotes the class of $x\in \Pi_{n\geq 1}M_n$, then $\tau_{\omega}(x):= \lim_{\omega}\tau_n(x_n)$ defines without ambiguity a faithful tracial state on $\Pi_{\omega} M_n$. We set $\|y\|_{2,\omega}=\tau_{\omega}(y^*y)^{1/2}$ whenever $y\in \Pi_{\omega} M_n$.

When $(M_n,\tau_n)=(M,\tau)$ for all $n$, we set $M^{\omega}=\Pi_{\omega}M$ and we says that $(M^{\omega},\tau_{\omega})$ is the (tracial) \textbf{ultrapower} of $(M,\tau)$ along $\omega$.

\begin{prop}
We have the followings.
\begin{enumerate}
\item $(\Pi_{\omega}M_n,\tau_{\omega})$ is a tracial von Neumann algebra.
\item If $M_n$ are finite factors such that $\lim_n \dim M_n=+\infty$, then $\Pi_{\omega} M_n$ is a II$_1$ factor.
\end{enumerate}
\end{prop}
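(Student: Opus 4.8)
We treat the two parts in turn.

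For part (1), recall that it has already been verified above that $(\Pi_\omega M_n,\tau_\omega)$ is a unital $C^*$-algebra carrying the faithful tracial state $\tau_\omega$; so the plan is to invoke the standard criterion that such a pair $(A,\tau)$ is a tracial von Neumann algebra precisely when its unit ball $(A)_1$ is complete for the metric $d(x,y)=\|x-y\|_{2,\tau}$, where $\|x\|_{2,\tau}=\tau(x^*x)^{1/2}$. (The mechanism: representing $A$ on $L^2(A,\tau)$ by the GNS construction recalled above, the strong and $\|\cdot\|_{2,\tau}$ topologies agree on bounded sets, so by the Kaplansky density theorem $(A)_1$ is $\|\cdot\|_{2,\tau}$-dense in $(A'')_1$; completeness of $(A)_1$ then forces $A=A''$ and makes $\tau$ normal.) Thus everything reduces to proving that $(\Pi_\omega M_n)_1$ is $\|\cdot\|_{2,\omega}$-complete. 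First I would note that every element of $(\Pi_\omega M_n)_1$ has a representing sequence $(x_n)_n$ with $\sup_n\|x_n\|\le1$. Given a $\|\cdot\|_{2,\omega}$-Cauchy sequence in $(\Pi_\omega M_n)_1$, I would pass to a fast subsequence $(\xi_k)$ with $\|\xi_{k+1}-\xi_k\|_{2,\omega}\le2^{-k}$, choose norm-$\le1$ lifts $(x^{(k)}_n)_n$, and form the nested sets $B_k=\{\,n\ge k:\ \|x^{(j+1)}_n-x^{(j)}_n\|_{2,\tau_n}\le2^{-j}\ \text{for all}\ 1\le j\le k\,\}\in\omega$, which satisfy $\bigcap_k B_k=\emptyset$; setting $y_n=x^{(k)}_n$ for $n\in B_k\setminus B_{k+1}$ and $y_n=0$ for $n\notin B_1$ produces a norm-$\le1$ sequence whose class $\eta$ obeys $\|\eta-\xi_k\|_{2,\omega}\le2^{-k+1}$ for all $k$, so the Cauchy sequence converges to $\eta$. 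This diagonalization across $\omega$ is the only technical point of part (1).

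For part (2), by part (1) the algebra $N:=\Pi_\omega M_n$ is a tracial von Neumann algebra, so by the Remark recorded above it suffices to show that $N$ is a factor and that it is infinite dimensional; $N$ is then automatically a type II$_1$ factor. For infinite dimensionality I would use the hypothesis $\lim_n\dim M_n=+\infty$ to build, for every $m\ge1$, a partition of $1$ in $N$ into $m$ mutually orthogonal projections of trace exactly $1/m$: for $\omega$-almost every $n$ (those with $\dim M_n$ large) the finite factor $M_n$ contains mutually orthogonal projections $p^{(n)}_1,\dots,p^{(n)}_m$ with $\sum_j p^{(n)}_j=1$ and $\tau_n(p^{(n)}_j)\to1/m$ — with ranks $\lfloor k_n/m\rfloor$ or $\lceil k_n/m\rceil$ in the matricial case, and exactly $1/m$ in the II$_1$ case — and the classes of these sequences are mutually orthogonal projections of $N$ summing to $1$ with trace exactly $1/m$. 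Since $\tau_\omega$ then takes infinitely many values on projections of $N$, $N$ is not finite dimensional.

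To show that $N$ is a factor, the main input is Dixmier's averaging theorem inside each finite factor $M_n$, which gives for every $x\in M_n$ the estimate $\|x-\tau_n(x)1\|_{2,\tau_n}\le\sup_{u\in\mathcal U(M_n)}\|xu-ux\|_{2,\tau_n}$ (approximate $\tau_n(x)1$ in operator norm by a convex combination $\sum_j\lambda_j u_jxu_j^*$ and bound $\|x-\sum_j\lambda_j u_jxu_j^*\|_{2,\tau_n}$ by $\max_j\|xu_j-u_jx\|_{2,\tau_n}$). Now let $z=(z_n)^\omega\in\mathcal Z(N)$, with a lift chosen so that $\|z_n\|\le\|z\|$, and put $\delta_n=\sup_{u\in\mathcal U(M_n)}\|z_nu-uz_n\|_{2,\tau_n}$. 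Picking $v_n\in\mathcal U(M_n)$ with $\|z_nv_n-v_nz_n\|_{2,\tau_n}\ge\delta_n-2^{-n}$, and using that every unitary of $N$ lifts to a sequence of unitaries of the $M_n$ (polar decomposition together with finiteness of each $M_n$), the class $v=(v_n)^\omega$ is a unitary of $N$ with $\|zv-vz\|_{2,\omega}=\lim_\omega\delta_n$, which is $0$ because $z$ is central. The Dixmier estimate now yields $\|z-\tau_\omega(z)1\|_{2,\omega}=\lim_\omega\|z_n-\tau_n(z_n)1\|_{2,\tau_n}\le\lim_\omega\delta_n=0$, so $z=\tau_\omega(z)1\in\mathbb{C}1$ and $\mathcal Z(N)=\mathbb{C}1$.

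The step I expect to be the main obstacle is the $\|\cdot\|_{2,\omega}$-completeness of the unit ball in part (1): one must stratify $\mathbb N$ by sets of the ultrafilter so that the diagonal sequence is at once uniformly norm bounded and $\|\cdot\|_{2,\omega}$-close to every $\xi_k$, and then check that the limiting relations pass to the quotient by $I_\omega$. In part (2) the one delicate point is bridging "$\sup$ over all unitaries of $M_n$" with centrality in the ultraproduct, which forces one to realize near-optimal unitaries $v_n$ as the coordinates of a genuine unitary of $N$; this in turn rests on the routine but essential fact that unitaries of $N$ admit unitary lifts, a consequence of finiteness of the $M_n$.
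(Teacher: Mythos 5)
Your proof is correct; note that the paper itself records this proposition without proof, so the relevant comparison is with the standard textbook argument (as in Anantharaman--Popa or the appendix of Brown--Ozawa), which is exactly what you reproduce: completeness of the unit ball in $\|\cdot\|_{2,\omega}$ via diagonalization over sets of the ultrafilter for part (1), and Dixmier averaging plus a dimension count for part (2). Two cosmetic points. In the definition of $B_k$ you need a little room to guarantee $B_k\in\omega$: from $\|\xi_{j+1}-\xi_j\|_{2,\omega}\le 2^{-j}$ one only gets that $\{n:\|x^{(j+1)}_n-x^{(j)}_n\|_{2,\tau_n}\le 2^{-j}+\varepsilon\}\in\omega$ for each $\varepsilon>0$, so either take the fast subsequence with bound $2^{-j-1}$ and test against $2^{-j}$, or use strict inequalities; this is a standard $\varepsilon$-of-room fix, not a gap. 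In part (2) the ``unitary lifting'' fact you invoke is not actually needed in the direction you use it: you start from genuine unitaries $v_n\in\mathcal U(M_n)$ and only need that their class is a unitary of $\Pi_\omega M_n$, which is immediate; the converse (that every unitary of the ultraproduct lifts to a sequence of unitaries) is the statement requiring finiteness, and your argument never uses it.
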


Next we recall Murray-von Neumann property \textit{Gamma} associates with a von Neumann algebra.
This was the first invariant introduced to distinguish the hyperfintie II$_1$ factor $\mathcal{R}$ from the free group factor $L(\mathbb F_{2})$. This showed the existence of 
non-hyperfinite II$_1$ factors. 

%Let $M$ be a type II$_1$ factors. A $\|\cdot\|_{\infty}$-bounded net $(x_i)$ in $M$ such that $\lim_i \|[y_i, x_i]\|=0$ for every $y\in M$ is said to be \textbf{central}.

\begin{definition}
A II$_1$ factor $M$ is said to have \textbf{Property Gamma} if given $\varepsilon>0$ and $x_1,\ldots, x_k\in N$, there exists a trace zero unitary $u\in M$ such that 
\begin{equation*}
\|ux_1-x_iu\|_2<\varepsilon, \quad 1\leq i\leq k.
\end{equation*}
An alternative formulation is the existence, for a fixed but arbitrary finite set $F \subset M$, of a sequence $\{u_n\}_{n=1}^{\infty}$ of trace zero unitaries in $N$ satisfying 
\begin{equation*}
\lim_{n\rightarrow \infty} \|u_nx-xu_n\|=0,\quad x\in F.
\end{equation*}
\end{definition}

\begin{thm}[\cite{Mc69}]
Let $M$ be a separable II$_1$ factor and let $\omega$ be free ultrafilter on $\mathbb{N}$. The following conditions are equivalent:
\begin{enumerate}
\item $M$ has Property Gamma;
\item $M'\cap M^{\omega}\neq \mathbb{C}1$;
\item $M'\cap M^{\omega}$ is diffuse.
\end{enumerate}
\end{thm}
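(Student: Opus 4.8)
The plan is to establish the cycle $(1)\Rightarrow(2)\Rightarrow(3)\Rightarrow(1)$; the implication $(3)\Rightarrow(2)$ is automatic, since a diffuse von Neumann algebra is never $\mathbb{C}1$.

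For $(1)\Rightarrow(2)$ I would run a routine diagonal argument. Fix a sequence $(a_k)_{k\geq 1}$ that is $\|\cdot\|_2$-dense in the unit ball of $M$ (possible as $M$ is separable). By the sequential form of Property Gamma, for each $n$ pick a trace-zero unitary $u_n\in M$ with $\|u_na_k-a_ku_n\|_2<1/n$ for all $k\leq n$. Then $u:=(u_n)_n\in M^{\omega}$ is a unitary with $\tau_{\omega}(u)=\lim_{\omega}\tau(u_n)=0$; since $\|u\|\leq 1$ and multiplication by $u$ preserves $\|\cdot\|_{2,\omega}$, the relations $\|ua_k-a_ku\|_{2,\omega}=0$ together with density of $(a_k)$ force $ua=au$ for all $a\in M$, i.e. $u\in M'\cap M^{\omega}$. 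A trace-zero unitary is not a scalar, so $M'\cap M^{\omega}\neq\mathbb{C}1$.

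The core of the theorem is $(2)\Rightarrow(3)$, for which I would prove the dichotomy: \emph{if $M$ is a separable} II$_1$ \emph{factor then $M'\cap M^{\omega}$ is either $\mathbb{C}1$ or diffuse.} Assuming $M'\cap M^{\omega}\neq\mathbb{C}1$, take a non-scalar element, pass to its real or imaginary part, and apply Borel functional calculus to get a projection $p\in M'\cap M^{\omega}$ with $0<\tau_{\omega}(p)<1$. The plan is then a re-indexation argument: lift $p$ to a sequence of projections $(p_n)$ in $M$, observe that for every $m$ the set of $n$ for which $p_n$ commutes with $a_1,\dots,a_m$ up to $\|\cdot\|_2$-error $1/m$ lies in $\omega$, and exploit this freedom to build, inside $M'\cap M^{\omega}$, countably many mutually independent copies $p^{(1)},p^{(2)},\dots$ of $p$. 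These generate a diffuse abelian subalgebra $A$ of $M'\cap M^{\omega}$, and I would finish with a short structural argument showing that $M'\cap M^{\omega}$ can then have no minimal projection: a Haar unitary $u\in A$ has no non-zero eigenprojection, whereas if $e$ were a minimal projection of $M'\cap M^{\omega}$ then $u$ would fix $\bigvee_{k}u^{k}eu^{-k}$ and hence would have to admit one. The hard part is precisely this re-indexation step, which must genuinely use the ultrafilter (and not merely a single approximately central sequence); it is what excludes pathologies such as $M'\cap M^{\omega}\cong\mathbb{C}^2$ with unequal weights, which would otherwise be compatible with $(2)$ yet carry no trace-zero unitary.

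Finally, for $(3)\Rightarrow(1)$: a diffuse $M'\cap M^{\omega}$ contains a projection $q$ with $\tau_{\omega}(q)=1/2$; lift it to projections $q_n\in M$ with $\tau(q_n)\to 1/2$ and $\|q_na-aq_n\|_2\to 0$ along $\omega$ for every $a$ in the dense sequence $(a_k)$. Given a finite $F\subset M$ and $\varepsilon>0$, I would choose $n$ in the relevant member of $\omega$ for which $\|q_na-aq_n\|_2$ and $|\tau(q_n)-1/2|$ are small on a finite $\|\cdot\|_2$-approximant of $F$, then correct $q_n$ by adding or deleting an orthogonal projection of trace $|\tau(q_n)-1/2|$ (possible since $M$ is a II$_1$ factor) to obtain a projection $q_n'$ of trace exactly $1/2$ with $\|q_n-q_n'\|_2=\sqrt{|\tau(q_n)-1/2|}$ small; then $u_n:=2q_n'-1\in M$ is a trace-zero self-adjoint unitary approximately commuting with $F$. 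Letting $F$ and $\varepsilon$ exhaust $M$ produces the sequence of trace-zero unitaries witnessing Property Gamma, closing the cycle.
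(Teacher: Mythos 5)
The paper does not prove this statement; it records it as background and cites it to \cite{Mc69}, so there is no in-paper argument to compare against and your proposal has to stand on its own. Your cycle $(1)\Rightarrow(2)\Rightarrow(3)\Rightarrow(1)$ is the right architecture, and the two outer implications are essentially complete: the diagonal argument for $(1)\Rightarrow(2)$ is fine (a trace-zero unitary in $M'\cap M^{\omega}$ is non-scalar), and for $(3)\Rightarrow(1)$ the lift of a trace-$\nicefrac{1}{2}$ projection, the correction to exact trace $\nicefrac{1}{2}$ with $\|q_n-q_n'\|_2=\sqrt{|\tau(q_n)-\nicefrac{1}{2}|}$, and the symmetry $u_n=2q_n'-1$ all work.

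The genuine gap is in $(2)\Rightarrow(3)$, which is the real content of the theorem, and it occurs at the two places where you are least explicit. First, re-indexing a representing sequence $(p_n)$ only yields elements of $M'\cap M^{\omega}$ that \emph{commute} with previously constructed ones; it does not by itself give \emph{independence}, i.e.\ $\tau_{\omega}(p^{(i_1)}\cdots p^{(i_k)})=t^k$. (Independence from $M$ itself is free, since $E_M(M'\cap M^{\omega})\subset M'\cap M=\mathbb C1$, but independence from $W^*(M,p^{(1)},\dots,p^{(k)})$ is not: that algebra has nontrivial center generated by the $p^{(j)}$, so $E$ of the next copy only lands in that center, and pinning its coefficients down to $t$ is exactly the hard lemma of McDuff/Dixmier, typically proved with a Dixmier-averaging or maximality argument. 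Without it nothing rules out the re-indexed copy coinciding with $p^{(1)}$.) You flag this as ``the hard part,'' but flagging it is not proving it, and it is precisely the step that excludes $M'\cap M^{\omega}\cong\mathbb C^2$. Second, your concluding argument that a diffuse abelian $A\subset M'\cap M^{\omega}$ forbids minimal projections is not valid as written: if $e$ is minimal and $u\in A$ is Haar, then $f=\bigvee_k u^keu^{-k}$ merely commutes with $u$; a projection commuting with $u$ is not an eigenprojection of $u$, so no contradiction is reached. The statement you want is true and can be proved correctly: if $e$ is minimal in a finite von Neumann algebra $N$ then its central support $z$ satisfies $Nz\cong {\rm M}_n(\mathbb C)$, and the unital normal $*$-homomorphism $A\ni a\mapsto az$ would then force the diffuse abelian $A$ to have a nonzero finite-dimensional corner. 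Alternatively one can avoid the abelian-subalgebra detour entirely and show directly that every nonzero projection $e\in M'\cap M^{\omega}$ dominates a nonzero subprojection of strictly smaller trace, again using the independence lemma. Either way, the independence lemma must actually be established for the proof to close.
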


\begin{definition}[\cite{Mc69}]
Let $M$ be a separable II$_1$ factor. For $\omega$ be free ultrafilter on $\mathbb{N}$, if the central sequence algebra $M'\cap M^{\omega}$ is non-abelian then $M\cong M\bar\otimes \mathcal{R}$ and $M$ is said to be \textbf{McDuff}.
\end{definition}

We finish this section by recording the important result for our development. 

\begin{thm}[Theorem 3.1in \cite{CSU13}]\label{CSU13theorem3.1}
Let $\Gamma$ ba a countable discrete group together with a family of subgroups $\mathcal{G}$ such that satisfies condition $\textbf{NC}(\mathcal{G})$. Let $(A,\tau)$ be any amenable von Neumann algebra equipped that $\omega$ is a free ultrafilter on the positive integers $\mathbb{N}$.

Then for any asymptotically central sequence $(x_n)_n\in M'\cap M^{\omega}$, there exists a finite subset $\mathcal{F}\subset \mathcal{G}$ such that $(x_n)_n\in \vee_{\Sigma\in\mathcal{F}}(A\rtimes \Sigma)^{\omega}\vee M$ (i.e. the von Neumann subalgebra of $M^{\omega}$ generated by $M$ and $(A\rtimes\Sigma)^{\omega}$ for $\Sigma\in \mathcal{F}$).
\end{thm}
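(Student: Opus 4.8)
The plan is to analyse the central sequence through its Fourier expansion over $\Gamma$, and then, using amenability of $A$, to reduce the problem to a purely combinatorial statement about the conjugation action of $\Gamma$ on itself, which is settled by condition $\textbf{NC}(\mathcal{G})$.

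First I would fix a bounded representative $(x_n)_n$ of $y\in M'\cap M^{\omega}$, say $\sup_n\|x_n\|_{\infty}\le C$, and write $x_n=\sum_{g\in\Gamma}a^{(n)}_g u_g$ with $a^{(n)}_g\in A$ and $\sum_{g}\|a^{(n)}_g\|_2^2\le C^2$. Since $A$ and $\{u_g:g\in\Gamma\}$ generate $M$ and, for a fixed element, commutation is a $\|\cdot\|_2$-closed condition on bounded sets, asymptotic centrality of $y$ amounts to $\lim_{\omega}\|u_h x_n u_h^*-x_n\|_2=0$ for every $h\in\Gamma$ together with $\lim_{\omega}\|ax_n-x_na\|_2=0$ for every $a\in A$. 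Expanding the first family in Fourier coefficients gives
\[
\lim_{\omega}\ \sum_{g\in\Gamma}\big\|\sigma_h\big(a^{(n)}_{h^{-1}gh}\big)-a^{(n)}_g\big\|_2^2=0\qquad(h\in\Gamma),
\]
so the $A$-valued mass distribution $g\mapsto a^{(n)}_g$ is asymptotically invariant under the ($\sigma$-twisted) conjugation action of $\Gamma$, while the second family reads $\lim_{\omega}\sum_{g}\|a\,a^{(n)}_g-a^{(n)}_g\,\sigma_g(a)\|_2^2=0$ for $a\in A$.

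Second, I would strip off the $A$-valued coefficients, and this is where amenability of $A$ enters. Amenability of $A$ is equivalent to the coarse $A$-$A$-bimodule $L^2(A)\otimes\overline{L^2(A)}$ weakly containing the trivial bimodule $L^2(A)$; concretely, $A$ carries a net of $\|\cdot\|_2$-normalized vectors that are asymptotically $A$-central and asymptotically implement $\tau$. Feeding such vectors into the relations of the first step — a Popa-style spectral gap / averaging manoeuvre carried out compatibly with $\lim_{\omega}$ — I would show that the $\|\cdot\|_{2,\omega}$-mass of $y$ carried by any conjugation-saturated subset $S\subseteq\Gamma$ is dominated by the scalar quantity $\lim_{\omega}\sum_{g\in S}\|a^{(n)}_g\|_2^2$, and that the scalar function $g\mapsto\|a^{(n)}_g\|_2$ is itself asymptotically invariant under conjugation by $\Gamma$ and has $\ell^2(\Gamma)$-norm at most $C$. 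In effect this trades the noncommutative problem for one about almost-conjugation-invariant $\ell^2$-distributions on the set $\Gamma$.

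Third, I would invoke condition $\textbf{NC}(\mathcal{G})$, which is exactly the hypothesis guaranteeing that such a distribution is supported, up to vanishing error, on $\bigcup_{\Sigma\in\mathcal{F}}\Sigma$ for a \emph{finite} subfamily $\mathcal{F}\subseteq\mathcal{G}$ depending only on $\mathcal{G}$ and the mass bound $C$: a conjugacy class meeting no member of $\mathcal{G}$ is too spread out to retain almost-invariant $\ell^2$-mass, and $\textbf{NC}(\mathcal{G})$ additionally forces finitely many members of $\mathcal{G}$ to absorb all of it. Combined with the domination established in the second step, this yields $\|y-E_{P_{\mathcal{F}}}(y)\|_{2,\omega}=0$ where $P_{\mathcal{F}}=\bigvee_{\Sigma\in\mathcal{F}}(A\rtimes\Sigma)^{\omega}\vee M$, i.e. $(x_n)_n\in P_{\mathcal{F}}$. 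The main obstacle is the second step: converting the operator relations, whose coefficients are genuinely noncommutative, into the scalar combinatorics on $\Gamma$ without degrading the estimate — one needs it strong enough that $\textbf{NC}(\mathcal{G})$ returns a single fixed finite $\mathcal{F}$ (giving exact membership, not just $\varepsilon$-approximation by an increasing family), which is precisely the regime in which the fine print of $\textbf{NC}(\mathcal{G})$ is designed to operate.
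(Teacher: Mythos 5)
First, a point of order: the dissertation does not prove this statement at all --- it is recorded verbatim as Theorem 3.1 of \cite{CSU13} and used as a black box (in the proof of the wreath-product classification theorem), so there is no internal proof to measure you against; your proposal has to be judged against the argument in \cite{CSU13} itself. Your overall architecture --- Fourier expansion $x_n=\sum_g a^{(n)}_g u_g$, translation of asymptotic centrality into $\lim_\omega\sum_g\|\sigma_h(a^{(n)}_{h^{-1}gh})-a^{(n)}_g\|_2^2=0$, reduction to an almost conjugation-invariant scalar mass distribution on $\Gamma$, and then the combinatorial condition $\textbf{NC}(\mathcal{G})$ to localize that mass on finitely many $\Sigma$'s --- is indeed the right skeleton and matches the known proof.

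The genuine gap is in your second step, which is where you place all the analytic weight. The passage from the $A$-valued relation to the scalar one does not require any bimodule weak-containment or ``Popa-style spectral gap / averaging manoeuvre'': since $\sigma_h$ is trace-preserving, the reverse triangle inequality gives $\big|\,\|a^{(n)}_{h^{-1}gh}\|_2-\|a^{(n)}_g\|_2\,\big|\le\|\sigma_h(a^{(n)}_{h^{-1}gh})-a^{(n)}_g\|_2$, so the function $g\mapsto\|a^{(n)}_g\|_2$ is automatically an asymptotically conjugation-invariant unit of $\ell^2(\Gamma)$-mass $\le C$. Your machinery is not needed there, and more importantly it does not constitute a proof of the ``domination'' estimate you assert; as written, the step where amenability of $A$ is supposed to enter is a gesture, not an argument, and you have not actually identified what amenability is for. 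The second unresolved point is the one you yourself flag at the end: an $\varepsilon$-version of the localization (for every $\varepsilon>0$ a finite $\mathcal{F}_\varepsilon$ capturing all but $\varepsilon$ of the mass) does not yield the stated conclusion, because the union $\bigvee_{\Sigma\in\mathcal{F}_\varepsilon}(A\rtimes\Sigma)^\omega\vee M$ grows with $\varepsilon$. One must rule out mass escaping to infinity in $\Gamma$ outside the (double) cosets of a \emph{single} finite subfamily, and this is exactly the content of the Effros-type spreading argument that $\textbf{NC}(\mathcal{G})$ is formulated to support; deferring it to ``the fine print of $\textbf{NC}(\mathcal{G})$'' leaves the crux of the theorem unproved. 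Until you write down the precise form of $\textbf{NC}(\mathcal{G})$ and run the mass-escape argument against it, the proposal is an accurate road map but not a proof.
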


%\include{chapter3}

% Chapter Three - Chapter Three Title
\chapter{Intertwining Results in Amalgamated Free Product von Neumann Algebras} \label{chapter:chapterlabeltwo} 

\section{Popa's intertwining techniques} 

Over a decade, S. Popa has developed the following powerful method in \cite[Theorem 2.1 and Corollary 2.3]{Po03} to identify intertwines between arbitrary subalgebras of tracial von Neumann algebras. 

In order to study the structural theory of von Neumann algebras, S. Popa introduced the following concept of the intertwining subalgebras which has been very instrumental in the recent development in the classification of von Neumann algebra.

\begin{thm}[Popa's intertwining by bimodule technique]\label{popaintertwing}
Let $(M,\tau)$ be a finite von Neumann algebra. 
Suppose $P, Q$ be von Neumann subalgebras of $M$. Then the following are equivalent:
\begin{enumerate}
\item There exist projections $p\in P$, $p\in Q$, a nonzero partially isometry $v\in pPq$ and a $*$-homomorphism $\psi:pPp\rightarrow qQq$ such that 
\begin{equation*}
\psi(x)v=vx\quad \text{for all $x\in pPp$.}
\end{equation*}
and such that $v^*v\in \psi(pPp)'\cap qMq$ and $vv^*\in (pPp)'\cap pMp$.
\item For any group $\mathcal G\subset  \mathcal U(P)$ such that $\mathcal G''= P$, there is no sequence $(u_n)_n\subset \mathcal G$ satisfying for all $x,y\in  M$
\[
\|E_{ Q}(xu_ny)\|_2\rightarrow 0.
\]
\item There exists a $Q$-$P$-submodule $\mathcal{H}$ of $L^2(M)$ with $\dim_Q\mathcal{H}<\infty$.
\item There exists a positive element $a\in \langle M, e_Q \rangle$; the basic construction with $\operatorname{Tr}(a)<\infty$ such that the ultraweakly closed convex hull of $\{w^*aw\,|\, w\in P\,\,\text{unitary}\}$ does not contain $0$.
\end{enumerate}
\end{thm}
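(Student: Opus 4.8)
The plan is to prove the four conditions mutually equivalent through the implications $(1)\Rightarrow(3)$, $(3)\Rightarrow(1)$, $(3)\Rightarrow(2)$, $(2)\Rightarrow(1)$, $(1)\Rightarrow(4)$ and $(4)\Rightarrow(2)$, which together connect all four. Everything takes place inside the basic construction $\langle M,e_Q\rangle$ with its semifinite normal trace $\operatorname{Tr}$, and I will repeatedly use the identities recorded in the basic-construction theorem above, namely $\operatorname{Tr}(xe_Qy)=\tau(xy)$, $e_Q z e_Q=E_Q(z)e_Q$, $e_Q\langle M,e_Q\rangle e_Q=Qe_Q$, $\langle M,e_Q\rangle'=JQJ$, and the fact that the central support of $e_Q$ is $1$. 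A useful device is the trace-preserving embedding $L^2(M)\hookrightarrow L^2(\langle M,e_Q\rangle,\operatorname{Tr})$, $\widehat x\mapsto xe_Q$, under which $\|E_Q(u^*x)\|_2^2=\langle ue_Qu^*\,\widehat x,\widehat x\rangle$ and similar formulas hold; these turn statements about the quantities $\|E_Q(xuy)\|_2$ into statements about the operators $ue_Qu^*\in\langle M,e_Q\rangle$.

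First I would set up the bimodule dictionary $(1)\Leftrightarrow(3)$. The implication $(1)\Rightarrow(3)$ is direct: from $\psi(x)v=vx$ one checks that $\mathcal H:=\overline{Qv}$ is a $Q$-$pPp$-bimodule inside $L^2(M)$, and since it is generated over $Q$ by the single vector $v\in M$ its $Q$-dimension is finite. The converse $(3)\Rightarrow(1)$ is the genuinely hard part: given a finite $Q$-$P$-bimodule $\mathcal H\subseteq L^2(M)$ one fixes a finite Pimsner--Popa basis $\{m_i\}_{i\le n}\subset M$ of $\mathcal H$ over $Q$; the $P$-action on $\mathcal H$ is then implemented in these coordinates by a normal $*$-homomorphism of $P$ into a corner $fM_n(Q)f$, and compressing by a suitable projection and polar-decomposing the resulting intertwining operator produces the projections $p\in P$, $q\in Q$, the partial isometry $v$ and the $*$-homomorphism $\psi$ of $(1)$. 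For $(3)\Rightarrow(2)$ one uses the same basis: it yields a bounded reconstruction of any $\xi\in\mathcal H$ from the finite family $\{E_Q(\xi m_i^*)\}_i$, so $\|\xi\|_2\le C\sum_i\|E_Q(\xi m_i^*)\|_2$; if some $(u_n)\subset\mathcal G$ satisfied $\|E_Q(xu_ny)\|_2\to0$ for all $x,y\in M$, then for a nonzero $\xi\in\mathcal H$ we would have $\xi u_n\in\mathcal H$ and $\|\xi\|_2=\|\xi u_n\|_2\le C\sum_i\|E_Q(\xi u_n m_i^*)\|_2\to0$, a contradiction.

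The remaining implications turn on convexity in $L^2(\langle M,e_Q\rangle,\operatorname{Tr})$. For $(4)\Rightarrow(2)$: if $a\ge0$ has $\operatorname{Tr}(a)<\infty$ and $0\notin\overline{\operatorname{conv}}^{w}\{w^*aw:w\in\mathcal U(P)\}$ but a bad sequence $(u_n)$ existed, then for $a=e_Q$ the identity $\operatorname{Tr}(se_Qt\,u_n^*e_Qu_n)=\tau\!\big(u_ns\,E_Q(tu_n^*)\big)$ together with $\|E_Q(tu_n^*)\|_2\to0$ gives $u_n^*e_Qu_n\to0$ ultraweakly; by linearity this holds with $e_Q$ replaced by any element of $\operatorname{span}\{se_Qt\}$, and an $\|\cdot\|_{1,\operatorname{Tr}}$-approximation (using $|\langle u_n^*(a-b)u_n\widehat x,\widehat y\rangle|\le\|a-b\|_{1,\operatorname{Tr}}\|x\|\|y\|$) upgrades it to $u_n^*au_n\to0$ ultraweakly for every finite-trace $a$, putting $0$ in the forbidden closure. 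For $(2)\Rightarrow(1)$ I argue by contraposition: assuming $(1)$ fails, let $a_0$ be the unique minimal $\|\cdot\|_{2,\operatorname{Tr}}$-norm element of the $\|\cdot\|_{2,\operatorname{Tr}}$-closed convex hull $\mathcal C$ of $\{ue_Qu^*:u\in\mathcal U(P)\}$. Since $\mathcal C$ is invariant under the $\operatorname{Tr}$-isometries $\operatorname{Ad}(w)$, $w\in\mathcal U(P)$, uniqueness forces $wa_0w^*=a_0$, so $a_0\in P'\cap\langle M,e_Q\rangle$ is positive with $\operatorname{Tr}(a_0)\le1$; if $a_0\ne0$ a spectral projection $e=\mathbf 1_{(\varepsilon,\infty)}(a_0)$ is a nonzero finite-trace projection in $P'\cap\langle M,e_Q\rangle$, so $eL^2(M)$ (or its image under $J$) is a finite $Q$-$P$-bimodule and $(3)\Rightarrow(1)$ would contradict the assumption. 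Hence $a_0=0$, i.e.\ there are convex combinations $a_n=\sum_j t_j^{(n)}u_j^{(n)}e_Q(u_j^{(n)})^*$ with $\|a_n\|_{2,\operatorname{Tr}}\to0$, equivalently $\sum_j t_j^{(n)}\|E_Q(xu_j^{(n)}y)\|_2^2\to0$ for each $x,y$; for each finite $F\subset M$ an averaging argument produces, for each $n$, an index $j(n)$ with $\sum_{x,y\in F}\|E_Q(xu_{j(n)}^{(n)}y)\|_2^2\to0$, and a diagonal argument over an increasing exhaustion of a countable $\|\cdot\|_2$-dense subset of $M$ (using separability) yields a single sequence witnessing the failure of $(2)$. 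Finally $(1)\Rightarrow(4)$: granted $(1)$, hence $(2)$, the preceding shows $a_0\ne0$ (else a bad sequence would contradict $(2)$), and then $a_0$ is a nonzero finite-trace positive element of $P'\cap\langle M,e_Q\rangle$, so $\{w^*a_0w\}=\{a_0\}$ and $0\notin\overline{\operatorname{conv}}^w\{w^*a_0w\}$.

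The main obstacle is the passage $(3)\Rightarrow(1)$, i.e.\ extracting the concrete intertwining package $(p,q,v,\psi)$ from a finite-trace $P$-central projection of the basic construction; this is the only point where one needs the full structure of $\langle M,e_Q\rangle$ and the Pimsner--Popa basis calculus, and it carries the real content of the theorem, the convexity and approximation steps being comparatively routine. A secondary nuisance is the extraction, in $(2)\Rightarrow(1)$, of a single sequence out of the vanishing convex combinations, which uses separability of $M$ (otherwise the statement of $(2)$ should be phrased with nets).
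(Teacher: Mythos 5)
The paper does not prove this theorem: it is stated as a recollection of Popa's result and cited directly from \cite{Po03} (see also \cite{BO08}, Appendix F), so there is no in-paper argument to compare against. Your outline is the standard proof from those references --- the bimodule dictionary for $(1)\Leftrightarrow(3)$, the Pimsner--Popa basis estimate for $(3)\Rightarrow(2)$, and the minimal $\|\cdot\|_{2,\operatorname{Tr}}$-norm element of the $\operatorname{Ad}(\mathcal U(P))$-invariant convex hull of $e_Q$ for the remaining implications --- and it is sound at the level of detail given. Two small points worth tightening: in $(1)\Rightarrow(3)$ the space $\overline{Qv}$ is a priori only a $Q$-$pPp$-bimodule, so one should either induce up to a $Q$-$P$-bimodule or route $(1)\Rightarrow(3)$ through the finite-trace projection in $P'\cap\langle M,e_Q\rangle$ produced by the convexity argument (whose $J$-image you correctly identify); and, as you note, extracting a single sequence in $(2)$ rather than a net requires $M$ to have separable predual, a hypothesis the paper's statement omits.
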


If one of the conditions in Theorem \ref{popaintertwing} above holds, we say \textbf{$Q$ embeds in $P$ inside $M$} and denoted by $P\prec_M Q$. Otherwise, we write $P\not\prec_M Q$. In the condition (1) the partial isometry $v$ is also called an \textbf{intertwiner} between $P$ and $Q$. 

Moreover, if we have $Pp'\prec_M Q$ for any nonzero projection $p'\in P'\cap 1_PM1_P$, then we write $P\prec_M^s Q$.

Next we record several well-known important results that will be used in the subsequent sections.

%For further use we record the following basic intertwining result in amalgamated free products von Neumann algebras.
% We prove specific intertwining results and record some results which we will need in the subsequence sections.
 
\begin{thm}[Corollary F.14 in \cite{BO08}]\label{BO08appendix}
Let $M$  be a finite von Neumann algebra with separable predual. Suppose $(A_n)\subset M$ is a sequence of von Neumann subalgebras and $N\subset pMp$ be a von Neumann subalgebra such that $N\not\prec_M A_n$ for any $n$. Then there exists a diffuse abelian von Neumann subalgebra $B\subset N$ such that $N\not \prec_M A_n$ for any $n$.
\end{thm}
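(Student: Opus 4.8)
The plan is to argue by contradiction via a maximality/exhaustion argument, producing the desired diffuse abelian subalgebra $B$ as the von Neumann algebra generated by a carefully chosen increasing sequence of finite-dimensional abelian subalgebras, while maintaining the non-intertwining condition $B\not\prec_M A_n$ for all $n$ at every finite stage. The key point to exploit is the characterization of intertwining in Theorem \ref{popaintertwing}(2): $B\not\prec_M A_n$ is equivalent to the existence, for each $n$, of a sequence of unitaries in $B$ that asymptotically kills $E_{A_n}(x\cdot y)$ for all $x,y$ in a fixed countable $\|\cdot\|_2$-dense subset of $M$ (here separability of the predual is used). So the task reduces to building a diffuse abelian $B\subset N$ together with, for each $n$, a witnessing sequence of unitaries in $B$.

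First I would fix a countable $\|\cdot\|_2$-dense subset $\{x_k\}_{k\ge 1}\subset (M)_1$. Since $N\not\prec_M A_n$, for each $n$ there is a sequence of unitaries in $\mathcal U(N)$ witnessing this; by a diagonal argument over the countably many pairs $(x_j,x_k)$ and over $n$, I can extract a single countable set of unitaries $\{v_m\}_{m\ge 1}\subset \mathcal U(N)$ such that for every $n$ some subsequence of $\{v_m\}$ still satisfies $\|E_{A_n}(x_j v_m x_k)\|_2\to 0$. Next I would build the abelian subalgebra. Starting from any single $v_{m_1}$, take a diffuse abelian (hence, after cutting, one can arrange a copy of $L^\infty[0,1]$ or at least an increasing chain of dyadic finite-dimensional abelian algebras) von Neumann subalgebra $B_1$ of $\{v_{m_1}\}''\subset N$; more carefully, one inductively chooses diffuse abelian $B_p\subset N$ increasing, with $v_{m_p}$ approximable in $\|\cdot\|_2$ by elements of $B_p$, so that $B:=(\bigcup_p B_p)''$ is diffuse abelian and $\|\cdot\|_2$-contains the relevant unitaries (or close enough approximants). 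The approximation in $\|\cdot\|_2$ suffices because the condition $\|E_{A_n}(x_j u x_k)\|_2\to 0$ is stable under replacing $u$ by a $\|\cdot\|_2$-close unitary, using $\|E_{A_n}\|_{2\to 2}\le 1$ and boundedness of $x_j,x_k$.

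The main obstacle is simultaneously guaranteeing that $B$ is diffuse abelian and that it genuinely contains (approximations of) a witnessing sequence for each $A_n$: one must ensure that the abelian algebra $B$ one builds is large enough to ``see'' a suitable net for every $n$, yet an abelian algebra generated by a single diffuse element can already be quite flexible, so the real content is organizing the induction so that the $v_{m_p}$'s that remain good for $A_n$ (for each $n$, infinitely many) are all captured, up to arbitrarily small $\|\cdot\|_2$-error, inside $B$. I would handle this by choosing at step $p$ a finite-dimensional abelian $C_p\subset N$ containing a $2^{-p}$-approximant of each of $v_{m_1},\dots,v_{m_p}$, then enlarging $C_p\subset B_p$ to a diffuse abelian subalgebra of $N$ (possible since $N$ is a II$_1$ factor, or at least diffuse, using that any finite-dimensional subalgebra of a diffuse finite von Neumann algebra sits inside a diffuse abelian one), and arranging $B_{p-1}\subset B_p$; then $B=(\bigcup_p B_p)''$ works. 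Finally I would verify: $B$ is abelian as an increasing union of abelian algebras; $B$ is diffuse since each $B_p$ is; and for each fixed $n$, picking the subsequence of indices $m_p$ that witness $A_n$ and the corresponding $\|\cdot\|_2$-approximants $w_p\in B$, one gets unitaries (after a small polar-decomposition correction inside $B$, which stays in $B$ since $B$ is a von Neumann algebra) in $\mathcal U(B)$ with $\|E_{A_n}(x_j w_p x_k)\|_2\to 0$ for all $j,k$, hence $B\not\prec_M A_n$ by Theorem \ref{popaintertwing}(2).
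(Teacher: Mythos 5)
The paper offers no proof of this statement---it is quoted verbatim from [BO08, Corollary F.14]---so the only question is whether your argument stands on its own. It does not: there is a genuine gap at the heart of the construction. Your plan is to extract, by diagonalization, a countable family $\{v_m\}\subset\mathcal U(N)$ of witnessing unitaries and then to build an increasing chain of abelian algebras $C_p\subset B_p\subset N$ such that $C_p$ contains a $2^{-p}$-approximant in $\|\cdot\|_2$ of each of $v_{m_1},\dots,v_{m_p}$. But the $v_m$ are essentially arbitrary unitaries of $N$ and need not commute, even approximately, and non-commuting unitaries cannot in general be simultaneously approximated inside a single abelian algebra. Concretely, if $u,v\in N$ are two free Haar unitaries (e.g.\ the generators of a copy of $L(\mathbb F_2)$ inside $N$), then $\|uv-vu\|_2=\sqrt 2$, whereas any commuting contractions $a,b$ with $\|a-u\|_2\le\varepsilon$ and $\|b-v\|_2\le\varepsilon$ give $\sqrt 2=\|uv-vu\|_2\le\|uv-ab\|_2+\|ba-vu\|_2\le 4\varepsilon$; hence no abelian algebra contains $\varepsilon$-approximants of both once $\varepsilon<\sqrt 2/4$. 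Your induction therefore already breaks at the second step, and the sentence where you declare that ``the real content is organizing the induction so that the $v_{m_p}$'s are all captured inside $B$'' is exactly the point at which the argument fails---that difficulty is the whole content of the theorem, not a bookkeeping issue.

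The reduction you set up beforehand (countable $\|\cdot\|_2$-dense subset, diagonalization over $n$ and over pairs $(x_j,x_k)$, stability of $\|E_{A_n}(x_j u x_k)\|_2\to 0$ under small $\|\cdot\|_2$-perturbations of $u$) is correct and is indeed part of the standard argument, as is the polar-decomposition correction at the end. What is missing is the actual mechanism for producing $B$: in [BO08] one does not import witnesses from $N$ into $B$ at all. Instead $B$ is built as the weak closure of an increasing sequence of finite-dimensional abelian subalgebras of $N$ obtained by dyadically splitting projections of equal trace, and the witnessing unitaries for $B\not\prec_M A_n$ are manufactured from $B$'s own minimal projections (unitaries of the form $\sum_j\lambda^jp_j$ with $\lambda$ a root of unity). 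The hypothesis $N\not\prec_M A_n$ is invoked afresh at each refinement step---via criterion (4) of Theorem \ref{popaintertwing}, with the positive element $a$ in the basic construction, or an equivalent averaging argument---to choose how each projection is split so that the relevant conditional-expectation quantities remain small. Repairing your proof amounts to replacing your approximation step by this internal construction.
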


\begin{prop}\label{intertamalgam} Let $M=M_1\ast_P M_2$ be an amalgamated free product von Neumann algebra.
If for each $i$ there is a unitary $u_i\in \mathcal U(M_i)$ such that $E_P(u_i)=0$ then
\begin{center}
$M\nprec_M M_k$ \,\,\,for all\,\,\, $k=1,2$. 
\end{center}
\end{prop}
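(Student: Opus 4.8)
\section*{Proof proposal}

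My plan is to disprove $M\prec_M M_k$ by violating criterion (2) of Theorem~\ref{popaintertwing} applied with $P=M$ and $Q=M_k$; that is, I will exhibit a subgroup $\mathcal G\subset\mathcal U(M)$ with $\mathcal G''=M$ and a sequence $(w_n)_n\subset\mathcal G$ such that $\|E_{M_k}(xw_ny)\|_2\to 0$ for all $x,y\in M$. For $\mathcal G$ I take $\mathcal U(M)$ itself, and I set $w_n=u_1u_2u_1u_2\cdots$ ($n$ alternating factors), which is a unitary of $M$. Recall the standard Hilbert-space decomposition of the amalgam,
\[
L^2(M)=L^2(P)\oplus\bigoplus_{n\ge 1}\ \bigoplus_{i_1\ne i_2\ne\cdots\ne i_n} H^\circ_{i_1}\otimes_P\cdots\otimes_P H^\circ_{i_n},\qquad H^\circ_i=L^2(M_i)\ominus L^2(P),
\]
and call an element $a_1a_2\cdots a_m$ with $a_j\in M_{i_j}$, $E_P(a_j)=0$ and $i_j\ne i_{j+1}$ a \emph{reduced word of length $m$} (elements of $P$ having length $0$). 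Since $E_P(u_i)=0$, each $w_n$ is a reduced word of length exactly $n$.

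The heart of the proof is a length bookkeeping. First, $L^2(M_k)=L^2(P)\oplus H^\circ_k$ is contained in the sum of the length-$0$ and length-$1$ parts of $L^2(M)$, so $E_{M_k}(\xi)=0$ for any $\xi\in L^2(M)$ all of whose components have length $\ge 2$. Second, left multiplication of a reduced word of length $n$ by a single generator of the amalgam (an element of some $M_i$ with vanishing $E_P$, or an element of $P$) yields a vector whose components all have length $\ge n-1$: this is checked directly, the only drop happening when the indices agree at the junction and a $P$-valued coefficient is produced. Iterating over the $\le L$ generators composing a reduced word, and doing the same on the right, we get that if $x,y$ are finite linear combinations of reduced words of length $\le L$ then $xw_ny$ has all components of length $\ge n-2L$; hence $E_{M_k}(xw_ny)=0$ as soon as $n\ge 2L+2$.

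Finally I pass to arbitrary $x,y\in M$. The $\ast$-algebra generated by $M_1\cup M_2$ is $\|\cdot\|_2$-dense in $L^2(M)$ and, by the usual reduction of alternating products (which never increases length), each of its elements is a finite linear combination of reduced words of bounded length; by Kaplansky density I may approximate $x$ (resp.\ $y$) in $\|\cdot\|_2$ by such elements $x'$ (resp.\ $y'$) with $\|x'\|_\infty\le\|x\|_\infty$ (resp.\ $\|y'\|_\infty\le\|y\|_\infty$). Using that $E_{M_k}$ is an $L^2$-contraction together with $\|ab\|_2\le\|a\|_2\|b\|_\infty$ and $\|ab\|_2\le\|a\|_\infty\|b\|_2$, one has
\[
\|E_{M_k}(xw_ny)\|_2\le\|x-x'\|_2\|y\|_\infty+\|x\|_\infty\|y-y'\|_2+\|E_{M_k}(x'w_ny')\|_2,
\]
and the last term is $0$ once $n$ is large. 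Hence $\|E_{M_k}(xw_ny)\|_2\to 0$ for all $x,y\in M$, contradicting condition (2) of Theorem~\ref{popaintertwing}; therefore $M\nprec_M M_k$ for $k=1,2$. The only delicate point is the second paragraph — making precise that a word of bounded length cannot telescope a long reduced word down below length $2$ — which is exactly where the amalgamated free product structure enters; the density/approximation step is routine.
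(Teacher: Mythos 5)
Your proposal is correct and follows essentially the same route as the paper: the paper's (one-line) proof takes the single unitary $u=u_1u_2$, uses the sequence $(u^n)_n$ of reduced words of growing length, invokes ``freeness and basic approximation properties'' to get $\|E_{M_k}(xu^ny)\|_2\to 0$, and concludes via condition (2) of Theorem~\ref{popaintertwing}. Your alternating words $w_n$ play the identical role, and your length-bookkeeping plus Kaplansky approximation is precisely the standard argument the paper leaves implicit.
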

\begin{proof} Let  $u=u_1u_2\in \mathcal U(M)$. Using freeness and basic approximation properties one can see that $\lim_{n\rightarrow \infty}\|E_{M_k} (x u^n y)\|_2=0$ for all $x,y\in M$. Then Theorem \ref{popaintertwing} (b) gives the conclusion. %If $M, M_i$ are factors, $P'\cap M\subset P$ and $M\prec_M M_1$ then either $M=M_1$ or, $M=M_2$ and $[M:M_1]<\infty$.
\end{proof}

\begin{thm}[Lemma 2.2 in \cite{CI17}]\label{CI17lemma2.2}
Let $\Gamma_1, \Gamma_2\leq \Gamma$ be countable groups such that 
\[
L(\Gamma_1)\prec_{L(\Gamma)}L(\Gamma_2).
\]
Then there exists  $g\in\Gamma$ such that $[\Gamma_1:\Gamma_1\cap g\Gamma_2g^{-1}]<\infty$.
\end{thm}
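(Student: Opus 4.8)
The plan is to unwind Popa's criterion, in the form of condition (2) of Theorem~\ref{popaintertwing}, on the standard unitary generators of $L(\Gamma_1)$, translate it into a statement about cosets in $\Gamma$, and then invoke a classical covering lemma for groups.

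First I would apply condition (2) with $\mathcal G=\{\lambda_g : g\in\Gamma_1\}$, which is a group generating $L(\Gamma_1)$. For $g\in\Gamma_1$ and $h,k\in\Gamma$ one computes directly $E_{L(\Gamma_2)}(\lambda_h\lambda_g\lambda_k)=\mathbf 1_{\Gamma_2}(hgk)\,\lambda_{hgk}$, so $\|E_{L(\Gamma_2)}(\lambda_h\lambda_g\lambda_k)\|_2$ equals $1$ when $hgk\in\Gamma_2$ and $0$ otherwise. A routine density reduction — approximating arbitrary $x,y\in L(\Gamma)$ in $\|\cdot\|_2$ by finitely supported elements of operator norm at most $\|x\|_\infty,\|y\|_\infty$ via the Kaplansky Density Theorem, and using $\|\lambda_{g_n}\|_\infty=1$ to control the errors $\|x\lambda_{g_n}(y-y')\|_2$ and $\|(x-x')\lambda_{g_n}y'\|_2$ uniformly in $n$ — shows that $L(\Gamma_1)\not\prec_{L(\Gamma)}L(\Gamma_2)$ is equivalent to the existence of a sequence $(g_n)_n\subset\Gamma_1$ such that for every pair $h,k\in\Gamma$ one has $hg_nk\notin\Gamma_2$ for all but finitely many $n$.

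Next I would run a diagonalization over a fixed enumeration of $\Gamma\times\Gamma$: if $\Gamma_1$ were not contained in a finite union of sets of the form $\{g\in\Gamma : hgk\in\Gamma_2\}$, one could choose $g_n\in\Gamma_1$ avoiding the first $n$ such sets and thereby manufacture a forbidden sequence. Hence $L(\Gamma_1)\prec_{L(\Gamma)}L(\Gamma_2)$ forces
\[
\Gamma_1\subseteq\bigcup_{j=1}^m\{g\in\Gamma : h_jgk_j\in\Gamma_2\}=\bigcup_{j=1}^m h_j^{-1}\Gamma_2 k_j^{-1}
\]
for finitely many $h_j,k_j\in\Gamma$. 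Writing $h_j^{-1}\Gamma_2 k_j^{-1}=(h_j^{-1}\Gamma_2 h_j)(h_j^{-1}k_j^{-1})$ exhibits each of these sets as a right coset of the conjugate subgroup $H_j:=h_j^{-1}\Gamma_2 h_j$. Intersecting with $\Gamma_1$ and picking, for each $j$ with $\Gamma_1\cap H_j(h_j^{-1}k_j^{-1})\neq\emptyset$, a point $t_j$ in it (so that $t_j\in\Gamma_1$), one gets $\Gamma_1\cap H_j(h_j^{-1}k_j^{-1})=(\Gamma_1\cap H_j)t_j$; thus $\Gamma_1$ is covered by finitely many right cosets of the subgroups $\Gamma_1\cap H_j\leq\Gamma_1$.

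Finally I would invoke the classical theorem of B.~H.~Neumann on coset coverings: a group covered by finitely many cosets of subgroups is already covered by those among them of finite index, so some $\Gamma_1\cap H_j$ has finite index in $\Gamma_1$. Taking $g=h_j^{-1}$ gives $[\Gamma_1:\Gamma_1\cap g\Gamma_2 g^{-1}]<\infty$, as claimed. The step I expect to need the most care is the density reduction from arbitrary $x,y$ to group elements in the second paragraph, where one must be sure the Kaplansky approximants have operator norm bounded by $\|x\|_\infty$ and $\|y\|_\infty$ so that the error estimates are genuinely uniform in $n$; everything after that is elementary group combinatorics together with the cited covering lemma.
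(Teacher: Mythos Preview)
Your proof is correct. The paper does not actually supply its own proof of this statement: it is quoted verbatim as Lemma~2.2 of \cite{CI17} and left unproved, so there is nothing in the paper to compare against line by line. That said, the argument you give --- reducing Popa's criterion~(2) to a statement about finitely many double cosets $h^{-1}\Gamma_2 k^{-1}$ covering $\Gamma_1$, rewriting these as cosets of conjugates $h^{-1}\Gamma_2 h$, and then applying B.~H.~Neumann's covering lemma --- is exactly the standard proof and is the one given in \cite{CI17}. Your handling of the density reduction is fine; as you note, the only point requiring care is bounding $\|y'\|_\infty$ uniformly, and Kaplansky does that.
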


%\include{chapter4}

% Chapter Four - Chapter Four Title
\chapter{Finite index inclusions of von Neuman algebras} \label{chapter:chapterlabel3}

In this section we recall several basic facts from the pioneering work of V.F.R Jones \cite{Jo81} on the theory of finite index inclusion of factors.

\begin{definition}
Let $B\subset M$ be an inclusion of finite von Neumann algebras. The a set $(m_i)_{1\leq i\leq n}\in M$ is called a (left) \textbf{Pimsner-Popa} basis if $m\in M$ has a unique expression form
\[
m=\sum_{i=1}^n m_ib_i
\]
where $b_i\in p_iB$. 
\end{definition}

\begin{thm}
Let $M$ be a II$_1$ factor and $B\subset M$ a von Neumann subalgebra.  Then $L^2(M)_B$ is finite generated  if and only if $m_1,\ldots,m_n\in M$ such that
\begin{enumerate}
\item[(i)] $E_{B}(m_i^*m_j)=\delta_{i,j}p_j$ is a projection in $B$ for all $i,j$;
\item[(ii)] $\sum_{1\leq i\leq n} m_ie_Bm_i^*=1$.
\end{enumerate}
If these conditions hold, we have $\sum_{1\leq i\leq n}m_im_i*=\dim(L^2(M)_B)1$ and $x=\sum_{1\leq i\leq n}m_iE_B(m_i^*x)=1$ for every $x\in M$.
\end{thm}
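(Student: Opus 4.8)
The plan is to establish the equivalence in two directions, using the basic construction $\langle M, e_B\rangle$ together with the trace $\mathrm{Tr}$ and its properties recorded earlier.

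First I would prove the ``if'' direction. Assume $m_1,\dots,m_n\in M$ satisfy (i) and (ii). The element $f=\sum_i m_i e_B m_i^*$ is, by (ii), equal to $1$; but more to the point, I would first check that each $m_i e_B m_i^*$ is a projection in $\langle M, e_B\rangle$ and that these projections are mutually orthogonal, using (i): indeed $(m_i e_B m_i^*)(m_j e_B m_j^*) = m_i e_B E_B(m_i^* m_j) e_B m_j^* = m_i e_B \delta_{i,j} p_j e_B m_j^*$, so for $i\neq j$ this vanishes, and for $i=j$ it reduces to $m_i e_B p_i e_B m_i^* = m_i e_B m_i^*$ since $e_B p_i = p_i e_B$ commute (as $p_i\in B$) and $E_B(m_i^* m_i) = p_i$. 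Thus $L^2(M)$ decomposes as an orthogonal sum of the ranges of $m_i e_B$, and the map $L^2(B)^{\oplus n}\to L^2(M)$ sending $(b_i)_i\mapsto \sum_i m_i b_i$ is a right $B$-modular isometry onto $L^2(M)$; hence $L^2(M)_B$ is finitely generated with $\dim(L^2(M)_B) = \sum_i \tau(p_i) = \sum_i \tau(E_B(m_i^* m_i)) = \sum_i \tau(m_i^* m_i)$. The identity $x = \sum_i m_i E_B(m_i^* x)$ then follows by applying the projection decomposition to $\widehat{x}\in L^2(M)$, since $m_i e_B \widehat{x} = m_i \widehat{E_B(m_i^* x)}$; and $\sum_i m_i m_i^* = \dim(L^2(M)_B)\cdot 1$ follows by taking $\mathrm{Tr}$-preserving manipulations, or directly by computing $\tau$ of both sides against arbitrary $y\in M$ using $\mathrm{Tr}(x e_B y) = \tau(xy)$.

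For the ``only if'' direction, suppose $L^2(M)_B$ is finitely generated, i.e. $\dim_B L^2(M) < \infty$. Then in the basic construction $\langle M, e_B\rangle = JB'J$, the identity $1$ is a finite projection, and $\langle M, e_B\rangle$ is a finite von Neumann algebra with $\mathrm{Tr}(1) = \dim_B L^2(M) < \infty$. Since $e_B$ has central support $1$ and $e_B\langle M, e_B\rangle e_B = B e_B \cong B$, a standard comparison argument in the finite algebra $\langle M, e_B\rangle$ lets me write $1$ as a finite orthogonal sum $1 = \sum_{i=1}^n f_i$ where each $f_i$ is a subprojection of (a projection equivalent to a piece of) $e_B$; more precisely each $f_i$ is equivalent to a projection $e_B p_i$ with $p_i\in B$, via a partial isometry $w_i\in \langle M, e_B\rangle$ with $w_i^* w_i = e_B p_i$, $w_i w_i^* = f_i$. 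Using density of $M e_B$ in $\langle M, e_B\rangle e_B$ (property (4) of the basic construction) I can approximate, and in fact arrange exactly, $w_i = m_i e_B$ for some $m_i\in M$: this is where the argument is most delicate, and it is the step I expect to be the main obstacle — one must pass from an abstract partial isometry in the von Neumann algebra to one of the special form $m_i e_B$, which requires the polar-decomposition/approximation machinery for the basic construction and the identification $e_B\langle M,e_B\rangle e_B = Be_B$. Granting this, $w_i^* w_i = e_B m_i^* m_i e_B = E_B(m_i^* m_i) e_B$, so condition (i) with $i=j$ reads $E_B(m_i^* m_i) = p_i$, a projection; the orthogonality $f_i f_j = 0$ for $i\neq j$ together with $\sum f_i = 1$ gives both that $E_B(m_i^* m_j) e_B = w_i^* w_j = w_i^*(w_i w_i^*)(w_j w_j^*) w_j$-type manipulations force $E_B(m_i^* m_j) = 0$ for $i\neq j$ (completing (i)) and that $\sum_i m_i e_B m_i^* = \sum_i f_i = 1$ (which is (ii)). The remaining formulas are then obtained exactly as in the first direction.

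The main obstacle, as indicated, is the normalization step: upgrading an abstract Murray--von Neumann equivalence of $1$ with a sum of subprojections of $e_B$ into the explicit Pimsner--Popa form with partial isometries $m_i e_B$ and with the clean orthonormality relations $E_B(m_i^* m_j) = \delta_{i,j} p_j$. Everything else is bookkeeping with the seven listed properties of $\langle M, e_B\rangle$ and with $\mathrm{Tr}(x e_B y) = \tau(xy)$.
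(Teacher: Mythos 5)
The paper states this theorem without proof---it is recalled background from Jones' index theory and Pimsner--Popa [PP86]---so there is no in-text argument to measure yours against; I can only assess the proposal on its own terms. Your ``if'' direction is complete and correct: $e_Bm_i^*m_je_B=E_B(m_i^*m_j)e_B=\delta_{ij}p_je_B$ makes each $m_ie_B$ a partial isometry with pairwise orthogonal range projections $m_ie_Bm_i^*$, condition (ii) then identifies $L^2(M)$ with $\bigoplus_i p_iL^2(B)$ as right $B$-modules, and the identities $x=\sum_i m_iE_B(m_i^*x)$ and $\sum_i m_im_i^*=\dim(L^2(M)_B)1$ follow as you indicate (for the latter, note that you are implicitly using that $\operatorname{Tr}|_M$ is a trace on the \emph{factor} $M$, hence a multiple of $\tau$; this is where the factoriality hypothesis enters).

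The ``only if'' direction, however, has a genuine gap at exactly the step you flag, and flagging it is not the same as closing it. Writing $1=\sum_i f_i$ with $f_i=w_iw_i^*$ and $w_i^*w_i=p_ie_B$ is fine, but a partial isometry $w_i\in\langle M,e_B\rangle$ with $w_i=w_ie_B$ need \emph{not} have the form $m_ie_B$ with $m_i\in M$, and this cannot be ``arranged exactly'' by density alone: the correspondence $xe_B\mapsto \widehat{x}$ only identifies $w_i$ with a right $B$-bounded vector $\xi_i\in L^2(M)$ satisfying $w_i=L_{\xi_i}e_B$, and such a vector generally lies outside $M$ (for $B=\mathbb{C}$, every unit vector of $L^2(M)$ arises this way). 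The standard repair is a genuine perturbation argument: approximate $w_i$ in $\|\cdot\|_{2,\operatorname{Tr}}$ by $xe_B$ with $x\in M$; set $a=E_B(x^*x)\in B$, cut down by a spectral projection $q=\chi_{[\delta,\infty)}(a)$ so that $m=xqa^{-1/2}q$ is a \emph{bounded} element of $M$ and $me_B$ is a partial isometry with $e_Bm^*me_B=qe_B$; check that $me_Bm^*$ captures all but a piece of $f_i$ of small trace; iterate and then regroup the countably many pieces into finitely many using $\operatorname{Tr}(1)<\infty$ and the comparison of projections under $e_B$ inside $e_B\langle M,e_B\rangle e_B=Be_B$. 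This is precisely the content of Proposition 1.3 of [PP86] (see also Popa's subfactor notes, 1.1.4). As written, your second direction is an accurate road map with the hardest kilometer left unbuilt, so the proof is incomplete.
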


\begin{definition}
Let $B$ be a subfactor of a II$_1$ factor $M$. The \textbf{Jones' index} of $B$ in $M$ is defined as the dimension of $L^2(M)$ as a left $B$-module, i.e.,
\begin{equation*}
[M:B]=\dim_{\mathbb{C}}(L^2(M)_B).
\end{equation*}

\end{definition}
By the definition, we have
$[M:B]$ is finite \textit{if and only if} $\langle M,e_B\rangle$ is a type II$_1$ factor \textit{if and only if}  $L^2(M)_B$ is finitely generated.

\begin{thm}[Downward basic construction, Lemma 3.1.8 in \cite{Jo81}]\label{downwardbasicconstruction}
Let $N\subset M$ be II$_1$ factors such that $[M:N]<\infty$. Then there exists a subfactor $P\subset N$ and a projection $e_P\in M$ such that 
\begin{itemize}
\item $E_P(e_P)=\tau(e_P)1$,
\item $e_pxe_P=E_p(x)e_P$ for all $x\in N$, and  
\item $M=\langle N,e_P\rangle$.
\end{itemize}
\end{thm}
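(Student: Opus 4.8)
# Proof Proposal for the Downward Basic Construction

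The plan is to reduce the statement to the standard basic-construction picture by running the Jones tower one step backwards, and the key device will be the finite-index hypothesis $[M:N]<\infty$, which guarantees that $\langle M, e_N\rangle$ is again a type II$_1$ factor with a Pimsner--Popa basis for $N\subset M$. First I would fix a faithful normal trace on $M$ and form the basic construction $\langle M, e_N\rangle$ from the inclusion $N\subset M$; by the finite-index hypothesis this is a II$_1$ factor, and by the tower relations it carries a trace extending $\tau$ with $[\langle M,e_N\rangle : M] = [M:N] < \infty$. The heart of the argument is then to locate, inside $M$ itself, a copy of the would-be downward algebra $P$ together with the projection $e_P$ implementing $E_P$.

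Next I would use the structure of $\langle M, e_N\rangle$ and the Pimsner--Popa basis $(m_i)$ for $N\subset M$ — which exists by the finite-index theorem recorded above, satisfying $E_N(m_i^*m_j)=\delta_{ij}p_j$ and $\sum_i m_i e_N m_i^* = 1$ — to produce a projection $f$ in the relative commutant $N'\cap \langle M, e_N\rangle$, or more precisely a projection in $M$ playing the role of $e_P$, with trace $\tau(e_N)= [M:N]^{-1}$. The idea is that the inclusion $M \subset \langle M, e_N\rangle$ is itself a finite-index inclusion of II$_1$ factors, so one can hunt for the subfactor $P\subset N$ by declaring $P := \{x\in N : x e_P = e_P x\} = \{e_P\}'\cap N$ once $e_P$ is chosen, and then checking that $N = \langle P, e_P\rangle$ is the basic construction of $P\subset N$. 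Concretely, I would look for $e_P$ as a projection in $M$ conjugate to $e_N$ via a partial isometry in $\langle M,e_N\rangle$ built from the basis elements, so that the characteristic relations $e_P x e_P = E_P(x)e_P$ for $x\in N$ and $E_P(e_P)=\tau(e_P)1$ follow formally from the corresponding relations for $e_N$.

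The three bullet-point conclusions should then fall out in order: (i) $E_P(e_P)=\tau(e_P)1$ from $P$ being a factor and a trace computation (using part (4) of the conditional-expectation theorem and the trace formula $\mathrm{Tr}(xe_By)=\tau(xy)$); (ii) $e_P x e_P = E_P(x) e_P$ for $x\in N$ directly from the definition of $P$ as the relative commutant of $e_P$ in $N$ together with the pull-down identity; and (iii) $M = \langle N, e_P\rangle$ by comparing traces — the basic construction $\langle N, e_P\rangle$ sits inside $M$, is a II$_1$ factor of the correct index $[M:N]=[N:P]$ over $N$, and two II$_1$ factors one containing the other with equal index must coincide.

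The main obstacle I anticipate is the construction of $e_P$ itself: one must exhibit a projection in $M$ (not merely in $\langle M, e_N\rangle$) that is Murray--von Neumann equivalent inside $\langle M, e_N\rangle$ to $e_N$ and whose compression algebra relates correctly to $N$. This requires working carefully with the spatial picture $\langle M, e_N\rangle = JN'J$ and using that $M e_N M$ is weakly dense in $\langle M, e_N\rangle$, so that the equivalence $e_N \sim 1$ in $\langle M, e_N\rangle$ can be refined to an equivalence of $e_N$ with a subprojection realized by an element of $M$. Once that projection is pinned down, everything else is a bookkeeping exercise with conditional expectations and traces; I do not expect any further conceptual difficulty.
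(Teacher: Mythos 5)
You should first note that the paper does not prove this statement at all --- it is quoted verbatim as Lemma 3.1.8 of \cite{Jo81} --- so there is no in-paper argument to match; I am judging your plan on its own terms. Your overall strategy (pass to $\langle M, e_N\rangle$, use that it is a II$_1$ factor of index $[M:N]$ over $M$, locate a projection $e_P\in M$ of trace $[M:N]^{-1}$, set $P:=\{e_P\}'\cap N$, and finish by a trace/index comparison) is indeed the standard route, and your closing step (iii) is fine. But there is a genuine gap at exactly the point you flag as ``the main obstacle'': you assert that once $e_P\in M$ is chosen Murray--von Neumann equivalent to $e_N$ inside $\langle M,e_N\rangle$, the relations $e_Px e_P=E_P(x)e_P$ and $E_P(e_P)=\tau(e_P)1$ ``follow formally from the corresponding relations for $e_N$.'' They do not. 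The partial isometry $v$ with $v^*v=e_N$, $vv^*=e_P$ lives in $\langle M,e_N\rangle$, not in $M$; conjugating the identity $e_Nye_N=E_N(y)e_N$ (valid for $y\in M$) by $v$ only tells you that $y\mapsto vyv^*$ factors through $E_N$, and says nothing about $e_Pxe_P$ for $x\in N$, because $v^*xv$ need not lie in $M$. Worse, equivalence to $e_N$ in $\langle M,e_N\rangle$ plus the trace condition is simply not enough information about $e_P$: take $[M:N]=4$ and let $e_P$ be a projection of trace $1/4$ lying inside $N$ itself. This $e_P$ is equivalent to $e_N$ in the factor $\langle M,e_N\rangle$, yet $\{e_P\}'\cap N=e_PNe_P+(1-e_P)N(1-e_P)$ has nontrivial center, so $P$ is not a subfactor and every bullet point fails.

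The missing idea is that $e_P$ must satisfy the Markov-type condition $E_N(e_P)=[M:N]^{-1}1$ (which your counterexample above violates, since there $E_N(e_P)=e_P$). Producing a projection in $M$ with scalar expectation onto $N$ is the actual content of Jones' lemma, and it cannot be extracted from trace bookkeeping or from the Pimsner--Popa basis by a ``formal'' conjugation; one has to either run Jones' original argument or, in the Pimsner--Popa formulation, build $e_P$ explicitly from the basis and verify $fMf=(\{f\}'\cap N)f$ before the relative-commutant definition of $P$ becomes legitimate. (Two small slips besides: in your second paragraph you write ``$N=\langle P,e_P\rangle$'' where you mean $M=\langle N,e_P\rangle$, and the trace of $e_P$ is $[M:N]^{-1}$, which equals $\tau(e_N)$ only in the $\operatorname{Tr}$-normalization on $\langle M,e_N\rangle$, not the trace of $e_N$ as an element of $M$, where it does not live.)
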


While V.F.R. Jones defined the notation of finite index on factors, Pimsner - Popa found a more probabilistic general notion of finite index that works for all inclusions of finite von Neumann algebras.

\begin{definition}[\cite{PP86}]\label{theorem2.2PP86}
If $B\subset M$ is a subfactor of the type II$_1$ factor, then
\begin{align*}
[M:B]^{-1}
&= \inf\big\{ \|E_B(x)\|^2_2/\|x\|^2_2 \,|\, x\in M_+,  x\neq 0\big\}
\end{align*}
with the convention $\infty^{-1}=0.$
If $[M:B]\neq 0$ then we says that $B\subset M$ has textbf{finite index} or is an finite index inclusion. In the case that $B\subset M$ are II$_1$ factors then it coincides with the notion of indexes by Jones.
\end{definition}

For the following proposition, we record some basic properties of finite index inclusions of von Neumann algebras that will be needed throughout our work. Even if they are well known, we also include their proofs for the sake of completeness.

\begin{prop}\label{finiteindexbasicprop} 
Let $N\subset M$ be von Neumann algebras with $[M:N]<\infty$. Then the following hold:\begin{enumerate}
\item \label{20} If $N$ is a factor, then 
\[
\operatorname{dim}_{\mathbb C}(N'\cap M)\leq [M:N]+1.
\] 
\item \label{20'}\cite[1.1.2(iv)]{Po95} If $\mathcal Z(M)$ is purely atomic\footnote{The unit 1 can be expressed as a sum of minimal projection} then $\mathcal Z(N)$ is also purely atomic, .  
\item\label{20''} \cite[1.1.2(ii)]{Po95} If $N$ is a factor and $r\in N'\cap M$ then 
\[
[rMr:Nr]\leq \tau(r) [M:N]<\infty.
\]
\end{enumerate}
\end{prop}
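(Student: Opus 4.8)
The plan is to establish the three items separately, each by reducing to the basic construction $\langle M, e_N\rangle$, which is a type II$_1$ factor precisely because $[M:N]<\infty$.

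For (1), I would observe that the relative commutant $N'\cap M$ embeds into the relative commutant $N'\cap \langle M,e_N\rangle$, and the latter is more tractable. Writing $J$ for the canonical conjugation on $L^2(M)$, Theorem on the basic construction gives $\langle M,e_N\rangle = JN'J$, so that $N'\cap \langle M,e_N\rangle = J\mathcal Z(N')J \cong \mathcal Z(N)' \cap \dots$; more directly, since $N$ is a factor, $e_N$ has central support $1$ in $\langle M,e_N\rangle$ and $e_N\langle M,e_N\rangle e_N = Ne_N \cong N$. I would use the trace $\operatorname{Tr}$ on $\langle M,e_N\rangle$ normalized so that $\operatorname{Tr}(e_N)=1$; then $\operatorname{Tr}(1)=[M:N]$. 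Any family of orthogonal nonzero projections in $N'\cap M$ yields, after compression by $e_N$, orthogonal projections in $(N'\cap M)e_N \subset e_N\langle M,e_N\rangle e_N = Ne_N$, a factor; counting dimensions via the trace and noting that the ``extra'' $+1$ accounts for the projection $1-e_N$ relative to the identity gives the bound $\dim_{\mathbb C}(N'\cap M)\le [M:N]+1$. The cleanest route is: a maximal family of minimal projections of $N'\cap M$ has at most $[M:N]+1$ members because their images under a suitable trace-preserving map into $\langle M,e_N\rangle$ (a finite factor of ``size'' $[M:N]$ plus the correction term) must have traces summing appropriately.

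For (2), this is quoted from \cite[1.1.2(iv)]{Po95}, so I would simply cite it; if a proof sketch is wanted, one notes that when $\mathcal Z(M)$ is purely atomic we may cut down by a minimal central projection of $M$ and reduce to the case $M$ a factor, whereupon $N'\cap M$ is finite-dimensional by a version of (1), hence $\mathcal Z(N)\subset N'\cap M$ is purely atomic; the general case follows by assembling over the atoms of $\mathcal Z(M)$. For (3), again quoted from \cite[1.1.2(ii)]{Po95}: with $r\in N'\cap M$ a projection, $rMr$ contains $Nr\cong N$, and one compares Pimsner--Popa bases. Concretely, if $(m_i)$ realizes $[M:N]$ via $\sum_i m_i e_N m_i^* = 1$ and $E_N(m_i^* m_j)=\delta_{ij}p_j$, then $(rm_i r)_i$ — or rather the compressions $rm_i$ viewed inside $rMr$ — furnish a generating family for $L^2(rMr)$ over $Nr$, and the index estimate follows from $\operatorname{Tr}$-computations: $[rMr:Nr] = \operatorname{Tr}(r\langle M,e_N\rangle r)/\operatorname{Tr}(e_{Nr})$, and since $r$ commutes with $N$ one gets $\operatorname{Tr}(r \cdot r) \le \tau(r)\operatorname{Tr}(1) = \tau(r)[M:N]$.

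The main obstacle is part (1): one must be careful that $N'\cap M$ really does inject trace-compatibly into the basic construction and that the bookkeeping produces exactly $[M:N]+1$ rather than $[M:N]$ or $2[M:N]$. The sharp constant comes from the fact that $N'\cap M$ acts on $L^2(M) = L^2(\overline{e_N L^2(M)}) \oplus (\text{complement})$, where the first summand is an $N$-$N$ bimodule isomorphic to $L^2(N)$ (contributing $1$ to any dimension count since $N$ is a factor) and the orthogonal complement is an $N$-module of dimension $[M:N]-1$; a projection in $N'\cap M$ is determined by its action on these pieces, and the dimension bound follows. Everything else is routine manipulation of conditional expectations, Pimsner--Popa bases, and the trace $\operatorname{Tr}$ on $\langle M,e_N\rangle$ recorded in the basic construction theorem above.
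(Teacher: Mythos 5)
Your overall strategy (routing everything through the basic construction $\langle M,e_N\rangle$ and its trace $\operatorname{Tr}$) is genuinely different from what the paper does, and in part (1) it breaks down at the key step. You propose to compress a family of orthogonal projections of $N'\cap M$ by $e_N$ so as to land in the factor $e_N\langle M,e_N\rangle e_N=Ne_N$ and then count traces. But for $x\in N'\cap M$ one has $e_Nxe_N=E_N(x)e_N=\tau(x)e_N$, because $N$ being a factor forces $E_N(x)$ to be the scalar $\tau(x)$. So the compression of $N'\cap M$ by $e_N$ is just $\mathbb C e_N$: orthogonal projections do not go to orthogonal projections (indeed $\tau(p)e_N$ is not even a projection unless $\tau(p)\in\{0,1\}$), and no dimension count survives the compression. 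The subsequent bookkeeping (``the extra $+1$ accounts for $1-e_N$'', ``traces summing appropriately'') is not an argument. The paper's proof of (1) is a two-line application of the Pimsner--Popa inequality built into the definition of index used in the text: for a nonzero projection $p\in N'\cap M$, factoriality of $N$ gives $E_N(p)=\tau(p)1$, hence $\tau(p)^2=\|E_N(p)\|_2^2\geq [M:N]^{-1}\|p\|_2^2=[M:N]^{-1}\tau(p)$, so every nonzero projection of $N'\cap M$ has trace at least $[M:N]^{-1}$, from which the dimension bound is deduced. This is exactly the mechanism you gesture at for (3), applied instead to (1); the scalar-valuedness of $E_N$ on $N'\cap M$ is the whole point, and it is also what makes your $e_N$-compression trivial.

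Parts (2) and (3) you essentially defer to \cite{Po95}, which is legitimate (the paper cites the same source), but the sketches you attach have their own problems. In (2) you reduce to $M$ a factor and then invoke ``a version of (1)'' to get $N'\cap M$ finite dimensional; but (1) requires $N$, not $M$, to be a factor --- without factoriality of $N$ the expectation $E_N(p)$ is a nonscalar element of $\mathcal Z(N)$ and the trace lower bound evaporates. The paper instead argues directly on centers: letting $q$ be the projection under which $\mathcal Z(N)$ is diffuse, finite index gives $qMq\prec_{qMq}qNq$, hence $\mathcal Z(N)q\prec \mathcal Z(M)q$, and pure atomicity of $\mathcal Z(M)$ then produces a minimal projection of $\mathcal Z(N)$ under $q$, forcing $q=0$. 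In (3) your formula $[rMr:Nr]=\operatorname{Tr}(r\langle M,e_N\rangle r)/\operatorname{Tr}(e_{Nr})$ is unjustified as stated; the paper's proof is again a direct Pimsner--Popa computation, using $E_{Nr}(rxr)=\tau(r)^{-1}E_N(rxr)r$ (valid because $E_N(r)=\tau(r)1$) to convert the inequality for $N\subset M$ into the one for $Nr\subset rMr$ with constant $\tau(r)^{-1}[M:N]^{-1}$, which is precisely the bound $[rMr:Nr]\leq\tau(r)[M:N]$.
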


\begin{proof} 

(\ref{20}) Fix $0\neq p\in N'\cap M$ a nonzero projection. Since $N$ is a factor then $E_N(p)=\tau(p)1$. As $[M:N]<\infty$, we have 
\[
\tau(p)^2=\|E_N(p)\|_2^2\geq [M:N]^{-1}\|p\|_2^2=[M:N]^{-1}\tau(p).
\]
Since $p$ is an arbitrary projection in $N'\cap M$, we obtain $\tau(p)\geq [M:N]^{-1}$ for all projections $p\in N'\cap M$. Hence, 
\[
\operatorname{dim}_{\mathbb C}(N'\cap M)\leq [M:N]+1.
\]  

(\ref{20'}) Let $p\in \mathcal Z(N)$ be a maximal projection such that $\mathcal Z(N)p$ is purely atomic and $\mathcal Z(N)(1-p)$ is diffuse. To prove the conclusion it suffices to show that $q=1-p$ vanishes. Since the inclustion
$N\subset M$ is finite index, we have $qNq\subset qMq$ is finite index. This implies that $qMq\prec_{qMq} qNq$. Hence,
\[
qNq'\cap qMq\prec_{qMq} qMq'\cap qMq=\mathcal Z(M)q.
\]
Therefore, $\mathcal Z(N)q\prec \mathcal Z(M)q$. Since $\mathcal Z(M)$ is purely atomic, it follows that there exists a minimal projection of $\mathcal Z(N)$ under $q$. This forces $q=0$, as desired.

(\ref{20''}) Since $r\in N'\cap M$ and $N$ is a factor, we have $E_N(r)=\tau(r)1$. Thus, 
\[
E_{Nr}(rxr)= \tau(r)^{-1} E_{N}(rxr)  r\quad\text{for all}\quad x\in M.
\]
Hence, we have 
\begin{align*}
\|E_{Nr}(rxr)\|^2_{2,r}
&=\tau(r)^{-1} \|E_{Nr}(rxr)\|_2^2\\
&=\tau(r)^{-1}\big(\tau(r)^{-1} \|E_N(rxr)r\|_2^2\big)\\
&=\tau(r)^{-2} \|E_N(rxr)r\|_2^2\\
&\geq \tau(r)^{-2} [M:N]^{-1}\|rxr\|^2_2\\
&=\tau(r)^{-1} [M:N]^{-1}\|rxr\|^2_{2,r}
\end{align*}
which shows $[rMr:Nr]\leq \tau(r)[M:N]$.  
\end{proof}

\begin{definition}\label{virtuallyprimedef}
Let $M$ be a factor. We say $M$ is \textbf{virtually prime} if $A,B\subset M$ are commuting diffuse subfactors of $M$, then $[M:A\vee B]=\infty$.
\end{definition}

\begin{lem}\label{finiteindeximage}
Let $N\subset M$ be a finite index inclusion of II$_1$ factors. Then one can find projections $p\in M$, $q\in N$, a partial isometry $v\in M$, and a unital injective $*$-homomorphism 
$\phi:pMp \rightarrow qNq$ such that
\begin{enumerate} 
\item $\phi(x)v=vx \text{ for all } x\in pMp$, and
\item $[qNq: \phi(pMp)]<\infty$.
\end{enumerate}
\end{lem}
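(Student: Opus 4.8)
The plan is to use the downward basic construction (Theorem~\ref{downwardbasicconstruction}) to produce the desired embedding. Since $N\subset M$ is a finite index inclusion of II$_1$ factors, Theorem~\ref{downwardbasicconstruction} gives a subfactor $P\subset N$ and a projection $e_P\in M$ such that $M=\langle N,e_P\rangle$, $E_P(e_P)=\tau(e_P)1$, and $e_P x e_P = E_P(x)e_P$ for all $x\in N$. Morally, this says $M$ is the Jones basic construction of $P\subset N$, so that the inclusion $P\subset N$ is, up to amplification, isomorphic to the inclusion $N\subset M$; in other words $N\subset M$ ``embeds'' into $N$ itself via an amplification. The job is to make this precise with an honest partial isometry and $*$-homomorphism between corners.

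First I would recall that in the basic construction $M=\langle N, e_P\rangle$ one has $e_P M e_P = P e_P$ and, by property (6) of the basic construction theorem, $\mathrm{Tr}(x e_P y)=\tau(xy)$; moreover $\tau(e_P)=[N:P]^{-1}=[M:N]^{-1}$ since the index is multiplicative in a basic construction tower. Next I would use the standard identification: the map $P\ni b \mapsto b e_P \in e_P M e_P$ is a (trace-scaling) $*$-isomorphism of $P$ onto $Pe_P = e_P M e_P$. Writing $t=\tau(e_P)$, the corner $e_P M e_P \cong M^{t}$ as a II$_1$ factor, and under this identification $e_P M e_P \supset e_P N e_P$? — careful: that is not quite an inclusion of the right form, so instead I would argue directly. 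Since $M = \langle N, e_P\rangle = J P' J$ and more usefully $N e_P N$ is weakly dense in $M$, one sees $e_P N e_P = E_P(N)e_P = P e_P$ as well, which is consistent. The key point I actually need: pick $p=e_P \in M$ (a projection in $M$), and observe $pMp = e_P M e_P = P e_P$. Define $\phi : pMp \to N$ by $\phi(b e_P) = b$ for $b\in P$; this is a unital injective $*$-homomorphism from $pMp$ onto $P\subset N$, so with $q = 1 \in N$ (or, to land in a genuine corner $qNq$ with $q$ a projection, simply take $q=1$) we get $\phi(pMp)=P$. The partial isometry: I want $v\in M$ with $\phi(x)v = vx$ for all $x\in pMp$, i.e. $b v = v b e_P$ for all $b\in P$; take $v = e_P$ itself — then for $x = be_P$ we have $vx = e_P b e_P = E_P(b)e_P = b e_P = x$ while $\phi(x) v = b e_P = x$, so indeed $\phi(x)v = vx$. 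Finally $[qNq:\phi(pMp)] = [N:P] = [M:N]<\infty$, giving property (2).

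The point I would need to double-check — and which I expect to be the only real subtlety — is precisely the claim $pMp = e_P M e_P = P e_P$ together with the identification of $P e_P$ with $P$ as von Neumann algebras; this is property (3) of the basic construction theorem ($e_B\langle N, e_B\rangle e_B = B e_B$) applied with $B=P$, $N$ in the role of the ambient algebra being extended, once one knows $M=\langle N, e_P\rangle$. One should also verify that the map $b\mapsto be_P$ is injective, which is property (5) of that theorem (applied to the tower $P\subset N\subset M=\langle N,e_P\rangle$), so $\phi$ is well-defined and injective. Everything else is the bookkeeping above. The main obstacle is thus not deep: it is just matching the abstract downward basic construction data to the concrete $p,q,v,\phi$ in the statement, and confirming that $[M:N]$ equals the index $[N:P]$ produced by the downward construction — which holds because $M = \langle N, e_P\rangle$ forces $[M:N]=[N:P]$ by the trace formula $\tau(e_P)=[M:N]^{-1}$ and the analogous identity for $P\subset N$.
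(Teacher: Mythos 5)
Your proof is correct, but it follows a genuinely different route from the paper. The paper starts from the trivial observation that $[M:N]<\infty$ implies $M\prec_M N$, extracts the intertwining data $(p,q,v,\phi)$ from Popa's theorem, and then has to do real work to verify condition (2): it cuts by a minimal projection $r\in \phi(pMp)'\cap qNq$ to arrange $\phi(pMp)'\cap qMq=\mathbb{C}q$, and then identifies $\langle qNq, vv^*\rangle$ with the Jones basic construction of $\phi(pMp)\subset qNq$ to conclude the index is finite. You instead invoke the downward basic construction up front, which hands you a subfactor $P\subset N$ with $[N:P]=[M:N]$ and a Jones projection $e_P$ satisfying $e_PMe_P=Pe_P$; taking $p=v=e_P$, $q=1$, and $\phi(be_P)=b$ then verifies all the required identities by direct computation ($vx=e_Pbe_P=E_P(b)e_P=be_P=\phi(x)v$), with injectivity and the corner identification coming from properties (3) and (5) of the basic construction theorem. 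Your argument is shorter and more explicit --- it pins down $\tau(p)=[M:N]^{-1}$ and realizes the image as an honest downward subfactor $P$ with $[N:P]=[M:N]$ exactly --- while the paper's intertwining-first argument is closer in spirit to the techniques used elsewhere in the thesis and adapts to settings where one only has an intertwining $M\prec_M N$ rather than a genuine finite-index inclusion. The one point you flag as needing care (that $e_PMe_P=Pe_P$ and $b\mapsto be_P$ is an injective $*$-isomorphism, using $e_P\in P'$) is indeed the only nontrivial step, and it does follow from the cited properties of the basic construction applied to $M=\langle N,e_P\rangle$; so I see no gap.
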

\begin{proof} 
Since $ [M:N]<\infty$ then $M\prec_M N$. Thus there exist projections $p\in M$, $q\in N$, a partial isometry $v\in M$, and a unital injective $*$-homomorphism $\phi:pMp \rightarrow qNq$ so that 
\begin{equation}\label{11}
\phi(x)v=vx \quad\text{ for all } \quad x\in pMp.
\end{equation}
Denoting by $Q=\phi(pMp)\subset qNq$, we notice that $vv^*\in Q'\cap qMq$ and $v^*v=p$. Moreover by restricting $vv^*$ if necessary we can assume wlog the support projection of $E_N (vv^*)$ equals $q$. Also from the condition (\ref{11}), we have that
\begin{equation*}
Qvv^*=vMv^*=vv^*Mvv^*
\end{equation*}
Since $M$ is a factor, passing to relative commutants we have 
\begin{align*}
vv^*(Q'\cap qMq)vv^*
&=(Qvv^*)'\cap vv^*Mvv^*\\
&=\mathcal Z(vv^*Mvv^*)\\
&= \mathbb C vv^*.
\end{align*}
Since $Q'\cap qNq\subset Q'\cap qMq$, there is a projection $r\in Q'\cap qN q$ such that 
\begin{equation*}
r (Q'\cap q N q) r=  Q r'\cap r N r=\mathbb C r.
\end{equation*}
 Since $q= \operatorname{s}(E_N(vv^*))$ one can check that $rv\neq 0$. Thus replacing $Q$ by $Qr$, $\phi(\cdot)$  by $\phi(\cdot)r$, $q$ by $r$, and $v$ by the partial isometry from the polar decomposition of $rv$ then the intertwining relation (\ref{11}) still holds with the additional assumption that $Q'\cap qM q=\mathbb Cq$. In particular, $E_{q N q}(vv^*)=c q$ where $c$ is a positive scalar.  

To finish the proof we only need to argue that $[qNq:Q]<\infty$. Consider the von Neumann algebra $\langle qNq, vv^*\rangle$ generated by $qNq$ and $vv^*$ inside $qMq$. Therefore we have the following inclusions  
\begin{equation*}
Q\subset qNq\subset \langle qNq, vv^*\rangle\subset qMq.
\end{equation*}
 Since $vv^* Mvv^* =Qvv^*$ then 
 \begin{equation*}
 vv^*  qNq \quad \text{and}\quad vv^*= Qvv^* .
 \end{equation*}
 Moreover, since $vv^*\in Q'\cap qMq$ and $E_{q N q}(vv^*)=c1$, one can check that $\langle qNq, vv^*\rangle$ is isomorphic to the basic construction of $Q\subset qNq$. Therefore, $Q\subset qNq$ has index $c$, hence finite.
 \end{proof}

\begin{lem}[Lemma 3.9 in \cite{Va08}]\label{lemma3.9Va08}
Let $(M,\tau)$ be a tracial von Neumann algebra and $A, B, N$ von Neumann subalgebras. Let $A\subset N$ be a finite index inclusion. Then the followings hold
\begin{enumerate}
\item [(1)] If $A\prec_M B$, then $N\prec_M B$.
\item [(2)] If $B\prec_M A$, then $B\prec_M A$.
\end{enumerate}
\end{lem}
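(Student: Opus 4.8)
The plan is to argue entirely through the Hilbert--bimodule formulation of intertwining, Theorem~\ref{popaintertwing}(3): $P\prec_M Q$ exactly when $L^2(M)$ contains a nonzero $P$-$Q$-subbimodule that is finitely generated as a right $Q$-module. The only extra input I need about the hypothesis $A\subset N$ is that a finite-index inclusion admits a finite right Pimsner--Popa basis (as discussed in the finite-index chapter): there are $m_1,\dots,m_k\in N$ with $x=\sum_i m_i\,E_A(m_i^*x)$ for every $x\in N$; and that right-module dimension is multiplicative under such inclusions, i.e. $\dim(\mathcal K_A)=[N:A]\,\dim(\mathcal K_N)$ for any right $N$-module $\mathcal K$.

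For (1), assume $A\prec_M B$ and fix a nonzero $A$-$B$-subbimodule $\mathcal H\subseteq L^2(M)$ with $d:=\dim(\mathcal H_B)<\infty$. I would then set $\mathcal K:=\overline{N\mathcal H}$, the closed linear span of $\{n\xi:n\in N,\ \xi\in\mathcal H\}$ inside $L^2(M)$. Since the left $N$-action and the right $B$-action on $L^2(M)$ commute and $\mathcal H$ is right $B$-invariant, $\mathcal K$ is an $N$-$B$-subbimodule of $L^2(M)$, and it is nonzero because $1\in N$ gives $\mathcal H\subseteq\mathcal K$. Writing $n\xi=\sum_i m_i\big(E_A(m_i^*n)\,\xi\big)$ with $E_A(m_i^*n)\in A$ and using $A\mathcal H\subseteq\mathcal H$, one gets $N\mathcal H\subseteq\sum_i m_i\mathcal H$, hence $\mathcal K\subseteq\sum_{i=1}^k\overline{m_i\mathcal H}$. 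Each $\overline{m_i\mathcal H}$ is the closure of the range of the bounded right-$B$-module map $\xi\mapsto m_i\xi$ on $\mathcal H$, hence a subquotient of $\mathcal H$ as a right $B$-module, so $\dim(\overline{m_i\mathcal H}_B)\le d$; therefore $\dim(\mathcal K_B)\le kd<\infty$, and Theorem~\ref{popaintertwing}(3) gives $N\prec_M B$.

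For (2), read as the nontrivial implication $B\prec_M N\Rightarrow B\prec_M A$ (the printed statement is a typo), the argument is shorter. Assume $B\prec_M N$ and fix a nonzero $B$-$N$-subbimodule $\mathcal H\subseteq L^2(M)$ with $\dim(\mathcal H_N)<\infty$. Restricting the right $N$-action to $A$ via the inclusion $A\subset N$, multiplicativity of the coupling constant across the finite-index inclusion yields $\dim(\mathcal H_A)=[N:A]\,\dim(\mathcal H_N)<\infty$, while $\mathcal H$ remains a left $B$-module inside $L^2(M)$. Thus $\mathcal H$ is a nonzero $B$-$A$-subbimodule of finite right $A$-dimension, and Theorem~\ref{popaintertwing}(3) gives $B\prec_M A$.

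The load-bearing facts are just the existence of a finite Pimsner--Popa basis and multiplicativity of right-module dimension under a finite-index inclusion; granting these, (2) is essentially immediate, and the only real bookkeeping lives in (1), where one must check that left multiplication by the finitely many basis elements $m_i$ cannot inflate the right $B$-dimension of the enlarged bimodule. I do not expect a serious obstacle beyond being careful that every span is taken in $L^2(M)$, so that the resulting closed subspaces are genuine $M$-subbimodules and the dimension estimates apply verbatim.
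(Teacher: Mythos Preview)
The paper does not supply its own proof of this lemma; it is simply quoted from \cite{Va08}, so there is nothing in the paper to compare your argument against. Your bimodule approach is correct and is essentially the same strategy used in the original reference: for (1) you enlarge the given $A$-$B$-subbimodule to $\overline{N\mathcal H}$ and control its right $B$-dimension via a finite Pimsner--Popa basis for $N$ over $A$; for (2) you restrict the right action along $A\subset N$ and use multiplicativity of the coupling constant. You are also right that part (2) as printed is a tautology and that the intended statement is $B\prec_M N\Rightarrow B\prec_M A$. One small slip in your closing sentence: the subspaces you build are $N$-$B$- (respectively $B$-$A$-) subbimodules of $L^2(M)$, not $M$-subbimodules, but this does not affect the argument.
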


%\include{chapter5}

% Chapter Four - Chapter Four Title

\chapter{Amenability and Relative Amenability} \label{chapter:chapterlabel4}

Amenability is one of the important standard term in studying von Neumann algebra which was first introduced by Connes in 1976. In this chapter, we discuss about the amenabilities on groups and on von Neumann algebra. Finally we 
provide the concept of relative amenability for von Neumann algebras.

\section{Amenable groups with their von Neumann algebras}

\begin{definition} A group $\Gamma$ is said to be \textbf{amenable} if one of the following conditions holds:
\begin{enumerate}
\item[a)] there exists a left $\Gamma$- invariant mean on $\ell^{\infty}(\Gamma)$
\item[b)] there exists a sequence of unit vectors $(\xi_n)$ in $\ell^2(\Gamma)$ such that for every $g\in \Gamma$,
\begin{equation*}
\lim_n\|\lambda_G(g)\xi_n-\xi_n\|_2=0
\end{equation*}
\item[c)] there exists a sequence of finitely supported positive definite functions on $\Gamma$ which converges pointwise to $1$
\item[d)] \textit{F\o lner}; For any finite subset $E\subset \Gamma$ and $\varepsilon>0$ there is a finite subset $F\subset \Gamma$ such that 
\begin{equation*}
\max_{s\in E}\frac{|sF\,\Delta\, F|}{|F|}<\varepsilon.
\end{equation*}
\end{enumerate} 
\end{definition}

%\begin{definition}
%Let $\Lambda$ be a subgroup of $\Gamma$. We say that $\Lambda$ is \textbf{co-amenable} in $\Gamma$ if one of the following conditions holds:\begin{enumerate}
%\item[a)] there exists a left $\Gamma$- invariant mean on $\ell^{\infty}(\Gamma\big/\Lambda)$
%\item[b)] \textit{co-F\o lner}; For any finite subset $E\subset \Gamma$ and $\varepsilon>0$ there is a finite subset $F\in\Gamma\big/\Lambda $ such that 
%\begin{equation*}\max_{s\in E}\frac{|sF\,\Delta\, F|}{|F|}<\varepsilon.\end{equation*}\end{enumerate} \end{definition}

\section{Amenable von Neumann algebras}

\begin{definition}
A von Neumann algebra $M$ is said to be \textbf{amenable} or \textbf{injective} 
\begin{enumerate}
\item[a)] if it has a concrete representation as  a von Neumann subalgebra of some $\mathcal{B}(H)$ such that there exists c conditional expectation $\operatorname{E}:\mathcal{B}(H)\rightarrow M$.
\item[b)] for every inclustion $A\subset B$ of unital $C^*$-algebra, every unital completely positive  map $\phi:A\rightarrow M$ extends to a completely positive map from $B$ to $M$.
\item[c)] for any $\mathcal{B}(H)$ which contains $M$ as a von Neumann subalgebra, there is a conditional expectional expectation from $\mathcal{B}(H)$ onto $M$.
\end{enumerate}
\end{definition}

We use the word "amenable" to emphasis the analogy of the amenability for groups. By previous section we can show that  a countable group $\Gamma$ is amenable if and only if group von Neumann algebra $L(\Gamma)$ is amenable. 

\begin{thm}[\cite{Co76}]
The hyperfinite factor $R$ is amenable. 
\end{thm}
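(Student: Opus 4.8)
The goal is to realize $R$ concretely so that a conditional expectation onto it can be manufactured by a limiting procedure from its finite dimensional approximants. Recall that the hyperfinite factor is, by construction, the weak closure $R=\big(\bigcup_{n}A_n\big)''$ of an increasing sequence of finite dimensional subfactors $A_n\cong M_{k_n}(\mathbb C)$, with $\bigcup_n A_n$ $\sigma$-weakly dense in $R$; represent $R$ standardly on $H=L^2(R,\tau)$. Since each $A_n$ is a type I$_{k_n}$ subfactor of $R$, we may write $H\cong\mathbb C^{k_n}\otimes K_n$ and $\mathcal B(H)=M_{k_n}(\mathbb C)\,\bar\otimes\,\mathcal B(K_n)$ in such a way that $A_n=M_{k_n}(\mathbb C)\otimes 1$ and $R=M_{k_n}(\mathbb C)\,\bar\otimes\, R_n$, where $R_n=A_n'\cap R\subseteq\mathcal B(K_n)$. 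Writing the trace as $\tau=\mathrm{tr}_{k_n}\otimes\tau_n$, extend the state $\tau_n$ on $R_n$ to a state $\phi_n$ on $\mathcal B(K_n)$ by Hahn--Banach, and set
\[
E_n:=\mathrm{id}_{M_{k_n}}\otimes\phi_n:\ \mathcal B(H)\longrightarrow A_n.
\]
Then $E_n$ is unital and completely positive, it is the identity on $A_n$, and its restriction to $R$ is precisely the $\tau$-preserving (hence normal) conditional expectation $E_{A_n}=e_{A_n}|_R$ of Theorem~\ref{conditonalexpectation}.

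Next I would pass to a limit along a free ultrafilter $\omega$ on $\mathbb N$. The set of unital completely positive maps $\mathcal B(H)\to\mathcal B(H)$, viewed inside $\prod_{x\in(\mathcal B(H))_1}(\mathcal B(H))_1$, is compact for the topology of pointwise $\sigma$-weak convergence by Banach--Alaoglu and Tychonoff, so
\[
E(x):=\lim_{\omega}E_n(x)\qquad(x\in\mathcal B(H))
\]
defines a unital completely positive contraction on $\mathcal B(H)$. Since $E_n(x)\in\bigcup_m A_m\subseteq R$ for every $n$ and $R$ is $\sigma$-weakly closed, $E$ takes values in $R$. Finally, for $r\in R$ we have $E(r)=\lim_\omega E_{A_n}(r)$, and the noncommutative martingale convergence theorem --- equivalently, the fact that $e_{A_n}\to 1$ strongly on $L^2(R)$ because $\overline{\bigcup_n L^2(A_n)}=L^2(R)$ --- gives $\|E_{A_n}(r)-r\|_2\to 0$, hence $E_{A_n}(r)\to r$ $\sigma$-weakly and $E(r)=r$. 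Thus $E:\mathcal B(H)\to R$ is a norm-one projection onto $R$; by Tomiyama's theorem it is automatically an $R$-bimodular conditional expectation, so $R$ satisfies the defining condition (a) for amenability.

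I expect the only genuinely delicate point to be the one resolved by choosing the $E_n$ to extend the \emph{normal} conditional expectations $E_{A_n}:R\to A_n$: the ultrafilter limit $E$ need not be $\sigma$-weakly continuous, so one cannot argue ``$E$ is the identity on the $\sigma$-weakly dense subalgebra $\bigcup_n A_n$, hence everywhere.'' Routing the restriction $E|_R$ through the genuinely convergent martingale $(E_{A_n})_n$, which does converge to the identity in $\|\cdot\|_2$ and therefore $\sigma$-weakly, is what forces $E|_R=\mathrm{id}$. The remaining ingredients --- the tensor splitting $R=A_n\bar\otimes R_n$ off a matrix subfactor, compactness of the u.c.p.\ maps in the pointwise $\sigma$-weak topology, and Tomiyama's automatic-bimodularity theorem --- are standard and require only routine verification.
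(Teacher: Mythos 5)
Your argument is correct, but note that the thesis itself offers no proof of this statement: it is recorded as a cited result from \cite{Co76}, so there is no ``paper proof'' to compare against. What you have written is the standard direct verification that an approximately finite dimensional factor is injective, and all the steps check out: the splitting $\mathcal B(H)=M_{k_n}(\mathbb C)\bar\otimes\mathcal B(K_n)$ with $A_n=M_{k_n}(\mathbb C)\otimes 1$ and $R=A_n\bar\otimes(A_n'\cap R)$ is legitimate because $A_n$ is a finite dimensional subfactor; the slice map $\mathrm{id}\otimes\phi_n$ is u.c.p.\ since the first tensor factor is finite dimensional; the BW-compactness of u.c.p.\ maps gives the ultrafilter limit; and you correctly isolate the genuinely delicate point, namely that $E|_R=\mathrm{id}$ cannot be deduced from density of $\bigcup_nA_n$ alone (the limit $E$ need not be normal) and must instead be routed through the $\|\cdot\|_2$-convergence $E_{A_n}(r)\to r$, which holds precisely because you arranged $\phi_n|_{R_n}=\tau_n$ so that $E_n|_R$ is the trace-preserving expectation. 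Two small remarks. First, a shorter route is available entirely within the framework this chapter sets up: $R\cong L(\mathfrak S_\infty)$, the group $\mathfrak S_\infty$ is locally finite hence amenable, and the equivalence ``$\Gamma$ amenable iff $L(\Gamma)$ amenable'' stated just before the theorem then yields the conclusion; your proof is more self-contained but uses more machinery (Tomiyama, ultrafilter compactness). Second, the attribution to \cite{Co76} really concerns the converse (injective $\Rightarrow$ hyperfinite); the direction you prove is the elementary one, which is worth flagging so the citation is not misread as the source of your argument.
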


\section{Relative amenability for von Neumann algebras}

In practice we will use the following characterization, which comes from \cite{OP07} wihch was introduced by Ozawa-Popa.

\begin{definition}
Let $P\subset M$ be an inclusion of a von Neumann algebras. A state $\psi:M\rightarrow \mathbb{C}$ is \textbf{$P$-central}  if 
\[
\psi(mx)=\psi(xm)
\]
for every $x\in P$ and every $m\in M$.
\end{definition}

Following Section 2.2 in \cite{OP07}, we have the following definition

\begin{definition}
%(From C. Anantharaman and S. Popa's Note page 230)
Let $P,Q$ be von Neumann subagebras of a tracial von Neumann algebra $(M,\tau)$ Then $P$ is \textbf{amenable relative to $Q$ inside $M$} and denoted by $P\lessdot_M Q$ if one of the following conditions holds:
\begin{enumerate}
\item[a)] there exists a conditional expectation from $\langle M,e_Q \rangle$ onto $P$ whose restriction to $M$ is $E_P^M$
\item[b)] there is a $P$-central state $\psi$ on $\langle M,e_Q \rangle$ such that $\psi{|_M}=\tau$
\item[c)] there is a $P$-central state $\psi$ on $\langle M,e_Q \rangle$ such that $\psi$ is normal on $M$ and faithful on $\mathcal{Z}(P'\cap M)$
\item[d)] there is a net $(\xi_i)$ of norm-one vector in $L^2(\langle M,e_Q \rangle)$ such that 
\begin{equation*}
\lim_i \|x\xi_i-\xi x\|=0\quad \text{for every}\quad x\in P
\end{equation*}
 and
\begin{equation*}
\lim_i\langle \xi_i x\xi_i\rangle=\tau(x) \quad\text{for every}\quad x\in M.
\end{equation*}
\item[e)] ${}_ML^2(M)_P$ is weakly contained in ${}_ML^2(M)\otimes L^2(M)_P$.
\end{enumerate}
\end{definition}

Moreover, if $M$ is amenable relative to $Q$ inside $M$, one simply says that \textbf{$M$ is amenable relative to $Q$} or that $Q$ is \textbf{co-amenable in $M$}. In particular, $M$ is amenable if and only if $M$ is amenable relative to $\mathbb{C}1.$

\begin{prop}[Ioana]
Let $P,Q$ be von Neumann subalgebras of a finite von Neumann algebra $(M, \tau)$. 
If  $P\prec^s_M Q$, then $P\lessdot_M Q$.
\end{prop}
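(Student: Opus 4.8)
The plan is to produce the net of almost-central, almost-tracial vectors in $L^2(\langle M, e_Q\rangle)$ required by condition (d), or equivalently the $P$-central state required by condition (b)/(c). First I would unpack what $P \prec_M^s Q$ gives us: for every nonzero projection $p' \in P' \cap 1_P M 1_P$ we have $Pp' \prec_M Q$, and in particular (applying this to a family of projections summing to $1$) we can produce, via Popa's intertwining theorem, a nonzero positive element $a \in \langle M, e_Q\rangle_+$ with $\operatorname{Tr}(a) < \infty$ whose ultraweakly closed convex hull of $\{w^* a w : w \in \mathcal U(P)\}$ avoids $0$. The strength of the ``$s$''-version is precisely that this can be done with support that exhausts $1$, so that by a maximality/exhaustion argument one obtains a single positive $a \in \langle M, e_Q\rangle$ (not necessarily $\operatorname{Tr}$-finite globally, but locally $\operatorname{Tr}$-finite on a dense ideal) whose $P$-conjugation orbit stays uniformly away from $0$ \emph{relative to the whole identity}, i.e. $E_{P'\cap \langle M, e_Q\rangle}(a)$ or a suitable averaging has full support.

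The key steps, in order, are: (1) Use $P \prec_M^s Q$ together with Theorem \ref{popaintertwing}(d) and an exhaustion argument over projections in $P' \cap M$ to build a $P$-invariant (under the conjugation action $w \mapsto w^* \cdot w$) positive element $T$ in the extended positive cone of $\langle M, e_Q\rangle$ with full central support and with $E_M(T) = 1$ after normalization --- this is essentially the ``Hahn--Banach / closed convex hull avoids zero'' packaging. (2) Using that $\langle M, e_Q\rangle$ is semifinite with faithful normal semifinite trace $\operatorname{Tr}$ satisfying $\operatorname{Tr}(x e_Q y) = \tau(xy)$, convert $T$ into a genuine $P$-central state $\psi$ on $\langle M, e_Q\rangle$ by setting $\psi(\cdot) = \operatorname{Tr}(T^{1/2} \cdot T^{1/2})$ after suitable normalization, and check that $\psi|_M = \tau$ (this uses the $P$-invariance to get centrality over $P$ and the normalization $E_M(T)=1$ to get the trace on $M$). (3) Verify the faithfulness-on-$\mathcal Z(P' \cap M)$ clause of condition (c), which follows because the ``$s$'' in $\prec_M^s$ forces the support of $T$ to dominate every nonzero projection in $P' \cap M$, hence $\psi$ restricted to that center is faithful. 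Then condition (c) of the definition of $P \lessdot_M Q$ is satisfied and we are done.

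Alternatively --- and this may be the cleaner route to write --- I would work directly with condition (d): from $P \prec_M^s Q$ and Theorem \ref{popaintertwing}(c) extract, for each projection in an exhausting family of $P' \cap M$, a $Q$-$P$ subbimodule of $L^2(M)$ of finite right $Q$-dimension; patch these into a single $P$-$P$ bimodule $\mathcal H \subset L^2(\langle M, e_Q\rangle)$ that is ``large'' (full support as a right $P$-module); then the finite-dimensionality over $Q$ on each piece gives a sequence/net of unit vectors $\xi_i \in \mathcal H \subset L^2(\langle M, e_Q\rangle)$ with $\|x\xi_i - \xi_i x\| \to 0$ for $x \in P$ (almost-$P$-central by the bimodule structure and a standard averaging over a generating group in $\mathcal U(P)$) and $\langle \xi_i x \xi_i\rangle \to \tau(x)$ for $x \in M$ (by normalizing against $\operatorname{Tr}$ and using $\operatorname{Tr}(x e_Q) = \tau(x)$). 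This is condition (d), which is one of the equivalent formulations of $P \lessdot_M Q$.

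The main obstacle I anticipate is step (1)/the patching: turning the \emph{pointwise} (per-projection) intertwining supplied by $\prec_M^s$ into a \emph{single global} almost-central vector or state with full support on $\mathcal Z(P' \cap M)$. Naively gluing the per-projection data can destroy either the $P$-centrality or the normalization $\psi|_M = \tau$, so one must average carefully --- e.g. take a maximal family of orthogonal projections $\{p_k\} \subset P' \cap M$ with associated data, form $\sum_k \lambda_k \psi_k$ with weights chosen so the restriction to $M$ is still $\tau$, and invoke maximality to conclude $\sum_k p_k = 1$. Verifying that this weighted sum is still $P$-central and that the weak-$*$ limit of the resulting net stays a state (no mass escaping to infinity in the semifinite algebra $\langle M, e_Q\rangle$) is the delicate point; everything else is a routine translation between the equivalent formulations (a)--(e) of relative amenability.
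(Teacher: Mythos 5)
The paper states this proposition without proof (it is imported from Ioana's work; the same statement reappears verbatim as part (3) of Lemma \ref{lemma2.6DHI16}, again without proof), so there is no in-text argument to compare against. Judged on its own terms, your sketch follows the standard argument and its architecture is correct: extract from each $Pz\prec_M Q$, $z\in\mathcal Z(P'\cap 1_PM1_P)$ a projection, via Theorem \ref{popaintertwing}(4) and the circumcenter (minimal $\|\cdot\|_{2,\operatorname{Tr}}$-norm element of the closed convex hull of $\{w^*aw : w\in\mathcal U(P)\}$), a nonzero positive $\operatorname{Tr}$-finite element of $P'\cap\langle M,e_Q\rangle$ supported under $z$; run a maximality argument over orthogonal families in $\mathcal Z(P'\cap M)$, using the ``$s$'' hypothesis to rule out a nonzero leftover projection; sum with summable weights to get a single $T\in P'\cap\langle M,e_Q\rangle_+$ with $\operatorname{Tr}(T)<\infty$ and $Tz\neq 0$ for every nonzero $z\in\mathcal Z(P'\cap M)$; and set $\psi=\operatorname{Tr}(T^{1/2}\cdot T^{1/2})$, which is $P$-central, normal on all of $\langle M,e_Q\rangle$, and faithful on $\mathcal Z(P'\cap M)$. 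That is exactly condition (c) of the definition of $P\lessdot_M Q$, and you correctly flag the gluing/maximality step as the only delicate point.

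One soft spot to fix when writing this up: in step (2) you claim $\psi|_M=\tau$ via a normalization ``$E_M(T)=1$.'' This is neither needed nor easily arranged --- the map from $\langle M,e_Q\rangle$ to $M$ is an operator-valued weight, not a trace-preserving conditional expectation, and there is no reason the circumcenter construction produces $T$ with that normalization. You should target condition (c) only (normality on $M$ plus faithfulness on $\mathcal Z(P'\cap M)$), which your $\psi$ satisfies automatically, and invoke the stated equivalence of (b) and (c) rather than proving (b) directly. The same caveat applies to your alternative route through condition (d): the exact restriction $\langle\xi_i x\xi_i\rangle\to\tau(x)$ is the same normalization issue in disguise, so the first route through (c) is the one to write.
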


\begin{prop}[Transitive property , Proposition 2.4 (3) in \cite{OP07}]
Let $P, Q, N\subset M$ be finite von Neumann algebras. If $N\lessdot_M P$ and $P\lessdot_M  Q$, then $N\lessdot_M  Q$.
\end{prop}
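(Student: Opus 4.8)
The approach I would take is through the conditional--expectation (equivalently, $P$--central--state) description of relative amenability, items (a)--(b) in the definition of $\lessdot_M$. The hypothesis $N\lessdot_M P$ supplies a conditional expectation $\Phi\colon\langle M,e_P\rangle\to N$ with $\Phi|_M=E_N^M$, the hypothesis $P\lessdot_M Q$ supplies a conditional expectation $\Psi\colon\langle M,e_Q\rangle\to P$ with $\Psi|_M=E_P^M$, and the goal is to manufacture a conditional expectation $\Xi\colon\langle M,e_Q\rangle\to N$ with $\Xi|_M=E_N^M$. Throughout I would work inside a single semifinite ambient, say $\mathcal B(L^2(M))$, where $\langle M,e_P\rangle$, $\langle M,e_Q\rangle$ and $M$ all sit and share the common subalgebra $M$. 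Equivalently, one may phrase everything with item (e): the two hypotheses say that ${}_NL^2(M)_M$ is weakly contained in ${}_N(L^2(M)\otimes_P L^2(M))_M$ and that ${}_PL^2(M)_M$ is weakly contained in ${}_P(L^2(M)\otimes_Q L^2(M))_M$, where $\otimes_B$ is the Connes tensor product and $L^2(M)\otimes_B L^2(M)$ is the $M$--bimodule underlying $L^2(\langle M,e_B\rangle)$; one wants ${}_NL^2(M)_M$ weakly contained in ${}_N(L^2(M)\otimes_Q L^2(M))_M$.

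The plan is to use $\Psi$ to transport the datum witnessing $N\lessdot_M P$ into the $\langle M,e_Q\rangle$--picture and then to compose and restrict. Since $\Psi\colon\langle M,e_Q\rangle\to P$ agrees with $E_P^M$ on $M$, it plays the role of the conditional expectation out of which the basic construction $\langle M,e_P\rangle$ is built; the idea is to use $\Psi$ together with the Jones projection $e_P$ to set up, on a semifinite algebra containing $\langle M,e_Q\rangle$, a copy of the inclusion $\langle M,e_P\rangle$ on which $\Phi$ can be applied, in such a way that the composite is $M$--bimodular, equals the identity on $N$, and restricts to $E_N^M$ on $M$; restricting that composite to $\langle M,e_Q\rangle$ then yields the conditional expectation $\Xi$ that characterizes $N\lessdot_M Q$. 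In bimodule language the corresponding step splices the two weak containments, using associativity of the Connes tensor product and the fact that ${}_ML^2(M)_M$ is its unit, to pass from ${}_NL^2(M)_M\prec{}_N(L^2(M)\otimes_P L^2(M))_M$ to the desired weak containment in ${}_N(L^2(M)\otimes_Q L^2(M))_M$.

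The hard part is exactly this transport step, and with it lies the reason the statement is not a purely formal manipulation: $\langle M,e_P\rangle$ and $\langle M,e_Q\rangle$ are incomparable subalgebras of $\mathcal B(L^2(M))$, so neither conditional expectation feeds directly into the other. In the bimodule picture this shows up as a genuine obstruction: naively chaining the two weak containments only produces ${}_NL^2(M)_M\prec{}_N(L^2(M)\otimes_P L^2(M)\otimes_Q L^2(M))_M$, and the interior ``$\otimes_P$'' does not cancel --- the tempting ${}_N(L^2(M)\otimes_P L^2(M))_Q\prec{}_NL^2(M)_Q$ is false in general. That extra $P$--layer must be \emph{absorbed} rather than erased, and it is precisely the $P\lessdot_M Q$ datum $\Psi$ (equivalently, the $P$--central state realizing it) that does the absorbing; carrying this out carefully --- controlling simultaneously the $N$--bimodularity and the fact that the auxiliary state restricts to $\tau$ on all of $M$ --- is the technical core. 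Since this is a standard preliminary, one may alternatively simply invoke \cite[Proposition~2.4(3)]{OP07}.
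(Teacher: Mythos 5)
The paper offers no proof of this proposition: it is recalled as a preliminary and attributed verbatim to Proposition 2.4(3) of \cite{OP07}, so your closing fallback to that citation is exactly what the paper itself does. Your preceding outline correctly identifies both the natural strategy (splicing the two conditional expectations, equivalently the two weak containments of bimodules) and the genuine obstruction (the interior Connes tensor factor over $P$ in ${}_N\bigl(L^2(M)\otimes_P L^2(M)\otimes_Q L^2(M)\bigr)_M$ does not simply cancel and must be absorbed using the $P\lessdot_M Q$ datum), but since you stop short of executing that absorption step, the citation is carrying the same weight for you as it does for the paper.
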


Next we record several important results that will be used in our subsequent development.

\begin{thm}[Theorem A in \cite{Va13}]\label{Va13theoremA}
Let $M=M_1*_{P}M_2$ be the amalgamated free product of the tracial von Neumann algebra $(M_i,\tau)$ with the common von Neumann subalgebra $P\subset M_i$ with respect to the unique trace preserving conditional expectations. Let $p$ be a nonzero projection, $A\subset pMp$ a von Neumann subalgebra that is amenable relative to one of the $M_i$ inside $M$. Then at least one of the following statement holds.
\begin{itemize}
\item $A\prec_M P$.
\item There is an $i\in\{1,2\}$ such that $\mathcal{N}_{pMp}(A)''\prec_M M_i$
\item $\mathcal{N}_{pMp}(A)''$ is amenable relative to $P$ inside $M$.
\end{itemize} 
\end{thm}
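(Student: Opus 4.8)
The natural route is Popa's deformation/rigidity theory applied to the \emph{free malleable deformation} of the amalgamated free product, as developed by Ioana and by Vaes; the three alternatives in the statement arise as the three ways such an argument can terminate. First I would build an enveloping tracial von Neumann algebra $\tilde M\supseteq M$ with a one-parameter group $(\theta_t)_{t\in\mathbb R}\subset\operatorname{Aut}(\tilde M)$: set $\tilde M_i=M_i*_P(P\bar\otimes L(\mathbb F_2))$, put $\tilde M=\tilde M_1*_P\tilde M_2$, and let $\theta_t$ act on $\tilde M_i$ by conjugation with $\exp(ith_i)$ for a self-adjoint generator $h_i$ of the added copy of $L(\mathbb F_2)$ (the two prescriptions agree on $P$, hence glue; the usual period-two flip $\beta$ with $\beta\theta_t\beta=\theta_{-t}$, $\beta|_M=\operatorname{id}$, is in place). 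The facts to record are: (a) $\theta_t\to\operatorname{id}$ pointwise in $\|\cdot\|_2$ on $M$, with Popa's transversality inequality bounding $\|\theta_{2t}(x)-x\|_2$ by $2\,\|\theta_t(x)-E_M(\theta_t(x))\|_2$ for $x\in M$; (b) there is a trace-preserving conditional expectation $E_M\colon\tilde M\to M$, so relative amenability passes upward---$A\lessdot_M M_i$ forces $A\lessdot_{\tilde M}M_i$, since $\langle M,e_{M_i}\rangle\subset\langle\tilde M,e_{M_i}\rangle$ compatibly; and (c) the ``boundary'' of the deformation is $\{M_1,M_2\}$: a direct word computation gives $\|\theta_t(x)-x\|_2^2=2(1-|c_t|^2)\,\|x-E_P(x)\|_2^2$ for $x\in M_i$ (with $c_t=\tau(e^{ith_i})\to1$), hence $\theta_t$ is uniform on $(M_i)_1$, while on long reduced words the error grows with the word length---so $\theta_t$ converges uniformly on the unit ball of $Q\subseteq pMp$ only if $Q\prec_M M_i$ for some $i$. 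The precise form of this last statement, together with the $M$-$M$ bimodular description of $\langle\tilde M,e_{M_i}\rangle$ (its part orthogonal to the $\langle M,e_P\rangle$-part is weakly contained in a multiple of the coarse $M$-$M$ bimodule induced from $M_i$), is Ioana's key technical input, which I would quote.

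With $A\lessdot_{\tilde M}M_i$ in hand, I would split according to whether $\theta_t$ is uniform on some corner of $A$. If it is \emph{not}---i.e.\ $\sup_{x\in(Aq)_1}\|\theta_t(x)-x\|_2$ fails to tend to $0$ for every nonzero projection $q\in A'\cap pMp$---then, combining non-rigidity with $A\lessdot_{\tilde M}M_i$ in an Ozawa--Popa-style averaging argument inside $\langle\tilde M,e_{M_i}\rangle$ and using the bimodule description in (c), one produces a nonzero positive $a\in\langle M,e_P\rangle$ with $\operatorname{Tr}(a)<\infty$ whose weakly closed $A$-central convex hull $\overline{\operatorname{co}}\{w^*aw:w\in\mathcal U(A)\}$ avoids $0$; by the basic-construction criterion of Theorem~\ref{popaintertwing} this gives $A\prec_M P$, the first alternative. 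If the averaging instead degenerates onto the coarse part, $A$ is amenable, so $\mathcal N_{pMp}(A)''\lessdot_{\tilde M}P$, hence $\mathcal N_{pMp}(A)''\lessdot_M P$ by (b), the third alternative. So I may assume $\theta_t$ \emph{is} uniform on some corner $(Aq)_1$.

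Uniform convergence of $\theta_t$ on $(Aq)_1$ propagates to $\mathcal N_{qMq}(Aq)''$ by Popa's standard argument: for a normalizing unitary $u$ one controls $\|\theta_t(u)-u\|_2$ using $\theta_t(u)\,\theta_t(Aq)\,\theta_t(u)^*=\theta_t(Aq)$ together with transversality, so $\theta_t\to\operatorname{id}$ uniformly on $(\mathcal N_{qMq}(Aq)'')_1$. By the precise form of (c) this forces $\mathcal N_{qMq}(Aq)''\prec_M M_i$ for some $i$, and since $q$ is a corner of the normalizer this upgrades to $\mathcal N_{pMp}(A)''\prec_M M_i$, the second alternative. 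A short maximality argument over projections of $A'\cap pMp$ makes the passages between corners and full algebras rigorous throughout.

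The main obstacle is the non-rigid case of the middle step: turning ``$A\lessdot_{\tilde M}M_i$ and $\theta_t$ not uniform on $A$'' into ``$A\prec_M P$''. This rests on pinning down the $M$-$M$ bimodular structure of the deformation---that the part of $\langle\tilde M,e_{M_i}\rangle$ not seen by $\langle M,e_P\rangle$ is coarse over $M_i$---and then feeding that into the weak-containment form of relative amenability so that the $A$-averaging lands in $\langle M,e_P\rangle$ rather than merely in $\langle\tilde M,e_{M_i}\rangle$; this is precisely where freeness of $M_1,M_2$ over $P$ is used, and keeping track of the corners $q\in A'\cap pMp$ throughout is the other place where care is needed.
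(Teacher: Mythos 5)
First, a point of order: the thesis does not prove this statement. It is imported verbatim as Theorem A of \cite{Va13} and used as a black box (for instance in the proof of Theorem \ref{intertwiningincore1}), so there is no in-paper argument to measure yours against; the relevant comparison is with the proofs in \cite{Io12} and \cite{Va13}. Your overall strategy --- the free malleable deformation $\tilde M=\tilde M_1*_P\tilde M_2$ with $\tilde M_i=M_i*_P(P\bar\otimes L(\mathbb F_2))$, Popa's transversality inequality, the \cite{IPP05}-type rigidity statement that uniform $\|\cdot\|_2$-convergence of $\theta_t$ on a unit ball forces intertwining into some $M_i$, and the $M$-$M$-bimodule analysis of the basic construction --- is indeed the correct circle of ideas and is essentially the route those papers take.

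Your sketch has a genuine gap, however, exactly at the step that makes the theorem nontrivial: passing from $A$ to its normalizer. You write that when the averaging ``degenerates onto the coarse part, $A$ is amenable, so $\mathcal N_{pMp}(A)''\lessdot_M P$.'' Two objections. (i) Concentration of the almost-$A$-central vectors in a bimodule weakly contained in a coarse module over $M_i$ only returns the hypothesis $A\lessdot_M M_i$; it gives neither absolute amenability of $A$ nor anything relative to $P$. (ii) Even if $A$ were amenable, or amenable relative to $P$, this would not pass to $\mathcal N_{pMp}(A)''$: a Cartan subalgebra of a non-amenable II$_1$ factor is amenable while its normalizer generates the whole factor. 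Upgrading approximately $A$-central, approximately tracial vectors to vectors that are in addition approximately invariant under the normalizing unitaries is the hard core of the dichotomy machinery of \cite{PV11} and \cite{Io12}, and obtaining the third alternative relative to $P$ (rather than relative to some $M_i$, which is what Ioana's earlier version of the theorem yields) is precisely Vaes's contribution in \cite{Va13}; your sketch treats it as automatic. A secondary issue: in the non-uniform case you claim the averaging yields a finite-trace positive $a\in\langle M,e_P\rangle$ whose $A$-central convex hull avoids $0$, i.e.\ genuine intertwining $A\prec_M P$. Amenability-type averaging naturally produces an $A$-central \emph{state} on $\langle M,e_P\rangle$, i.e.\ $A\lessdot_M P$, which is strictly weaker than the first alternative of the theorem; you would need to explain where the finite-trace element comes from.
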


\begin{prop}[Proposition 2.7 in \cite{PV11}]\label{proposition2.7PV11}
Let $(M,\tau)$ be a tracial von Neumann algebra with von Neumann subalgebras $Q_1, Q_2\subset M$. Assume that $Q_1$ and $Q_2$ form a {commuting square} and that $Q_1$ is regular in $M$. If a von Neumann algebra $P\subset pMp$ is amenable relative to both $Q_1$ and $Q_2$, then $P$ is amenable relative to $Q_1\cap Q_2$.  
\end{prop}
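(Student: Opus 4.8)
The strategy is to argue entirely with Hilbert bimodules (correspondences) and the bimodule description of relative amenability coming from part (e) of the definition above: for von Neumann subalgebras $P,Q\subset M$ one has $P\lessdot_M Q$ exactly when the $M$-$P$ bimodule $L^2(M)$ is weakly contained in the $M$-$P$ bimodule $L^2\langle M,e_Q\rangle$, and the latter is, as an $M$-$M$ bimodule, canonically isomorphic to $L^2(M)\otimes_Q L^2(M)$. The corner $p$ is irrelevant: one restricts every $M$-$M$ bimodule below to the right action of $P\subset pMp$ and all weak containments pass to the restrictions. I will use without comment the standard calculus of correspondences: Connes' relative tensor product is normal (it commutes with orthogonal, hence with closed, sums of subbimodules) and monotone for weak containment (tensoring a fixed bimodule on either side preserves $\prec$); $H\otimes_M L^2(M)\cong H\cong L^2(M)\otimes_M H$; a subbimodule, a quotient bimodule and an arbitrary-multiplicity amplification are all controlled by $\prec$; and $\prec$ is transitive.

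\emph{Step 1 (composing the two hypotheses).} From $P\lessdot_M Q_2$ we have ${}_ML^2(M)_P\prec{}_ML^2\langle M,e_{Q_2}\rangle_P$. Tensoring on the left by the fixed $M$-$M$ bimodule $L^2\langle M,e_{Q_1}\rangle$ and using $L^2\langle M,e_{Q_1}\rangle\otimes_M L^2(M)\cong L^2\langle M,e_{Q_1}\rangle$ gives ${}_ML^2\langle M,e_{Q_1}\rangle_P\prec{}_M\big(L^2\langle M,e_{Q_1}\rangle\otimes_M L^2\langle M,e_{Q_2}\rangle\big)_P$. Composing with $P\lessdot_M Q_1$, i.e. ${}_ML^2(M)_P\prec{}_ML^2\langle M,e_{Q_1}\rangle_P$, and using transitivity, we obtain
\[
{}_ML^2(M)_P\ \prec\ {}_M\big(L^2\langle M,e_{Q_1}\rangle\otimes_M L^2\langle M,e_{Q_2}\rangle\big)_P\ \cong\ {}_M\big(L^2(M)\otimes_{Q_1}L^2(M)\otimes_{Q_2}L^2(M)\big)_P .
\]

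\emph{Step 2 (identifying the target via the remaining hypotheses).} I claim that, as $M$-$M$ bimodules, $L^2(M)\otimes_{Q_1}L^2(M)\otimes_{Q_2}L^2(M)$ is weakly contained in $L^2\langle M,e_{Q_1\cap Q_2}\rangle$. Fix $u\in\mathcal N_M(Q_1)$ and let $\overline{Q_1uQ_2}\subset L^2(M)$ be the closed $Q_1$-$Q_2$ subbimodule spanned by the $\widehat{q_1uq_2}$. Because $u$ normalises $Q_1$, the pair $(Q_1,uQ_2u^*)$ is again a commuting square and $Q_1\cap uQ_2u^*=u(Q_1\cap Q_2)u^*$. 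Using these facts, together with the commuting-square identity $E_{Q_2}E_{Q_1}=E_{Q_1\cap Q_2}$ to evaluate the relevant $Q_2$-valued inner products, one checks that $\hat x\otimes\hat y\mapsto\hat x\otimes\hat u\otimes\widehat{u^*y}$ defines a unitary isomorphism of $M$-$M$ bimodules $L^2(M)\otimes_{Q_1\cap uQ_2u^*}L^2(M)\cong L^2(M)\otimes_{Q_1}\overline{Q_1uQ_2}\otimes_{Q_2}L^2(M)$, while $\hat a\otimes\hat b\mapsto\widehat{au^*}\otimes\widehat{ub}$ gives an $M$-$M$ bimodule isomorphism $L^2\langle M,e_{Q_1\cap Q_2}\rangle\cong L^2\langle M,e_{u(Q_1\cap Q_2)u^*}\rangle$. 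Combining, $L^2(M)\otimes_{Q_1}\overline{Q_1uQ_2}\otimes_{Q_2}L^2(M)\cong L^2\langle M,e_{Q_1\cap Q_2}\rangle$ for every $u\in\mathcal N_M(Q_1)$. Now regularity enters: since $\mathcal N_M(Q_1)''=M$, the span of $\{\widehat u:u\in\mathcal N_M(Q_1)\}$ is $\|\cdot\|_2$-dense in $L^2(M)$, so $L^2(M)=\overline{\sum_{u\in\mathcal N_M(Q_1)}\overline{Q_1uQ_2}}$. Applying the normal functor $L^2(M)\otimes_{Q_1}(-)\otimes_{Q_2}L^2(M)$, the bimodule $L^2(M)\otimes_{Q_1}L^2(M)\otimes_{Q_2}L^2(M)$ becomes a closed sum of subbimodules each isomorphic to $L^2\langle M,e_{Q_1\cap Q_2}\rangle$; being a quotient of a direct sum of copies of $L^2\langle M,e_{Q_1\cap Q_2}\rangle$, it is weakly contained in it.

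Combining Steps 1 and 2 with transitivity of $\prec$ gives ${}_ML^2(M)_P\prec{}_ML^2\langle M,e_{Q_1\cap Q_2}\rangle_P$, i.e. $P\lessdot_M Q_1\cap Q_2$, as wanted. The routine portion is the formal bimodule calculus; the main obstacle, and the only place the geometric hypotheses are used, is Step 2 — verifying that the two displayed maps are well defined, isometric and surjective. Each such check reduces, after transporting the $u$'s across the relative tensor signs, to the commuting-square identities for the conditional expectations and to the relations $uQ_1u^*=Q_1$ and $u(Q_1\cap Q_2)u^*=Q_1\cap uQ_2u^*$ for $u\in\mathcal N_M(Q_1)$; without regularity one cannot exhaust $L^2(M)$ by the pieces $\overline{Q_1uQ_2}$, and without the commuting square the first map fails to be isometric.
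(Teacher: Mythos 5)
The paper only quotes this statement as Proposition 2.7 of \cite{PV11} and gives no proof of its own, so there is nothing internal to compare against; your argument is correct and is essentially the original Popa--Vaes proof: reduce, via transitivity and monotonicity of Connes fusion, to showing $L^2\langle M,e_{Q_1}\rangle\otimes_M L^2\langle M,e_{Q_2}\rangle\prec L^2\langle M,e_{Q_1\cap Q_2}\rangle$, then decompose the middle copy of $L^2(M)$ into the $Q_1$-$Q_2$ sub-bimodules $\overline{Q_1uQ_2}$ indexed by $u\in\mathcal N_M(Q_1)$ and use the commuting-square identity $E_{Q_2}|_{Q_1}=E_{Q_1\cap Q_2}|_{Q_1}$ together with $uQ_1u^*=Q_1$ to identify each fused piece with $L^2\langle M,e_{u(Q_1\cap Q_2)u^*}\rangle\cong L^2\langle M,e_{Q_1\cap Q_2}\rangle$. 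The facts you invoke about correspondences (normality of the relative tensor product, preservation of weak containment, and that a closed, not necessarily orthogonal, span of sub-bimodules each weakly contained in $K$ is itself weakly contained in $K$) are all standard and correctly applied.
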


\begin{lem}[Lemma 2.6 in \cite{DHI16}]\label{lemma2.6DHI16}
Let $(M,\tau)$ be a tracial von Neumann algebra, and $P\subset pMp$, $Q\subset M$ be von Neumann subalgebras. 
\begin{enumerate}
\item [(1)] Assume that $P$ is amenable relative tot $Q$. Then $Pp'$ is amenable relative to $Q$ for every projection $p'\in P'\cap pMp$.
\item [(2)] Assume that $p_0Pp_0p'$ is amenable relative to $Q$ for some projection $p_0\in P,$ $p'\in P'\cap pMp$. Let $z$ be the smallest projection belonging to $\mathcal{N}_{pMp}(P)'\cap pMp$ such that $p_0p'\leq z.$ Then $Pz$ is amenable relative to $Q$.
\item [(3)] Assume that $P\prec^s_M Q$. Then $P$ is amenable relative to $Q$.
\end{enumerate}
\end{lem}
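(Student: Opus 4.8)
The plan is to rely throughout on the state formulation of relative amenability: for a von Neumann subalgebra $R\subseteq qMq$ one has $R\lessdot_M Q$ precisely when $q\langle M,e_Q\rangle q$ carries an $R$-central state restricting to $\tau(q)^{-1}\tau$ on $qMq$, equivalently when there is a net of unit vectors $(\xi_i)\subset L^2(\langle M,e_Q\rangle)$ under $q$ with $\|x\xi_i-\xi_ix\|\to0$ for $x\in R$ and $\langle x\xi_i,\xi_i\rangle\to\tau(x)$ for $x\in qMq$. Granting this, part (3) is immediate: $P\prec_M^s Q$ gives $P\lessdot_M Q$ by the Proposition of Ioana recorded above, whose proof is unaffected by $P$ sitting inside a corner $pMp$ instead of $M$.

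For part (1) I may assume $\tau(p')>0$, since otherwise $p'=0$ by faithfulness of $\tau$ and the statement is vacuous. Picking a net $(\xi_i)$ witnessing $P\lessdot_M Q$, set $\eta_i=\|p'\xi_ip'\|^{-1}p'\xi_ip'$; this makes sense for large $i$ because $p'\in P$ is asymptotically central for $(\xi_i)$, so $\|p'\xi_ip'\|^2\to\langle p'\xi_i,\xi_i\rangle\to\tau(p')>0$. Since $p'\in P'\cap pMp$, every element of $Pp'$ is of the form $yp'$ with $y\in P$, and then $\|(yp')\eta_i-\eta_i(yp')\|\to0$ follows from $\|y\xi_i-\xi_iy\|\to0$ and $[p',y]=0$, while $\langle x\eta_i,\eta_i\rangle\to\tau(p')^{-1}\tau(x)$ for $x\in p'Mp'$. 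Hence $Pp'\lessdot_M Q$. Equivalently, one can compress a $P$-central state $\psi$ on $\langle M,e_Q\rangle$ to $\psi(p')^{-1}\psi(p'\,\cdot\,p')$ on $p'\langle M,e_Q\rangle p'$: this is $Pp'$-central because $p'\in P'$, and it restricts to the normalized trace on $p'Mp'$.

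Part (2) is the substantive statement, and I would carry it out in three moves. (i) \emph{Tile the central support of $p_0$ in $P$.} Let $z_0\in\mathcal Z(P)$ be the central support of $p_0$ in $P$ and fix a maximal pairwise-orthogonal family $(u_kp_0u_k^*)_k$ with $u_k\in\mathcal U(P)$, so that $\sum_k u_kp_0u_k^*=z_0$. Conjugating a witness of $p_0Pp_0p'\lessdot_M Q$ by $u_k$ (which commutes with $p'$) shows $(u_kp_0u_k^*)Pz_0p'(u_kp_0u_k^*)\lessdot_M Q$ for each $k$; these are corners of $Pz_0p'$ by projections of $P$ summing to the unit $z_0p'$, so a weak-$*$ cluster point of trace-weighted convex combinations of the associated states gives $Pz_0p'\lessdot_M Q$. (ii) \emph{Spread by the normalizer.} For $u\in\mathcal N_{pMp}(P)$ we have $uPu^*=P$, so conjugating a witness of $Pz_0p'\lessdot_M Q$ by $u$ gives $P\,u(z_0p')u^*\lessdot_M Q$, with $u(z_0p')u^*=uz_0u^*\cdot up'u^*\in P'\cap pMp$ since $uz_0u^*\in\mathcal Z(P)$ and $up'u^*\in P'\cap pMp$. (iii) \emph{Take the supremum.} One checks that $\bigvee_{u\in\mathcal U(P)}u(p_0p')u^*=z_0p'$, so the smallest $\mathcal N_{pMp}(P)$-invariant projection dominating $p_0p'$ is $z:=\bigvee_{u\in\mathcal N_{pMp}(P)}u(z_0p')u^*$, and this join is $\mathcal N_{pMp}(P)$-invariant, hence lies in $\mathcal N_{pMp}(P)'\cap pMp$; passing once more to a weak-$*$ limit of convex combinations, now of the states from (ii) indexed by finite subfamilies, produces a $Pz$-central state on $z\langle M,e_Q\rangle z$ restricting to the normalized trace, i.e.\ $Pz\lessdot_M Q$.

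The step I expect to be the real obstacle is the gluing used in both (i) and (iii): that if $(r_k)_k$ are projections with $\bigvee_k r_k=r$, lying either in $P'\cap pMp$ or inside $P$ below a given $P$-central projection, and each corresponding compression is amenable relative to $Q$, then so is the compression by $r$. One must assemble from the partial states on the corners $r_k\langle M,e_Q\rangle r_k$ a single state on $r\langle M,e_Q\rangle r$ that is simultaneously $Pr$-central and restricts to the normalized trace on $rMr$. The natural route is to pass first to a pairwise-orthogonal (partition) subfamily, then take a weak-$*$ limit of convex combinations along the net of finite subsets, and check that $Pr$-centrality, normality of the restriction to $rMr$, and the correct normalization all survive the limit. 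Once this is granted, everything else reduces to the compression computation of part (1) and routine manipulations with partial isometries and normalizing unitaries.
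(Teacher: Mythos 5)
The thesis itself gives no proof of this lemma---it is imported verbatim as Lemma 2.6 of [DHI16]---so there is nothing in the paper to compare your argument against and I am judging it on its own terms. Parts (1) and (3) are fine: (3) is the recorded proposition of Ioana, and for (1) the state compression $\psi\mapsto\psi(p')^{-1}\psi(p'\,\cdot\,p')$ works exactly as you say. (Your vector-net variant of (1) is shakier: you justify $\|p'\xi_ip'\|\not\to 0$ by calling $p'$ ``asymptotically central'' for $(\xi_i)$, but $p'$ lies in $P'\cap pMp$, not in $P$, so the net gives no control on $\xi_ip'-p'\xi_i$; the state version is the one to keep.)

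The genuine gap is the gluing in step (i) of part (2); you have correctly located the obstacle but the mechanism you propose for it does not work. There you must pass from amenability of the corners $e_kPe_kp'$, for pairwise orthogonal projections $e_k\in P$ summing to $z_0$, to amenability of $Pz_0p'$. A weak-$*$ cluster point of convex combinations $\sum_k c_k\,\psi_k(e_kp'\,\cdot\,e_kp')$ of the corner states is \emph{not} $Pz_0p'$-central: for $x\in Pz_0$ one has $e_kxTe_k=(e_kxe_k)(e_kTe_k)+e_kx(z_0-e_k)Te_k$, and the off-diagonal term is invisible to the $e_kPe_kp'$-centrality of $\psi_k$ and cancels against nothing; this already fails for two summands, before any limit is taken. (Contrast this with your step (iii), where the projections $u(z_0p')u^*$ lie in $P'\cap pMp$ and hence commute with $Pz$, so there the convex-combination gluing, after disjointifying inside $P'\cap pMp$, genuinely works.) The standard repair transports everything back to a single corner: choose partial isometries $v_k\in P$ with $v_k^*v_k\le p_0$ and $\sum_k v_kv_k^*=z_0$ (your unitaries $u_k$ with $\sum_k u_kp_0u_k^*=z_0$ generally do not exist---orthogonal projections all equivalent to $p_0$ cannot sum to $z_0$ unless $\tau(z_0)/\tau(p_0)$ is an integer when $P$ is a factor), let $\varphi$ be the $p_0Pp_0p'$-central state, and set $\psi_n=\frac{1}{n}\sum_{k\le n}\varphi(v_k^*\,\cdot\,v_k)$; expanding $v_k^*x=\sum_j(v_k^*xv_j)v_j^*$ with $v_k^*xv_j\in p_0Pp_0$ and using the centrality of $\varphi$, the centrality defect of $\psi_n$ is controlled by $\sum_{j>n}\tau(v_jv_j^*)\to 0$. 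The price is that the weak-$*$ limit need no longer restrict to the trace on $z_0p'Mz_0p'$, so the argument must be run with characterization (c) (normal on $M$, faithful on $\mathcal Z(P'\cap pMp)$) or with the bimodule characterization (e) rather than with (b). This transport-and-average step is precisely the content of Section 2.2 of Ozawa--Popa to which the proof in [DHI16] delegates, and it is the one piece your proposal leaves unproved.
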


\begin{lem}[Lemma 2.6 in \cite{IS19}]\label{lemma2.6IS19}
Let $(M,\tau)$ be a tracial von Neumann algebra and $Q\subset M$ a von Neumann subalgebra. Assume that there exists nets of von Neumann algebras $Q_n, M_n\subset M$ such that
\begin{enumerate}
\item[(1)] $Q\subset M_n\cap Q_n$ and ${}_{Q_n}L^{2}(M)_{M_n}\subset_{weak}{}_{Q_n}L^{2}(Q_n)\otimes_QL^2(M_n)_{M_n}$ for every $n$,
\item[(2)] $\lim_n \|x-E_{M_n}(x)\|_2= 0$ for every $x\in M$.
\end{enumerate}
If $P\subset M$ is a von Neumann subalgebra which is amenable relative to $Q_n$ inside $M$, for every $n$ then $P$ is amenable relative to $Q$ inside $M$.
\end{lem}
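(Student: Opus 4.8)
The plan is to recast relative amenability as a weak containment of Hilbert bimodules and then to argue entirely with Connes' tensor product, using hypothesis (1) to rewrite one bimodule as another. By the characterization (e) of relative amenability, $P\lessdot_M Q$ is equivalent to the weak containment of $M$-$P$-bimodules ${}_M L^2(M)_P\subset_{\mathrm{weak}}{}_M(L^2(M)\otimes_Q L^2(M))_P$, and likewise $P\lessdot_M Q_n$ means ${}_M L^2(M)_P\subset_{\mathrm{weak}}{}_M(L^2(M)\otimes_{Q_n}L^2(M))_P$. Hence, by transitivity of weak containment and by restricting the right action from $M$ to $P$, it suffices to establish, for every $n$, the weak containment of $M$-$M$-bimodules
\[
{}_M\big(L^2(M)\otimes_{Q_n}L^2(M)\big)_M\ \subset_{\mathrm{weak}}\ {}_M\big(L^2(M)\otimes_Q L^2(M)\big)_M .
\]

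To get this I would use the elementary bimodule identities available from the inclusions $Q\subset M_n\subset M$ and $Q_n\subset M$: namely $L^2(M)\otimes_{Q_n}L^2(Q_n)\cong L^2(M)$ as $M$-$Q_n$-bimodules, $L^2(M_n)\otimes_{M_n}L^2(M)\cong L^2(M)$ as $Q$-$M$-bimodules, and the isometric inclusion ${}_{M_n}L^2(M)_M\hookrightarrow{}_{M_n}(L^2(M)\otimes_{M_n}L^2(M))_M$ induced by $L^2(M_n)\subset L^2(M)$. Fusing this last inclusion on the left with ${}_M L^2(M)_{Q_n}$ over $Q_n$ produces an isometric embedding ${}_M(L^2(M)\otimes_{Q_n}L^2(M))_M\hookrightarrow{}_M(L^2(M)\otimes_{Q_n}L^2(M)\otimes_{M_n}L^2(M))_M$. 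On the other hand, starting from hypothesis (1) and fusing ${}_M L^2(M)_{Q_n}$ on the left over $Q_n$ and ${}_{M_n}L^2(M)_M$ on the right over $M_n$ — each such fusion preserving weak containment — one turns ${}_{Q_n}L^2(M)_{M_n}\subset_{\mathrm{weak}}{}_{Q_n}(L^2(Q_n)\otimes_Q L^2(M_n))_{M_n}$ into
\[
{}_M\big(L^2(M)\otimes_{Q_n}L^2(M)\otimes_{M_n}L^2(M)\big)_M\ \subset_{\mathrm{weak}}\ {}_M\big(L^2(M)\otimes_{Q_n}L^2(Q_n)\otimes_Q L^2(M_n)\otimes_{M_n}L^2(M)\big)_M ,
\]
and the right-hand bimodule collapses, by the two identities above, to $L^2(M)\otimes_Q L^2(M)$. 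Composing with the isometric embedding yields the displayed weak containment, and therefore $P\lessdot_M Q$.

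The main obstacle is the bookkeeping: tracking which algebra acts on which side in every Connes fusion, checking that each fusion used preserves weak containment (resp. isometric inclusion), and verifying that the collapse $L^2(M)\otimes_{Q_n}L^2(Q_n)\otimes_Q L^2(M_n)\otimes_{M_n}L^2(M)\cong L^2(M)\otimes_Q L^2(M)$ is genuinely an isomorphism of $M$-$M$-bimodules (here one uses associativity of Connes' product together with $Q\subset M_n$ and $Q_n\subset M$). Hypothesis (2) is the natural tool for the alternative, more hands-on argument: from the net of $\|\cdot\|_{2,\operatorname{Tr}}$-unit vectors in $L^2\langle M, e_{Q_n}\rangle$ that are approximately $P$-central and approximately tracial (provided by $P\lessdot_M Q_n$), condition (2) allows one to replace each $p\in P$ by $E_{M_n}(p)\in M_n$ with an error controlled by $\|p-E_{M_n}(p)\|_2$, to transport the resulting vectors through the $M$-$M_n$ weak containment coming from (1), and then to undo the substitution on the target side; this is the route I would take if separability or the net indexing obstructs the abstract fusion argument.
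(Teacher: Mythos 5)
The dissertation does not actually prove this statement: it is quoted as Lemma 2.6 of \cite{IS19} and used as a black box, so your attempt can only be judged on its own terms.

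Your main route has a genuine gap. The step that fails is the claimed isometric embedding of $M$-$M$-bimodules
\[
{}_M\big(L^2(M)\otimes_{Q_n}L^2(M)\big)_M\;\hookrightarrow\;{}_M\big(L^2(M)\otimes_{Q_n}L^2(M)\otimes_{M_n}L^2(M)\big)_M .
\]
The natural candidate $\xi\otimes_{Q_n}\widehat{x}\mapsto\xi\otimes_{Q_n}\widehat{1}\otimes_{M_n}\widehat{x}$ is not well defined: the map $\widehat{x}\mapsto\widehat{1}\otimes_{M_n}\widehat{x}$ is left $M_n$-modular but not left $Q_n$-modular, and since there is no inclusion $Q_n\subset M_n$ the assignment is not balanced over $Q_n$ (for $a\in Q_n\setminus M_n$ the vectors $\xi a\otimes\widehat{1}\otimes\widehat{x}$ and $\xi\otimes\widehat{1}\otimes\widehat{ax}$ differ). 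More decisively, the conclusion your fusion argument would deliver --- the $M$-$M$-bimodule weak containment ${}_ML^2\langle M,e_{Q_n}\rangle_M\subset_{\mathrm{weak}}{}_ML^2\langle M,e_Q\rangle_M$ for a \emph{single} fixed $n$ --- is false in general and makes no use of hypothesis (2). Take $M=L(\mathbb F_2\times\mathbb F_2)$, $Q=\mathbb C1$, $Q_n=L(\mathbb F_2)\otimes 1$, $M_n=1\otimes L(\mathbb F_2)$: hypothesis (1) holds because ${}_{Q_n}L^2(M)_{M_n}\cong{}_{Q_n}\big(L^2(Q_n)\otimes L^2(M_n)\big)_{M_n}$, and $P=Q_n$ is trivially amenable relative to $Q_n$, yet $L(\mathbb F_2)$ is not amenable. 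Hypothesis (2) (which fails for this constant net) is exactly what excludes such examples, so any argument that never invokes it cannot be correct.

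What the fusion legitimately yields is only a weak containment of $M$-$M_n$-bimodules, ${}_M(L^2(M)\otimes_{Q_n}L^2(M))_{M_n}\subset_{\mathrm{weak}}{}_M(L^2(M)\otimes_QL^2(M_n))_{M_n}\subset{}_M(L^2(M)\otimes_QL^2(M))_{M_n}$, with the right action restricted to $M_n$. The entire content of the lemma is the passage from right $M_n$-modularity to right $P$-modularity, and this is where (2) must enter: given finite $F\subset P$ and $\varepsilon>0$, choose $n$ with $\|y-E_{M_n}(y)\|_2$ small for $y\in F$, take almost $P$-central, almost trace-implementing vectors in $L^2\langle M,e_{Q_n}\rangle$, transfer their coefficients through the $M$-$M_n$ weak containment relative to the finite set $E_{M_n}(F)\subset M_n$, and then estimate $\|y\xi-\xi y\|$ against $\|E_{M_n}(y)\xi-\xi E_{M_n}(y)\|$ using left and right $\tau$-boundedness of the transferred vectors. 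This is precisely the ``hands-on'' argument you relegate to a fallback in your final paragraph; it is not an optional alternative but the necessary core of the proof, and the quantitative estimates there still need to be written out.
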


%\include{chapter6}

% Chapter Six - Chapter Six Title
\chapter{Main Results} \label{chapter:chapterlabel6}

%%%%%%%%%%%%%%%%%%%%%%%%%%%%%%%%%%%%%%%%%%%%%%%%%%%%%%%%%%%%%%%%%%%%%%%%%%%%%%%%%%%%%%%%%%%%%%%%%%%%%%%%%%%%%%%%%%
%%%%%%%%%%%%%%%%%%%%%%%%%%%%%%%%%%%%%%%%%%%%%%%%%%%%%%%%%%%%%%%%%%%%%%%%%%%%%%%%%%%%%%%%%%%%%%%%%%%%%%%%%%%%%%%%%%
%%%%%%%%%%%%%%%%%%%%%Tensor product decompositions of amalgamated free products of von Neumann algebras%%%%%%%%%%%
%%%%%%%%%%%%%%%%%%%%%%%%%%%%%%%%%%%%%%%%%%%%%%%%%%%%%%%%%%%%%%%%%%%%%%%%%%%%%%%%%%%%%%%%%%%%%%%%%%%%%%%%%%%%%%%%%%
%%%%%%%%%%%%%%%%%%%%%%%%%%%%%%%%%%%%%%%%%%%%%%%%%%%%%%%%%%%%%%%%%%%%%%%%%%%%%%%%%%%%%%%%%%%%%%%%%%%%%%%%%%%%%%%%%%

\section{Tensor product decompositions of amalgamated free products of von Neumann algebras}\label{AFPtensor}

In this section we preset a general result that completely describe all the tensor product decompositions for a large class of amalgamated free product von Neumann algebras $M_1\ast_P M_2$. Specifically, we will show that every tensor product product decomposition essentially splits the core $P$. This is a phenomenon that parallels results in Bass-Serre theory for groups. The precise statement is Theorem \ref{tensordecompafp}. However in order to prove our result we first need the following result which essentially relies on the usage of \cite[Theorem A]{Va13} (see also \cite[Theorem 7.1]{Io12}).  

\begin{thm}\label{intertwiningincore1} 
Let $M_1, M_2$ be tracial von Neumanna algebras with the common von Neumann subalgebra $P\subset M_i$ such that for each $i=1,2$ there is a unitary $u_i\in \mathcal U(M_i)$ so that $E_P(u_i)=0$. Let $M=M_1\ast_P M_2$ be the corresponding amalgamated free product von Neumann algebra and assume in addition that $M$ is not amenable relative to $P$ inside $M$.  Let $p\in M$ be a nonzero projection and assume $A_1,A_2 \subset pMp$ are two commuting diffuse subalgebras that $A_1\vee A_2\subset pMp$ has finite index. Then
\[
A_i\prec_M P\quad\text{for some}\quad i=1,2.
\]
\end{thm}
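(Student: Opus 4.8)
The plan is to combine Vaes' dichotomy for amalgamated free products (Theorem \ref{Va13theoremA}) with the finite-index hypothesis on $A_1 \vee A_2$ and the intertwining calculus recorded in Chapter 3. First I would observe that since $A_1, A_2 \subset pMp$ commute, we have $A_1 \subset (A_2)' \cap pMp$; in particular $A_1$ normalizes $A_2$ and vice versa, and more importantly $A_1 \vee A_2 \subset \mathcal{N}_{pMp}(A_2)''$. Since $A_1 \vee A_2 \subset pMp$ has finite index, Lemma \ref{lemma3.9Va08}(1) upgrades this: any subalgebra $\mathcal{N}_{pMp}(A_2)''$ containing the finite-index subalgebra $A_1 \vee A_2$ must itself satisfy $\mathcal{N}_{pMp}(A_2)'' \prec_M Q$ whenever $A_1 \vee A_2 \prec_M Q$, and conversely finite-index inclusions are transitive under $\prec_M$.

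Next I would apply Theorem \ref{Va13theoremA} to the diffuse subalgebra $A_2 \subset pMp$. The theorem requires $A_2$ to be amenable relative to one of the $M_i$; but here we are not given that, so instead the strategy is to apply the trichotomy to $A_2$ directly only after establishing relative amenability, OR — the cleaner route — to observe that the three alternatives of Theorem \ref{Va13theoremA} must be examined for whichever $A_i$ we can feed into it. Let me restructure: the hypothesis "$M$ is not amenable relative to $P$" is exactly what rules out the third alternative globally. So I would argue by contradiction: suppose $A_i \not\prec_M P$ for $i = 1, 2$. Then by Theorem \ref{BO08appendix} applied inside, or directly, $A_2 \not\prec_M P$. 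Now $A_1$ commutes with $A_2$, so $A_1 \subset \mathcal{N}_{pMp}(A_2)''$; since also $A_2 \subset \mathcal{N}_{pMp}(A_2)''$ we get $A_1 \vee A_2 \subset \mathcal{N}_{pMp}(A_2)''$, hence by finite index $\mathcal{N}_{pMp}(A_2)'' \subset pMp$ has finite index too, so $\mathcal{N}_{pMp}(A_2)'' \not\prec_M M_i$ for either $i$ would follow if we knew $pMp \not\prec_M M_i$ — and that is exactly Proposition \ref{intertamalgam} together with the amplification/corner bookkeeping (using the hypothesis that each $M_i$ carries a Haar-unitary $u_i$ with $E_P(u_i)=0$). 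This kills the second alternative of Theorem \ref{Va13theoremA}.

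Therefore, once we know $A_2$ is amenable relative to some $M_i$ inside $M$, Theorem \ref{Va13theoremA} forces the third alternative: $\mathcal{N}_{pMp}(A_2)''$ is amenable relative to $P$. But $\mathcal{N}_{pMp}(A_2)''$ has finite index in $pMp$, hence contains $pMp$ up to finite index, so relative amenability passes up (finite-index extensions of relatively amenable algebras are relatively amenable — this is the standard averaging argument, or via Lemma \ref{lemma2.6DHI16} and a corner computation), giving that $pMp$ is amenable relative to $P$, and thus $M$ is amenable relative to $P$ (cutting down by the central support of $p$, a full projection since $M$ is a factor) — contradicting the hypothesis. This completes the argument modulo the one remaining input: \emph{why is $A_2$ amenable relative to some $M_i$?} Here the finite-index hypothesis is again the key — $A_1 \vee A_2$ has finite index in $pMp$, and $A_1, A_2$ are diffuse commuting, so one of them, say $A_1$, together with its commutant structure, must be captured by the amalgam geometry; concretely, I would run the dichotomy on $A_1$ instead and note that the "$A_1 \prec_M P$" conclusion is what we want, the "$\mathcal{N}(A_1)'' \prec_M M_i$" conclusion contradicts $pMp \not\prec_M M_i$ as above (since $A_2 \subset \mathcal{N}_{pMp}(A_1)''$ too, forcing finite index), and the "$\mathcal{N}(A_1)''$ amenable relative to $P$" conclusion again propagates to $M$ amenable relative to $P$, a contradiction — but this requires $A_1$ to be relatively amenable to some $M_i$ to even start, which is circular.

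\textbf{The main obstacle} is precisely breaking this circularity: Theorem \ref{Va13theoremA} is only usable once relative amenability to some $M_i$ is in hand, and nothing in the hypotheses hands that to us for free. The resolution I would pursue is to \emph{not} use Theorem \ref{Va13theoremA} as a black box on $A_1$ or $A_2$ separately, but to exploit that a finite-index subalgebra $A_1 \vee A_2$ of $pMp$ sees all of $M$: since $A_1 \vee A_2$ has finite index in $pMp$, by the downward/upward basic construction bookkeeping one has $pMp \prec_M A_1 \vee A_2$, and then $M \prec_M A_1 \vee A_2$ (amplifying and using factoriality). Combined with the fact that $A_1$ and $A_2$ commute, a standard trick (as in \cite{CdSS15}, \cite{dSP17}) shows that if \emph{neither} $A_i \prec_M P$ then one can produce a diffuse subalgebra of $A_1$ (using Theorem \ref{BO08appendix}) whose relative commutant contains the diffuse algebra $A_2$, and feed this into \cite[Theorem A]{Va13}, where the needed relative amenability comes from the commuting structure itself via Lemma \ref{lemma2.6DHI16}(3): $A_1 \prec^s_{M} A_1 \vee A_2$ trivially, but more to the point, since $A_2$ commutes with $A_1$ and $A_1 \vee A_2$ is finite-index in $pMp$, one shows $A_1$ is amenable relative to $A_1\vee A_2$ hence — this still needs $A_1 \vee A_2$ related to some $M_i$. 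The genuinely correct path, which I would write out carefully, is: apply Theorem \ref{Va13theoremA} to $A_2$ under the a priori assumption (for contradiction) and use that the commutant $A_1 \subset (A_2)'\cap pMp$ being diffuse means $(A_2)' \cap pMp \not\prec_M P$ cannot be ruled out cheaply, so one instead first proves $A_1 \vee A_2$, being finite-index, is amenable relative to $M_i$ for \emph{some} $i$ — but a finite-index subalgebra of $pMp$ need not be, unless $pMp$ itself is, which fails. Hence the actual proof must be subtler and I expect it goes: \emph{assume $A_i \not\prec_M P$ for both $i$}; by Theorem \ref{BO08appendix} pick diffuse abelian $B_i \subset A_i$ with $B_i \not\prec_M P$; then $B_1, B_2$ commute; apply Theorem \ref{Va13theoremA} to $B_1$ — but we still need relative amenability. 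I believe the resolution in the paper is that they do \emph{not} invoke relative amenability as a hypothesis of their intertwining input but rather a version (\cite[Theorem 7.1]{Io12} as cited) whose trichotomy starts from an \emph{arbitrary} subalgebra, and the finite-index hypothesis on $A_1 \vee A_2$ is what forces, in the "control by $M_i$" branch, the contradiction $pMp \prec_M M_i$ (impossible by Proposition \ref{intertamalgam} plus \ref{lemma3.9Va08}), and in the "relatively amenable to $P$" branch, the contradiction that $M$ is amenable relative to $P$; hence the only surviving branch is $B_i \prec_M P$ for the relevant $i$, whence $A_i \prec_M P$. I would structure the final write-up around this trichotomy from \cite[Theorem 7.1]{Io12}, using the finite index of $A_1\vee A_2$ to promote normalizer-intertwining statements about $A_i$ to statements about a finite-index subalgebra of $pMp$, and then Proposition \ref{intertamalgam} and the non-relative-amenability hypothesis to close both undesired cases.
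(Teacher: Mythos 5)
You have correctly identified the skeleton of the argument --- Vaes' dichotomy (Theorem \ref{Va13theoremA}), the finite-index hypothesis on $A_1\vee A_2$ used to rule out the normalizer branches via Proposition \ref{intertamalgam} and to promote relative amenability from $A_1\vee A_2$ up to $pMp$, and the standing hypothesis that $M$ is not amenable relative to $P$ to kill the last branch --- but you never close the circularity you yourself flag as ``the main obstacle,'' and the resolution is not the one you guess at the end. The paper does not replace Theorem \ref{Va13theoremA} by a trichotomy ``starting from an arbitrary subalgebra''; it applies Theorem \ref{Va13theoremA} exactly as stated, but to an auxiliary algebra chosen so that its hypothesis holds for free: a diffuse \emph{amenable} subalgebra $A\subset A_1$ (e.g.\ any diffuse abelian subalgebra, which exists because $A_1$ is diffuse). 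An amenable algebra is amenable relative to $\mathbb{C}1$, hence relative to either $M_i$, so the theorem applies with no further input. Since $A_2$ commutes with $A_1\supset A$ we have $A_2\subset \mathcal N_{pMp}(A)''$, so the second and third branches of the trichotomy become statements about $A_2$ (and, after a second application of the trichotomy to $A_2$, about $A_1\vee A_2$), at which point the finite-index arguments you describe do apply and yield the desired contradictions. This is precisely where your write-up stalls: you reach ``pick diffuse abelian $B_i\subset A_i$ \dots apply Theorem \ref{Va13theoremA} to $B_1$ --- but we still need relative amenability,'' without noticing that a diffuse abelian algebra is amenable and therefore already satisfies the relative-amenability hypothesis.

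The second half of the argument, which you also do not supply, is how the conclusion about the auxiliary $A$ is converted into a conclusion about $A_1$ itself. The first stage gives: for \emph{every} diffuse amenable $A\subset A_1$, either $A\prec_M P$ or $A_2\prec_M P$. One then assumes $A_1\not\prec_M P$ and invokes Theorem \ref{BO08appendix} (Corollary F.14 in \cite{BO08}) to produce a diffuse abelian $B\subset A_1$ with $B\not\prec_M P$; feeding $A=B$ into the dichotomy forces $A_2\prec_M P$. You cite Theorem \ref{BO08appendix} in passing but never put it to this use. So the proposal, as written, assembles the right ingredients but leaves the two load-bearing steps --- amenability of a diffuse abelian subalgebra as the entry point into Vaes' theorem, and Corollary F.14 as the device for choosing that subalgebra away from $P$ --- unproved and, in the first case, explicitly declared out of reach.
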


\begin{proof} Fix $A\subset A_1$ an arbitrary diffuse amenable subalgebra of $A_1$. Using Theorem \ref{Va13theoremA}, one of the following holds:
\begin{enumerate}
\item [(1)] \label{e1} $A\prec_M P$;
\item [(2)]\label{e2} $A_2\prec_M M_i$ for some $i=1,2$; or 
\item [(3)]\label{e3} $A_2$ is amenable relative to $P$ inside $M$.
\end{enumerate}
\noindent If (\ref{e2}) holds then either 
\begin{enumerate}[resume]
\item [(4)]\label{e4} $A_2\prec_M P$; or 
\item [(5)]\label{e5} $A_1\vee A_2\prec_M M_i$. 
\end{enumerate}
\noindent If (\ref{e5}) holds, since 
$[pMp:A_1\vee A_2]<\infty$, then we must have $M\prec_M M_i$. Then Proposition \ref{intertamalgam} will lead to a contradiction. If case (\ref{e3}) holds, then applying Theorem \ref{Va13theoremA} again 
we get one of the following
\begin{enumerate}[resume]
\item [(6)]\label{e6} $A_2\prec_M P$;
\item [(7)]\label{e7} $A_1\vee A_2$ is a amenable relative to $P$ inside $M$; or 
\item [(8)]\label{e8}$A_1\vee A_2\prec_M M_i$ for some $i.$
\end{enumerate}

\noindent If (\ref{e7}) holds, since $[pMp:A_1\vee A_2]<\infty$, it follows that $pMp$ is a amenable relative to $P$ inside $M$, contradicting the initial assumption. Notice that the condition (\ref{e8}) is similar to the condition (\ref{e5})  which was already eliminated before.
To summary, we have obtained that for any subalgebra $A\subset A_1$ amenable we have either
\begin{equation}
 A\prec_M P \quad\text{or} \quad A_2\prec_M P. 
 \end{equation} 
 
Here, suppose $A_1\not\prec_M P$
By using Theorem \ref{BO08appendix} and setting $A_n=P$ and $N=A_1$  We obtain that there exist a diffuse von Neumann subalgebra $B\subset A_1$ such that $B\not\prec_M P$. From above since $A$ is any arbitrary diffuse subalgebra, it is forced that $A_2\prec_M P$. So we can conclude that $ A_1\prec_M P$ or $A_2\prec_M P.$
\end{proof}

\begin{cor}\label{intertwiningcoregroupsafphnn} Let $\Gamma=\Gamma_1\ast_\Sigma \Gamma_2$ such that $[\Gamma_1:\Sigma]\geq 2$ and $[\Gamma_2:\Sigma]\geq 3$.
Denote by $M=L(\Gamma)$ let $p$ be a projection in $M$ and assume $A_1,A_2 \subset pMp$ are two commuting diffuse subalgebras such that $A_1\vee A_2\subset pMp$ has finite index. Then
\begin{equation}
A_i\prec_M L(\Sigma)\quad \text{for some} \quad i=1,2.
\end{equation}
\end{cor}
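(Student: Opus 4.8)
The plan is to deduce Corollary \ref{intertwiningcoregroupsafphnn} from Theorem \ref{intertwiningincore1} by specializing to the group von Neumann algebra setting $M_i = L(\Gamma_i)$, $P = L(\Sigma)$, and $M = L(\Gamma) = L(\Gamma_1) \ast_{L(\Sigma)} L(\Gamma_2)$. First I would verify the two hypotheses of Theorem \ref{intertwiningincore1}. The condition that for each $i$ there exists a unitary $u_i \in \mathcal{U}(M_i)$ with $E_{P}(u_i) = 0$ is immediate: since $[\Gamma_1 : \Sigma] \geq 2$ and $[\Gamma_2 : \Sigma] \geq 3$, in each $\Gamma_i$ one can pick a group element $g_i \in \Gamma_i \setminus \Sigma$, and then $u_i = u_{g_i}$ satisfies $E_{L(\Sigma)}(u_{g_i}) = 0$ by the formula $E_{L(\Sigma)}(\sum_g x_g u_g) = \sum_{g \in \Sigma} x_g u_g$ recorded after Theorem \ref{conditonalexpectation}.

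The more substantive hypothesis to check is that $M = L(\Gamma)$ is \emph{not} amenable relative to $P = L(\Sigma)$ inside $M$. This is where I expect the main work to lie. The natural route is: if $M \lessdot_M L(\Sigma)$, then in particular $L(\Gamma) \lessdot_M L(\Sigma)$, which by the transfer between relative amenability and co-amenability means $L(\Sigma)$ is co-amenable in $L(\Gamma)$; one then wants to conclude $[\Gamma : \Sigma] < \infty$ or some similar degeneracy contradicting the index hypotheses $[\Gamma_1:\Sigma] \geq 2$, $[\Gamma_2 : \Sigma] \geq 3$ (which force $\Gamma_1 \ast_\Sigma \Gamma_2$ to contain a nonabelian free subgroup acting suitably, hence be "large" over $\Sigma$). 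Concretely, co-amenability of $L(\Sigma)$ in $L(\Gamma)$ would give a $\Gamma$-invariant state on $\ell^\infty(\Gamma/\Sigma)$-type objects, i.e. relative amenability of the action $\Gamma \curvearrowright \Gamma/\Sigma$; but for a nontrivial amalgam with these indices the Bass–Serre tree action shows $\Gamma$ is not amenable relative to $\Sigma$ (equivalently the quasi-regular representation on $\ell^2(\Gamma/\Sigma)$ is not amenable). Alternatively, and perhaps more cleanly within the tools quoted, one uses Proposition \ref{intertamalgam}: $M \nprec_M M_k$ for $k = 1, 2$, together with the fact that relative amenability of $M$ over $P$ would — via Theorem \ref{Va13theoremA} applied to $A = M = pMp$ with $p = 1$ — force either $M \prec_M P$ (impossible, as $P \subset M_k$ and that would give $M \prec_M M_k$) or $M \prec_M M_i$ (impossible by Proposition \ref{intertamalgam}) or $M$ genuinely amenable relative to $P$; so one still must rule out the last. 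I would handle this by observing that $M \lessdot_M L(\Sigma)$ with $L(\Sigma)$ having infinite index (since $[\Gamma : \Sigma] = \infty$, which follows from $[\Gamma_1 : \Sigma] \cdot$ something $\geq 2 \cdot 3$ being only a lower bound but in fact any nontrivial amalgam has infinite index core) would contradict the fact that a finite von Neumann algebra is never amenable relative to an infinite-index subalgebra unless the ambient algebra is amenable — and $L(\Gamma)$ is not amenable because $\Gamma$ contains $\mathbb{F}_2$ by the Bass–Serre / ping-pong argument from the index hypotheses.

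Once both hypotheses of Theorem \ref{intertwiningincore1} are verified, the conclusion is immediate: applying that theorem with $M_i = L(\Gamma_i)$, $P = L(\Sigma)$, $A_1, A_2 \subset pMp$ the given commuting diffuse subalgebras with $A_1 \vee A_2 \subset pMp$ of finite index, we get $A_i \prec_M L(\Sigma)$ for some $i \in \{1,2\}$, which is exactly the assertion of the corollary. So the only real content is the relative non-amenability check, and the hard part will be packaging the "nontrivial amalgam is large over its core" fact — that $[\Gamma_1 : \Sigma] \geq 2$ and $[\Gamma_2 : \Sigma] \geq 3$ prevent $L(\Gamma)$ from being amenable relative to $L(\Sigma)$ — in a way that cites only results available in the excerpt; I would lean on Proposition \ref{intertamalgam} plus Theorem \ref{Va13theoremA} as above, reducing everything to ruling out genuine relative amenability of $M$ over $P$, and for that invoke that $\Gamma \supset \mathbb{F}_2$ makes $L(\Gamma)$ nonamenable while $L(\Sigma)$ sits at infinite index.
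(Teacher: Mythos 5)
Your overall reduction is exactly the paper's: the corollary follows from Theorem \ref{intertwiningincore1} once one checks (i) the existence of unitaries $u_i\in L(\Gamma_i)$ with $E_{L(\Sigma)}(u_i)=0$ (which you verify correctly by taking canonical unitaries $u_{g_i}$ with $g_i\in\Gamma_i\setminus\Sigma$) and (ii) that $L(\Gamma)$ is not amenable relative to $L(\Sigma)$ inside $L(\Gamma)$. For (ii) the paper simply cites the proof of Theorem 7.1 in \cite{Io12}, whose content is precisely your first suggested route: under $[\Gamma_1:\Sigma]\geq 2$ and $[\Gamma_2:\Sigma]\geq 3$ the action of $\Gamma$ on its Bass--Serre tree is non-elementary, so there is no $\Gamma$-invariant mean on $\Gamma/\Sigma$, i.e.\ $\Sigma$ is not co-amenable in $\Gamma$, which rules out $L(\Gamma)\lessdot_{L(\Gamma)}L(\Sigma)$. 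So the correct argument is present in your write-up, and it is the same as the paper's.

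However, two of your fallback arguments for (ii) do not work and you should not rely on them. The appeal to Theorem \ref{Va13theoremA} with $A=M$ is circular, as you yourself note: one of its alternatives is exactly the relative amenability you are trying to exclude. More seriously, the principle you then invoke --- that a finite von Neumann algebra is never amenable relative to an infinite-index subalgebra unless it is itself amenable --- is false: $M=N\bar\otimes\mathcal R$ is amenable relative to $N\otimes 1$ for any nonamenable II$_1$ factor $N$, and $N\otimes 1$ has infinite index in $M$; similarly any crossed product $B\rtimes\Lambda$ with $\Lambda$ amenable is amenable relative to $B$. Infinite index alone never obstructs relative amenability; what does the work here is the tree geometry of the amalgam (equivalently, non-co-amenability of $\Sigma$ in $\Gamma$), so the Bass--Serre argument, or the citation to \cite{Io12}, is not optional --- it is the whole content of the step.
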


\begin{proof} Since $[\Gamma_1:\Sigma]\geq 2$ and $[\Gamma_2:\Sigma]\geq 3$ then by the proof of Theorem 7.1 in \cite{Io12} it follows that $L(\Gamma)$ is not amenable relative to $L(\Sigma)$. The conclusion follows then from Theorem \ref{intertwiningincore1}. 
\end{proof}

With these preparations at hand we are ready to prove the main theorem of this section.

\begin{thm}\label{tensordecompafp} 
Let $M=M_1\ast_P M_2$ be an amalgamated free product such that $M, M_1, M_2, P$ are II$_1$ factors and $[M_k:P]=\infty$ for all $k=1,2$. Assume $A_1,A_2 \subset M$ are diffuse factors such that 
$M=A_1\bar\otimes A_2.$
Then there exist tensor product decompositions 
\begin{center}
$P=C\bar \otimes P_0$,\quad $M_1 =C\bar\otimes M^0_1$,\quad and \quad $M_2 =C\bar\otimes M^0_2$
\end{center}
and hence $M= C\bar\otimes (M^0_1\ast_{P_0} M^0_2)$. Moreover, there exist $t>0$ and a permutation $\sigma \in \mathfrak S_2$ such that 
\begin{center}
$A_{\sigma(1)}^t\cong  C$ \quad and \quad $A_{\sigma(2)}^{1/t}\cong M^0_1\ast_{P_0} M^0_2$. 
\end{center}
\end{thm}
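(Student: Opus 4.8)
The plan is to combine Ge's tensor splitting theorem (Theorem~\ref{Ge96theoremA}) with the intertwining dichotomy in the core (Theorem~\ref{intertwiningincore1}), and then promote a unitary-conjugacy statement to the genuine tensor/amalgam splitting. First I would observe that since $M = A_1 \bar\otimes A_2$ and $P \subset M$ is a II$_1$ factor, we can apply Theorem~\ref{intertwiningincore1} to the pair of commuting diffuse subalgebras $A_1 \otimes 1$ and $1 \otimes A_2$ inside $M$ (their join is all of $M$, hence certainly finite index, and $M$ is not amenable relative to $P$ by hypothesis once one checks the index conditions $[M_k:P]=\infty$ force this, as in Corollary~\ref{intertwiningcoregroupsafphnn}). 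This yields, after relabeling via $\sigma$, that $A_1 \prec_M P$. The first real work is to upgrade this to a \emph{strong} intertwining $A_1 \prec^s_M P$: since $A_1'\cap M \supseteq A_2$ is diffuse and $A_1$ is regular in $M$ (its normalizer contains $A_2$ and $A_1$ itself, generating $M$), a standard argument shows that if $A_1 p' \prec_M P$ for one nonzero $p'\in A_1'\cap M$ then it holds for all of them, giving $A_1\prec^s_M P$.

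Next I would extract from $A_1 \prec^s_M P$ the location of $A_2 = A_1'\cap M$. The heuristic is that if $A_1$ sits (up to a corner and unitary conjugacy) inside $P$, then its relative commutant — being large — must contain the ``complementary'' part of the amalgam. Concretely, using the basic-construction/intertwiner description (Theorem~\ref{popaintertwing}(1)) one obtains a projection $q\in P$, a partial isometry $v$, and a corner $Q = \phi(p A_1 p) \subseteq qPq$ with $\phi(x)v = vx$ and $v^*v = p$, $vv^* \in Q'\cap qMq$. Pushing the tensor decomposition $M = A_1 \bar\otimes A_2$ through $v$, one gets that $vv^*(A_2)vv^*$ is essentially $Q'\cap (\text{corner of }M)$, and then control of the relative commutant of a subalgebra of $P$ inside the amalgam $M_1 \ast_P M_2$ forces $Q'\cap qMq \subseteq qM_jq$ for some $j$, and more precisely that the complementary factor lands in a single vertex algebra.

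Then comes the structural splitting. Knowing that (a corner of) $A_1$ conjugates into $P$ and its commutant conjugates into, say, $M_1$, together with $M = A_1\bar\otimes A_2$, I would run a Ge-type argument \emph{relative to $P$}: write $P$ as containing the conjugated copy $C := u A_1^t u^*$ of (an amplification of) $A_1$; because $C$ is a factor and $C' \cap M$ contains the conjugated $A_2$, apply Theorem~\ref{Ge96theoremA} with the ambient finite factor being a corner of $M$ to split $P = C \bar\otimes P_0$ where $P_0 = C'\cap P$. The same tensor-splitting applied inside $M_1$ and $M_2$ (each contains $C$ since $C\subset P \subset M_i$, and $C$ is a factor) gives $M_i = C \bar\otimes M_i^0$ with $M_i^0 = C'\cap M_i$; compatibility of conditional expectations $E_P = E_C \bar\otimes E_{P_0}$ on each $M_i$ then yields $M = C\bar\otimes (M_1^0 \ast_{P_0} M_2^0)$. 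Finally $A_2 = A_1'\cap M = C'\cap M = M_1^0\ast_{P_0}M_2^0$ (up to the amplification bookkeeping), which delivers $A_{\sigma(1)}^t \cong C$ and $A_{\sigma(2)}^{1/t}\cong M_1^0\ast_{P_0}M_2^0$.

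The main obstacle I anticipate is the middle step: rigorously pinning down the relative commutant of a conjugated corner of $A_1$ inside the amalgamated free product and showing it forces a \emph{global} tensor splitting of $P$ (not merely a corner statement), while carefully tracking amplification constants and the passage between ``$\prec$'', ``$\prec^s$'', and honest unitary conjugacy with full support projections. Handling the matrix amplifications so that all three decompositions $P$, $M_1$, $M_2$ split by the \emph{same} subfactor $C$ with matching expectations — rather than by a priori different corners — is the delicate bookkeeping that makes the clean final statement work; I expect this to mirror the arguments of \cite{DHI16} and \cite{Io12} adapted to the AFP setting.
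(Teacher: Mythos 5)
Your overall skeleton matches the paper's: use Theorem~\ref{intertwiningincore1} to get $A_{\sigma(1)}\prec_M P$, then leverage Ge's splitting theorem (Theorem~\ref{Ge96theoremA}) to split $P$, $M_1$, $M_2$ by the same factor $C$. However, your middle step contains a genuine error. You claim that the relative commutant of the conjugated corner $Q=\phi(pA_1p)\subset qPq$ lands in a single vertex algebra, i.e.\ ``$Q'\cap qMq\subseteq qM_jq$ for some $j$'' and that ``the complementary factor lands in a single vertex algebra.'' This is false, and it contradicts the conclusion of the theorem itself: the complementary factor $A_{\sigma(2)}$ is isomorphic to an amplification of $M_1^0\ast_{P_0}M_2^0$, which sits in neither $M_1$ nor $M_2$. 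Indeed, if $A_{\sigma(2)}\prec_M M_j$ held, then since $A_{\sigma(1)}\vee A_{\sigma(2)}=M$ one would get $M\prec_M M_j$, contradicting Proposition~\ref{intertamalgam}. The control theorems of IPP type give containment of quasi-normalizers in a vertex algebra only for subalgebras $Q\subset M_j$ with $Q\nprec_{M_j}P$; for $Q\subset P$ the relative commutant genuinely spreads over both sides of the amalgam, and the whole content of the theorem is that it is again an amalgamated free product $(B'\cap pM_1p)\ast_{B'\cap pPp}(B'\cap pM_2p)$.

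The step you defer as ``delicate bookkeeping'' is in fact the central technical point, and your proposed mechanism for it would fail. What must actually be shown is that the local identity $vv^*(B\vee(B'\cap pMp))vv^*=vv^*Mvv^*$ (obtained by pushing $M=A_1\bar\otimes A_2$ through $v$) propagates to the whole corner, i.e.\ that the central support $z$ of $vv^*$ in $B\vee(B'\cap pMp)$ equals $p$; only then can Ge's theorem be applied to $B\subset pPp\subset pMp=B\bar\otimes(B'\cap pMp)$. The paper proves $z=p$ by showing $z\in pM_1p\cap pM_2p=pPp$: for each $k$ one considers the spectral projections $e^k_t=\chi_{[t,\infty)}(E_{M_k}(z))$, shows via \cite{CIK13} that $(B\vee(B'\cap pM_kp))e^k_t\subset e^k_tM_ke^k_t$ has finite index, deduces from $[M_k:P]=\infty$ that this subalgebra does not embed into $P$ inside $M_k$, and then applies the quasi-normalizer control of \cite{IPP05} to the commuting element $e^k_tz$ to conclude $e^k_tz\in M_k$, letting $t\to 0$. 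Combined with $\operatorname{s}(E_P(z))=p$ this forces $z=p$. Nothing in your outline supplies this argument, and the route you do sketch (locating $A_2$ inside a vertex algebra) is blocked for the reason above.
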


\begin{proof} By Theorem \ref{intertwiningincore1} we have that 
$A_i\prec_M P$ for some $i\in \{1,2\}$. Since $M=A_1\bar\otimes A_2$, by symmetry it suffices to assume $A_1\prec_M P$. It follows directly from the the definition that there exist nonzero projections $a\in A_1$, $p\in P$, a nonzero partial isometry $v\in M$, and a unital injective $\ast$-homomorphism 
\[
\Phi: aA_1 a\rightarrow pPp
\]
such that 
\begin{equation}\label{-1}
\Phi(x)v=vx \quad \text{for all}\quad x\in aA_1 a.
\end{equation}    

Shrinking $a$ if necessary we can assume there is an integer $m$ such that $\tau(p)=m^{-1}$. Letting $B=\phi(aA_1a)$, it is easy to chech that $vv^*\in B'\cap pMp$. Also we can assume wlog that  $\operatorname{s}(E_P(vv^*))=p$ and using factoriality of $A_i$ that  $v^*v= r_1\otimes r_2$. Thus by (\ref{-1}) there is a unitary $u\in M$ which is extended from $v$ so that 
\begin{equation}
\label{-3}Bvv^*=vA_1v^*= u (r_1 A_1 r_1 \otimes r_2) u^*.
\end{equation}
Passing to relative commutants we also have 
\begin{align}\label{-4}
vv^* (B'\cap pMp )vv^* 
&=vv^*B'vv^*\cap vv^*pMpvv^*\nonumber\\
&= (Bvv^*)'\cap vv^* Mvv^*\nonumber\\
&=(vA_1v^*)'\cap vMv^* \nonumber\\
& = u ((r_1 A_1 r_1 \otimes r_2 )'\cap (r_1 A_1 r_1 \otimes r_2 A_2 r_2 ))u^*\nonumber \\
&= u (r_1 \otimes r_2A_2 r_2)u^*.
\end{align}
Combing (\ref{-3}) and (\ref{-4}) together, we have 
\begin{align*}
vv^*(B\vee B'\cap pMp)vv^*
&= u (r_1 A_1r_1) \bar\otimes (r_2A_2 r_2 )u^*\\
&=vv^* Mvv^*.
\end{align*}
Letting $z$ be the central support of $vv^*$ in $B\vee B'\cap pMp$ we conclude that \begin{equation}\label{-2}(B\vee B'\cap pMp)z=zMz.
 \end{equation}
Note by construction we actually have $z\in \mathcal Z(B'\cap pMp)$. In addition, we have 
$p\geq z\geq vv^*$
and hence
\[
 p\geq  \operatorname{s}(E_P(z))\geq \operatorname{s}(E_P(vv^*))=p.
\]
 Thus  $\operatorname{s}(E_P(z))=p$. Also notice that $p\geq \operatorname{s}(E_{M_k}(z))\geq z\geq vv^*$.  For every $t>0$, denote $e^k_t=\chi_{[t,\infty )}(E_{M_k}(z))$.  Using relation (\ref{-2}) and \cite[Lemma 2.3]{CIK13} it follows that the inclusion $(B\vee B'\cap pM_k p)e^k_t\subset e^k_t M_k e^k_t$ is finite index. This, together with the assumptions and \cite[Lemma 3.7]{Va08} further imply that $(B\vee B'\cap pM_k p)e^k_t\nprec_{M_k} P$. But $e^k_t z$ commutes with $(B\vee B'\cap pM_k p)e^k_t$ and hence by \cite[Theorem 1.2.1]{IPP05} we have $e^k_tz\in M_k$. Since $e^k_t z\rightarrow z$ in $WOT$, as $t\rightarrow 0$, we obtain that $z\in pM_kp$, for all $k=1,2$. In conclusion $z\in pM_1p\cap pM_2p=pPp$ and hence $z=p$. Thus using factoriality and (\ref{-2}) we get that $pMp= B \bar \otimes (B'\cap pMp)$. Moreover,  we have $B\subset pPp\subset pMp= B \bar \otimes (B'\cap pMp)$ and since $B$ is a factor it follows from Theorem \ref{Ge96theoremA} that $pPp= B\bar\otimes (B'\cap pPp)$. 
Similarly one can show that $pM_kp= B\bar\otimes (B'\cap pM_kp)$ for all $k=1,2$. Thus, 
\begin{align*}
B'\cap pMp
&= (B'\cap pM_1p) \vee (B'\cap pM_2p)\\
&= (B'\cap pM_1p)\ast_{(B'\cap pPp)} ( B'\cap pM_2p).
\end{align*}
Combining these observations, we now have
\begin{equation*}\begin{split}
pMp
&= B\bar\otimes (B'\cap pMp)\\
& = B\bar \otimes ((B'\cap pM_1p)\ast_{(B'\cap pPp)} ( B'\cap pM_2p)) \\
&=(B\bar\otimes (B'\cap pM_1p))\ast_{B\bar \otimes (B'\cap pPp)} (B\bar \otimes ( B'\cap pM_2p)).
\end{split} \end{equation*} 
Tensoring by $M_m(\mathbb C)$ this further gives \begin{equation*}\begin{split}
M&= M_m(\mathbb C)\bar \otimes pMp\\
&= M_m(\mathbb C)\bar \otimes B\bar \otimes ((B'\cap pM_1p)\ast_{(B'\cap pPp)} ( B'\cap pM_2p)) \\&=(M_m(\mathbb C)\bar \otimes B\bar\otimes (B'\cap pM_1p))\ast_{M_m(\mathbb C)\bar \otimes B\bar \otimes (B'\cap pPp)} (M_m(\mathbb C)\bar \otimes B\bar \otimes ( B'\cap pM_2p))\end{split} 
\end{equation*} 
Letting 
\begin{center}
$C:= M_m(\mathbb C)\bar \otimes B$,\quad $P_0:= B'\cap pPp$, \quad and \quad $M^0_k:=B'\cap pM_kp$,
\end{center}
altogether, the previous relations show that 
\begin{center}
$P=C\bar \otimes P_0$, $M_1 =C\bar\otimes M^0_1$, $M_2 =C\bar\otimes M^0_2$, and $M= C\bar\otimes (M^0_1\ast_{P_0} M^0_2)$.
\end{center}
For the remaining part of the conclusion, notice that relations (\ref{-3}), (\ref{-4}) and $p=z(vv^*)$ show that 
\[
A_i^{\tau(r_1)} \cong B,  \quad A_{i+1}^{\tau(r_2)}\cong (B'\cap pMp)^{\tau(vv^*)}.
\]
Using amplifications these further imply that 
\[
A_i^{m \tau(r_1)} \cong C, \quad  A_{i+1}^{\tau(r_2)/ (m\tau(vv^*))}\cong M^0_1\ast_{P_0}M^0_2.
\]
Letting $t= m\tau(r_1)$ we get the desired conclusion.
\end{proof}

%%%%%%%%%%%%%%%%%%%%%%%%%%%%%%%%%%%%%%%%%%%%%%%%%%%%%%%%%%%%%%%%%%%%%%%%%%%%%%%%%%%%%%%%%%%%%%%%%%%%%%%%%%%%%%%%%%
%%%%%%%%%%%%%%%%%%%%%%%%%%%%%%%%%%%%%%%%%%%%%%%%%%%%%%%%%%%%%%%%%%%%%%%%%%%%%%%%%%%%%%%%%%%%%%%%%%%%%%%%%%%%%%%%%%
%%%%%%%%%%%%%%%%%%%%%                         Commensurable von Neumann algebras                       %%%%%%%%%%%
%%%%%%%%%%%%%%%%%%%%%%%%%%%%%%%%%%%%%%%%%%%%%%%%%%%%%%%%%%%%%%%%%%%%%%%%%%%%%%%%%%%%%%%%%%%%%%%%%%%%%%%%%%%%%%%%%%
%%%%%%%%%%%%%%%%%%%%%%%%%%%%%%%%%%%%%%%%%%%%%%%%%%%%%%%%%%%%%%%%%%%%%%%%%%%%%%%%%%%%%%%%%%%%%%%%%%%%%%%%%%%%%%%%%%

\section{Spatially commensurable von Neumann algebras}\label{commalg}

In the context of Popa's concept of weak intertwining of von Neumann algebras we introduce a notion of commensurable von Neumann algebras up to corners. This notion is essential to this work as it can be used very effectively to detect tensor product decompositions of II$_1$ factors (see Theorems \ref{fromrelcomtocomgroups} and \ref{virtualprod} below). It is also the correct notion which translate in the von Neumann algebraic language to the notion of commensurability for groups.

In the first part of section we build the necessary technical tools to prove these two results. Several of the arguments developed here are inspired by ideas from \cite{CdSS15} and \cite{DHI16}.  

\begin{definition} 
Let $P,Q\subset M$ (not necessarily unital) be inclusions of von Neumann algebras. We write $P\cong^{com}_M Q$ (and we say \emph{a corner of $P$ is spatially commensurable to a corner of $Q$}) if there exist nonzero projections $p\in P$, $q\in Q$, a nonzero partial isometry $v\in M$ and a $\ast$-homomorphism $\phi: pPp\rightarrow qQq $ such that 
\begin{eqnarray}
 \label{ccom1}& \phi(x)v=vx \quad \text{ for all }\quad x\in pPp\\
 \label{ccom2}&[qQq: \phi(pPp) ]<\infty\\
 \label{ccom3}& \operatorname{s}(E_Q(vv^*))=q.
\end{eqnarray} 

\noindent  When just the condition (\ref{ccom1}) is satisfied together with $\phi(pPp)=qQq$. In other words, $\phi$ is a $\ast$-isomorphism. We write $pPp\cong^{\phi,v}_M qQq$.
\end{definition}
\begin{remark} When $pPp$ is a II$_1$ factor then so is $\phi(pPp)$. By Proposition \ref{finiteindexbasicprop} (1), $\phi(pPp)'\cap qQq)$ is finite dimensional, so there exists $r\in \phi(pPp)'\cap qQq)$ such that $rv\neq 0$. Thus replacing $\phi(\cdot)$ by $\phi(\cdot)r$ and $v$ by the isometry in the polar decomposition of $rv$ one can check  (\ref{ccom1}) still holds. Also from Proposition \ref{finiteindexbasicprop} (3) it follows that $\phi(pPp)r\subset rQr$ is an finite index inclusion of II$_1$ factors. Hence throughout this article, whenever $P\cong^{com}_M Q$ and $P$ is a factor, we will always assume the algebras in (\ref{ccom2}) are II$_1$ factors.  
\end{remark}

For further use we recall the following result from \cite[Lemma 2.6]{CKP14}. 
\begin{prop}[Proposition 2.4]\cite{CKP14}\label{prop2.4CKP14}
Let $(M,\tau)$ be a tracial von Neumann algebra and let $z\in M$ be a nonzero projection. Suppose that $P\subset zMz$ and $N\subset M$ are von Neumann subalgebras such that $P\vee (P'\cap zMz)\subset zMz$ has finite index and that $P\prec_M N$. Then there exist a scalar $s>0$, nonzero projections $r\in N$, $p\in P$, a subalgebra $P_0\subset rNr$, and a $*$-isomorphism $\theta:pPp\rightarrow P_0$ such that the following properties are satisfied:
\begin{enumerate}
\item $P_0\vee (P'_0\cap rNr)\subset rNr$ has finite index;
\item there exist a nonzero partial isometry $v\in M$ such that 
\begin{center}
$r\operatorname{E}_N(vv^*)=\operatorname{E}_{N}(vv^*)r\geq sr$ \quad and \quad $\theta(pPp)v= P_0v=rvpPp$;
\end{center}
\item $\operatorname{E}_N(v(pP'p\cap pMp)v^*)''\subset P'_0\cap rNr$.
\end{enumerate}
\end{prop}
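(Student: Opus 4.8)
The plan is to read off the data $(r,p,P_0,\theta,v)$ from Popa's intertwining criterion, normalise it, and then use the finite index hypothesis on $P$ to force the finite index conclusion~(1).

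First I would invoke Theorem~\ref{popaintertwing}: since $P\prec_M N$, there are nonzero projections $p\in P$, $q\in N$, a nonzero partial isometry $v\in M$ with $v=qvp$, and a unital $*$-homomorphism $\psi\colon pPp\to qNq$ satisfying $vx=\psi(x)v$ and $yv^*=v^*\psi(y)$ for all $x,y\in pPp$, together with $v^*v\in(pPp)'\cap pMp$ and $vv^*\in\psi(pPp)'\cap qMq$. We may and do assume $\psi$ injective (automatic when $pPp$ is a factor, which is the case in the applications, and arranged by shrinking $p$ in general); put $P_0:=\psi(pPp)$ and $\theta:=\psi$. Since $vv^*\in P_0'\cap qMq$ and $E_N$ is $N$-bimodular, $E_N(vv^*)\in P_0'\cap qNq$; I would then replace $q$ by the spectral projection $r:=\chi_{[s,\infty)}\big(E_N(vv^*)\big)$ for a small $s>0$ with $r\neq 0$, and simultaneously $v$ by $rv$, $P_0$ by $P_0r$, and $\theta$ by $r\theta(\cdot)$. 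Because $r$ commutes with $P_0$ one checks that $rv\neq 0$, that $\theta$ remains a $*$-isomorphism onto the new $P_0$, that $rE_N(vv^*)=E_N(vv^*)r\ge sr$, and that $\theta(pPp)v=P_0v=rv\,pPp$; this is~(2). Moreover, a direct computation with the relations above shows $vbv^*\in P_0'\cap rMr$ for every $b\in(pPp)'\cap pMp$, hence $E_N(vbv^*)\in P_0'\cap rNr$ by $N$-bimodularity, so $E_N\big(v((pPp)'\cap pMp)v^*\big)''\subseteq P_0'\cap rNr$, which is~(3).

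It remains to establish~(1), the heart of the argument. From $[\,zMz:P\vee(P'\cap zMz)\,]<\infty$ and $p\in P$, compressing by $p$ gives $[\,pMp:pPp\vee p(P'\cap zMz)p\,]<\infty$, and since $p(P'\cap zMz)p\subseteq(pPp)'\cap pMp$ this yields, a fortiori,
\[
\big[\,pMp:R\,\big]<\infty,\qquad R:=pPp\vee\big((pPp)'\cap pMp\big).
\]
Setting $e:=v^*v\in R$, the map $\mathrm{Ad}(v)$ restricts to a $*$-isomorphism $eMe\to vv^*Mvv^*$ which carries $eRe$ onto $P_0vv^*\vee\big((P_0vv^*)'\cap vv^*Mvv^*\big)$; since $e\in R$ one has $[\,eMe:eRe\,]<\infty$, so $P_0vv^*\vee\big((P_0vv^*)'\cap vv^*Mvv^*\big)$ has finite index in $vv^*Mvv^*$. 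The last step is to descend this finite index property from the $M$-corner $vv^*Mvv^*$ to the $N$-corner $rNr$: using $vv^*\in P_0'\cap qMq$, the lower bound $E_N(vv^*)\ge sr$ from~(2), and a Jones index estimate carried out inside the basic construction $\langle M,e_N\rangle$ in the spirit of \cite[Lemma 2.3]{CIK13}, one concludes that $P_0\vee(P_0'\cap rNr)$ has finite index in $rNr$.

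I expect the descent step to be the main obstacle. Transporting the finite index property along $\mathrm{Ad}(v)$ is immediate, but the proposition needs the relative commutant of $P_0$ \emph{computed inside $N$}, not merely inside $M$, to remain of finite index; reconciling the $M$-corner picture with the $N$-corner picture is exactly what forces the spectral cut-down in~(2) and the basic-construction estimate, while everything else reduces to routine bookkeeping with compressions and the trace-preserving property of $E_N$.
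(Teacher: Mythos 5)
The paper offers no proof of this proposition: it is imported verbatim from \cite{CKP14}, so there is no in-text argument to compare yours against. Judged on its own terms, your construction of the data $(p,r,P_0,\theta,v)$ is correct and is the standard one: extracting $\psi$ and $v$ from Theorem \ref{popaintertwing}, cutting by the central kernel projection to make $\psi$ injective, passing to the spectral projection $r=\chi_{[s,\infty)}(E_N(vv^*))$, and verifying the identities in (2) and the commutation $vbv^*\in P_0'\cap rMr$ (hence $E_N(vbv^*)\in P_0'\cap rNr$) for (3) all check out. Likewise the compression $[pMp:pPp\vee((pPp)'\cap pMp)]<\infty$ and its transport through $\mathrm{Ad}(v)$ to $[vv^*Mvv^*:P_0vv^*\vee(P_0'\cap vv^*Mvv^*)]<\infty$ are sound; the only point needing care there, which you implicitly use, is that $\mathrm{Ad}(v)$ maps $(pPp)'\cap eMe$ \emph{onto} $P_0'\cap vv^*Mvv^*$, and this does follow from the intertwining relations.

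The gap is exactly where you flag it: conclusion (1) is never actually proved. Knowing that $P_0f\vee(P_0'\cap fMf)\subset fMf$ has finite index for $f=vv^*$ does not by itself yield $[rNr:P_0\vee(P_0'\cap rNr)]<\infty$: the relative commutant must now be computed inside $N$, the corner $f$ lives in $M$ rather than in $N$, and a Pimsner--Popa estimate for $y\in rNr$ requires both a lower bound of the form $\|fyf\|_2\geq c\|y\|_2$ (this is where $E_N(f)\geq sr$ must enter, and it is not a one-line computation since $yfy^*\notin N$) and a comparison of $E_{P_0f\vee(P_0'\cap fMf)}(fyf)$ with $E_{P_0\vee(P_0'\cap rNr)}(y)$, for which (3) and the bimodularity of $E_N$ have to be combined in a specific way. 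Appealing to ``a Jones index estimate in the spirit of \cite[Lemma 2.3]{CIK13}'' names the right kind of tool but does not supply the argument. Since everything before this point is a routine application of Popa's intertwining theorem, this descent \emph{is} the content of the proposition, and as written your proof is incomplete precisely there.
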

We record next a technical variation of \cite[Proposition 2.4]{CKP14} in the context of commensurable von Neumann algebras that will be essential to deriving the main results of this section.

\begin{lem}\label{intertwiningdichotomy} 
Let $\Sigma<\Gamma$ be groups where $\Gamma$ is icc. Assume $\mathcal Z(L(\Sigma))$ is purely atomic\footnote{The unit 1 can be expressed as a sum of minimal projection}, $r\in L(\Gamma)$ is a projection, and there exist commuting II$_1$ subfactors $P,Q\subset rL(\Gamma)r$  such that $P\vee Q\subset rL(\Gamma)r$ has finite index. If $P\prec_M L(\Sigma)$ then one of the following holds:
\begin{enumerate} 
\item\label{p1}  There exist projections $p\in P, e\in L(\Sigma) $, a partial isometry $w\in M$,  and a unital injective $*$-homomorphism $\Phi: pPp\rightarrow eL(\Sigma)e$ such that \begin{enumerate}
\item\label{11'''''} $\Phi(x)w=wx \text{ for all } x\in pPp$;
\item\label{11''''''} $\operatorname{s}(E_{L(\Sigma)}(ww^*))=e$;
\item \label{12'''''} If $B:=\Phi(pPp)$ then $B\vee (B'\cap eL(\Sigma)e) \subset eL(\Sigma)e$ is a finite index inclusion of II$_1$ factors.
 \end{enumerate}
\item\label{p2} $P\cong^{com}_{L(\Gamma)} L(\Sigma)$.
\end{enumerate}
\end{lem}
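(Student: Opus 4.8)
The plan is to start from the hypothesis $P \prec_M L(\Sigma)$ and run the machinery of Proposition \ref{prop2.4CKP14} (applied with $z = r$, $N = L(\Sigma)$, and using that $P \vee Q \subset rL(\Gamma)r$ has finite index so that a fortiori $P \vee (P'\cap rL(\Gamma)r) \subset rL(\Gamma)r$ has finite index). This produces a scalar $s>0$, projections $r_0 \in L(\Sigma)$, $p\in P$, a subalgebra $P_0 \subset r_0 L(\Sigma) r_0$, a $*$-isomorphism $\theta : pPp \to P_0$, and a partial isometry $v \in M$ satisfying the three conclusions there; in particular $P_0 \vee (P_0' \cap r_0 L(\Sigma) r_0) \subset r_0 L(\Sigma) r_0$ has finite index, and $\theta(x)v = vx$ for all $x\in pPp$ together with $r_0 E_{L(\Sigma)}(vv^*) = E_{L(\Sigma)}(vv^*) r_0 \geq s r_0$. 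Renaming $\Phi = \theta$, $B = P_0$, and $e = \operatorname{s}(E_{L(\Sigma)}(vv^*))$ (after cutting by the central support, which is legitimate since $r_0 E_{L(\Sigma)}(vv^*)$ is bounded below, $e \leq r_0$ and one can compress $B$ by $e$), one gets item (\ref{11'''''}) and item (\ref{11''''''}), while $B$ is a II$_1$ factor since it is isomorphic to the II$_1$ factor $pPp$.

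The dichotomy then comes from examining the finite index inclusion $B \vee (B' \cap e L(\Sigma) e) \subset e L(\Sigma) e$. The first case (\ref{p1}) is simply the situation in which this inclusion is genuinely finite index \emph{and} all the algebras involved are II$_1$ factors: here $B' \cap eL(\Sigma)e$ is a factor (so $B \vee (B'\cap eL(\Sigma)e)$ is the tensor product $B \bar\otimes (B'\cap eL(\Sigma)e)$) and we are done. For the complementary case I would argue that when $B' \cap e L(\Sigma) e$ fails to be a factor we can still extract a commensurability. The key point is that $\mathcal{Z}(L(\Sigma))$ is purely atomic; combining this with Proposition \ref{finiteindexbasicprop}(\ref{20'}) applied to the finite-index inclusion $B \vee (B'\cap eL(\Sigma)e) \subset eL(\Sigma)e$, the center $\mathcal{Z}(B \vee (B'\cap eL(\Sigma)e))$ is purely atomic, hence $\mathcal{Z}(B'\cap eL(\Sigma)e)$ is purely atomic. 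Therefore we may cut down by a minimal central projection $z_0 \in \mathcal{Z}(B' \cap eL(\Sigma)e)$ on which $B'\cap eL(\Sigma)e$ becomes a II$_1$ factor, and then $B z_0 \vee (Bz_0)'\cap z_0 L(\Sigma) z_0$ has finite index in $z_0 L(\Sigma) z_0$; absorbing the amplification by $M_n(\mathbb{C})$ coming from the multiplicity and replacing $v$ by $v z_0$ (nonzero since $z_0$ dominates a subprojection of $vv^*$, by construction of $e$), we get a partial isometry witnessing $\Phi(pPp) = eL(\Sigma)e$ after a final compression — i.e. the $*$-isomorphism condition — hence $P \cong^{com}_{L(\Gamma)} L(\Sigma)$ with the finite-index condition (\ref{ccom2}) satisfied and (\ref{ccom3}) satisfied by the choice of $e$.

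I expect the main obstacle to be the bookkeeping in the complementary case: showing that after all the cut-downs (by $p$, by $r_0$, by the central support $e$, and by the minimal central projection $z_0$) the partial isometry survives — i.e. remains nonzero and still implements an honest $*$-isomorphism of corners onto all of $eL(\Sigma)e$ rather than just a finite-index subfactor — and correctly tracking how the amplification constant interacts with $\operatorname{s}(E_{L(\Sigma)}(\cdot))$ so that condition (\ref{ccom3}) is genuinely met. The icc hypothesis on $\Gamma$ (hence factoriality of $L(\Gamma)$) and the factoriality of $P, Q$ are used to control relative commutants throughout these compressions, exactly as in the remark following the definition of $\cong^{com}_M$; the only genuinely new ingredient beyond \cite[Proposition 2.4]{CKP14} is the passage through the purely-atomic center via Proposition \ref{finiteindexbasicprop}(\ref{20'}).
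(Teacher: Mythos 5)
Your setup is the paper's: run Popa's intertwining to get $\phi: pPp\to qL(\Sigma)q$ and $v$, invoke Proposition \ref{prop2.4CKP14} (using that $P\vee Q\subset rL(\Gamma)r$ finite index implies $P\vee(P'\cap rL(\Gamma)r)\subset rL(\Gamma)r$ finite index) to conclude that $C\vee(C'\cap qL(\Sigma)q)\subset qL(\Sigma)q$ has finite index for $C=\phi(pPp)$, and then use the purely atomic center hypothesis together with Proposition \ref{finiteindexbasicprop}(\ref{20'}) to cut down to a minimal projection $e$ of $\mathcal Z(C'\cap qL(\Sigma)q)$ with $ev\neq 0$. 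However, the dichotomy itself is set up backwards, and this is a genuine gap. After cutting by such a minimal central projection, $(C'\cap qL(\Sigma)q)e$ is automatically a factor, and the relevant alternative is: either it is a II$_1$ factor, which is precisely conclusion (\ref{p1}); or it is finite-dimensional, $(C'\cap qL(\Sigma)q)e={\rm M}_n(\mathbb C)e$, in which case the finite-index inclusion $Ce\vee {\rm M}_n(\mathbb C)e\subset eL(\Sigma)e$ forces $Ce\subset eL(\Sigma)e$ itself to be finite index --- and \emph{that} is what yields $P\cong^{com}_{L(\Gamma)}L(\Sigma)$, since the definition of $\cong^{com}$ only demands $[eL(\Sigma)e:\Phi(pPp)]<\infty$, not surjectivity of $\Phi$.

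Your complementary case instead cuts down to a $z_0$ on which the relative commutant ``becomes a II$_1$ factor'' and then claims this produces $\Phi(pPp)=eL(\Sigma)e$ after a final compression. This cannot work: if the relative commutant of $Bz_0$ inside $z_0L(\Sigma)z_0$ is a diffuse II$_1$ factor, then $Bz_0$ has infinite index in $z_0L(\Sigma)z_0$ (a finite-index subfactor has finite-dimensional relative commutant by Proposition \ref{finiteindexbasicprop}(\ref{20})), so neither equality nor finite index is attainable, and you have merely reproduced conclusion (\ref{p1}) on a smaller projection. The case you are missing --- the cut-down relative commutant being a matrix algebra --- is exactly the one that produces conclusion (\ref{p2}), and no ``absorption of the amplification'' can convert the II$_1$-factor case into it. The remaining bookkeeping in your write-up (nonvanishing of $ev$, passing to the polar decomposition $w$ of $ev$, and checking $\operatorname{s}(E_{L(\Sigma)}(ww^*))=e$) does match the paper's argument.
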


\begin{proof} 
From the assumption, $P\prec_{L(\Gamma)} L(\Sigma)$ so there exist projections $p\in P$, $q\in L(\Sigma)$, a nonzero partial isometry $v\in L(\Gamma)$,  and a unital injective $*$-homomorphism $\phi: pPp\rightarrow qL(\Sigma)q$ such that 
\begin{align}\label{11'''''a}
\phi(x)v=vx \quad\text{ for all } x\in pPp.
\end{align}
Let $C:=\phi(pPp)$. Note $v^*v\in pPp'\cap pL(\Gamma)p$, $vv^*\in C'\cap qL(\Gamma)q$ and we can also assume that 
\begin{equation*}%\label{111''''''}
\operatorname{s}(E_{L(\Sigma)}(vv^*))=q
\end{equation*} 
Clearly $Q\subset P'$. Since $P\vee Q\subset rL(\Gamma)r$ has finite index, we have
$P\vee (P'\cap rL(\Gamma)r)$ also has finite index in $rL(\Gamma)r$. By Proposition \ref{prop2.4CKP14}, it implies that
\begin{align}\label{12'''''b}
C\vee (C'\cap qL(\Sigma)q) \subset qL(\Sigma)q\end{align} is also a finite index inclusion of algebras. 
By Proposition \ref{finiteindexbasicprop}(2),  $\mathcal Z(C'\cap qL(\Sigma)q)$ is purely atomic and there is a nonzero projection $e\in \mathcal Z(C'\cap qL(\Sigma)q)$ so that $ev\neq 0$ and we have either 
\begin{enumerate}[label=\roman*)]
\item
$ (C'\cap qL(\Sigma)q)e$ is a II$_1$ factor, or \label{13a}
\item$(C'\cap qL(\Sigma)q)e={\rm M}_n(\mathbb C) e$ for some $n\in \mathbb N$.\label{13b}\end{enumerate} 
Consider $\Phi: pPp\rightarrow Ce=:B$ given by
\begin{equation*}
\Phi(x)=\phi(x)e \quad\text{for all}\quad x\in pPp
\end{equation*}
 and let $w$ be the partial isometry in the polar decomposition of $ev$.  Then condition (\ref{11'''''a}) implies that 
 \begin{equation*}
 \Phi(x)w=wx\quad \text{for all}\quad x\in pPp.
\end{equation*} 

 Moreover, we have $evv^*e\leq ww^*$. Applying conditional expectation to the relation, with its properties we have
 \[
 E_{qL(\Sigma)q}(vv^*)e= E_{qL(\Sigma)q}(evv^*e)\leq E_{qL(\Sigma)q}(ww^*).
 \]
By considering support vectors, we obtain that 
\[
e=\operatorname{s}(E_{qL(\Sigma)q}(vv^*))e=\operatorname{s}(E_{qL(\Sigma)q}(vv^*)e)\leq \operatorname{s}(E_{qL(\Sigma)q}(ww^*)).
\]
Since $w$ is the partial isometry in the polar decomposition of $ev$, by its uniqueness, we have $eE_{qL(\Sigma)q}(ww^*)=E_{qL(\Sigma)q}(eww^*)=E_{qL(\Sigma)q}(ww^*)$. it follows that $\operatorname{s}(E_{qL(\Sigma)q}(ww^*))\leq e$ and therefore $\operatorname{s}(E_{L(\Sigma)}(ww^*))=e.$

Assume case \ref{13a} above. Using (\ref{12'''''b}), we have
\[
B\vee (B'\cap eL(\Sigma)e)= C e\vee (C'\cap qL(\Sigma)q)e\subset eL(\Sigma)e)\] 
is a finite index inclusion of II$_1$ factors. Altogether, these lead to possibility (\ref{p1}) in the statement.

Assume case \ref{13b} above.  Then relation (\ref{12'''''b}) implies that
\[
C=B e\subset  eL(\Sigma)e
\]
is a finite index inclusion which gives possibility (\ref{p2}) in the statement. 
\end{proof}

%##########################################################################################
%##########################################################################################

\begin{thm}[Claims 4.7-4.12 in \cite{CdSS15}]\label{fromrelcomtocomgroups} 
Let $\Sigma<\Lambda$ be finite-by-icc groups\footnote{A group $G$ is called \textit{finite-by-icc} if it has a normal subgroup $N$ that is finite and the quotient $G/N$ is icc.}. Also assume there exists $0\neq p\in \mathcal Z(L(\Sigma)'\cap L(\Lambda))$ such that $L(\Sigma)\vee (L(\Sigma)'\cap L(\Lambda))p\subset pL(\Lambda)p$ admits a finite Pimnser-Popa basis. Then there exists $\Omega<\Lambda$ such that 
\begin{center}
$[\Sigma,\Omega]=1$\quad and\quad$[\Lambda:\Sigma\Omega]<\infty$. 
\end{center} 
\end{thm}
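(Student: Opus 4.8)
The plan is to reconstruct, in our setting, the chain of arguments from \cite[Claims 4.7–4.12]{CdSS15}, converting the von Neumann algebraic hypothesis on $L(\Sigma)$ and $L(\Sigma)'\cap L(\Lambda)$ into a group-theoretic commensuration statement. First I would exploit the hypothesis that $L(\Sigma)\vee\big(L(\Sigma)'\cap L(\Lambda)\big)p\subset pL(\Lambda)p$ admits a finite Pimsner–Popa basis. Writing $\mathcal{Q}:=L(\Sigma)'\cap L(\Lambda)$, this says the inclusion of $L(\Sigma)\bar\otimes$-type algebra generated by $L(\Sigma)$ and $\mathcal{Q}p$ inside $pL(\Lambda)p$ is of finite index. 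Since $\Lambda$ is finite-by-icc, $L(\Lambda)$ is a II$_1$ factor (up to the finite kernel), and similarly for $\Sigma$; the finite index forces $\mathcal{Q}$ to have purely atomic center by Proposition \ref{finiteindexbasicprop}(2), so after cutting by a central projection we may assume $\mathcal{Q}p$ is a II$_1$ factor and $L(\Sigma)\bar\otimes \mathcal{Q}p\subset pL(\Lambda)p$ is finite index.

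Next I would run the normalizer/quasi-normalizer analysis. The key object is the group $\Omega_0:=\{g\in\Lambda : |\Sigma g\Sigma|<\infty \text{ in a suitable double-coset sense}\}$, or more precisely the set of $g$ for which $u_g$ has a nonzero component under $E_{\mathcal{Q}}$ after the finite-index reduction; I expect to show that $\mathcal{Q}p$ is "essentially" spanned by group elements, i.e.\ that there is a subgroup $\Omega<\Lambda$ with $[\Sigma,\Omega]=1$ such that $\mathcal{Q}p$ and $L(\Omega)p$ have a common corner (this is the heart of Claims 4.7–4.10 in \cite{CdSS15}, using that a finite-index subalgebra generated by a group-like piece must itself be group-like, together with the finite-by-icc hypothesis to rule out pathologies coming from the finite normal subgroups). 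Concretely: since $L(\Sigma)$ and $\mathcal{Q}p$ commute, every element normalizing one essentially normalizes the other; pushing the finite index of $L(\Sigma)\vee\mathcal{Q}p$ in $pL(\Lambda)p$ through Theorem \ref{CI17lemma2.2} and the quasi-regular bimodule decomposition yields finitely many double cosets $\Sigma g_i \Sigma$ covering a finite-index subgroup, and the commutation $[\Sigma,\Omega]=1$ falls out from the fact that the $g_i$'s must centralize $\Sigma$ up to finite kernel.

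Finally I would assemble the index bound: once $\Omega<\Lambda$ with $[\Sigma,\Omega]=1$ is produced and $L(\Sigma)\vee L(\Omega)$ (a corner thereof) has finite index in a corner of $L(\Lambda)$, Theorem \ref{CI17lemma2.2} (applied to $L(\Sigma\Omega)\prec_{L(\Lambda)} L(\Sigma\Omega)$ via the finite-index inclusion) together with the icc-ness of $\Lambda/(\text{finite})$ gives $[\Lambda:\Sigma\Omega]<\infty$; the finite normal subgroups are absorbed using that finite-by-icc groups have the property that a finite-index subalgebra of $L(\Lambda)$ generated by $L(\Sigma\Omega)$ forces finite index of $\Sigma\Omega$ in $\Lambda$ at the group level. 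The main obstacle I anticipate is the middle step: upgrading the operator-algebraic finite-index datum to the \emph{group-theoretic} commuting subgroup $\Omega$ — one must show the relative commutant $\mathcal{Q}p$, a priori only a von Neumann algebra, actually comes from a subgroup of $\Lambda$ commuting with $\Sigma$, and this is where the delicate bimodule/quasi-normalizer computations of \cite{CdSS15} (exploiting finite-by-icc crucially, so that centralizers are well-behaved and the Pimsner–Popa basis can be chosen among group elements) cannot be shortcut.
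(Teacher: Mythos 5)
The paper does not actually prove this statement: it is imported verbatim as Claims 4.7--4.12 of \cite{CdSS15}, so there is no in-house argument to compare yours against. Judged on its own terms, your proposal is an outline whose architecture matches what \cite{CdSS15} does (pass from the finite Pimsner--Popa basis to an atomic-center reduction, identify the relative commutant with a group-like object via Fourier supports, then convert finite index of the subalgebra into finite index of $\Sigma\Omega$ in $\Lambda$), but it contains a genuine gap that you yourself flag: the entire middle step --- producing an honest subgroup $\Omega<\Lambda$ with $[\Sigma,\Omega]=1$ such that $L(\Sigma)'\cap L(\Lambda)$ is (up to corners and finite index) $L(\Omega)$ --- is stated as an expectation and then explicitly deferred to the ``delicate bimodule/quasi-normalizer computations of \cite{CdSS15}''. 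That step is not a technicality; it is the content of the theorem. The correct starting object is the virtual centralizer $\Omega_0=\{g\in\Lambda : |\{\sigma g\sigma^{-1}:\sigma\in\Sigma\}|<\infty\}$ (note this is a conjugation-orbit condition, not the double-coset condition $|\Sigma g\Sigma|<\infty$ you wrote, which is a different and here inappropriate notion); one shows every element of $L(\Sigma)'\cap L(\Lambda)$ has Fourier support in $\Omega_0$ because $\Sigma$ is finite-by-icc, and then one must still descend from $\Omega_0$ to a genuinely centralizing subgroup $\Omega$ of finite index in $\Omega_0$, using the finite basis to bound orbit sizes uniformly and a B.~H.~Neumann-type covering argument. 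None of this is carried out in your sketch.

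Two smaller but concrete errors in the parts you do write out. First, in the final step you propose to apply Theorem \ref{CI17lemma2.2} to ``$L(\Sigma\Omega)\prec_{L(\Lambda)}L(\Sigma\Omega)$'', which is vacuous; the intertwining you actually need is $L(\Lambda)\prec_{L(\Lambda)}L(\Sigma\Omega)$, which follows from the finite-index inclusion $\big(L(\Sigma)\vee(L(\Sigma)'\cap L(\Lambda))\big)p\subset pL(\Lambda)p$ once the relative commutant has been identified with (a corner of) $L(\Omega)$, and then Theorem \ref{CI17lemma2.2} with $\Gamma_1=\Lambda$, $\Gamma_2=\Sigma\Omega$ yields $[\Lambda:g(\Sigma\Omega)g^{-1}]<\infty$ for some $g$, hence $[\Lambda:\Sigma\Omega]<\infty$. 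Second, your claim that ``$L(\Lambda)$ is a II$_1$ factor (up to the finite kernel)'' and that one may freely cut to make $\mathcal{Q}p$ a II$_1$ factor glosses over exactly the finite-normal-subgroup pathologies that the finite-by-icc hypothesis is there to control; the reduction to the factor case must be done with care, since $\mathcal{Z}(L(\Sigma))$ and $\mathcal{Z}(L(\Sigma)'\cap L(\Lambda))$ are only purely atomic, not trivial. As it stands the proposal is a correct road map but not a proof.
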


The next result is a basic von Neumann's projections equivalence property for inclusions of von Neumann algebras. Its proof is standard and we include it only for reader's convenience. 

\begin{lem}\label{intinsubfactor} Let $N\subset (M, \tau)$ be finite von Neumann algebras, where $N$ is a II$_1$ factor. Then for every projection $0\neq e\in M$ there exists a projection $f\in N$ and a partial isometry $w\in M$ such that $e=w^*w$ and $ww^*=f$. 
\end{lem}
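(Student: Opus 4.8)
The plan is to exploit the comparison theory of projections in a finite von Neumann algebra together with the fact that a $\mathrm{II}_1$ factor has no gaps in its trace values. First I would reduce to comparing the trace of $e$ with traces of projections in $N$: since $N$ is a $\mathrm{II}_1$ factor, its trace takes every value in $[0,1]$, so I can choose a projection $f\in N$ with $\tau(f)=\tau(e)$. The point is then to upgrade this numerical equality to a genuine Murray--von Neumann equivalence $e\sim f$ \emph{inside} $M$ via a partial isometry $w\in M$ with $w^*w=e$ and $ww^*=f$.

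The key step is the comparison argument. By the comparison theorem for the finite von Neumann algebra $M$, applied to the two projections $e$ and $f$ (using the central-valued trace, or directly in the factor case by the ordinary trace when $M$ is a factor), there is a central projection splitting $M$ so that on one piece $e \precsim f$ and on the complementary piece $f \precsim e$. On the piece where $e\precsim f$ there is a partial isometry $w_1$ with $w_1^*w_1 = e$ and $w_1 w_1^* \le f$; comparing traces and using faithfulness of $\tau$ (together with $\tau(e)=\tau(f)$) forces $w_1w_1^*=f$ on that piece. Symmetrically on the other piece. Assembling the pieces gives the desired $w$ with $w^*w=e$, $ww^*=f$. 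I would phrase this cleanly: in a finite von Neumann algebra two projections with the same central-valued trace are equivalent, and here $\tau(e)=\tau(f)$ together with $f\in N$ a factor means their central traces agree (the center of $M$ only sees $f$ through $\tau(f)$ since $N$ is a factor, and $e$'s central trace can be matched after the comparison split). When $M$ itself is a factor this is immediate: $e\sim f$ iff $\tau(e)=\tau(f)$.

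Finally I would note that $f$ can be taken \emph{in} $N$ precisely because $N$ is diffuse (being a $\mathrm{II}_1$ factor), which is what supplies a projection of the prescribed trace; the equivalence is then realized by a partial isometry in the ambient $M$, not necessarily in $N$, which is all that is claimed. The main obstacle — and it is a mild one — is bookkeeping the central decomposition of $M$ when $M$ is not assumed to be a factor: one must be careful that matching $\tau(e)=\tau(f)$ does not by itself guarantee equal central-valued traces, so the argument should be run as ``compare $e$ and $f$ in $M$, obtain a partial isometry onto a subprojection of the larger one, then use the trace equality on each central summand to conclude the range is all of $f$ (resp.\ all of $e$).'' This is the only place requiring genuine care; everything else is a direct invocation of standard comparison theory.
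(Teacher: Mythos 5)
Your first paragraph is, by itself, a complete and correct proof in the case where $M$ is a factor: choose $f\in N$ with $\tau(f)=\tau(e)$ (possible because $N$ is a II$_1$ factor) and use that two projections of equal trace in a finite factor are Murray--von Neumann equivalent. The paper announces a proof of this lemma ``for reader's convenience'' but does not actually print one, so there is nothing to compare against; in the factor case your argument is certainly the standard one. The genuine gap is in the non-factor case, which you correctly single out as the delicate point but do not actually resolve. Your proposed repair --- apply the comparison theorem to get a central projection $z$ with $ez\precsim fz$ and $f(1-z)\precsim e(1-z)$, and ``then use the trace equality on each central summand'' --- is circular: the hypothesis $\tau(e)=\tau(f)$ does not localize to $\tau(ez)=\tau(fz)$, and that localized equality is exactly what would be needed to upgrade the two subequivalences to equivalences. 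Nothing in the setup supplies it.

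In fact no repair is possible, because the statement is false when $M$ is not a factor. For any projection $f$ in the subfactor $N$ and any central projection $z\in\mathcal Z(M)$ one has $E_N(z)\in\mathcal Z(N)=\mathbb C1$, hence $E_N(z)=\tau(z)1$ and $\tau(fz)=\tau(fE_N(z))=\tau(f)\tau(z)$; therefore the center-valued trace of $f$ in $M$ is the scalar $\tau(f)1$. (This is the correct half of your parenthetical remark that ``the center of $M$ only sees $f$ through $\tau(f)$.'') Since equivalence of projections in a finite von Neumann algebra is detected exactly by the center-valued trace, a projection $e\in M$ can be equivalent to a projection of $N$ only if its center-valued trace is a scalar. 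A nontrivial central projection $e\in\mathcal Z(M)$ violates this, as does the concrete example $M=R\oplus R$, $N=\{(x,x):x\in R\}$, $e=p\oplus q$ with $\tau_R(p)\neq\tau_R(q)$. So the lemma requires the additional hypothesis that $M$ is a factor (or that $e$ has scalar center-valued trace) --- a hypothesis that does hold in the situations where the paper invokes it only after further argument --- and under that hypothesis your first paragraph already finishes the proof, so the comparison-theorem discussion should simply be deleted rather than patched.
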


\begin{thm}\label{virtualprod} 
Let $\Sigma < \Gamma$ be countable groups, where $\Gamma$ is icc and $\Sigma$ is finite-by-icc. Let $r\in L(\Gamma)$ be a projection and let $P,Q\subset rL(\Gamma)r$ be commuting II$_1$ factors such that $P\vee Q\subset rL(\Gamma)r$ has finite index. If  $P\cong^{com}_{L(\Gamma)} L(\Sigma)$ 
then there exist a subgroup $\Omega< C_{\Gamma}(\Sigma)$ satisfying the following properties:
\begin{enumerate}
\item[(a)] \label{a1} $[\Gamma:\Sigma \Omega]<\infty$;
\item[(b)] \label{a3}$Q\cong^{com}_M L(\Omega)$.
\end{enumerate} 
\end{thm}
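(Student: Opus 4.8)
The plan is to unpack the hypothesis $P \cong^{com}_{L(\Gamma)} L(\Sigma)$ into concrete intertwining data and then apply Theorem \ref{fromrelcomtocomgroups} to produce the group $\Omega$. First I would invoke Lemma \ref{intertwiningdichotomy}: since $P \prec_M L(\Sigma)$ (which follows from $P \cong^{com}_{L(\Gamma)} L(\Sigma)$) and $P,Q$ are commuting II$_1$ factors with $P \vee Q \subset rL(\Gamma)r$ of finite index, we are in one of the two cases. Case (\ref{p2}) is precisely the commensurability conclusion again, so the content is really in case (\ref{p1}): there are projections $p \in P$, $e \in L(\Sigma)$, a partial isometry $w$, and a unital injective $*$-homomorphism $\Phi \colon pPp \to eL(\Sigma)e$ with $\Phi(x)w = wx$, $\operatorname{s}(E_{L(\Sigma)}(ww^*)) = e$, and, writing $B := \Phi(pPp)$, the inclusion $B \vee (B' \cap eL(\Sigma)e) \subset eL(\Sigma)e$ of finite index. (One should check that the two ``cases'' can be reconciled — even in case (\ref{p2}) one extracts such data with $e$ possibly a smaller projection — this is the kind of bookkeeping I would handle by shrinking $e$ and $p$ as in the Remark following the definition of $\cong^{com}_M$.)

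Next I would transport the commutant of $P$ across the intertwiner. Since $\Phi(x)w = wx$ for $x \in pPp$, we get $w(pPp)w^* = Bww^*$ and, passing to relative commutants inside $ww^* M ww^*$, the algebra $ww^*(B' \cap eMe)ww^*$ is spatially identified with a corner of $p(P' \cap pMp)p$, which contains a corner of $pQp$ since $Q \subset P' \cap rMr$. The finite-index hypothesis $P \vee Q \subset rL(\Gamma)r$ forces $Q$ to be ``large'' in $P' \cap rMr$ — in fact $Q \vee \mathcal Z(\cdots)$ has finite index there — so after the usual reductions ($Q$ is a factor, so one can arrange $v^*v$ to be a tensor-split projection as in the proof of Theorem \ref{tensordecompafp}) the corner of $Q$ we see sits with finite index inside $B' \cap eMe$. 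Combined with $B \vee (B' \cap eL(\Sigma)e) \subset eL(\Sigma)e$ finite index, this gives that a corner of $B' \cap eL(\Sigma)e$ — hence a corner of $Q$ — together with $B$ generates a finite-index subalgebra of $eL(\Sigma)e$, i.e. $B$ and (a corner of) $Q$ are commuting factors inside $eL(\Sigma)e$ whose join has finite index. In particular, writing things back at the level of $L(\Sigma)$, we obtain $0 \neq e' \in \mathcal Z(L(\Sigma)' \cap L(\Lambda))$-type data — more precisely, I would set this up so that $L(\Sigma) \vee (L(\Sigma)' \cap L(\Gamma))$ restricted to an appropriate corner has a finite Pimsner–Popa basis, which is exactly the hypothesis of Theorem \ref{fromrelcomtocomgroups} applied with $\Lambda = \Gamma$ (using that both $\Sigma$ and $\Gamma$ are finite-by-icc, $\Gamma$ being icc).

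Applying Theorem \ref{fromrelcomtocomgroups} then yields a subgroup $\Omega < \Gamma$ with $[\Sigma, \Omega] = 1$ (so $\Omega < C_\Gamma(\Sigma)$) and $[\Gamma : \Sigma\Omega] < \infty$, giving conclusion (a). For conclusion (b), $Q \cong^{com}_M L(\Omega)$, I would argue that $L(\Sigma\Omega)$ has finite index in $L(\Gamma)$, that $L(\Sigma)$ and $L(\Omega)$ commute inside it, and that since $L(\Sigma) \vee L(\Omega) \subset L(\Sigma\Omega)$ has finite index while $B$ (a corner of $P$) already ``fills up'' the $L(\Sigma)$ direction, the commutant $B' \cap (\text{corner of } L(\Sigma\Omega))$ — which we showed contains a corner of $Q$ with finite index — is a corner of $L(\Omega)$ up to finite index. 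Packaging the intertwiner $w$ together with the finite-index bound then produces exactly a partial isometry, projections, and a $*$-homomorphism witnessing $Q \cong^{com}_M L(\Omega)$, after the by-now-routine adjustment (Remark after the definition; Proposition \ref{finiteindexbasicprop}(1),(3)) to make the algebras II$_1$ factors and the support condition (\ref{ccom3}) hold.

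The main obstacle I expect is the middle step: correctly matching up the two finite-index conditions — $P \vee Q \subset rMr$ finite index on the $\Gamma$ side, and $B \vee (B' \cap eL(\Sigma)e) \subset eL(\Sigma)e$ finite index on the $\Sigma$ side — across the partial isometry $w$, while keeping track of all the corner projections and central supports, so that the resulting data is genuinely in the form required by Theorem \ref{fromrelcomtocomgroups} (a finite Pimsner–Popa basis for $L(\Sigma) \vee (L(\Sigma)' \cap L(\Gamma))p \subset pL(\Gamma)p$). The technical heart is essentially a careful application of Proposition \ref{prop2.4CKP14} together with Lemma \ref{intinsubfactor} to move the relevant projection from $M$ into $L(\Sigma)$; everything else is deformation/rigidity bookkeeping of the kind already carried out in the proof of Theorem \ref{tensordecompafp}.
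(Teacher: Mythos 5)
Your overall strategy --- unpack the commensurability data, push the finite-index condition $P\vee Q\subset rL(\Gamma)r$ through the intertwiner, land in the hypothesis of Theorem \ref{fromrelcomtocomgroups}, and then upgrade the resulting group data to $Q\cong^{com}_M L(\Omega)$ --- is the right shape, and your sketch of part (b) is in the right spirit. But the middle step, which you yourself flag as the main obstacle, has a genuine gap, and the tools you propose for it (Proposition \ref{prop2.4CKP14} and Lemma \ref{intinsubfactor}) do not close it. The hypothesis hands you a finite-index embedding $\Phi:pPp\to B\subset qL(\Sigma)q$, so everything you can transport across $w$ is expressed in terms of $B$ and $B'\cap qMq$. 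Theorem \ref{fromrelcomtocomgroups}, however, needs a finite Pimsner--Popa basis for $L(\Sigma)\vee(L(\Sigma)'\cap L(\Gamma))p\subset pL(\Gamma)p$, i.e.\ a statement about $L(\Sigma)$ itself. Since $B$ is only an abstractly defined finite-index subfactor of a corner of $L(\Sigma)$, one has $B'\cap qMq\supseteq (L(\Sigma)'\cap M)q$ with the inclusion going the wrong way: knowing $B\vee(B'\cap qMq)\subset qMq$ has finite index does not give you that $q(L(\Sigma)\vee(L(\Sigma)'\cap M))q\subset qMq$ does, because $B\vee(B'\cap qMq)$ is not contained in the latter. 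The missing idea is the downward basic construction (Theorem \ref{downwardbasicconstruction}): from $[qL(\Sigma)q:B]<\infty$ one produces $T\subset B$ with $aL(\Sigma)a=Ta$ for a Jones projection $a$, and hence an honest embedding $\theta:aL(\Sigma)a\to U:=\Phi^{-1}(T)\subset pPp$ with finite-index image, \emph{reversing} the direction of the intertwining. Only after this reversal do the relative commutants come out as $L(\Sigma)'\cap L(\Gamma)$ on the nose, which is what feeds into Theorem \ref{fromrelcomtocomgroups} and, later, into the identification of a corner of $Q$ with a finite-index subalgebra of a corner of $L(\Omega)$.

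Two smaller but real errors. First, your claim that the transported corner of $Q$ sits inside $eL(\Sigma)e$ (``$B$ and (a corner of) $Q$ are commuting factors inside $eL(\Sigma)e$ whose join has finite index'') is false: $Q$ intertwines into the \emph{relative commutant} $L(\Sigma)'\cap L(\Gamma)$, not into $L(\Sigma)$ --- that is precisely why the conclusion produces $\Omega<C_\Gamma(\Sigma)$ rather than $\Omega<\Sigma$. Second, your opening appeal to Lemma \ref{intertwiningdichotomy} is backwards: the hypothesis $P\cong^{com}_{L(\Gamma)}L(\Sigma)$ \emph{is} the case-(2) conclusion of that lemma and gives the strictly stronger datum $[qL(\Sigma)q:\Phi(pPp)]<\infty$; retreating to the case-(1) data $B\vee(B'\cap eL(\Sigma)e)\subset eL(\Sigma)e$ of finite index discards exactly the strength needed to run the downward basic construction.
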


\begin{proof} Since $P\cong^{com}_{L(\Gamma)} L(\Sigma)$, by the definition there exist nonzero projections $p\in P,$ $q\in L(\Sigma)$, a nonzero partial isometry $v\in L(\Gamma)$, and an injective, unital $\ast$-homomorphism $\Phi: pPp\rightarrow eL(\Sigma)e$ so that \begin{enumerate}
\item[(1)]  $\Phi(x)v=vx$ for all $x\in pPp$, and 
\item[(2)]  $\Phi(pPp)\subset qL(\Sigma)q$ is a finite index inclusion of II$_1$ factors.
\end{enumerate}
Now we denote by $R:=\Phi(pPp)\subset qL(\Sigma)q$. Let $T \subset R\subset qL(\Sigma) q$ be the downward basic construction for inclusion $R\subset qL(\Sigma) q$. Since $[qL(\Sigma) q:R]<\infty$, according to Theorem \ref{downwardbasicconstruction} let $a\in T'\cap qL(\Sigma) q$ be the Jones' projection satisfying 
\begin{equation}
qL(\Sigma)q=\langle R,a \rangle\quad\text{and}\quad a L(\Sigma)a = Ta.\label{2eqs}
\end{equation}
 Also note that $[qL(\Sigma)q: R]=[R:T]$. As the $*$-homomorphism $\Phi: pPp\rightarrow qL(\Sigma)q$ is injective, the restriction 
$ \Phi^{-1}: T\rightarrow pPp$
is  an injective $\ast$-homomorphism such that $U:=\Phi^{-1}(T)\subset pPp$ is a finite Jones index subfactor and 
\begin{equation}\label{4}
\Phi^{-1} (x)v^*=v^*x\quad\text{for all}\quad x\in T.
\end{equation}  

Notice that $T\subset qL(\Sigma)q$ and the projection $a\in T'\cap qL(\Sigma)q$.
Let $\theta':Ta\rightarrow T$ be the $\ast$-isomorphism given by 
$\theta'(xa)=x$ for all $x\in T.$

We can check that $v^*a\neq 0$ and from the polar decomposition of $v^*a$, let $w_0$ be a nonzero partial isometry so that $v^*a=w^*_0|v^*a|$. Since from above we know $Ta=aL(\Sigma)a$, combining together with (\ref{4}) we have that the compostion map
\[
\theta=\Phi^{-1}\circ\theta': aL(\Sigma) a\rightarrow pPp
\]
is an injective $\ast$-homomorphism such that its image
\begin{equation*}
\theta(aL(\Sigma )a)=\Phi^{-1}\circ\theta'(aL(\Sigma)a)=\Phi^{-1}(T)=U\subset pPp \quad \text{and}
\end{equation*}  
\begin{equation}\label{3}
\theta (y)w^*_0=w^*_0y\quad\text{for all }\quad y\in aL(\Sigma) a.
\end{equation} 

By the assumption $P\vee Q \subset rL(\Gamma)r$ has finite index. It follows that $pPp\vee Qp \subset pL(\Gamma)p$ also has finite index as well. From (\ref{2eqs}) we have $U \subset pPp$ has finite index so it follows that $U\vee Qp\subset pL(\Gamma)p$ has finite index. Since these all are factors, it follows that
$U\vee Qp\subset pL(\Gamma)p$
admits a finite Pimsner-Popa basis. From construction we have
\[
U\vee Qp\,\,\subset\,\, U\vee (U'\cap pL(\Gamma)p)\,\,\subset\,\, pL(\Gamma)p
\]
and hence $U\vee Qp\subset U\vee (U'\cap pL(\Gamma)p)$ admits a finite Pimsner-Popa basis. Also since $U\vee Qp$ is a factor, we have by Proposition \ref{finiteindexbasicprop}(1) that 
\[
\operatorname{dim}_{\mathbb C}\bigg(\big[U\vee (U'\cap pL(\Gamma)p\big]\cap(U\vee Qp)'\bigg)<\infty.
\]
Since $[U\vee (U'\cap pL(\Gamma)p\big]\cap(U\vee Qp)'=[U'\cap pL(\Gamma)p]\cap(U\vee Qp)'$, we conclude that 
\[
\operatorname{dim}_{\mathbb C}\big([U'\cap pL(\Gamma)p]\cap(U\vee Qp)'\big)<\infty.
\]
Using Proposition \ref{finiteindexbasicprop}(3) for every minimal projection $b\in [U'\cap pL(\Gamma)p]\cap(U\vee Qp)'$, then we have
\[
(U\vee Qp) b\subset \big(U\vee (U'\cap pL(\Gamma)p)\big)b
\]
is a finite inclusion of II$_1$ factors.

\noindent\textit{Claim:} $Qb\subset (U'\cap pL(\Gamma)p)b$ has finite index.\\
Now we have known from above that $(U\vee Qp) b\subset \big(U\vee (U'\cap pL(\Gamma)p)\big)b$
is a finite inclusion.
Thus, by \ref{theorem2.2PP86} there exists $C_b>0$ such that for all $x\in U_+$ and $y\in (U'\cap pL(\Gamma)p)_+$ we have 
\begin{equation}\label{1201}
\|E_{U\vee Q b}(xyb)\|^2_{2,b}\geq C_b\|xyb\|^2_{2,b},
\end{equation}
where $\|\cdot\|_{2,b}$ is the norm on $L^2(bL^2(\Gamma)b)$.
Since $E_{U\vee Qp}(b)=\tau_p(b) p$ we have  
\begin{equation*}
E_{U\vee Q b}(zb)=E_{U\vee Qp}(zb) b \tau^{-1}_p(b)\quad\text{for all}\quad z\in  U\vee (U'\cap pL(\Gamma)p).
\end{equation*}
Thus for every $x\in U$ and $y\in (U'\cap pL(\Gamma)p)$ we have
\begin{align*}
E_{U\vee Qb}(xyb)
&=E_{U\vee Qp}(xyb) b \tau^{-1}_p(b)\\
&=xE_{U\vee Qp}(yb) b \tau^{-1}_p(b)\\
&=xE_{Qp}(yb) b \tau^{-1}_p(b)\\
&=xE_{Qb}(yb).
\end{align*}  
Also since $U$ is a factor, we can check that we have 
\[
\|xyb\|^2_2=\|x\|^2_2\|yb\|^2_2
\]
for all $x\in U$ and $y\in (U'\cap pL(\Gamma)p)$.
This further implies that 
\[
\|xyb\|^2_{2,b}=\|x\|^2_2\|yb\|^2_{2,b}.
\]
Using these formulas together with (\ref{1201}) we see that
\begin{align*}
\|x\|^2_2 \| E_{Q b}(yb)\|^2_{2,b}
&=\|x E_{Q b}(yb)\|^2_{2,b}\\
&=\|E_{U\vee Q b}(xyb)\|^2_{2,b}\\
&\geq C_b\|xyb\|^2_{2,b}\\
&=C_b\|x\|^2_2 \|yb\|^2_{2,b}
\end{align*}
and hence 
\[
\| E_{Q b}(yb)\|^2_{2,b}\geq C_b \|yb\|^2_{2,b}
\]
for all $y\in (U'\cap pL(\Gamma)p)_+$. Hence $Qb\subset U'\cap pL(\Gamma)p b$ is a finite index inclusion of II$_1$ factors for every minimal projection $b\in [U'\cap pL(\Gamma)p]\cap(U\vee Qp)'$. 
 
Choose a minimal projection $b\in [U'\cap pL(\Gamma)p]\cap(U\vee Qp)'$ so that $w^*=bw^*_0\neq 0$. Thus (\ref{3}) gives 
 \begin{equation}\label{3'}
\theta (y)w^*=w^*y\text{, for all }y\in aL(\Sigma) a.
\end{equation} 
Notice that $w^*w\in (U'\cap pL(\Gamma)p) b$  and $ww^*\in aL(\Sigma)a'\cap aL(\Gamma)a$. Let $u\in pL(\Gamma)p$ be a unitary so that $uw^*w=w$, then relation (\ref{3'}) entails  \begin{equation}\label{5}u U w^*wu^*=ww^* aL(\Sigma)a.
\end{equation} 
Passing through relative commutants we also have 
\begin{align}
uw^*w (U'\cap pL(\Gamma)p) w^*wu^*
&=ww^* (aL(\Sigma)a'\cap aL(\Gamma)a) ww^*\\ 
&= ww^* (L(\Sigma)'\cap L(\Gamma)) ww^*\label{6}
\end{align}
Altogether, (\ref{5}) and (\ref{6}) imply that 
\begin{equation}\label{7}\begin{split}uw^*w (U\vee (U'\cap pL(\Gamma)p)) w^*wu^*& =ww^* (a L(\Sigma)a \vee  (aL(\Sigma)a'\cap aL(\Gamma)a)) ww^*\\& = ww^* ( L(\Sigma) \vee  (L(\Sigma)'\cap L(\Gamma))) ww^*.\end{split}
\end{equation} 
Since from assumptions $pPp \vee Qp =p(P\vee Q)p\subset pL(\Gamma)p$ is a finite index and $U\subset pPp$ is finite index it follows that $U \vee Qp \subset pL(\Gamma)p$ has finite index as well. Also notice
\begin{align*}
U\vee Qp 
&\subset U\vee (P'\cap rL(\Gamma)r)p\\
&= U\vee (pPp'\cap pL(\Gamma)p)\\
&\subset U\vee (U'\cap pL(\Gamma)p)
\end{align*}
Thus $U\vee (U'\cap pL(\Gamma)p)\subset pL(\Gamma)p$ is finite index. Combining with (\ref{7}) we obtain 
\begin{equation*}
ww^*( L(\Sigma) \vee  (L(\Sigma)'\cap L(\Gamma))) ww^*\subset ww^* L(\Gamma)ww^*
\end{equation*}
is a finite index inclusion of II$_1$ factors. By using Theorem \ref{fromrelcomtocomgroups}, there exists a subgroup $\Omega<\Lambda$ such that
\begin{equation}
[\Sigma,\Omega]=1\quad \text{and} \quad [\Gamma:\Sigma\Omega]<\infty. \label{2eqs2}
\end{equation}
Since $\Gamma$ is an icc group, it follows that $\Sigma,\Omega$ also are icc groups as well; in particular, both $L(\Sigma)$ and $L(\Omega)$ are II$_1$ factors.
By Lemma \ref{intinsubfactor}, there exist unitaries $u_1\in U'\cap pL(\Gamma)p$ and $u_2\in L(\Sigma)'\cap L(\Gamma)$ such that 
\begin{equation*}
u_1 w^*w u_1^*=q_1\in Qb \quad \text{and} \quad u^*_2 ww^* u_2=q_2\in L(\Omega).
\end{equation*}
We denote $t:=u_2^*uu^*_1$ then the relation (\ref{6}) can be rewritten as
\begin{equation}\label{8}
t q_1 (U'\cap pL(\Gamma)p) q_1 t^*=q_2 (L(\Sigma)'\cap L(\Gamma))q_2. 
\end{equation}  
Since by (\ref{2eqs2}) $[\Gamma:\Sigma\Omega]<\infty$, we obtain that
\begin{equation*}
q_2 L(\Sigma \Omega) q_2 \subset q_2L(\Gamma)q_2
\end{equation*}
has finite index. Since $L(\Omega)\subset L(\Sigma)'\cap L(\Gamma)$, it follows that
\begin{equation*}
q_2 L(\Sigma \Omega)q_2\subset q_2\big(L(\Sigma )\vee (L(\Sigma)'\cap L(\Gamma))\big)q_2
\end{equation*}
is a finite index inclusion. Therefore following the same argument as the previous claim, we obtain that
\[
q_2L(\Omega)q_2\subset q_2L(\Sigma)'\cap L(\Gamma)q_2
\]
is a finite index inclusion of II$_1$ factors. By Lemma \ref{finiteindeximage}, there exist projections $r_1, r_2\leq q_2$, a partial isometry $w_1\in q_2L(\Sigma)'\cap L(\Gamma)q_2$,  and a $*$-isomorphism
\[
\phi': r_1L(\Sigma)'\cap L(\Gamma)r_1 \rightarrow B\subset r_2L(\Omega)r_2
\]
such that 
\begin{enumerate}
\item [(3)] \label{10'}$\phi'(x)w_1=w_1 x$ for all $x\in r_1L(\Sigma)'\cap L(\Gamma)r_1$;
\item [(4)]\label{10} $[r_2L(\Omega)r_2:B]<\infty$.
\end{enumerate}
Using Lemma \ref{intinsubfactor}, relation (\ref{8}), and perturbing more the unitary $t$, we can assume there exists a projection $q_3\in Q$ such that $q_3 b\leq q_1$ and 
\begin{equation}\label{8'}
t q_3 (U'\cap pL(\Gamma)p) q_3 b t^*=r_1 (L(\Sigma)'\cap L(\Gamma))r_1. 
\end{equation} 
Consider the $\ast$-isomorphism 
$\Psi':q_3Qq_3\rightarrow tq_3 Qq_3b t^*$
given by
\[
\Psi'(x)=txb t^*\quad\text{for} \quad x\in q_3Qq_3
\]
and we set $\Psi=\phi'\circ\Psi': q_3Qq_3 \rightarrow r_2L(\Omega)r_2.$ Clearly $\Psi$ is a $*$-homomorphis.
Using (3) above for every $x\in q_3Qq_3$ we have
\begin{align*}
\Psi(x) w_1
&=\phi'(\Psi'(x))w_1t = w_1 \Psi'(x) t=w_1txbt^*t\\
&= w_1 txb = w_1tb x.
\end{align*}
 
Next we will show that $w_1tb\neq 0$. Indeed, suppose by contradiction that  $w_1tb= 0$ then  $w_1tbq_1 t^* = 0$. This implies that $w_1q_2 = 0$. Thus
\begin{equation*}
w_1=w_1 r_1= w_1r_1q_2=0,
\end{equation*}
a contradiction.  So letting $\hat w$ to be the partial isometry in the polar decomposition of $w_1tb=\hat w|w_1tb|$, simply denoting
$q:=q_3$ and $f:=r_2$, we get that 
\begin{equation*}
\Psi: qQq\rightarrow fL(\Omega)f
\end{equation*} 
is an injective, unital $\ast$-homomorphism so that
\begin{equation*}
\Psi(x)\hat w=\hat wx \quad\text{for all}\quad x\in qQq.
\end{equation*}
Moreover since $Qb\subset q_1(U'\cap pL(\Gamma)p) q_1$ is finite index, using (4) above and (\ref{8'}) one gets that 
\[
\Psi(qQq)\subset r_2 L(\Omega)r_2
\]
has finite index. Altogether these show that 
$Q\cong^{com}_{L(\Gamma)} L(\Omega)$
as desired.  
\end{proof}

%##########################################################################################
%##########################################################################################

We end this section presenting the second main result. This roughly asserts that tensor product decompositions of group von Neumann algebras whose factors are commensurable with subalgebras arising commuting subgroups can be ``slightly perturbed'' to tensor product decompositions arising from the actual direct product decompositions of the underlying group. The proof uses the factor framework in an essential way and it is based on arguments from \cite[Proposition 12]{OP03} and \cite[Theorem 4.14]{CdSS15} (see also \cite[Theorem 6.1]{DHI16}).  

\begin{thm}\label{fromvirtualtoproduct}
Let $\Gamma$ be an icc group and assume that  $M=L(\Gamma)=M_1\bar\otimes M_2$, where $M_i$ are diffuse factors. Also assume there exist commuting, non-amenable, icc subgroups $\Sigma_1,\Sigma_2<\Gamma$ such that
\begin{center}
$[\Gamma:\Sigma_1 \Sigma_2]<\infty$, \quad $M_1\cong^{com}_M L(\Sigma_1)$, 
\quad and \quad $M_2\cong^{com}_M L(\Sigma_2)$.
\end{center}
Then there exist a group decomposition $\Gamma=\Gamma_1\times \Gamma_2$, a unitary $u\in M$ and $t>0$ such that 
\begin{center}
$M_1=uL(\Gamma_1)^tu^*$\quad and \quad $M_2=uL(\Gamma_2)^{1/t}u^*$. 
\end{center} 
 \end{thm}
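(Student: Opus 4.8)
The plan is to leverage the structural dictionary between $M$ and $\Gamma$ by combining the commensurability hypotheses with Ge's splitting theorem (Theorem~\ref{Ge96theoremA}) and the rigidity of commuting subgroups inside an icc group. First I would unpack $M_1\cong^{com}_M L(\Sigma_1)$: there are projections $p_1\in M_1$, $q_1\in L(\Sigma_1)$, a partial isometry $v_1\in M$ with $\operatorname{s}(E_{L(\Sigma_1)}(v_1v_1^*))=q_1$, and a $\ast$-homomorphism $\phi_1\colon p_1M_1p_1\to q_1L(\Sigma_1)q_1$ with finite-index image and $\phi_1(x)v_1=vx$ for $x\in p_1M_1p_1$; similarly for $M_2$. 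The key point is that since $M=M_1\bar\otimes M_2$ and $M_i$ are factors, the relative commutant of a corner of $M_1$ inside $M$ is a corner of $M_2$ up to a matrix amplification. Passing to relative commutants of $\phi_1(p_1M_1p_1)$ and using that $L(\Sigma_1)\vee(L(\Sigma_1)'\cap L(\Gamma))p\subset pL(\Gamma)p$ has finite index whenever $[\Gamma:\Sigma_1\Sigma_2]<\infty$ (because $L(\Sigma_2)\subset L(\Sigma_1)'\cap L(\Gamma)$), I would set up the hypotheses of Theorem~\ref{fromrelcomtocomgroups} (the Chifan--de Santiago--Sinclair Claims 4.7--4.12), which upgrades the von Neumann commensurability into a genuine group-theoretic statement.

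Next, I would use Theorem~\ref{virtualprod} applied with $\Sigma=\Sigma_1$, $P=M_1$ (a corner of), $Q=$ a corner of $M_2$: this produces a subgroup $\Omega<C_\Gamma(\Sigma_1)$ with $[\Gamma:\Sigma_1\Omega]<\infty$ and $M_2\cong^{com}_M L(\Omega)$. By symmetry, I also get a corresponding statement with the roles of $1,2$ swapped. The upshot is that $\Sigma_1$ and its (virtual) centralizer $\Omega$ virtually generate $\Gamma$ and their intersection is finite; since $\Gamma$ is icc, a standard argument (the finite normal subgroup $\Sigma_1\cap\Omega$ must be trivial by icc-ness, after possibly passing to finite-index subgroups and using that the centralizer of a finite-index subgroup of an icc group is finite) forces an honest internal direct product decomposition $\Gamma_0=\Gamma_1\times\Gamma_2$ of a finite-index subgroup $\Gamma_0<\Gamma$, with $\Gamma_i$ commensurable to $\Sigma_i$; then one checks the icc condition propagates to the factors $\Gamma_i$ so that $L(\Gamma_i)$ are $\mathrm{II}_1$ factors and $L(\Gamma_0)=L(\Gamma_1)\bar\otimes L(\Gamma_2)$ has finite index in $M$. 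In fact, because $\Gamma$ is icc and $\Gamma_1\times\Gamma_2$ has finite index, $C_\Gamma(\Gamma_1)=\Gamma_2$ exactly and one can arrange $\Gamma=\Gamma_1\times\Gamma_2$ on the nose.

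Finally, I would pin down the unitary conjugacy. Having $M_1\cong^{com}_M L(\Gamma_1)$ with $\Gamma_1$ of finite index in a direct factor, and $M=M_1\bar\otimes M_2=L(\Gamma_1)\bar\otimes L(\Gamma_2)$ (up to amplifications), I would feed $M_1\prec_M L(\Gamma_1)$ into an argument of the type used in the proof of Theorem~\ref{tensordecompafp}: intertwine $M_1$ into $L(\Gamma_1)$, pass to relative commutants to intertwine $M_2$ into $L(\Gamma_2)$, and use factoriality together with Theorem~\ref{Ge96theoremA} to conclude that the intertwiner can be dilated to a genuine unitary $u\in M$ with $M_1=uL(\Gamma_1)^tu^*$ and $M_2=uL(\Gamma_2)^{1/t}u^*$ for the appropriate $t>0$ (the scalar bookkeeping being exactly as in Theorem~\ref{tensordecompafp}, using Theorem on amplifications part~(c)). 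The main obstacle I anticipate is the middle step: converting the \emph{virtual} (finite-index, up-to-corners) containments coming from $\cong^{com}$ into the \emph{exact} internal direct product decomposition $\Gamma=\Gamma_1\times\Gamma_2$ — one must be careful that the finite "error" subgroups ($\Sigma_i\cap\Omega$, the finite-index defects) genuinely vanish, and here the icc hypothesis on $\Gamma$ is doing essential work; handling this cleanly, and then propagating factoriality through all the amplifications so that Ge's theorem applies, is where the real care is needed.
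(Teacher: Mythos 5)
Your outline has the right endpoints, but the decisive middle step is wrong as stated. You propose to pass from the virtual data (a finite-index subgroup $\Gamma_0=\Gamma_1\times\Gamma_2<\Gamma$ with $\Gamma_i$ commensurable to $\Sigma_i$) to the exact decomposition $\Gamma=\Gamma_1\times\Gamma_2$ by a purely group-theoretic argument, asserting that ``because $\Gamma$ is icc and $\Gamma_1\times\Gamma_2$ has finite index, $C_\Gamma(\Gamma_1)=\Gamma_2$ exactly and one can arrange $\Gamma=\Gamma_1\times\Gamma_2$ on the nose.'' This implication is false: an icc group can contain a finite-index direct product of icc subgroups without itself splitting. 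For instance $(\Lambda\times\Lambda)\rtimes\mathbb{Z}/2\mathbb{Z}$, with the flip action and $\Lambda$ icc, is icc, contains $\Lambda\times\Lambda$ with index $2$, and admits no nontrivial direct product decomposition. So icc-ness alone cannot close the gap; the hypothesis $M=M_1\bar\otimes M_2$ must be used at exactly this point, and your sketch does not use it there. (Relatedly, the hypothesis $M_2\cong^{com}_M L(\Sigma_2)$ never visibly enters your argument at a decisive moment, whereas in the paper it is essential.)

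The paper's proof closes this gap analytically rather than group-theoretically. Starting from $L(\Sigma_1)\prec_M M_1$ it builds, via the Ozawa--Popa dilation argument, a genuine unitary $u$ and an iterated chain of \emph{virtual centralizers} computed inside all of $\Gamma$: $\Omega_2=\{\gamma:|\mathcal O_{\Sigma_1}(\gamma)|<\infty\}$, $\Omega_1=C_{\Sigma_1}(\Omega_2)$, then $\Theta_2=\{\lambda:|\mathcal O_{\Omega_1}(\lambda)|<\infty\}$, arranging $M_2^{1/\mu}\subset uL(\Theta_2)u^*$. Ge's splitting theorem (Theorem \ref{Ge96theoremA}) then gives $uL(\Theta_2)u^*=B\bar\otimes M_2^{1/\mu}$ with $B\subset M_1^\mu$, and it is precisely the second hypothesis $M_2\cong^{com}_M L(\Sigma_2)$ that forces $B\prec_M M_2^{1/\mu}$, hence $B=\mathbb{M}_k(\mathbb C)$ and $uL(\Theta_2)u^*=M_2^t$ \emph{exactly}. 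A second application of Ge's theorem to $\Gamma_1=\{\lambda:|\mathcal O_{\Theta_2}(\lambda)|<\infty\}$ yields $uL(\Gamma_1)u^*=M_1^{1/t}$, and only then do the group facts drop out: $L(\Gamma_1)\vee L(\Theta_2)=L(\Gamma)$ forces $\Gamma_1\Theta_2=\Gamma$, and icc-ness of $\Theta_2$ gives $\Gamma_1\cap\Theta_2=\{1\}$. If you want to salvage your approach, you must replace your step (b) by an argument of this type in which the equality $M=M_1\bar\otimes M_2$ is what upgrades the finite-index containments to equalities; your final unitary-conjugacy step is then essentially subsumed by that same argument rather than being a separate application of the Theorem \ref{tensordecompafp} machinery.
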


\begin{proof}
Since $M_1\cong^{com}_M L(\Sigma_1)$, in particular we have $L(\Sigma_1)\prec_M M_1 $. Since $M=M_1\bar\otimes M_2$ then proceeding as in the proof of  \cite[Proposition 12]{OP03}  there exist a scalar $\mu>0$ and a partial isometry $v\in M$ satisfying 
\begin{center}
$p:=vv^*\in M^{1/\mu}_2$, \quad $q:=v^*v\in L(\Sigma_1)'\cap M$ \quad and
\end{center}
\begin{equation}\label{4.1'}
vL(\Sigma_1) v^*\subset M^\mu_1p.
\end{equation} 

Let $\Omega_2=\{ \gamma \in\Gamma \,|\, |\mathcal{O}_{\Sigma_1}(\gamma) |<\infty \}$ 
where $\mathcal{O}_{\Sigma_1}(\gamma)=\{\eta\gamma\eta^{-1}\,|\,\eta\in\Sigma_1\}$ is the orbit of $\gamma\in \Gamma$ under the conjugate action of $\Sigma_1$. Notice that for $\gamma_1, \gamma_2\in \Gamma$, it is easy to show that $\mathcal{O}_{\Sigma_1}(\gamma_1\gamma_2)\subset \mathcal{O}_{\Sigma_1}(\gamma_1)\mathcal{O}_{\Sigma_1}(\gamma_2)$. This implies that 
\begin{center}
$|\mathcal{O}_{\Sigma_1}(\gamma_1\gamma_2)|\leq |\mathcal{O}_{\Sigma_1}(\gamma_1)||\mathcal{O}_{\Sigma_1}(\gamma_2)|$ 
\end{center}
and hence $\Omega_2$ is a subgroup of $\Gamma$.
Clearly $ \Sigma_2 < \Omega_2$ because from the assumption $\Sigma_1, \Sigma_2$ commute.
Since $[\Gamma:\Sigma_1\Sigma_2]<\infty$, it follows that $[\Gamma: \Omega_2\Sigma_1]<\infty$. 

Now setting $\Omega_1=\operatorname{C}_{\Sigma_1}(\Omega_2)$, the centerizer of $\Omega_2$ in $\Sigma_1$, we can easily see that $\Omega_1,\Omega_2<\Gamma$ are commuting, non-amenable, icc subgroups.

\noindent \textit{Claim:} $[\Sigma_1:\Omega_1]<\infty$ and $[\Gamma:\Omega_1 \Omega_2]<\infty$.\\
First, we will show that $[\Sigma_1:\Omega_1]<\infty$. Assume by a contradiction that $\Omega_1$ has infinite index in $\Sigma_1$ and $\{h_k\}\subset \Sigma_1$ is an infinite sequence of representatives of distinct right cosets of $\Omega_1$ in $\Sigma_1$. Since $[\Gamma:\Omega_2\Sigma_1]<\infty$, there is a right coset $\Omega_2\Sigma_1\gamma$ such that $\Omega_2\Sigma_1\gamma\cap \Omega_1h_i\neq \emptyset$. Then consider the subsequence $\{h_i\}$ of $\{h_k\}$, for each $i\geq 1$, $h_i=x_i\gamma$ for some $x_i\in \Omega_2\Sigma_1$. Then 
\begin{center}
$h_ih_1^{-1}=(x_i\gamma)(x_1\gamma)^{-1}=x_i\gamma\gamma^{-1}x_1^{-1}=x_ix_1^{-1}\in \Omega_2\Sigma_1$
\end{center}
for all $i\geq 2$. Then for each $i\geq 2$, It follows that $h_ih_1^{-1}=\omega_i\sigma_i$ for some $\omega_i\in \Omega_2$ and $\sigma_i\in \Sigma_1$. Notice that $\omega_j\sigma_j=h_jh_1^{-1}\neq h_ih_1^{-1}=\omega_i\sigma_i$. From the construction, we have $\omega_i\Sigma_1\neq \omega_j\Sigma_1$ for all $i\neq j$ It is easy to check that $|\mathcal{O}_{\Omega_2\Sigma_1}(\omega_i)|<\infty$ for all $i\geq 2$. Also since $[\Gamma:\Omega_2\Sigma_1]<\infty$, it implies further that $|\mathcal{O}_{\Gamma}(\omega_i)|<\infty$. However, it contradicts the assumption $\Gamma$ is icc. Hence  $[\Sigma_1:\Omega_1]<\infty$.
Furthermore, as a consequence $[\Gamma:\Omega_1\Omega_2]<\infty$ as well.

Also notice that since ${C}_{\Gamma}(\Sigma_1)\subset \Omega_2,$ we have $L(\Sigma_1) '\cap M \subset L(\Omega_2)$ and  by relation (\ref{4.1'}) we have 
\[
vL(\Omega_1) v^*\subset M^\mu_1p.
\] 
Since $L(\Omega_2)$ and $M^{1/\mu}_2$ are factors then as in the proof of \cite[Proposition 12]{OP03}, we can find partial isomoetries 
\begin{center}
$w_1, \ldots,w_m\in L(\Omega_2)$\quad and  \quad $u_1,\ldots, u_m \in M^{1/\mu}_2$ 
\end{center}
satisfying 
\begin{center}
$w_i{w_i}^*=q'\leq q$,\quad ${u_i}^*u_i=p'=uq'u^*\leq p$ for any $i$ \quad and
\end{center}
\begin{center}
$\sum_j {w_j}^*w_j=1_{L(\Omega_2)}$, \quad $\sum_j u_j{u_j}^*=1_{M^{1/\mu}_2}$. 
\end{center}
Combining with the above, we can check that $u=\sum_j u_j vw_j\in M$ is a unitary satisfying $u L(\Omega_1)u^*\subset M^\mu_1$. Since we know that $M=M_1^{\mu}\bar\otimes M_2^{1/\mu}$, it is forced that
\begin{equation}\label{4.2'} 
M^{1/\mu}_2\subset u(L(\Omega_1)'\cap M)u^* .
\end{equation}

Similarly, let $\Theta_2=\{ \lambda \in\Gamma \,|\, |\mathcal O_{\Omega_1}(\lambda) |<\infty \}$ and $\Theta_1=C_{\Omega_1}(\Theta_2)$. As before it follows that $\Theta_1,\Theta_2 <\Lambda$ are commuting, non-amenable and icc subgroups  such that 
\begin{center}
$[\Gamma:\Theta_1 \Theta_2]<\infty$ \quad and\quad $[\Sigma_1:\Theta_1]<\infty$.  
\end{center}
Moreover, Since $C_{\Gamma}(\Omega_1)\subset \Theta_2$, by (\ref{4.2'}) we have
\[
M^{1/\mu}_2 \subset uL(\Theta_2)u^*.
\]
Since $M=M^\mu_1\bar \otimes M^{1/\mu}_2$, by Theorem \ref{Ge96theoremA}, there exists a subfactor $B\subset M^\mu_1$ such that
\[
uL(\Theta_2)u^*=B\bar \otimes M^{1/\mu}_2.
\]
Since $M_2\cong^{com}_M L(\Sigma_2),$ we have 
$uL(\Sigma_2)u^* \prec_M M^{1/\mu}_2.$
Since $[\Omega_2:\Sigma_2]<\infty$, it follows that 
$uL(\Omega_2)u^*\prec_M M_2^{1/\mu}$ as well.
 Since $B\subset uL(\Omega_2)u^*$ we have that $B\prec_M M_2^{1/\mu}$. 
 However since $B\subset M_1^{\mu}$ and $M=M_1^{\mu}\bar\otimes M_2^{1/\mu}$, these force that $B$ has an atomic corner. As $B$ is a factor, then we get
\begin{center}
$B=\mathbb{M}_k(\mathbb C)$, \quad for some\quad $k\in \mathbb N$. 
\end{center}
Altogether, we have 
\begin{equation}\label{K1}
uL(\Theta_2)u^*= B\bar\otimes M^{1/\mu}_2=\mathbb{M}_k(\mathbb C)\bar\otimes M^{1/\mu}_2=M^{t}_2,
\end{equation} 
where $t=k/\mu$. Since $M=M^{1/t}_1\bar\otimes M^t_2$, we also get 
\begin{equation}\label{K2}
u(L(\Theta_2)'\cap M)u^*= M^{1/t}_1.
\end{equation}
Let $\Gamma_1=\{ \lambda \in\Gamma \,|\, |\mathcal O_{\Theta_2}(\lambda) |<\infty \}$ and since $\Theta_2$ is an icc group, it follows that $\Gamma_1\cap\Theta_2=\{1\}$.  By construction as $C_{\Gamma}(\Theta_2)\subset \Gamma_1$, we obtain $uL(\Gamma_1)u^*\supseteq u(L(\Theta_2)'\cap M)u^*= M^{1/t}_1$. Therefore, agian
applying Theorem \ref{Ge96theoremA}, we have that
\begin{center}
$uL(\Gamma_1)u^*=A\bar\otimes M_1^{1/t}=A\bar \otimes u(L(\Theta_2)'\cap M)u^*$,
\end{center}
for some subfactor $A \subset uL(\Theta_2)u^*$.

In particular, we have $A=u L(\Gamma_1)u^*\cap u L(\Theta_2)u^* = \mathbb C 1$ since $ \Gamma_1\cap\Theta_2=\{1\}$ and, hence $uL(\Gamma_1)u^*=u(L(\Theta_2)'\cap M) u^*$. Letting $\Gamma_2=\Theta_2$, it follows that the subgroups $\Gamma_1$ and $\Gamma_2$ are commuting, non-amenable subgroups of $\Gamma$ such that $\Gamma_1\cap\Gamma_2=\{1\}$, $\Gamma_1\Gamma_2=\Gamma$. And from equation (\ref{K1}) and (\ref{K2}) above,  $uL(\Gamma_1)u^*=  M^{1/t}_1$, and $uL(\Gamma_2)u^*= M^{t}_2$.\end{proof}

%%%%%%%%%%%%%%%%%%%%%%%%%%%%%%%%%%%%%%%%%%%%%%%%%%%%%%%%%%%%%%%%%%%%%%%%%%%%%%%%%%%%%%%%%%%%%%%%%%%%%%%%%%%%%%%%%%%%%%%%%%%%%%%%%%
%%%%%%%%%%%%%%%%%%%%%%%%%%%%%%%%%%%%%%%%%%%%%%%%%%%%%%%%%%%%%%%%%%%%%%%%%%%%%%%%%%%%%%%%%%%%%%%%%%%%%%%%%%%%%%%%%%%%%%%%%%%%%%%%%%
%%%%%%%%%%%%%%%%%%%%%       Classification of tensor product decompositions for II_1 factors arising from groups        %%%%%%%%%%%
%%%%%%%%%%%%%%%%%%%%%%%%%%%%%%%%%%%%%%%%%%%%%%%%%%%%%%%%%%%%%%%%%%%%%%%%%%%%%%%%%%%%%%%%%%%%%%%%%%%%%%%%%%%%%%%%%%%%%%%%%%%%%%%%%%%
%%%%%%%%%%%%%%%%%%%%%%%%%%%%%%%%%%%%%%%%%%%%%%%%%%%%%%%%%%%%%%%%%%%%%%%%%%%%%%%%%%%%%%%%%%%%%%%%%%%%%%%%%%%%%%%%%%%%%%%%%%%%%%%%%%

\section{Classification of tensor product decompositions of II$_1$ factors arising from groups}

Motivated by the prior work \cite{CdSS15}, Drimbe, Hoff and Ioana have discovered in \cite{DHI16} a new classification result in the study of tensor product decompositions of II$_1$ factors. Specifically they unveiled the first examples of icc groups $\Gamma$ for which all diffuse tensor product decompositions of $L(\Gamma)$ are ``paramatrized'' by the canonical direct product decompositions of the underlying group $\Gamma$. Their examples include remarkable groups such as the class of all icc groups $\Gamma$ that are measure equivalent to products of non-elementary hyperbolic groups. Similar results where obtained subsequently in \cite{CdSS17,dSP17}. In this dissertation we obtained similar results for new classes of groups including amalgamated free products, direct products of wreath product groups and MsDuff's groups. For the ease of presentation the results will be presented in independent subsections.

%%%%%%%%%%%%%%%%%%%%%%%%%%%%%%%%%%%%%%%%%%%%%%%%%%%%%%%%%%%%%%%%%%%%%%%%%%%%%%%%%%%%%%%%%%%%%%%%%%%%%%%%%%%%%%%%%%%%%%%%%%%%%%%%%%
%%%%%%%%%%%%%%%%%%%%%%%%%%%%%%%%%%%%%%%%%%%%%%%%%%%%%%%%%%%%%%%%%%%%%%%%%%%%%%%%%%%%%%%%%%%%%%%%%%%%%%%%%%%%%%%%%%%%%%%%%%%%%%%%%%
%%%%%%%%%%%%%%%%%%%%%                                                AFP                                               %%%%%%%%%%%
%%%%%%%%%%%%%%%%%%%%%%%%%%%%%%%%%%%%%%%%%%%%%%%%%%%%%%%%%%%%%%%%%%%%%%%%%%%%%%%%%%%%%%%%%%%%%%%%%%%%%%%%%%%%%%%%%%%%%%%%%%%%%%%%%%%
%%%%%%%%%%%%%%%%%%%%%%%%%%%%%%%%%%%%%%%%%%%%%%%%%%%%%%%%%%%%%%%%%%%%%%%%%%%%%%%%%%%%%%%%%%%%%%%%%%%%%%%%%%%%%%%%%%%%%%%%%%%%%%%%%%

\subsection{Amalgamated free product groups} 

In Section \ref{AFPtensor} we have seen that for a large class of AFP von Neumann algebras $M=M_1\ast_P M_2$ all their tensor factorizations essentially split P and the entire inclusions $P\subset M_i$. However in the particular case when $M$ arises for amalgam groups $\Gamma=\Gamma_1\ast_\Sigma \Gamma_2$ this is insufficient to determine whether this further splits the group $\Sigma$ as well. In fact it is well known this does not happen all the time (see the \emph{Remark} after the Theorem \ref{tensordecompamalgam}) and hence a separate analysis is required to understand this aspect. In this direction we isolate several situations when indeed the tensor decompositions arise from the direct product splittings of $\Gamma$. One instance is when the algebra $L(\Sigma)$ is \textit{virtually prime}\footnote{See Definition \ref{virtuallyprimedef}.}.  

Before stating our result we need a group theoretic preliminary.

\begin{lem}\label{amalgamprodecomp} %Lemma 5.3 (1)
Let $\Gamma=\Gamma_1\ast_\Sigma \Gamma_2$ be an amalgamated free product. Suppose $\Gamma=\Lambda_1\times \Lambda_2$ for some subgroups $\Lambda_1, \Lambda_2$. Then we can find a permutation $\sigma\in \mathfrak{S}_2$  satisfying
\begin{itemize}
\item $\Sigma=\Lambda_{\sigma(1)} \times \Sigma_0$, 
\item $\Gamma_1=\Lambda_{\sigma(1)} \times \Gamma^0_1$, 
\item $\Gamma_2= \Lambda_{\sigma(1)}\times \Gamma^0_2$, 
\item $\Lambda_{\sigma(2)}=\Gamma^0_1\ast_{\Sigma_0} \Gamma^0_2$.
\end{itemize}
\end{lem}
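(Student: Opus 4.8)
The plan is to use the Bass--Serre dichotomy to place one of the two direct factors inside the core $\Sigma$, and then to recover all the remaining splittings by elementary manipulations with the direct product together with the normal form theorem for amalgams.

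\emph{Step 1 (one factor lies in $\Sigma$).} As recorded in the introduction (basic Bass--Serre theory), a direct product decomposition of an amalgam must have one factor inside $\Sigma$. The mechanism: $\Lambda_1$ and $\Lambda_2$ are both normal in $\Gamma=\Lambda_1\times\Lambda_2$, and a normal subgroup of $\Gamma$ that fixes a vertex of the Bass--Serre tree $T$ fixes its entire $\Gamma$-orbit, hence acts trivially on $T$ by minimality of the action, hence lies in the kernel $\bigcap_{g\in\Gamma}g\Sigma g^{-1}\le\Sigma$; if instead $\Lambda_1$ fixes no vertex, its minimal invariant subtree is $\Gamma$-invariant, so $\Lambda_1$ acts minimally on $T$, and then its commuting complement $\Lambda_2$ centralizes an irreducible tree action and so must act trivially, i.e. $\Lambda_2\le\Sigma$. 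Relabelling by a permutation $\sigma\in\mathfrak S_2$ we may assume $\Lambda_1\le\Sigma$, hence $\Lambda_1\le\Sigma\le\Gamma_1,\Gamma_2$.

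\emph{Step 2 (splitting $\Gamma_1,\Gamma_2,\Sigma$).} Set $\Gamma_1^0:=\Gamma_1\cap\Lambda_2$, $\Gamma_2^0:=\Gamma_2\cap\Lambda_2$, $\Sigma_0:=\Sigma\cap\Lambda_2$. Each of $\Gamma_1,\Gamma_2,\Sigma$ contains $\Lambda_1$; and for any subgroup $H$ with $\Lambda_1\le H\le\Gamma$ the subgroups $\Lambda_1$ and $H\cap\Lambda_2$ commute, intersect trivially, and generate $H$ (given $h\in H$, write $h=\lambda_1\lambda_2$ with $\lambda_i\in\Lambda_i$; then $\lambda_2=\lambda_1^{-1}h\in H\cap\Lambda_2$ since $\lambda_1\in\Lambda_1\le H$). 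Hence $\Gamma_1=\Lambda_1\times\Gamma_1^0$, $\Gamma_2=\Lambda_1\times\Gamma_2^0$ and $\Sigma=\Lambda_1\times\Sigma_0$, while $\Sigma_0=\Sigma\cap\Lambda_2\le\Gamma_i\cap\Lambda_2=\Gamma_i^0$.

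\emph{Step 3 (identifying $\Lambda_{\sigma(2)}$ as the amalgam).} Let $\pi:\Gamma\to\Lambda_2$ be the projection with kernel $\Lambda_1$; it is the identity on $\Lambda_2\supseteq\Gamma_1^0,\Gamma_2^0,\Sigma_0$, so $\pi(\Gamma_i)=\Gamma_i^0$ and $\Lambda_2=\pi(\Gamma)=\pi(\langle\Gamma_1,\Gamma_2\rangle)=\langle\Gamma_1^0,\Gamma_2^0\rangle$. To see that this internal subgroup carries the amalgam structure $\Gamma_1^0\ast_{\Sigma_0}\Gamma_2^0$, consider the canonical homomorphism $\iota:\Gamma_1^0\ast_{\Sigma_0}\Gamma_2^0\to\Gamma$ induced by $\Gamma_i^0\hookrightarrow\Gamma_i$ (these agree on $\Sigma_0\hookrightarrow\Sigma$). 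Since $\Gamma_i^0\cap\Sigma=\Lambda_2\cap\Sigma=\Sigma_0$, a reduced word over $\Gamma_1^0\ast_{\Sigma_0}\Gamma_2^0$ maps to a reduced word over $\Gamma_1\ast_\Sigma\Gamma_2$, so $\iota$ is injective by the normal form theorem; its image lies in $\Lambda_2$, and $\pi\circ\iota$ is still injective because its kernel is contained in $\Lambda_1\cap\Lambda_2=\{e\}$, with image $\langle\Gamma_1^0,\Gamma_2^0\rangle=\Lambda_2$. Thus $\pi\circ\iota$ is an isomorphism of $\Gamma_1^0\ast_{\Sigma_0}\Gamma_2^0$ onto $\Lambda_{\sigma(2)}$ restricting to the identity on each $\Gamma_i^0$, which is the remaining assertion.

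\emph{Main difficulty.} Steps 2 and 3 are routine bookkeeping; the only substantive input is Step 1, namely that a direct product decomposition of the amalgam forces one factor into the edge group $\Sigma$. This genuinely requires the amalgam to be nontrivial — if some index is $1$, or if both indices equal $2$ so that $T$ is a line and $\Lambda_2$ can act hyperbolically, the conclusion fails — so the clean statement tacitly relies on the index hypotheses in force in this chapter, and I would either carry those explicitly or quote the precise version from \cite{CdSS15}.
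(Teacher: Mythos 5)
Your proof is correct, but your Step 1 takes a genuinely different route from the paper. The paper does not argue on the Bass--Serre tree at all: it passes to the von Neumann algebra $L(\Gamma)=L(\Gamma_1)\ast_{L(\Sigma)}L(\Gamma_2)$, applies Theorem \ref{intertwiningincore1} to the commuting subalgebras $L(\Lambda_1),L(\Lambda_2)$ to get $L(\Lambda_{\sigma(1)})\prec_{L(\Gamma)}L(\Sigma)$, converts this back to group theory via \cite{CI17} (Lemma \ref{CI17lemma2.2}) to obtain $[\Lambda_{\sigma(1)}:\Lambda_{\sigma(1)}\cap\Sigma]<\infty$ after using normality to absorb the conjugation, and then kills the finite quotient $\Lambda_{\sigma(1)}/(\Sigma\cap\Lambda_{\sigma(1)})$ inside the induced amalgam decomposition of $\Gamma/(\Sigma\cap\Lambda_{\sigma(1)})$ using Karrass--Solitar \cite{KS70} to conclude $\Lambda_{\sigma(1)}<\Sigma$. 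Your tree argument is more elementary and self-contained (no operator algebras, no \cite{CI17}, no \cite{KS70}), which is arguably more natural for a purely group-theoretic lemma; the paper's route is essentially free given that Theorem \ref{intertwiningincore1} has already been established for the main results, and it sidesteps the case analysis on elliptic versus hyperbolic behaviour. Your Steps 2 and 3 coincide in substance with the paper's bookkeeping (the paper splits $\Sigma$, $\Gamma_1$, $\Gamma_2$ off $\Lambda_{\sigma(1)}$ by the same ``normal direct factor'' observation and identifies $\Lambda_{\sigma(2)}$ by rewriting $\Gamma$ as $(\Lambda_{\sigma(1)}\times\Gamma_1^0)\ast_{\Lambda_{\sigma(1)}\times\Sigma_0}(\Lambda_{\sigma(1)}\times\Gamma_2^0)$); your normal-form verification of injectivity is if anything more careful.

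Two small remarks. In your Step 1, case B, the assertion that the centralizer of a minimal action on a non-linear tree acts trivially requires that the action of $\Lambda_{\sigma(1)}$ also not fix an end; this is automatic here because its set of fixed ends (of cardinality at most two) would be $\Gamma$-invariant, contradicting that the amalgam action under the standing index hypotheses is of general type --- worth a sentence. And you are right to flag that the index hypotheses $[\Gamma_1:\Sigma]\geq 2$, $[\Gamma_2:\Sigma]\geq 3$ are not stated in the lemma but are needed; note that the paper's proof suppresses them too, since Theorem \ref{intertwiningincore1} requires non-amenability of $L(\Gamma)$ relative to $L(\Sigma)$, which again comes from those indices.
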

\begin{proof}
Since $\Gamma=\Gamma_1\ast_\Sigma \Gamma_2$, considering the von Neumann algebra of $\Gamma$, we have $L(\Gamma)=L(\Gamma_1)\ast_{L(\Sigma)} L(\Gamma_2)$.
By using Theorem \ref{intertwiningincore1}, we have 
\begin{center}
$L(\Gamma_{\sigma(1)})\prec_{L(\Gamma)} L(\Sigma)$ 
\end{center}
for some $\sigma\in \mathfrak{S}_2$.
Since $\Gamma=\Lambda_1\times \Lambda_2$, by applying Theorem \ref{CI17lemma2.2}, there is an element $h\in \Gamma$ so that
$[\Lambda_{\sigma(1)}: h\Sigma h^{-1}\cap\Lambda_{\sigma(1)}]<\infty.$
Since $\Lambda_{\sigma(1)}$ is normal in $\Gamma$, conjugating by $h$ we can assume that $[\Lambda_{\sigma(1)}: \Sigma \cap \Lambda_{\sigma(1)}]<\infty$. Also passing through a finite index subgroup, we can also assume  $\Sigma \cap \Lambda_{\sigma(1)}$ is normal in $\Gamma$. Therefore, we have 
\begin{align*}
\Gamma\big/(\Sigma\cap\Lambda_{\sigma(1)})
&= (\Gamma_1\big/(\Sigma\cap \Lambda_{\sigma(1)})) \ast_{\Sigma \big/(\Sigma\cap \Lambda_{\sigma(1)})} (\Gamma_2 \big/(\Sigma\cap \Lambda_{\sigma(1)})) \\
&= \Lambda_{\sigma(1)}\big/(\Sigma \cap \Lambda_{\sigma(1)}) \times \Lambda_{\sigma(2)}.
\end{align*}
Since $\Lambda_{\sigma(1)}\big/(\Sigma \cap\Lambda_{\sigma(1)})$ is finite, \cite[Theorem 10]{KS70}  implies that 
\[
\Lambda_{\sigma(1)}\big/(\Sigma \cap \Lambda_{\sigma(1)})<\Sigma \big/(\Sigma\cap\Lambda_{\sigma(1)})
\]
and thus $\Lambda_{\sigma(1)}< \Sigma $. Since $\Lambda_{\sigma(1)}<\Sigma \subset \Lambda_1\times \Lambda_2$ and clearly $\Lambda_{\sigma(1)}$ is normal in $\Lambda_1\times \Lambda_2$, 
there is a subgroup $\Sigma_0$ of $\Sigma$ such that  $\Sigma=\Lambda_{\sigma(1)}\times \Sigma_0$. With the same argument, since $\Sigma< \Gamma_1, \Gamma_2 < \Lambda_1\times \Lambda_2$, for $i={1,2}$ there are subgroups $\Gamma^0_i<\Gamma_i$ such that
$\Gamma_i= \Lambda_{\sigma(1)} \times \Gamma^0_i$. Moreover, 
\begin{align*}
\Lambda_{\sigma(1)}\times \Lambda_{\sigma(2)}
&=\Gamma=\Gamma_1*_{\Sigma}\Gamma_2\\
&=(\Lambda_{\sigma(1)} \times \Gamma^0_1)\ast_{(\Lambda_{\sigma(1)}\times (\Sigma_0)} \Lambda_{\sigma(1)} \times \Gamma^0_2)\\
&=\Lambda_{\sigma(1)}\times (\Gamma^0_1\ast_{\Sigma_0} \Gamma^0_2).
\end{align*}
Hence, we can conclude that $\Lambda_{\sigma(2)}=\Gamma^0_1\ast_{\Sigma_0} \Gamma^0_2$. 
\end{proof}

\begin{thm}\label{tensordecompamalgam}%[[MAIN THEOREM]]
 Let $\Gamma=\Gamma_1\ast_{\Sigma}\Gamma_2$ be an icc group with $[\Gamma_1:\Sigma]\geq 2$ and $[\Gamma_2:\Sigma]\geq 3$. Assume that $\Sigma$ is finite-by-icc
and any corner of $L(\Sigma)$ is virtually prime. Suppose that $L(\Gamma)=M_1\bar\otimes M_2$ for diffuse $M_i$'s. Then there exist direct product decompositions 
\begin{equation*}
\Sigma=\Omega \times \Sigma_0,\,\,\,  \Gamma_1= \Omega \times \Gamma^0_1, \,\,\,\text{and} \,\,\,\Gamma_2= \Omega \times \Gamma^0_2
\end{equation*}
with  $\Sigma_0$ finite, for some groups $\Sigma_0<\Gamma^0_1, \Gamma^0_2$, and hence $\Gamma=\Sigma\times (\Gamma_1^0\ast_{\Sigma_0}\Gamma_2^0)$. Moreover, there exist a unitary $u\in L(\Gamma)$, a scalar $t>0$ and $\sigma\in\mathfrak S_2$ such that  
\begin{equation*}
M_{\sigma(1)}= u L(\Omega)^t u^*\quad \text{ and }\quad  M_{\sigma(2)} = u L(\Gamma_1^0\ast_{\Sigma_0} \Gamma_2^0)^{1/t} u^*.
\end{equation*}
\end{thm}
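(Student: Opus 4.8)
The plan is to push $\Gamma=\Gamma_1\ast_\Sigma\Gamma_2$ through the three structural theorems of Sections~\ref{AFPtensor} and~\ref{commalg} and then translate the conclusion back into group theory via Lemma~\ref{amalgamprodecomp}. Write $M=L(\Gamma)=L(\Gamma_1)\ast_{L(\Sigma)}L(\Gamma_2)$; since $\mathcal Z(M_1\bar\otimes M_2)=\mathcal Z(M_1)\bar\otimes\mathcal Z(M_2)$ is trivial and each $M_i$ is a diffuse subfactor of a II$_1$ factor, both $M_i$ are in fact II$_1$ factors. First I would apply Corollary~\ref{intertwiningcoregroupsafphnn} with $p=1$, $A_1=M_1$, $A_2=M_2$ (here $A_1\vee A_2=M$, so the finite index hypothesis is automatic), obtaining a permutation $\sigma\in\mathfrak S_2$ with $M_{\sigma(1)}\prec_M L(\Sigma)$.

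The next step is to upgrade this intertwining to a spatial commensurability. Because $\Sigma$ is finite-by-icc, its only finite conjugacy classes lie inside the finite normal subgroup whose quotient is icc, so $\mathcal Z(L(\Sigma))$ is finite dimensional, in particular purely atomic; hence Lemma~\ref{intertwiningdichotomy} applies with $\Sigma<\Gamma$, $r=1$, $P=M_{\sigma(1)}$, $Q=M_{\sigma(2)}$. Its first alternative would produce a corner $eL(\Sigma)e$, a II$_1$ factor, containing two commuting diffuse II$_1$ subfactors, namely $B\cong pM_{\sigma(1)}p$ and $B'\cap eL(\Sigma)e$, with $B\vee(B'\cap eL(\Sigma)e)\subset eL(\Sigma)e$ of finite index; this contradicts the standing hypothesis that every corner of $L(\Sigma)$ is virtually prime. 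So the second alternative must hold: $M_{\sigma(1)}\cong^{com}_M L(\Sigma)$. Feeding this, with the same $r,P,Q$, into Theorem~\ref{virtualprod} then yields a subgroup $\Delta<\operatorname{C}_{\Gamma}(\Sigma)$ with $[\Gamma:\Sigma\Delta]<\infty$ and $M_{\sigma(2)}\cong^{com}_M L(\Delta)$.

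Before invoking Theorem~\ref{fromvirtualtoproduct} I must verify that $\Sigma$ and $\Delta$ are commuting, non-amenable, icc subgroups. Commutativity is exactly $\Delta<\operatorname{C}_{\Gamma}(\Sigma)$. If $\Sigma$ were amenable, then $L(\Sigma)$, being infinite dimensional and amenable with purely atomic centre, would have a corner isomorphic to $\mathcal R=\mathcal R\bar\otimes\mathcal R$, which is not virtually prime; so $\Sigma$ is non-amenable. For the icc properties: if $\Sigma$ has a finite normal subgroup $N$, then $N$ is centralised by $\Delta$ and normalised by $\Sigma$, so each $g\in N$ has finite conjugacy class in the finite-index subgroup $\Sigma\Delta$, hence finite $\Gamma$-conjugacy class; since $\Gamma$ is icc, $N=\{e\}$, so $\Sigma$ is icc, and a parallel argument (using that $\Sigma$ centralises $\Delta$ and $[\Gamma:\Sigma\Delta]<\infty$) shows $\Delta$ is icc. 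Finally, suppose $\Delta$ were amenable; then $L(\Delta)$ is amenable, so $M_{\sigma(2)}$, being commensurable with a corner of it, is the hyperfinite factor, and consequently $L(\Sigma\Delta)=L(\Sigma)\bar\otimes L(\Delta)\subset L(\Gamma)$ is a finite-index inclusion in which $L(\Sigma)$ is co-amenable; by transitivity of relative amenability this would make $L(\Gamma)$ amenable relative to $L(\Sigma)$, contradicting the non-relative-amenability of $L(\Gamma)$ over $L(\Sigma)$ that underlies Corollary~\ref{intertwiningcoregroupsafphnn} (and into which $[\Gamma_1:\Sigma]\geq2$, $[\Gamma_2:\Sigma]\geq3$ enter). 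Thus all hypotheses are met and Theorem~\ref{fromvirtualtoproduct} provides a direct product decomposition $\Gamma=\Lambda_1\times\Lambda_2$, a unitary $u\in M$ and $t>0$ with $M_{\sigma(1)}=uL(\Lambda_1)^tu^*$ and $M_{\sigma(2)}=uL(\Lambda_2)^{1/t}u^*$.

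It remains to reconcile $\Gamma=\Lambda_1\times\Lambda_2$ with the amalgam. Lemma~\ref{amalgamprodecomp} applied to $\Gamma=\Lambda_1\times\Lambda_2=\Gamma_1\ast_\Sigma\Gamma_2$ gives, after possibly interchanging $\Lambda_1$ and $\Lambda_2$, decompositions $\Sigma=\Lambda_1\times\Sigma_0$, $\Gamma_1=\Lambda_1\times\Gamma^0_1$, $\Gamma_2=\Lambda_1\times\Gamma^0_2$ and $\Lambda_2=\Gamma^0_1\ast_{\Sigma_0}\Gamma^0_2$, with $\Sigma_0<\Gamma^0_1,\Gamma^0_2$. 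I would then fix the labelling and establish finiteness of $\Sigma_0$ simultaneously: since $M_{\sigma(1)}\cong^{com}_M L(\Sigma)$ and $M_{\sigma(1)}=uL(\Lambda_1)^tu^*$ give $L(\Sigma)\prec_M L(\Lambda_1)$, Theorem~\ref{CI17lemma2.2} together with the normality of the direct factor $\Lambda_1$ in $\Gamma$ yields $[\Sigma:\Sigma\cap\Lambda_1]<\infty$; with the other choice of labels one would instead have $\Sigma\cap\Lambda_1\cong\Sigma_0$, hence $\Lambda_2$ finite, contradicting diffuseness of $M_{\sigma(2)}=uL(\Lambda_2)^{1/t}u^*$. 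Therefore $\Lambda_1\subset\Sigma$ and $[\Sigma:\Lambda_1]=|\Sigma_0|<\infty$, so $\Sigma_0$ is finite. Putting $\Omega:=\Lambda_1$ finishes everything: $\Sigma=\Omega\times\Sigma_0$ with $\Sigma_0$ finite, $\Gamma_i=\Omega\times\Gamma^0_i$, $\Gamma=\Omega\times(\Gamma^0_1\ast_{\Sigma_0}\Gamma^0_2)$, $M_{\sigma(1)}=uL(\Omega)^tu^*$ and $M_{\sigma(2)}=uL(\Gamma^0_1\ast_{\Sigma_0}\Gamma^0_2)^{1/t}u^*$. I expect the two genuinely delicate points to be the use of virtual primeness to kill the first alternative of Lemma~\ref{intertwiningdichotomy} and, above all, the proof that $\Delta$ is non-amenable --- this is exactly what excludes the degenerate situation $M_{\sigma(2)}\cong\mathcal R$ and is what makes Theorem~\ref{fromvirtualtoproduct} available.
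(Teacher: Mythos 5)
Your proposal is correct and follows exactly the route of the paper's own (very terse) proof: Corollary~\ref{intertwiningcoregroupsafphnn} to get $M_{\sigma(1)}\prec_M L(\Sigma)$, Lemma~\ref{intertwiningdichotomy} plus virtual primeness of the corners of $L(\Sigma)$ to upgrade this to $M_{\sigma(1)}\cong^{com}_M L(\Sigma)$, then Theorems~\ref{virtualprod} and~\ref{fromvirtualtoproduct} to produce $\Gamma=\Lambda_1\times\Lambda_2$, and finally Lemma~\ref{amalgamprodecomp}. The only difference is that you explicitly verify the hypotheses of Theorem~\ref{fromvirtualtoproduct} (non-amenability and icc-ness of $\Sigma$ and $\Delta$) and the finiteness of $\Sigma_0$, details the paper leaves implicit; these checks are sound.
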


\begin{proof}Since $M_1\bar\otimes M_2=L(\Gamma)$, by Corollary \ref{intertwiningcoregroupsafphnn} we can assume $M_{\sigma(1)} \prec L(\Sigma)$. Since any corner  of $L(\Sigma)$ is virtually prime then by Lemma \ref{intertwiningdichotomy} we must have  
\[
M_{\sigma(1)}\cong_M^{com} L(\Sigma),
\]
and further applying Theorems \ref{virtualprod} and \ref{fromvirtualtoproduct} there exist infinite groups $\Lambda_i$ so that $\Gamma=\Lambda_1\times \Lambda_2$. Thus the desired conclusion follows by using Lemma \ref{amalgamprodecomp}. \end{proof}

\begin{remark}The previous theorem illustrates a situation when a true von Neumann algebraic counterpart of Lemma \ref{amalgamprodecomp} could be successfully obtained. However, if one drops the primeness assumption on $L(\Sigma)$, the conclusion of the theorem is no longer true. Precisely, there are icc amalgams $\Gamma=\Gamma_1\ast_\Sigma \Gamma_2$ whose group factors $L(\Gamma)$ admit non-canonical tensor product decompositions while $\Gamma$ is indecomposable as a nontrivial direct product. For instance, consider a group inclusion $\Sigma <\Omega$ satisfying the following conditions: 
\begin{enumerate} 
\item [i)] \cite{Jo98} for each finite $E\subset \Omega$ there are $\gamma,\lambda \in \Sigma$ so that 
\begin{center}
$[\gamma,E]=[\lambda,E]=1$ \quad and \quad $[\gamma,\lambda]\neq 1$;
\end{center}
\item [ii)] for each $\gamma\in \Sigma$ there is $\lambda\in \Omega$ so that $[\gamma,\lambda]\neq 1$.
\end{enumerate}
Concrete such examples are $\Sigma = \oplus_{\mathfrak S_{\infty}} H < \Omega= \cup_{n\in \mathbb N}(H\wr \mathfrak S_n)$, where $H$ is any icc group and $\mathfrak S_{\infty}$ is the group of finite permutations of $\mathbb N$.

Then the inclusion  $\Sigma <\Gamma=\Omega\ast _\Sigma \Omega$ still satisfies i) and by \cite[Proposition 2.4]{Jo98} $L(\Gamma)$ is McDuff so $L(\Gamma)=L(\Gamma)\bar\otimes \mathcal{R}$, where $\mathcal{R}$ is the hyperfinite factor. On the other hand, combining Lemma \ref{amalgamprodecomp} with ii) one can see that $\Gamma$ cannot be written as a nontrivial direct product. 
\end{remark}
%#####################################################################################################################################################################################################################################################################################

%%%%%%%%%%%%%%%%%%%%%%%%%%%%%%%%%%%%%%%%%%%%%%%%%%%%%%%%%%%%%%%%%%%%%%%%%%%%%%%%%%%%%%%%%%%%%%%%%%%%%%%%%%%%%%%%%%%%%%%%%%%%%%%%%%
%%%%%%%%%%%%%%%%%%%%%%%%%%%%%%%%%%%%%%%%%%%%%%%%%%%%%%%%%%%%%%%%%%%%%%%%%%%%%%%%%%%%%%%%%%%%%%%%%%%%%%%%%%%%%%%%%%%%%%%%%%%%%%%%%%
%%%%%%%%%%%%%%%%%%%%%                                                WP                                               %%%%%%%%%%%
%%%%%%%%%%%%%%%%%%%%%%%%%%%%%%%%%%%%%%%%%%%%%%%%%%%%%%%%%%%%%%%%%%%%%%%%%%%%%%%%%%%%%%%%%%%%%%%%%%%%%%%%%%%%%%%%%%%%%%%%%%%%%%%%%%%
%%%%%%%%%%%%%%%%%%%%%%%%%%%%%%%%%%%%%%%%%%%%%%%%%%%%%%%%%%%%%%%%%%%%%%%%%%%%%%%%%%%%%%%%%%%%%%%%%%%%%%%%%%%%%%%%%%%%%%%%%%%%%%%%%%

\subsection{Direct product of wreath product groups}\label{wreathproductdecompostion}

Throughout this section, we denote by $\mathcal{WR}$, the class of generalized wreath product groups in the form $\Gamma= A\wr_I G$,
where $G$ is a group acting on a set $I$, $A$ is an amenable group whose stabilizers $\operatorname{Stab}_{\Gamma}(i)$ are finite for all $i\in I$.

For further use we recall the following result, which is a particular case of \cite[Corollary 4.3]{IPV10}.

\begin{thm}[\cite{IPV10}]\label{AA1}
Let $\Gamma=A\wr_{I}\Gamma_0 \in \mathcal{WR}$ and let $B$ be a finite von Neumann algebra $B$. Denote by $M=B  \bar\otimes L(\Gamma)$ the corresponding tensor product algebra. Let $P_1, P_2\in pMp$ be two commuting von Neumann subalgebras such that $P_1\vee P_2\subset pMp$ is a finite index inclusion, Then either 
\begin{enumerate}
\item [i)] there exists a nonzero $p_0\in P_1'\cap pMp$ such that $P_1p_0$ is amenable relative to $B$ or 
\item [ii)] $P_2\prec_M B$ 
\end{enumerate}
\end{thm}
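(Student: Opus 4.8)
The plan is to obtain this as a specialization of \cite[Corollary 4.3]{IPV10}. Write $\Gamma=A^{(I)}\rtimes\Gamma_0$, let $\mathcal A:=L(A^{(I)})$ be the Bernoulli base of $L(\Gamma)$, and set $\tilde B:=B\,\bar\otimes\,\mathcal A\subset M$. The Ioana--Popa--Vaes analysis of wreath product von Neumann algebras --- built on Popa's malleable deformation of the generalized Bernoulli inclusion $\mathcal A\subset L((A\ast A)^{(I)})$ together with a spectral gap / transversality argument --- produces, for a commuting pair $P_1,P_2\subset pMp$ whose join $P_1\vee P_2\subset pMp$ has finite index, the dichotomy: either there is a nonzero projection $p_0\in P_1'\cap pMp$ with $P_1p_0$ amenable relative to $\tilde B$ inside $M$, or $P_2$ intertwines into $\tilde B$ (in the generalized case, into one of the algebras $B\,\bar\otimes\,L(A^{(I)}\rtimes\operatorname{Stab}_{\Gamma_0}(F))$ over a finite $F\subset I$, which in the class $\mathcal{WR}$ are all amenable extensions of $\tilde B$ since the stabilizers are finite). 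The real task is then to convert these two alternatives into the two alternatives of the statement, and this is where the hypotheses defining $\mathcal{WR}$ enter.

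For the first alternative this is immediate: since $A$ is amenable, $\mathcal A$ is amenable, hence $\tilde B=B\,\bar\otimes\,\mathcal A$ is amenable relative to $B$ inside $M$, and by Lemma \ref{lemma2.6DHI16}(1) together with the transitivity of relative amenability (Proposition 2.4(3) in \cite{OP07}) the corner $P_1p_0$ is amenable relative to $B$, which is conclusion (i). For the second alternative $P_2\prec_M\tilde B$, first note $P_2\vee(P_2'\cap pMp)\subset pMp$ has finite index because $P_1\subset P_2'\cap pMp$; feeding this and $P_2\prec_M\tilde B$ into Proposition \ref{prop2.4CKP14} yields a corner $P_2^0\subset r\tilde Br$ of $P_2$ with $P_2^0\vee(P_2^0{}'\cap r\tilde Br)\subset r\tilde Br$ of finite index and a partial isometry $v$ realizing the intertwining with $E_{\tilde B}\big(v(p'P_1p')v^*\big)''\subset P_2^0{}'\cap r\tilde Br$. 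Since $r\tilde Br$ is amenable relative to $B$ inside $rMr$, so are $P_2^0$ and its relative commutant there; transporting back along $v$, invoking once more the finite index of $P_1\vee P_2$, and using Lemma \ref{lemma2.6DHI16}(2) to spread out a corner, one arrives at either $P_2\prec_M B$ (conclusion (ii)) or, should that intertwining fail, at a corner $P_1p_0$ that is amenable relative to $B$ (conclusion (i) again).

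The amenable-relative-to-$B$ bookkeeping is routine; the one genuinely delicate step --- the place where amenability of $A$ and finiteness of the stabilizers of $\Gamma_0\curvearrowright I$ are indispensable rather than a mere appeal to \cite{IPV10} --- is the passage from $P_2\prec_M\tilde B$ to the dichotomy for $P_2$ with respect to the small tensor factor $B$. One must argue that the part of $\tilde B$ not ``seen'' by $B$ is amenable and hence gets absorbed into the relative-amenability alternative, which is precisely what finite stabilizers and amenability of $A$ guarantee.
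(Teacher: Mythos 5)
Your reduction of the first alternative is fine, but your handling of the second alternative contains a genuine gap, and it sits exactly where your route diverges from the paper's. The paper quotes \cite[Corollary 4.3]{IPV10} as a \emph{trichotomy} whose first two branches are already the desired conclusions stated relative to $B$ (not relative to $\tilde B=B\bar\otimes L(A^{(I)})$), the third branch being $P_1\vee(P_1'\cap pMp)\prec_M B\bar\otimes L(A^{(I)})$; the entire content of the paper's proof is then to kill this third branch with the finite-index hypothesis: since $P_2\subset P_1'\cap pMp$, that branch gives $P_1\vee P_2\prec_M B\bar\otimes L(A^{(I)})$, finite index upgrades this (Lemma \ref{lemma3.9Va08}) to $pMp\prec_M B\bar\otimes L(A^{(I)})$, which is absurd because $A^{(I)}$ has infinite index in $\Gamma$. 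You instead posit a \emph{dichotomy} relative to $\tilde B$ and then try to descend from $\tilde B$ to $B$. For the amenability branch the descent is indeed routine (amenability of $A$ plus transitivity of relative amenability). For the intertwining branch it is not: $P_2\prec_M B\bar\otimes L(A^{(I)})$ does \emph{not} imply $P_2\prec_M B$ (take $P_2$ diffuse inside $L(A^{(I)})$ with $B=\mathbb C 1$), and your fallback claim --- that when this implication fails a corner of $P_1$ must be amenable relative to $B$ --- is precisely the non-trivial content of \cite[Theorem 4.2 and Corollary 4.3]{IPV10}; you assert it rather than prove it.

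Concretely, the steps you do write down do not land on either stated conclusion. Proposition \ref{prop2.4CKP14} gives you $P_2^0\subset r\tilde Br$ together with $E_{\tilde B}\big(v(p'P_1p')v^*\big)''\subset P_2^0{}'\cap r\tilde Br$, and the observation that $r\tilde Br$ is amenable relative to $B$ then yields relative amenability (over $B$) of a corner of $P_2$ and of the algebra generated by $E_{\tilde B}(vP_1v^*)$ --- but conclusion (i) concerns $P_1$, not $P_2$, and relative amenability of $E_{\tilde B}(vP_1v^*)''$ gives no control on $P_1$ itself, since $P_1$ need not be close to its expectation onto $\tilde B$ and nothing in \cite{OP07} or \cite{DHI16} transfers relative amenability backwards through a conditional expectation. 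Your argument also never uses the finite-index hypothesis to rule out $pMp\prec_M B\bar\otimes L(A^{(I)})$, which is the one place the paper genuinely needs it. To repair the proof, either quote \cite[Corollary 4.3]{IPV10} in its actual trichotomy form and eliminate the third alternative as the paper does, or supply the missing argument that the commutant of a subalgebra of $B\bar\otimes L(A^{(I)})$ with diffuse $L(A^{(I)})$-direction is amenable relative to $B$ --- which amounts to reproving the relevant part of \cite{IPV10}.
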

\begin{proof} Apply \cite[Corollary 4.3]{IPV10}, one of the following must hold:
\begin{enumerate}
\item[(1)] There exists $p_1\in (P_1)'\cap M$ such that $(P_1 )p_1$ is amenable relative to $B$ inside $M$;
\item[(2)] $P_2\prec_{M} B$;
\item[(3)] $P_1\vee (P_1'\cap pMp)\prec_{M} M\bar\otimes L(A^I)$.
\end{enumerate}

To finish the proof we only need to show that (3) does not hold. Assuming by contradiction it holds then, since $P_2\subset P_1'\cap pMp$, we have
$P_1\vee P_2\prec_{M} B\bar\otimes L(A^I)$. Together with the assumption that $P_1\vee P_2\subset pMp$ has finite index, these imply that $pMp \prec_{M} B\bar\otimes L(A^I)$. This further implies that $ B\prec_B L(A^I)$ which is a contradiction. \end{proof}

\noindent \textbf{Notation.} Let $\Gamma_1, \Gamma_2, \ldots, \Gamma_n$ be groups and let $\Gamma=\Gamma_1\times\Gamma_2\times \cdots \times \Gamma_n$ the corresponding $n$-folded direct product. For every subset $I\subset \{1, 2,\ldots, n\}$ we will be denoting by $\Gamma_I <\Gamma$ the subproduct groups supported on $I$, i.e.\ $\Gamma_I=\Pi_{i\in I}\Gamma_i$.

%#########################################################

Next we present the main result of the section which classify all tensor product decompositions of II$_1$ factors associated with $n$-folded products of wreath product groups. In particular our result generalizes the unique prime decompositions results for such factors obtained by Sizemore and Winchester \cite{SW11}.

\begin{thm}
Let $\Gamma_1,\Gamma_2, \ldots, \Gamma_n\in \mathcal{WR}$ and let $\Gamma=\Gamma_1\times\Gamma_2\times \cdots \times \Gamma_n$. Consider the corresponding von Neumann algebra $M=L(\Gamma)$ and let $P_1, P_2$ be non-amenable II$_1$ factors such that $M=P_1\bar\otimes P_2$. Then there exist a scalar $t>0$ and a partition $I_1\sqcup I_2=\{1,2,\ldots,n\}$ such that
\begin{equation*}
L(\Gamma_{I_1})\cong P_1^t \quad \text{and} \quad L(\Gamma_{I_2})\cong P_2^{1/t}.
\end{equation*}
\end{thm}

\begin{proof}
Pick $I_1, I_2\subset \{1,2,\ldots,n\}$ be minimal (nonempty) subsets so that $P_1\prec_M L(\Gamma_{I_1})$ and $P_2\prec_M L(\Gamma_{I_2})$. Next we argue that $I_1\subsetneqq \{1,2,\ldots,n\}$ and $I_2\subsetneqq \{1,2,\ldots,n\}$.
We will only show the first statement as the second will follow similarly. Fix $i\in \{1,2,\ldots,n\}$. 
Write $M=L(\hat\Gamma_i)\otimes L(\Gamma_i)$ where $\hat\Gamma_i:= \Gamma_{\{1,...,n\}\setminus\{i\}}$ and using Theorem \ref{AA1}  for $B_i=L(\hat\Gamma_i)$  we have that either
\begin{enumerate}
\item [(a)]$P_1\prec_M L(\hat\Gamma_i)$ or
\item [(b)]$P_2\otimes p_i$ is amenable relative to $L(\hat\Gamma_i)$ inside $M$ for some nonzero projection $p_i\in P_1$. 
\end{enumerate}
Notice that using Lemma \ref{lemma2.6DHI16} (2), since $P_2$ is a factor, case (b) above is equivalent to 
\begin{enumerate}
\item [(b')]$P_2$ is amenable relative to $L(\hat\Gamma_i)$ inside $M$.
\end{enumerate}
 Assume by contradiction that for all $i\in \{1,\ldots,n\}$ we have only case (b'). Since $E_{L(\hat\Gamma_i)}\circ E_{L(\hat\Gamma_j)}=E_{L(\hat\Gamma_j)}\circ E_{L(\hat\Gamma_i)}$ for all $i,j$ and $L(\hat\Gamma_j)\subset M$ is regular, by using Proposition \ref{proposition2.7PV11} inductively we have that $P_2$ is amenable relative to  $\bigcap_{i=1}^n L(\hat\Gamma_i)=\mathbb{C}1$ inside $M$. In particular, this implies that $P_2$ is amenable which  contradicts the initial assumption. Therefore, there exists an $i_o\in\{1,\ldots,n\}$ such that $P_1\prec_M L(\hat\Gamma_{i_0})$. In particular this show that $I_1\subset \{1,\ldots,n\}\setminus\{i_0\}$. Similarly we have that $I_2\subsetneqq\{1,\ldots,n\}$.

Next we prove the following
\begin{equation}\label{com1}P_1\cong^{com}_M L(\Gamma_{I_1}).
\end{equation}

To see this recall that $P_1\prec_M L(\Gamma_{I_1})$. Since $P_1\vee P_2=M$ and $\Gamma_{I_1}$ is icc, by using Lemma \ref{intertwiningdichotomy} one of the followings must hold:
\begin{enumerate}
\item[(a)] $P_1\cong^{com}_M L(\Gamma_{I_1})$, or 
\item[(b)] there exist nonzero projections $p_1\in P_1$, $q_1\in L(\Gamma_{I_1})$, a nonzero partial isometry $v\in q_1Mp_1$, and a $*$-isomorphism $\psi:p_1P_1p_1\rightarrow Q\subset q_1L(\Gamma_{I_1})q_1$ such that 
\begin{enumerate}
\item[(i)] $\psi(x)v=vx$ for $x\in p_1P_1p_1$;
\item[(ii)] $Q$ and $Q'\cap q_1L(\Gamma_{I_1})q_1$ are II$_1$ factors so that 
$Q\vee(Q'\cap q_1L(\Gamma_{I_1})q_1)\subset q_1L(\Gamma_{I_1})q_1$ has finite index;
\item[(iii)] $\operatorname{s}(E_{L(\Gamma_{I_1})}(vv^*))=q_1$.
\end{enumerate}
\end{enumerate}
So to show \eqref{com1} we only need to argue that the case (b) above does not hold. Assume by contradiction it does. As it is well-known that  the algebras $L(\Gamma_i)$ are prime for all $i\in \{1,\ldots,n\}$ (see for instance \cite[6.4]{Po07}), the part (ii) above implies that $|I_1|\geq 2$. Fix $j\in I_1$. 
From (ii) we have that $Q\vee(Q'\cap q_1L(\Gamma_{I_1})q_1)\subset q_1L(\Gamma_{I_1})q_1$ has finite index, and hence using Theorem \ref{AA1} we have that either
\begin{enumerate}
\item[(c)] $Q\prec_{q_1L(\Gamma_{I_1})q_1}L(\Gamma_{I_1\setminus\{j\}})$, or
\item[(d)] there exists a nonzero projection $p_0\in (Q'\cap q_1L(\Gamma_{I_1})q_1)'\cap q_1L(\Gamma_{I_1})q_1$ such that
$(Q'\cap q_1L(\Gamma_{I_1})q_1)p_0$ is amenable relative to $L(\Gamma_{I_1\setminus\{j\}})$ inside $L(\Gamma_{I_1})$.
\end{enumerate}
Since $Q\vee (Q'\cap q_1L(\Gamma_{I_1})q_1)$ is a factor, one can easily see that the inclusion $Q\vee Q'\cap q_1L(\Gamma_{I_1})q_1\subset q_1L(\Gamma_{I_1})q_1$ is \textit{irreducible}
\footnote{A subfactor of finite index $N\subset M$ is said to be \textit{irreducible} if 
the relative commutant $N'\cap M=\mathbb{C}$.}; 
in particular the normalizer satisfies that
\begin{equation*}
\big(\mathcal{N}_{q_1L(\Gamma_{I_1})q_1}(Q'\cap q_1L(\Gamma_{I_1})q_1)\big)'\cap q_1L(\Gamma_{I_1})q_1=\mathbb{C}1.
\end{equation*}
Hence, using Lemma \ref{lemma2.6DHI16} we see that the condition (d) is equivalent to 
\begin{enumerate}
\item[(d')] $Q'\cap q_1L(\Gamma_{I_1})q_1$ is amenable relative to $L(\Gamma_{I_1\setminus\{j\}})$ inside $L(\Gamma_{I_1})$.
\end{enumerate}
Assume that for every $j\in I_1$ only the possibility (d') holds. Since 
$E_{L(\Gamma_{I_1\setminus\{j_1\}})}\circ E_{L(\Gamma_{I_1\setminus\{j_2\}})}=E_{L(\Gamma_{I_1\setminus\{j_2\}})}\circ E_{L(\Gamma_{I_1\setminus\{j_1\}})}$ for all $j_1,j_2\in I_1$ and $L(\Gamma_{I_1\setminus\{j\}})$ are regular in $L(\Gamma_{I_1})$ then applying Proposition \ref{proposition2.7PV11}
 inductively we get that  $Q'\cap q_1L(\Gamma_{I_1})q_1$ is amenable relative to $\bigcap_{j\in I_1}L(\Gamma_{I_1\setminus\{j\}})=\mathbb{C}1$. 
It follows that $Q'\cap q_1L(\Gamma_{I_1})q_1$ is isomorphic to the hyperfinite II$_1$ factor. In particular, 
$Q\vee(Q'\cap q_1L(\Gamma_{I_1})q_1)$ is a factors with McDuff's property. In particular, it has property \emph{Gamma} of Murray-von Neumann.
Since $Q\vee(Q'\cap q_1L(\Gamma_{I_1})q_1)\subset q_1L(\Gamma_{I_1})q_1$ has finite index, it follows from \cite[Proposition 1.11]{PP86} that $q_1L(\Gamma_{I_1})q_1$ has property \emph{Gamma} as well. Therefore, for every $\omega$ non-principal ultrafilter on $\mathbb{N}$ we have that 
\begin{equation}\label{rel2}
L(\Gamma_{I_1})'\cap L(\Gamma_{I_1})^{\omega}\neq \mathbb{C}1.
\end{equation}
Thus $L(\Gamma_{I_1})$ has property \emph{Gamma}. Notice that $L(\Gamma_{I_1})=L(\Gamma_{I_1\setminus\{j\}})\bar\otimes L(\Gamma_j)$ and using both Example 1.4 c
\footnote{
Let $H, \Gamma$ be countably infinite discrete group, let $G\curvearrowright I$ , and consider the generalized wreath product group $H\wr_{I}\Gamma:=(\oplus_I H)\rtimes \Gamma.$. Let $\mathcal{G}:=\{\text{Stab}_{\Gamma}i\,|\, i\in I\}$. We have this group statisfies condition $\textbf{NC}$ with respect to $\mathcal{G}$
 }and Theorem 3.1\footnote{
 See Theorem \ref{CSU13theorem3.1}} in \cite{CSU13} we have that 
\begin{equation*}
L(\Gamma_{I_1})'\cap L(\Gamma_{I_1})^{\omega}\subset L(\Gamma_{I_1\setminus\{j\}})^{\omega}\vee L(\Gamma_j).
\end{equation*}
Since this holds for all $j\in I_1$ then we have that 
\begin{equation*}
L(\Gamma_{I_1})'\cap L(\Gamma_{I_1})^{\omega}\subset \bigcap_{j\in I_1} \big(L(\Gamma_{I_1\setminus\{j\}})^{\omega}\vee L(\Gamma_j)\big).
\end{equation*}
But by using the same argument from \cite[Corollary 1.2]{CP10} one can check that $\bigcap_{j\in I_1} \big(L(\Gamma_{I_1\setminus\{j\}})^{\omega}\vee L(\Gamma_j)\big)=L(\Gamma_{I_1})$ and hence 
\begin{center}
$L(\Gamma_{I_1})'\cap L(\Gamma_{I_1})^{\omega}\subset L(\Gamma_{I_1})\cap L(\Gamma_{I_1})'=\mathbb{C}1$ 
\end{center}
which is a contradiction to (\ref{rel2}).
Thus there must exist $j_0\in I_1$ such that  $Q\prec_{L(\Gamma_{I_1})} L(\Gamma_{I_1\setminus\{j_0\}})$. It follows that there exists nonzero projections $r\in Q$, $t\in L(\Gamma_{I_1\setminus\{j_0\}})$
and a nonzero partial isometry $w\in tL(\Gamma_{I_1})r$ and an injective $*$-homomorphism $\Phi:rQr\rightarrow tL(\Gamma_{I_1\setminus\{j_0\}})t$ such that 
\begin{equation}\label{rel3}
\Phi(y)w=wy \quad \text{for}\quad y\in rQr.
\end{equation}
Since $\psi$  is an isomorphism, there is a nonzero projection $p_0\in P_1$ such that $\psi(p_0)=r$. Thus the relation (i) implies that 
\begin{equation}\label{rel4}
\psi(x)v=vx \quad \text{for}\quad x\in p_0P_1p_0.
\end{equation}
Applying (\ref{rel4}) in (\ref{rel3}), we see that for all $x\in P_1$ we have that
\begin{equation}\label{rel5}
\Phi(\psi(x))wv=w\psi(x)v=wvx.
\end{equation}
Next we argue that  \begin{equation}\label{nonzero1}wv\neq 0.\end{equation}
Assume by contradiction that $wv=0$. Thus $wvv^*=0$ end hence 
\begin{equation*}
0=E_{L(\Gamma_{I_1})}(wvv^*)=wE_{L(\Gamma_1)}(vv^*).
\end{equation*}
But this implies  that $0=w \operatorname{s}(E_{L(\Gamma_{I_1})}(vv^*))$, where $\operatorname{s}(E_{L(\Gamma_{I_1})}(vv^*))$ is the support projection of $E_{L(\Gamma_{I_1})}(vv^*)$. By (iii) we get $0=wq_1$ and since by construction $r\leq q_1$ and $w\in tL(\Gamma_{I_1})r$ then we get $wq_1=w$, hence $w=0$ which is a contradiction. This proves \eqref{nonzero1}.

Therefore $wv\neq 0$ and by taking the polar decomposition of $wv=w_0|wv|$, we see that (\ref{rel5}) implies
\begin{equation}
\Phi\circ\psi(x)w_0=w_0 x \quad \text{for all}\quad x\in p_0P_1p_0.
\end{equation}
Since $\Phi\circ\psi:p_0P_1p_0\rightarrow tL(\Gamma_{I_1\setminus\{j_0\}})t$ is  a $*$-homomorphism, it follows that $P_1\prec_{L(\Gamma_{I_1})} L(\Gamma_{I_1\setminus\{j_0\}})$ but this contradicts the minimality of $I_1$ and therefore we have reached a contradiction. As a consequence, case (b) does not hold altogether. 

Using relation \eqref{com1} and Theorem \ref{virtualprod} 
there exist a subgroup $\Omega\leqslant C_{\Gamma}(\Gamma_{I_1})=\Gamma_{I\setminus I_1}$ such that $\Omega\times \Gamma_{I_1}\leqslant \Gamma$ is finite index and $P\cong^{com}_M L(\Omega)$. Hence by Theorem \ref{fromvirtualtoproduct} we conclude that there exist $\Gamma_1\times \Gamma_2=\Gamma$ a product decomposition and a scalar $t>0$ and a unitary $u\in \mathcal{U}(M)$ such that
\begin{equation*}
L(\Gamma_1)=uP_1^tu^*\quad \text{and}\quad L(\Gamma_2)=uP_2^{1/t}u^*.
\end{equation*}
Moreover, it is implicit in the proof of Theorem \ref{fromvirtualtoproduct}  that $\Gamma_{I_1}$ is commensurable to $\Gamma_1$ and $\Gamma_{\{1,...,n\}\setminus I_1}=\Gamma_{I_2}$ is commensurable to $\Gamma_2$. It only remains to argue that $\Gamma_{I_1}=\Gamma_1$ and $\Gamma_{I_2}=\Gamma_2$ which follows from basic group theoretic considerations. 
\end{proof}

%%%%%%%%%%%%%%%%%%%%%%%%%%%%%%%%%%%%%%%%%%%%%%%%%%%%%%%%%%%%%%%%%%%%%%%%%%%%%%%%%%%%%%%%%%%%%%%%%%%%%%%%%%%%%%%%%%%%%%%%%%%%%%%%%%
%%%%%%%%%%%%%%%%%%%%%%%%%%%%%%%%%%%%%%%%%%%%%%%%%%%%%%%%%%%%%%%%%%%%%%%%%%%%%%%%%%%%%%%%%%%%%%%%%%%%%%%%%%%%%%%%%%%%%%%%%%%%%%%%%%
%%%%%%%%%%%%%%%%%%%%%                                               McDuff                                              %%%%%%%%%%%
%%%%%%%%%%%%%%%%%%%%%%%%%%%%%%%%%%%%%%%%%%%%%%%%%%%%%%%%%%%%%%%%%%%%%%%%%%%%%%%%%%%%%%%%%%%%%%%%%%%%%%%%%%%%%%%%%%%%%%%%%%%%%%%%%%%
%%%%%%%%%%%%%%%%%%%%%%%%%%%%%%%%%%%%%%%%%%%%%%%%%%%%%%%%%%%%%%%%%%%%%%%%%%%%%%%%%%%%%%%%%%%%%%%%%%%%%%%%%%%%%%%%%%%%%%%%%%%%%%%%%%

\subsection{McDuff's group functors $T_0$ and $T_1$}

In this subsection we establish tensor product decomposition results for II$_1$ factors associated with groups that arise via $T_0$, $T_1$-group functorial constructions introduced by D. McDuff in \cite{Mc69}. Before doing so we recall those notations from \cite{Mc69}. These constructions are inspired by the earlier work of Dixmier and Lance \cite{DL69} which in turn go back to the pioneering work of Murray and von Neumann \cite{MvN43}.

Let $\Gamma$  be a group. For $i\geq 1,$ let $\Gamma_i$ be isomorphic copies of $\Gamma$
 and $\Lambda_i$ be isomorphic to $\mathbb{Z}$. Define $\tilde\Gamma=\bigoplus_{i\geq 1} \Gamma_i$ and let $\mathfrak S_{\infty}$ be the group of finite permutations of the positive integers $\mathbb N$. Consider the semidirect product $\tilde\Gamma\rtimes \mathfrak S_{\infty}$ associated to the natural action of $\mathfrak S_{\infty}$ on $\tilde\Gamma$ which permutes the copies of $\Gamma$. Following \cite{Mc69} we define
\begin{itemize}
\item $T_0(\Gamma)$ = the group generated by $\tilde\Gamma$ and $\Lambda_i, i\geq 1$ with the only relation that $\Gamma_i$ and $\Lambda_j$ commutes for $i\geq j\geq 1$.
\item  $T_1(\Gamma)$ = the group generated by $\tilde\Gamma\rtimes \mathfrak S_{\infty}$ and $\Lambda_i, i\geq 1$ with the only relation that $\Gamma_i$ and $\Lambda_j$ commute for $i\geq j\geq 1$.
\end{itemize}
%Notice that $\Gamma\leq T_{\alpha}(\Gamma)$ for $\alpha\in \{0,1\}$. Moreover, if $\Sigma\leq\Sigma'$, then $T_{\alpha}(\Sigma)\leq T_{\alpha}(\Sigma')$. This leads to the following tower:
%\begin{equation*}
%\Gamma\leq T_{\alpha_1}(\Gamma)\leq (T_{\alpha_1}\circ T_{\alpha_2})(\Gamma)\leq \cdots \leq (T_{\alpha_1}\circ T_{\alpha_2}\circ\cdots\circ T_{\alpha_n})(\Gamma).
%\end{equation*}

%We set
%\begin{itemize}
%\item $K_{\alpha}(\Gamma) =\Gamma$ if $\alpha=\emptyset$.
%\item $K_{\alpha}(\Gamma )=(T_{\alpha_1}\circ T_{\alpha_2}\circ\cdots\circ T_{\alpha_n})(\Gamma)$ if $\alpha=(\alpha_1,\alpha_2,\ldots,\alpha_n)$ where $%\alpha_i\in \{0,1\}$.
%\item $K_{\alpha}(\Gamma) =$ inductive limit of increasing sequence $\alpha=(\alpha_n)_{n\geq 1}$ where $\alpha_i\in \{0,1\}$.
%\end{itemize}

Using a basic iterative procedure, these famous functorial group constructions were used to provide the first infinite family of non-isomorphic II$_1$ factors, the so called $L(K_\alpha(\Gamma) )$'s where $\alpha\in \{0,1\}^{\mathbb N}$. One key feature, which also played a crucial role in McDuff's work, is that the corresponding group factors $L(T_\alpha (\Gamma))$ possess lots of central sequences. In particular these algebras have McDuff property, i.e. $L(T_\alpha(\Gamma))\cong L(T_\alpha(\Gamma))\bar\otimes\mathcal R$, where $\mathcal R$ is the hyperfinite II$_1$ factor. However we will prove below that these are the only possible tensor decompositions. Specifically we have the following type of unique prime factorization result

\begin{thm}\label{MDsplitting}
Fix $\Gamma$  a non-amenable group and let $\alpha\in\{0,1\}$. If $L(T_{\alpha}(\Gamma))=P_1\bar\otimes P_2$ then either $P_1$  or $P_2$ is isomorphic to the hyperfinite II$_1$ factor.
\end{thm}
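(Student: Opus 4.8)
The plan is the following. Suppose $M:=L(T_\alpha(\Gamma))=P_1\bar\otimes P_2$ with $P_1,P_2$ diffuse II$_1$ factors, and assume toward a contradiction that \emph{neither} $P_i$ is amenable. The starting point is the exhaustive tower structure of $T_\alpha(\Gamma)$: for $n\ge 1$ let $G_n<T_\alpha(\Gamma)$ be the subgroup generated by $\Gamma_1,\dots,\Gamma_n$, by $\Lambda_1,\dots,\Lambda_n$, and (when $\alpha=1$) by the finite permutations of $\{1,\dots,n\}$. Then $T_\alpha(\Gamma)=\bigcup_n G_n$, each inclusion $G_n<G_{n+1}$ has infinite index (a copy of the infinite group $\Gamma$, disjoint from $G_n$, is adjoined), so $[M:L(G_n)]=\infty$ for every $n$; and, by the defining relations, $\Gamma_{n+1}$ commutes with $\Gamma_1,\dots,\Gamma_n$, with $\Lambda_1,\dots,\Lambda_n$, and with the permutations of $\{1,\dots,n\}$, so $L(\Gamma_{n+1})\cong L(\Gamma)$ is a non-amenable subalgebra of $L(G_n)'\cap M$ — this is exactly the mechanism producing the asymptotically central sequences and the McDuff property.

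First I would establish the key dichotomy: for each $n$ and each $i\in\{1,2\}$, \emph{either} $P_i\prec_M L(G_n)$, \emph{or} $P_i$ (equivalently, since what drives it is the relative commutant $P_i'\cap M=P_{3-i}$, its relative commutant) is amenable, inside $M$, relative to a suitably chosen ``tail'' subalgebra $R_n\subseteq M$ associated with the generators of index $>n$. This is the heart of the argument and where the new technical input flagged in the introduction is needed: one verifies that $T_\alpha(\Gamma)$ satisfies condition $\mathbf{NC}(\mathcal G)$ for the directed family $\mathcal G=\{G_n\}_n$ (in the same spirit as the wreath-product input behind Theorem~\ref{AA1}), and then combines Theorem~\ref{CSU13theorem3.1} with an asymptotic ``clustering'' analysis of the $M$--$M$ bimodule $L^2(M)$ over the ultrapower to locate subalgebras with non-amenable relative commutant. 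Granting the dichotomy, if $P_i$ (resp.\ $P_{3-i}$) were amenable relative to $R_n$ for every $n$, then — after arranging the $R_n$ to be regular and to form commuting squares with the increasing algebras $L(G_n)$ — Proposition~\ref{proposition2.7PV11}, iterated, and/or Lemma~\ref{lemma2.6IS19} would force it amenable, a contradiction. Hence $P_1\prec_M L(G_{n_1})$ and $P_2\prec_M L(G_{n_2})$ for some $n_1,n_2$; putting $N=\max(n_1,n_2)$ we obtain $P_1\prec_M L(G_N)$ and $P_2\prec_M L(G_N)$ simultaneously.

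Then I would close with an index obstruction. Using factoriality, an intertwiner realizing $P_1\prec_M L(G_N)$ can be taken of the form $v$ with $v^*v=p\otimes r_2$, $p\in P_1$, $r_2\in P_2$, so that $v\big(pP_1p\,\bar\otimes\,r_2P_2r_2\big)v^*=\phi(pP_1p)\,\bar\otimes\,W$, where $\phi(pP_1p)\subseteq L(G_N)$ and $W:=v(r_2P_2r_2)v^*$ commutes with $\phi(pP_1p)$; the left-hand side is a corner of an amplification of $M=P_1\bar\otimes P_2$. Since also $P_2\prec_M L(G_N)$, we get $W\prec_M L(G_N)$; feeding this — together with $\phi(pP_1p)\subseteq L(G_N)$ and the fact that $\phi(pP_1p)\vee W$ is (a corner of) all of $M$, of finite index in itself — into Proposition~\ref{prop2.4CKP14}, applied first to $\phi(pP_1p)$ and then to $W$, one chains the two intertwinings into a single unitary conjugacy exhibiting an amplification $M^t$ as a finite-index subfactor of a corner of $L(G_N)$. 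Comparing Jones indices (equivalently $\dim_{L(G_N)}L^2(M)$) forces $[M:L(G_N)]<\infty$, contradicting $[M:L(G_N)]=\infty$ from the first paragraph. Therefore one of $P_1,P_2$ is amenable, and being a II$_1$ factor with separable predual it is isomorphic to $\mathcal R$.

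The hard part is the dichotomy of the second paragraph: $T_\alpha(\Gamma)$ is neither an amalgamated free product (so Theorem~\ref{intertwiningincore1}, via \cite{Va13}, does not apply) nor a direct product of wreath products (so Theorem~\ref{AA1} does not apply), and pinning down where the asymptotically central sequences and the tensor factors must ``sit'' requires the $\mathbf{NC}$/clustering analysis rather than an off-the-shelf deformation/rigidity dichotomy. As an alternative to the index obstruction, once $P_1,P_2\prec_M L(G_N)$ one could instead push the intertwinings through Theorems~\ref{virtualprod} and \ref{fromvirtualtoproduct} to produce a direct product decomposition of $T_\alpha(\Gamma)$ with both factors non-amenable, which is impossible because the shared generators $\Lambda_j$ prevent $T_\alpha(\Gamma)$ from splitting nontrivially with a non-amenable direct factor.
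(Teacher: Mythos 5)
Your proposal has a genuine gap at its core: the entire argument rests on the dichotomy in your second paragraph (``either $P_i\prec_M L(G_n)$ or $P_i$ is amenable relative to a tail algebra $R_n$''), and you leave it unproved, explicitly flagging it as the hard part requiring a new $\mathbf{NC}$/clustering analysis. Worse, the premise motivating that detour --- that $T_\alpha(\Gamma)$ ``is not an amalgamated free product, so Theorem~\ref{intertwiningincore1} does not apply'' --- is exactly backwards. The key structural observation in the actual proof is that for \emph{every} $n$ one has an amalgamated free product decomposition $T_\alpha(\Gamma)=\Sigma_n\ast_{\tilde\Gamma_n}\Delta_n$ over the \emph{tail} subgroup $\tilde\Gamma_n=\oplus_{i\ge n}\Gamma_i$, where $\Sigma_n$ is generated by $\tilde\Gamma$ (resp.\ $\tilde\Gamma\rtimes\mathfrak S_\infty$) together with $\Lambda_1,\dots,\Lambda_n$, and $\Delta_n$ by $\tilde\Gamma_n$ together with $\Lambda_{n+1},\Lambda_{n+2},\dots$. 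Theorem~\ref{intertwiningincore1} then applies directly and yields $P_i\prec_M L(\tilde\Gamma_n)$ for some $i$; factoriality upgrades this to $P_i\prec^s_M L(\tilde\Gamma_n)$ and hence to amenability of $P_i$ relative to $Q_n:=L(\tilde\Gamma_n)$. The remaining work is the bimodule estimate ${}_{Q_n}L^2(M)_{M_n}\prec{}_{Q_n}L^2(Q_n)\otimes L^2(M_n)_{M_n}$ for the increasing algebras $M_n=L(\Sigma_n')$, proved by a double-coset analysis exploiting freeness over $\tilde\Gamma_n$; together with $M_n\nearrow M$ this lets Lemma~\ref{lemma2.6IS19} shrink the $Q_n$ down to $\bigcap_n Q_n=\mathbb C1$ and conclude that $P_i$ is amenable. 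No dichotomy of your form, and no condition $\mathbf{NC}$ or appeal to Theorem~\ref{CSU13theorem3.1}, is needed.

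Two further problems with your closing step, in case you wanted to pursue your route independently. First, even granting your dichotomy, the index obstruction is not sound as stated: from $P_1\prec_M L(G_N)$ and $P_2\prec_M L(G_N)$ one can (since the $P_i$ are factors, so the intertwinings are strong) deduce $M\prec_M L(G_N)$, but $M\prec_M L(G_N)$ does \emph{not} imply $[M:L(G_N)]<\infty$; to reach a contradiction you would have to pass through Theorem~\ref{CI17lemma2.2} and argue at the level of group indices, not at the level of the Jones index of $L(G_N)\subset M$. Second, the direction of your tower is inverted relative to what the argument needs: intertwining into an increasing exhaustion $L(G_n)\nearrow M$ carries no information in the limit, whereas the proof needs intertwining into the decreasing tails $L(\tilde\Gamma_n)$ with trivial intersection --- and it is precisely the amalgam structure over those tails that produces it.
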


\begin{proof}
First denote by $\tilde \Gamma_n:=\oplus_{i\geq n} \Gamma_i$.  Let $\alpha=0$ and define 
\begin{itemize}
\item $\Sigma_{n}\leqslant T_0(\Gamma)$ be the subgroup generated by $\tilde\Gamma, \Lambda_1, \Lambda_2,\ldots,\Lambda_n$; 
\item $\Delta_n\leqslant T_0(\Gamma)$ be the subgroup generated by $\tilde\Gamma_n, \Lambda_{n+1}, \Lambda_{n+2},\ldots$. 
\end{itemize}
Similarly in the case of $\alpha=1$, we define
\begin{itemize}
\item $\Sigma_n\leqslant T_1(\Gamma)$ is the subgroup generated by $\tilde\Gamma\rtimes \mathfrak S_{\infty}, \Lambda_1, \Lambda_2,\ldots,\Lambda_n$; 
\item $\Delta_n\leqslant T_1(\Gamma)$ is the subgroup generated by $\tilde\Gamma_n, \Lambda_{n+1}, \Lambda_{n+2}$. 
\end{itemize}
In both cases, one can check that
\begin{equation*}
T_{\alpha}(\Gamma)=\Sigma_n*_{\tilde\Gamma_n}\Delta_n. \quad\text{Thus,}\quad
L(T_{\alpha}(\Gamma))=L(\Sigma_n)*_{L(\tilde\Gamma_n)}L(\Delta_n).
\end{equation*}
And we denote by $\Sigma_n':=(\bigoplus_{i=1}^{n-1}\Gamma_i)\vee \Lambda_1\vee\Lambda_2\vee\cdots\vee\Lambda_n< \Sigma_n.$

Now let $M=L(T_{0}(\Gamma))=P_1\bar\otimes P_2.$ Then by Theorem \ref{intertwiningincore1} there exist $i\in\{1,2\}$ 
such that $P_i\prec_M L(\tilde\Gamma_n)$. Since $P_i$ are factor, we have 
\begin{equation}\label{BB6}
P_i\prec_M^s L(\tilde\Gamma_n). 
\end{equation}
Next denote by $Q_n:=L(\tilde\Gamma_n)$ and $M_n:=L(\Sigma_n')$. With these notations at hand we show the followings hold.
\begin{eqnarray}
&&\label{eqq1} \lim_{n\rightarrow\infty}\|x-E_{M_n}(x)\|_2=0\text{ for all }x\in M.\\
&&\label{eqq2} \text{as } Q_n-M_n\text{ bimodules we have }{}_{Qn}L^2(M)_{M_n}\prec \,{}_{Q_n}L^2(Q_n)\bar\otimes L^2(M)_{M_n}.
\end{eqnarray}

To justify these statements notice first, since $\Sigma_n':=(\bigoplus_{i=1}^{n-1}\Gamma_i)\vee \Lambda_1\vee\Lambda_2\vee\cdots\vee\Lambda_n$, then clearly $\Sigma_n'\nearrow \bigcup_{n\geq 1}\Sigma_n'=T_0(\Gamma) $ and hence $M=L(T_0(\Gamma))=\overline{\bigcup_{n}L(\Sigma_n')}^{\tiny{\text{SOT}}}=\overline{\bigcup_{n}M_n}^{\tiny{\text{SOT}}}$. This clearly shows \eqref{eqq1}. 

Now we show \eqref{eqq2}. As before we have that  $T_0(\Gamma)=\Sigma_n *_{\tilde\Gamma_n}\Delta_n$. Notice that $\Sigma_n=\Sigma_n'\times \tilde\Gamma_n.$ Fix $\mathcal{F}$ a set of left coset representatives for $\Sigma_n'$ in $\Gamma$ and we isolate the following subsets of $\mathcal{F}$:
\begin{align*}
\mathcal{F}_1&=\{w\,|\, w=a_1b_1a_2b_2\ldots a_kb_k\,\,\text{or}\,\, b_1a_2b_2\ldots a_kb_k \,\text{where}\, a_i\in \Sigma_n\setminus\tilde\Gamma_n, b_i\in \Delta_n\setminus\tilde\Gamma_n \};\\
\mathcal{F}_0&=\{w\,|\, w\in \tilde\Gamma_n\}.
\end{align*}
We can check that $\mathcal{F}_1\sqcup\mathcal{F}_0=\mathcal{F}$.

Next we prove that if $\tilde\Gamma_n w_1\Sigma_n'=\tilde\Gamma_n w_2\Sigma_n'$ for $w_1, w_2\in\mathcal{F}_1$, then $w_2^{-1}w_1\in \tilde\Gamma_n$. Indeed, let $m_1,m_2\in \Gamma_n', k_1,k_2\in \Sigma_n'$ such that 
\begin{center}
$m_1w_1k_1=m_2w_2k_2$.\quad Thus\quad$m_1w_1k_1k^{-1}_2w^{-1}_2m^{-1}_2=1.$
\end{center}
As $w_i={\ldots a^{(i)}_kb^{(i)}_k}$ where $a^{(i)}_k\in \Sigma_n\setminus\tilde\Gamma_n$ and $b^{(i)}_k\in \Delta_n\setminus \tilde\Gamma_n$, we see that the previous equation implies that 
\begin{equation}\label{BB1}
m_1\ldots b^{(1)}_{k-1}a_k^{(1)}b_k^{(1)}k_1k_2^{-1}(b_k^{(2)})^{-1}(a_k^{(2)})^{-1}(b_{k-1}^{(2)})^{-1}\ldots m_2^{-1}=1
\end{equation}
Consider the part $b_k^{(1)}k_1k_2^{-1}(b_k^{(2)})^{-1}$ and notice that if $k_1k_2^{-1}\neq 1$ then $k_1k_2^{-1}\in\Sigma_n'\setminus\{1\}\subset\Sigma_n' \setminus \tilde\Gamma_n$ because $\Sigma'_n\cap\tilde\Gamma_n=\{1\}$. Therefore, the left-hand side in (\ref{BB1}) is already in its reduced form so it cannot be trivial
 since it has alternating word length at least $2$. Thus $k_1k_2^{-1}=1$ which means $k_1=k_2$ and $m_1w_1=m_2w_2$ so that $w_2^{-1}w_1=m_1^{-1}m_2\in \tilde\Gamma_n$. Moreover, observe that if $w_1, w_2\in\mathcal{F}_0$, then clearly $w_2^{-1}w_1\in \tilde\Gamma_n$

 From above, on the set $\mathcal{F}$ we can introduce the following equivalence relation:  
 \begin{equation*}
 w_1\sim w_2 \quad\text{if there exists an}\,\,\, m\in \tilde\Gamma_n \,\,\,\text{such that}\,\,\, mw_1=w_2.
 \end{equation*} 
Next let $\mathcal{G}$ be a transversal set for 
$\mathcal F\big/\sim$
%$\quotient{\mathcal{F}_1}{\sim}$
, i.e., pick an element $w$ in each equivalence class of $\mathcal F/\sim$. Note that $T_0(\Gamma)=\sqcup_{w\in \mathcal{G}} \tilde\Gamma_n w\Sigma_n'$ is the double coset decomposition. Thus as $Q_n$-$M_n$ bimodules we have the following decomposition:
\begin{equation}\label{BB2}
{}_{Q_n}L^2(M)_{M_n}\cong \bigoplus_{w\in \mathcal{G}}\overline{Q_mu_w M_n}^{\|\cdot\|_2}.
\end{equation}
Next let $\mathcal K$ be a right cosets representatives for the inclusion $\tilde \Gamma_n <T_0(\Gamma)$. Thus as $Q_n$-$M_n$ bimodules we have that
\begin{align}
{}_{Q_n}L^2(M)\bar\otimes L^2(M)_{M_n}
&\cong\bigoplus_{k \in \mathcal K, w\in \mathcal F}\overline{Q_m (u_k\otimes u_w) M_n}^{\|\cdot\|_2}\nonumber \\
&\cong\bigoplus_{k\in \mathcal K, w\in\mathcal{G}}\left (\bigoplus_{\delta\sim w}\overline{Q_m (u_k \otimes u_{\delta}) M_n}^{\|\cdot\|_2}\right )\label{BB3}
\end{align}

Next we argue that argue that for all $w\in \mathcal{G}$, $\delta\in \mathcal F$ and $k\in\mathcal K$  we have that 
\begin{equation}\label{coarsebim}
{}_{Q_n}\overline{Q_nu_wM_n}^{\|\cdot\|_2}_{M_n}\cong {}_{Q_n} L^2(Q_n)\otimes L^2(M_n)_{M_n}\cong {}_{Q_n}\overline{Q_m (u_k \otimes u_{\delta}) M_n}^{\|\cdot\|_2}_{M_n}
\end{equation}
as $Q_n$-$M_n$-bimodules.

To see the first part of \eqref{coarsebim} fix $q_1,q_2\in Q_n$ and $n_1,n_2\in M_n$ and notice that 
\begin{align*}
\langle q_1u_wn_1,q_2u_wn_2 \rangle
&=\tau(q_1u_wn_1n_2^*u_{w^{-1}}q_2^*)\\
&=\tau(q_1u_wE_{Q_n}(n_1n_2^*)u_{w^{-1}}q_2^*)\\
&=\tau(n_1n_2^*)\tau(q_1u_wu_{w^{-1}}q_2^*)\\
&=\tau(n_1n_2^*)\tau(q_1q_2^*)\\
&=\langle q_1\otimes n_1,q_2\otimes n_2  \rangle
\end{align*}
This computation shows that the map $q u_w  n\mapsto q\otimes n$ induces an $Q_n$-$M_n$-bimodules isomorphism  
between $\overline{Q_mu_wM_n}^{\|\cdot\|_2}$ and  $L^2(Q_n)\bar\otimes L^2(M_n)$.

The second part of \eqref{coarsebim} follows in a similar manner as the map $q u_k\otimes u_\delta  n\mapsto q\otimes n$ does the job. Indeed fixing $q_1,q_2\in Q_n$ and $n_1,n_2\in M_n$ we see that 
\begin{align*}
\langle q_1 (u_k\otimes u_\delta) n_1,q_2 (u_k\otimes u_\delta) n_2\rangle
&=\langle q_1 u_k,q_2 u_k\rangle \langle u_\delta n_1, u_\delta n_2\rangle\\
&=\langle q_1,q_2\rangle_{L^2(Q_n)} \langle  n_1,n_2\rangle_{L^2(M_n)}\\
&=\langle q_1\otimes n_1, q_2\otimes n_2\rangle_{L^2(Q_n)\otimes L^2(M_n)}.
\end{align*}
Now combining  relations \eqref{BB2}, \eqref{coarsebim} and \eqref{BB3} we see that, as $Q_n$-$M_n$ bimodules we have the following 
\begin{align*}
{}_{Q_n}L^2(M)_{M_n}
&\cong \bigoplus_{w\in \mathcal{G}}{}_{Q_n} \overline{Q_nu_wM_n}_{M_n}^{\|\cdot\|_2}\\
&\cong \bigoplus_{w\in \mathcal{G}}{}_{Q_n} L^2(Q_n)\bar\otimes L^2(M_n) _{M_n}\\
&\prec \bigoplus_{k\in \mathcal K, w\in \mathcal{G}}\big(\bigoplus_{\delta\sim w}{}_{Q_n} L^2(Q_n)\bar\otimes L^2(M_n) _{M_n}\big)\\
&\cong \bigoplus_{k\in\mathcal K,w\in \mathcal{G}}\big(\bigoplus_{\delta\sim w}{}_{Q_n}\overline{Q_n (u_k\otimes u_\delta) M_n}^{\|\cdot\|_2}_{M_n}\big)\\
&\cong {}_{Q_n} L^2(Q_n)\bar\otimes L^2(M_n) _{M_n}.
\end{align*}
This concludes the proof of (\ref{eqq2}).

Notice that relations (\ref{eqq1}) and (\ref{eqq2}) show that the conditions in Lemma \ref{lemma2.6IS19} are satisfied. 
Since {$P_i\prec^s_M L(\tilde\Gamma_n)$} by (\ref{BB6}) then we have that $P_i$ is amenable relative to $\cap_nQ_n=\mathbb{C}1.$ Thus, $P_i$ is amenable and we are done.
In the case $\alpha=1$ and can let $\Sigma_n'=(\bigoplus_{i=1}^n)\rtimes \mathfrak S_n\vee\Lambda_1\vee\Lambda_2\cdots\vee\Lambda_n$ and the same method above applies verbatim. \end{proof}

Notice that the previous theorem can be generalized by to the case of products $\Omega= \Omega_1\times ...\times \Omega_n$ of McDuff's groups $\Omega_i = T_{\alpha_i}(\Gamma)$. Specifically it asserts that all possible tensor splittings $L(\Omega)=P_1\bar\otimes P_2$ occurs only in the ``amenable rooms'' around the subproducts $\Gamma$ of $\Gamma$. The proof follows essentially the same arguments as in the proof Theorem \ref{MDsplitting} and is left to the reader. 

\begin{thm}
For $n\geq 2$ and $i\in\{1,\ldots,n\}$, fix $\Gamma_i$  non-amenable groups. Let $\alpha_i\in \{0,1\}$ and let $\Omega_i=T_{\alpha_i}(\Gamma_i)$. Denote by $\Omega=\Omega_1\times \Omega_2\times \cdots\times \Omega_n$ and assume that $M=L(\Omega)=P_1\bar\otimes P_2$ where $P_i$ are non-amenable factors. Then there exist $i\in\{1,2\}$ and a subset $I \subsetneq \{1,2,\ldots,n\}$ such that $P_i$ is amenable relative to $L(\Omega_I)$ inside $M$.
\end{thm}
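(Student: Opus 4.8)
The plan is to run the argument of Theorem~\ref{MDsplitting} relative to a single coordinate. Fix $j=1$ and write $\Omega=\Omega_1\times\Omega_{\{2,\dots,n\}}$, so $M=L(\Omega)=L(\Omega_1)\bar\otimes L(\Omega_{\{2,\dots,n\}})$. For each level $m\geq 1$, recall from the proof of Theorem~\ref{MDsplitting} (applied to $\Gamma_1,\alpha_1$) the amalgam decomposition $\Omega_1=\Sigma_m\ast_{\tilde\Gamma_m}\Delta_m$ together with the subgroup $\Sigma_m'<\Sigma_m$ satisfying $\Sigma_m=\Sigma_m'\times\tilde\Gamma_m$ and $\Sigma_m'\nearrow\Omega_1$. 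Tensoring the core and both vertices of this amalgam by $\Omega_{\{2,\dots,n\}}$ yields
\[
\Omega=(\Sigma_m\times\Omega_{\{2,\dots,n\}})\ast_{\tilde\Gamma_m\times\Omega_{\{2,\dots,n\}}}(\Delta_m\times\Omega_{\{2,\dots,n\}}),
\]
hence $M$ is the amalgamated free product von Neumann algebra with core $\mathcal P_m:=L(\tilde\Gamma_m)\bar\otimes L(\Omega_{\{2,\dots,n\}})$. The vertex-to-core indices of this amalgam equal $[\Sigma_m:\tilde\Gamma_m]$ and $[\Delta_m:\tilde\Gamma_m]$, i.e.\ those of the amalgam used in Theorem~\ref{MDsplitting}, so all hypotheses of Theorem~\ref{intertwiningincore1} hold for it verbatim; in particular $M$ is not amenable relative to $\mathcal P_m$. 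Applying Theorem~\ref{intertwiningincore1} to the commuting factors $P_1,P_2$ with $P_1\vee P_2=M$, for each $m$ we obtain $i(m)\in\{1,2\}$ with $P_{i(m)}\prec_M\mathcal P_m$.

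Since $\mathcal P_{m+1}\subseteq\mathcal P_m$, intertwining into $\mathcal P_{m+1}$ forces intertwining into $\mathcal P_m$, so by pigeonhole there is one $i_0\in\{1,2\}$ with $P_{i_0}\prec_M\mathcal P_m$ for every $m$. As $P_{i_0}$ is a factor with $P_{i_0}'\cap M=P_{3-i_0}$ also a factor, this upgrades to $P_{i_0}\prec_M^s\mathcal P_m$ (exactly as in the step producing \eqref{BB6}), whence by Lemma~\ref{lemma2.6DHI16}(3) the algebra $P_{i_0}$ is amenable relative to $\mathcal P_m$ inside $M$ for every $m$.

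Finally I would apply Lemma~\ref{lemma2.6IS19} with $Q:=L(\Omega_{\{2,\dots,n\}})$, $Q_m:=\mathcal P_m$ and $M_m:=L(\Sigma_m'\times\Omega_{\{2,\dots,n\}})$. The hypothesis $\|x-E_{M_m}(x)\|_2\to 0$ holds because $\Sigma_m'\times\Omega_{\{2,\dots,n\}}\nearrow\Omega$, and $Q\subseteq Q_m\cap M_m$ is clear. For the bimodule condition, $L^2(M)\cong L^2(L(\Omega_1))\otimes L^2(Q)$ and, with respect to the left $Q_m$- and right $M_m$-actions,
\[
{}_{Q_m}L^2(M)_{M_m}\cong\big({}_{L(\tilde\Gamma_m)}L^2(L(\Omega_1))_{L(\Sigma_m')}\big)\bar\otimes\big({}_{Q}L^2(Q)_{Q}\big);
\]
the first factor is weakly contained in ${}_{L(\tilde\Gamma_m)}L^2(L(\tilde\Gamma_m))\otimes L^2(L(\Sigma_m'))_{L(\Sigma_m')}$ by the computation establishing \eqref{eqq2} in Theorem~\ref{MDsplitting}, while the second passes through the relative tensor product over $Q$ via $\mathcal H\otimes_N L^2(N)\cong\mathcal H$; rearranging the tensor legs gives exactly ${}_{Q_m}L^2(M)_{M_m}\prec{}_{Q_m}L^2(Q_m)\otimes_Q L^2(M_m)_{M_m}$. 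Lemma~\ref{lemma2.6IS19} then yields that $P_{i_0}$ is amenable relative to $\bigcap_m Q_m=L(\Omega_{\{2,\dots,n\}})=L(\Omega_I)$ with $I=\{2,\dots,n\}\subsetneq\{1,\dots,n\}$, which is the assertion with $i=i_0$.

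The one step demanding real work is the bimodule weak containment fed into Lemma~\ref{lemma2.6IS19}: one must reproduce the double-coset bimodule decomposition of ${}_{Q_n}L^2(M)_{M_n}$ from the proof of Theorem~\ref{MDsplitting} while carrying the common direct factor $L(\Omega_{\{2,\dots,n\}})$ along and replacing the ordinary tensor product by the relative one over $Q=L(\Omega_{\{2,\dots,n\}})$. Since this factor is inert it merely threads through that argument, so the difficulty is bookkeeping rather than conceptual; a minor additional point is to confirm the amalgam built above is still not amenable relative to its core, which is immediate because its defining index conditions are inherited unchanged from the $\Omega_1$-side.
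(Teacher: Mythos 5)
Your proposal is correct and is exactly the relativization of the proof of Theorem \ref{MDsplitting} that the paper has in mind (the paper leaves this proof to the reader, stating only that it "follows essentially the same arguments"): you use the same amalgam decompositions $\Omega_1=\Sigma_m\ast_{\tilde\Gamma_m}\Delta_m$ tensored by $\Omega_{\{2,\dots,n\}}$, the same intertwining-into-the-core step via Theorem \ref{intertwiningincore1}, the same upgrade to $\prec^s$ and to relative amenability, and the same bimodule weak containment fed into Lemma \ref{lemma2.6IS19}, now taken over $Q=L(\Omega_{\{2,\dots,n\}})$ instead of $\mathbb{C}1$. The added pigeonhole step to fix a single index $i_0$ across all levels $m$ is a welcome precision that the sketch in Theorem \ref{MDsplitting} glosses over.
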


%##################################################################################################################################################################################################################################################################################

%
% this command includes the entire bib file in the
% references section, whether the entry has been
% cited in the paper or not
%
%\nocite{*}

\biblio{thesis}

\nocite{*}
	\bibliographystyle{amsalpha}

%\singlespacing

\end{document}